\begin{document}

\newtheorem{theorem}{Theorem}[section]
\newtheorem{prop}[theorem]{Proposition}
\newtheorem{lemma}[theorem]{Lemma}
\newtheorem{cor}[theorem]{Corollary}
\newtheorem{definition}[theorem]{Definition}
\newtheorem{defn}[theorem]{Definition}
\newtheorem{conj}[theorem]{Conjecture}
\newtheorem{claim}[theorem]{Claim}
\newtheorem{defth}[theorem]{Definition-Theorem}
\newtheorem{obs}[theorem]{Observation}
\newtheorem{rmark}[theorem]{Remark}
\newtheorem{qn}[theorem]{Question}
\newtheorem{theo}[theorem]{Theorem}
\newtheorem{thmbis}{Theorem}
\newtheorem{dfn}[theorem]{Definition} 
\newtheorem{defi}[theorem]{Definition} 
\newtheorem{coro}[theorem]{Corollary}
\newtheorem{corbis}{Corollary}
\newtheorem{propbis}[thmbis]{Proposition} 
\newtheorem*{prop*}{Proposition} 
\newtheorem{lem}[theorem]{Lemma} 
\newtheorem{lembis}[thmbis]{Lemma} 
\newtheorem{claimbis}{Claim} 
\newtheorem{fact}[theorem]{Fact} 
\newtheorem{factbis}{Fact} 
\newtheorem{qst}[theorem]{Question} 
\newtheorem{qstbis}{Question} 
\newtheorem{pb}[theorem]{Problem} 
\newtheorem{pbbis}{Problem} 
 \newtheorem{question}[theorem]{Question}
\newtheorem{rem}[theorem]{Remark}
\newtheorem{remark}[theorem]{Remark}
\newtheorem{rmk}[theorem]{Remark}
\newtheorem{example}[theorem]{Example}
\newtheorem{eg}[theorem]{Example}
\newtheorem{notation}[theorem]{Notation}
\newenvironment{preuve}[1][Preuve]{\begin{proof}[#1]}{\end{proof}}

\newcommand{\hhat}{\widehat}
\newcommand{\boundary}{\partial}
\newcommand{\C}{{\mathbb C}}
\newcommand{\integers}{{\mathbb Z}}
\newcommand{\natls}{{\mathbb N}}
\newcommand{\bbN}{{\mathbb N}}
\newcommand{\ratls}{{\mathbb Q}}
\newcommand{\reals}{{\mathbb R}}
\newcommand{\bbR}{{\mathbb R}}
\newcommand{\lhp}{{\mathbb L}}
\newcommand{\tube}{{\mathbb T}}
\newcommand{\cusp}{{\mathbb P}}
\newcommand\AAA{{\mathcal A}}
\newcommand\BB{{\mathcal B}}
\newcommand\CC{{\mathcal C}}
\newcommand\DD{{\mathcal D}}
\newcommand\EE{{\mathcal E}}
\newcommand\FF{{\mathcal F}}
\newcommand\GG{{\mathcal G}}
\newcommand\HH{{\mathcal H}}
\newcommand\II{{\mathcal I}}
\newcommand\JJ{{\mathcal J}}
\newcommand\KK{{\mathcal K}}
\newcommand\LL{{\mathcal L}}
\newcommand\MM{{\mathcal M}}
\newcommand\NN{{\mathcal N}}
\newcommand\OO{{\mathcal O}}
\newcommand\PP{{\mathcal P}}
\newcommand\QQ{{\mathcal Q}}
\newcommand\RR{{\mathcal R}}
\newcommand\SSS{{\mathcal S}}
\newcommand\TT{{\mathcal T}}
\newcommand\UU{{\mathcal U}}
\newcommand\VV{{\mathcal V}}
\newcommand\WW{{\mathcal W}}
\newcommand\XX{{\mathcal X}}
\newcommand\YY{{\mathcal Y}}
\newcommand\ZZ{{\mathcal Z}}
\newcommand\CH{{\CC\Hyp}}
\newcommand\MF{{\MM\FF}}
\newcommand\PMF{{\PP\kern-2pt\MM\FF}}
\newcommand\ML{{\MM\LL}}
\newcommand\PML{{\PP\kern-2pt\MM\LL}}
\newcommand\GL{{\GG\LL}}
\newcommand\Pol{{\mathcal P}}
\newcommand\half{{\textstyle{\frac12}}}
\newcommand\Half{{\frac12}}
\newcommand\Mod{\operatorname{Mod}}
\newcommand\Area{\operatorname{Area}}
\newcommand\ep{\epsilon}
\newcommand\Hypat{\widehat}
\newcommand\Proj{{\mathbf P}}
\newcommand\U{{\mathbf U}}
 \newcommand\Hyp{{\mathbf H}}
\newcommand\D{{\mathbf D}}
\newcommand\Z{{\mathbb Z}}
\newcommand\R{{\mathbb R}}
\newcommand\Q{{\mathbb Q}}
\newcommand\E{{\mathbb E}}
\newcommand\til{\widetilde}
\newcommand\length{\operatorname{length}}
\newcommand\tr{\operatorname{tr}}
\newcommand\gesim{\succ}
\newcommand\lesim{\prec}
\newcommand\simle{\lesim}
\newcommand\simge{\gesim}
\newcommand{\simmult}{\asymp}
\newcommand{\simadd}{\mathrel{\overset{\text{\tiny $+$}}{\sim}}}
\newcommand{\ssm}{\setminus}
\newcommand{\pair}[1]{\langle #1\rangle}
\newcommand{\T}{{\mathbf T}}
\newcommand{\inj}{\operatorname{inj}}
\newcommand{\collar}{\operatorname{\mathbf{collar}}}
\newcommand{\bcollar}{\operatorname{\overline{\mathbf{collar}}}}
\newcommand{\I}{{\mathbf I}}

\newcommand{\bbar}{\overline}
\newcommand{\UML}{\operatorname{\UU\MM\LL}}
\newcommand{\EL}{\mathcal{EL}}
\newcommand\MT{{\mathbb T}}
\newcommand\Teich{{\mathcal T}}

\makeatletter
\@tfor\next:=abcdefghijklmnopqrstuvwxyzABCDEFGHIJKLMNOPQRSTUVWXYZ\do{%
  \def\command@factory#1{%
    \expandafter\def\csname cal#1\endcsname{\mathcal{#1}}
    \expandafter\def\csname frak#1\endcsname{\mathfrak{#1}}
    \expandafter\def\csname scr#1\endcsname{\mathscr{#1}}
    \expandafter\def\csname bb#1\endcsname{\mathbb{#1}}
    \expandafter\def\csname rm#1\endcsname{\mathrm{#1}}
  }
 \expandafter\command@factory\next
}
\makeatother

\newcommand*{\longhookrightarrow}{\ensuremath{\lhook\joinrel\relbar\joinrel\rightarrow}}
\newcommand {\tto}{ \to\rangle }
\newcommand {\onto} {\twoheadrightarrow}
\newcommand {\into} {\hookrightarrow}
\newcommand {\xra} {\xrightarrow}    
\newcommand{\ra}{\rightarrow}
\newcommand{\imp} {\Rightarrow}
\newcommand{\actedon}{\curvearrowleft} 
\newcommand{\actson}{\curvearrowright}

\newcommand {\sd} {\rtimes}   
\newcommand{\semidirect}{\ltimes}
\newcommand{\isemidirect}{\rtimes}
\newcommand{\tensor}{\otimes}
\newcommand{\wreath}{\Lbag}

\newcommand{\du}{\sqcup}
\newcommand{\Dunion}{\bigsqcup} 
\newcommand{\disjoint}{\sqcup}
\newcommand{\normal} {\vartriangleleft}

\newcommand {\ie}{ i.e.  }

\newcommand{\ul}[1]{\underline{#1}} 
\newcommand{\ol}[1]{\overline{#1}}


\newcommand{\Cay}{\operatorname{Cay}}
\newcommand{\proj}{\operatorname{proj}}
\newcommand{\Fix} {\operatorname{Fix}}
\newcommand{\dist}{\operatorname{dist}}
\newcommand{\diam}{\mathop{\mathrm{diam}\;}}


\title{Height, Graded Relative Hyperbolicity and Quasiconvexity}

\author[Francois Dahmani]{Francois Dahmani}

\address{Institut Fourier,
Universite  Grenoble Alpes,
100 rue des Maths, CS 40700,
F-38058 Grenoble Cedex 9,
France}

\email{francois.dahmani@univ-grenoble-alpes.fr }

\author[Mahan Mj]{Mahan Mj}

\address{Tata Institute of Fundamental Research. 1, Homi Bhabha Road, Mumbai-400005, India}

\email{mahan@math.tifr.res.in}
\email{mahan.mj@gmail.com}

\subjclass[2010]{20F65, 20F67 (Primary);   22E40  (Secondary)}

\thanks{The first author acknowledge support of ANR grant
  ANR-11-BS01-013, and from the Institut Universitaire de France. The
  research of the second author is partially supported by  a DST J C Bose Fellowship. }   

\date{\today}

 \begin{abstract}
 We introduce the notions of geometric height and graded (geometric) relative hyperbolicity in this paper. We use these to characterize
 quasiconvexity in hyperbolic groups, relative  quasiconvexity in relatively hyperbolic groups, and convex cocompactness in mapping class groups and $Out(F_n)$.
\end{abstract}

\maketitle

\tableofcontents

\section{Introduction}
  It is well-known that quasiconvex subgroups of hyperbolic groups have
  finite height.  In order to distinguish this notion from the notion of
  {\bf geometric height} introduced later in this paper, we shall call
  the former {\bf algebraic height}: Let $G$ be a finitely generated
  group and $H$ a subgroup.  We say that a collection of conjugates
  $\{ g_iHg_i^{-1} \}, i =1, \dots, n$ are {\bf essentially distinct} if
  the cosets $\{ g_i H\}$ are distinct.  
  We
  say that $H$ has finite {\bf algebraic height} if there exists
  $n \in \mathbb{N}$ such that the intersection of any $(n+1)$
  essentially distinct conjugates of $H$ is finite. The minimal $n$ for
  which this happens is called the {\bf algebraic height} of $H$. 
  Thus $H$ has algebraic height one if and only if it is almost malnormal.
  This
  admits a natural (and obvious) generalization to a finite collection
  of subgroups $H_i$ instead of one $H$.  Thus, if $G$ is a hyperbolic
  group and $H$ a quasiconvex subgroup (or more generally if
  $H_1, \dots, H_n$ are quasiconvex), then $H$ (or more generally the
  collection $\{ H_1, \dots, H_n\}$) has finite algebraic height
  \cite{GMRS}. (See \cite{D03, hruska-wise} for generalizations to the
  context of relatively hyperbolic groups.)  Swarup asked if the
  converse is true:
  
  \begin{qn}\cite{bestvinahp}
     Let $G$ be a hyperbolic group and $H$ a finitely generated
     subgroup. If $H$ has finite height, is $H$
     quasiconvex? \label{swarup} 
  \end{qn}
 
   An example of an infinitely generated (and hence non-quasiconvex)
   malnormal subgroup of a finitely generated free group was obtained in
   \cite{dm} showing that the hypothesis that $H$ is finitely generated
   cannot be relaxed. On the other hand, Bowditch shows in
   \cite{bowditch-relhyp} (see also \cite[Proposition
   2.10]{mahan-relrig}) the following positive result:
 
  \begin{theorem} \cite{bowditch-relhyp} 
     Let $G$ be a hyperbolic group
     and $H$ a subgroup.  Then $G$ is strongly relatively hyperbolic with
     respect to $H$ if and only if $H$ is an almost malnormal quasiconvex
     subgroup. \label{rhqcchar} 
  \end{theorem}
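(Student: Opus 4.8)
The plan is to prove the two implications separately. Throughout, fix a finite generating set for $G$, let $\Gamma$ be the corresponding Cayley graph (a $\delta$-hyperbolic space), and for $A\subset\Gamma$ write $N_r(A)$ for the $r$-neighbourhood of $A$.

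\emph{$(\Rightarrow)$ Suppose $G$ is strongly hyperbolic relative to $H$.} Following Bowditch, realize this by an action of $G$ on a connected, fine, hyperbolic graph $K$ with finitely many orbits of edges and finite edge stabilizers, in which $H$ is the stabilizer of a vertex $v_0$ and every infinite vertex stabilizer is conjugate to $H$. For almost malnormality: if $g\notin H=\operatorname{Stab}(v_0)$ then $gv_0\neq v_0$, so $H\cap gHg^{-1}=\operatorname{Stab}(v_0)\cap\operatorname{Stab}(gv_0)$ permutes the (finite, by fineness) set of $K$-geodesics from $v_0$ to $gv_0$; a finite-index subgroup of it fixes one such geodesic and hence an edge of it, so lies in an edge stabilizer and is finite. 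For quasiconvexity: a peripheral subgroup of a finitely generated relatively hyperbolic group is itself finitely generated and undistorted, so the inclusion of a Cayley graph of $H$ into $\Gamma$ is a quasi-isometric embedding. Hence, when included into $\Gamma$, an $H$-geodesic $\gamma$ joining $x,y\in H$ is a uniform quasigeodesic in $\Gamma$; by the Morse lemma in the $\delta$-hyperbolic $\Gamma$ it lies within a uniform Hausdorff distance of the $\Gamma$-geodesic $[x,y]$, so $[x,y]\subset N_C(\gamma)\subset N_{C+1}(H)$ with $C$ uniform. Thus $H$ is quasiconvex.

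\emph{$(\Leftarrow)$ Suppose $H$ is quasiconvex and almost malnormal.} Form the coned-off Cayley graph $\widehat\Gamma$ by adjoining, for each left coset $gH$, a cone vertex $c(gH)$ joined by an edge to every element of $gH$. The $G$-action on $\widehat\Gamma$ has finitely many edge orbits and finite edge stabilizers, the cone vertex $c(gH)$ has stabilizer $gHg^{-1}$, and the remaining vertex stabilizers are trivial; so it suffices to show $\widehat\Gamma$ is hyperbolic and fine. Hyperbolicity is the standard fact that coning off the cosets of a quasiconvex subgroup of a hyperbolic group produces a hyperbolic graph: $H$ quasiconvex makes each $gH$ a uniformly quasiconvex subset of $\Gamma$, $\Gamma$-geodesics then project to uniform unparametrized quasigeodesics in $\widehat\Gamma$, and this, together with thinness of $\Gamma$-triangles, yields thinness of $\widehat\Gamma$-triangles. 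This already shows $G$ is weakly hyperbolic relative to $H$.

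The remaining and main point is fineness of $\widehat\Gamma$ (equivalently, the bounded coset penetration property). The key geometric input is the \emph{coarse divergence of cosets}: for each $r$ there is $D(r)<\infty$ with $\operatorname{diam}_\Gamma\big(N_r(g_1H)\cap N_r(g_2H)\big)\le D(r)$ whenever $g_1H\neq g_2H$. This is where almost malnormality enters. If the diameter were large, then, by $\delta$-hyperbolicity and uniform quasiconvexity of cosets, a $\Gamma$-geodesic segment of comparable length would lie within a bounded distance $s=s(r)$ of both $g_1H$ and $g_2H$; translating by $g_1^{-1}$ gives a long geodesic segment within distance $s$ of both $H$ and $wH$, where $w=g_1^{-1}g_2\notin H$. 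Writing each vertex $x$ along this segment as $x=h_x\eta_x$ with $h_x\in H$, $|\eta_x|\le s$, and also as $x=w\mu_x\nu_x$ with $\mu_x\in H$, $|\nu_x|\le s$, a pigeonhole argument along the segment produces two vertices $x_0,x_i$ with $\eta_{x_0}=\eta_{x_i}=:\eta$ and $\nu_{x_0}=\nu_{x_i}=:\nu$, whence $h_{x_0}^{-1}h_{x_i}\in H\cap tHt^{-1}$ for $t=\eta\nu^{-1}$; here $|t|\le 2s$ and $t\in HwH\setminus H$, so $t\notin H$. Running over the segment yields arbitrarily many such elements; but $t$ ranges over the finite set of length-$\le 2s$ elements outside $H$, and for each of these $H\cap tHt^{-1}$ is finite by almost malnormality, a contradiction. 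Granting coarse divergence, the fine-graph analysis of \cite{bowditch-relhyp} shows $\widehat\Gamma$ is fine and BCP holds: the control on how distinct cosets can coarsely overlap rules out both infinite families of short circuits through a fixed edge and pairs of backtracking-free $\widehat\Gamma$-quasigeodesics penetrating incompatible cosets. Hence $G$ acts on the connected, fine, hyperbolic graph $\widehat\Gamma$ with finitely many edge orbits and finite edge stabilizers, realizing $H$ as a peripheral subgroup; that is, $G$ is strongly hyperbolic relative to $H$.

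The main obstacle is this last step. The forward implication and the bare hyperbolicity of $\widehat\Gamma$ are soft — the Morse lemma, standard facts about peripheral subgroups, and electrification. Passing from almost malnormality to coarse divergence of cosets, and then from that metric statement to the combinatorial conclusions of fineness and bounded coset penetration, is the technical heart: it is precisely the bounded-packing phenomenon of \cite{GMRS} combined with Bowditch's fine-graph formalism.
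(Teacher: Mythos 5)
The paper does not prove Theorem \ref{rhqcchar}: it is stated as a cited result from \cite{bowditch-relhyp} (with a pointer to \cite[Proposition 2.10]{mahan-relrig}), so there is no in-paper argument to compare against. Your reconstruction is essentially Bowditch's original argument and is correct in outline. The forward direction is sound: undistortion of peripherals plus the Morse lemma gives quasiconvexity, and fineness of the graph $K$ gives almost malnormality via the finite-set-of-geodesics observation. In the reverse direction, the identities $t=\eta\nu^{-1}=h_{x_0}^{-1}w\mu_{x_0}\in HwH\setminus H$ and $h_{x_0}^{-1}h_{x_i}=t\bigl(\mu_{x_0}^{-1}\mu_{x_i}\bigr)t^{-1}\in H\cap tHt^{-1}$ both check out, so your pigeonhole correctly converts almost malnormality into coarse divergence of cosets. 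You defer the final passage, from coarse divergence to fineness/BCP of $\widehat\Gamma$, to Bowditch's fine-graph analysis; given the attribution that is fair, but it is worth emphasizing that this step, not the bounded-packing lemma, is where the bulk of the technical estimates in \cite{bowditch-relhyp} actually live.
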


  One of the motivational points for this paper is to extend Theorem
  \ref{rhqcchar} to give a characterization of quasiconvex subgroups of
  hyperbolic groups in terms of a notion of {\it graded relative
    hyperbolicity} defined as follows:

  \begin{defn} \label{grh} 
    Let $G$ be a finitely generated group, $d$
    the word metric with respect to a finite generating set and $H$ a
    subgroup.  Let $\HH_i$ be the collection of intersections of $i$
    essentially distinct conjugates of $H$, let $(\HH_i)_0$ be a choice of
    conjugacy representatives, and let $\CH_i$ be the set of cosets of
    elements of $(\HH_i)_0$.   
    Let $d_i$ be the metric on $(G,d)$ obtained by
    electrifying\footnote{The second author acknowledges the moderating
      influence of the first author on the more extremist terminology
      {\it electrocution} \cite{mahan-split, mahan-ibdd}} the elements
    of $\CH_i$. We say that $G$ is {\bf graded relatively hyperbolic}
    with respect to $H$ (or equivalently that the pair $(G,\{H\})$ has
    graded relative hyperbolicity) if

    \begin{enumerate}
        \item $H$ has algebraic height $n$ for some $n \in \natls$.
        \item Each element $K$ of $\HH_{i-1}$ has a finite relative
          generating set $S_K$, relative to
          $H\cap \HH_i (:= \{ H\cap H_i : H_i \in \HH_i\})$. Further, the
          cardinality of the generating set $S_K$ is bounded by a number depending only on $i$ (and
          not on $K$).
        \item $(G,d_i)$ is strongly hyperbolic relative to $\HH_{i-1}$,
          where each element $K$ of $\HH_{i-1}$ is equipped with the word
          metric coming from $S_K$.
      \end{enumerate}
    \end{defn}

    The following is the main Theorem of this paper (see Theorem
    \ref{hypcharzn} for a more precise statement using the notion of {\it
      graded geometric relative hyperbolicity} defined later) 
    providing a
    partial positive answer to Question \ref{swarup} and a generalization
    of Theorem \ref{rhqcchar}:

    \begin{theorem} Let $(G,d)$ be one of the following:
      \begin{enumerate}
         \item $G$ a hyperbolic group and $d$ the word metric with respect to
           a finite generating set $S$.
         \item $G$ is finitely generated and hyperbolic relative to $\PP$,
           $S$ a finite relative generating set, and $d$ the word metric with
           respect to $S \cup \PP$.
         \item $G$ is the mapping class group $Mod(S)$ and $d$    
           a metric 
           that  is equivariantly quasi-isometric to the curve
           complex $CC(S)$.
         \item $G$ is $Out(F_n)$ and $d$  
           a metric 
           that is equivariantly quasi-isometric to the free factor
           complex $\FF_n$.
         \end{enumerate}
         Then (respectively)
         \begin{enumerate}
            \item if $H$ is quasiconvex, then $(G,\{H\})$ has graded
              relative hyperbolicity; conversely,  if $(G,\{H\})$ has
              geometric graded
              relative hyperbolicity then $H$ is
              quasiconvex.
            \item if $H$ is relatively quasiconvex then $(G,\{H\},d)$ has
              graded relative hyperbolicity; conversely,  if $(G,\{H\},d)$ has
              geometric graded relative hyperbolicity then $H$ is relatively quasiconvex.
            \item if $H$ is convex cocompact in $Mod(S)$ then 
              $(G,\{H\},d)$ has graded relative hyperbolicity;
              conversely, if $(G,\{H\},d)$ has geometric  graded relative
              hyperbolicity  and the action of
              $H$ on the curve complex is uniformly proper, 
              then $H$ is convex cocompact in $Mod(S)$.
            \item if $H$ is convex cocompact in $Out(F_n)$ then
              $(G,\{H\},d)$ has graded relative hyperbolicity;
              conversely, if $(G,\{H\},d)$ has geometric  graded relative
              hyperbolicity  and the action of
              $H$ on the free factor complex is uniformly proper, then
              $H$ is convex cocompact in $Out(F_n)$.
          \end{enumerate}
        \end{theorem}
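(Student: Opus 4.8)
The plan is to handle all four cases in parallel, separating the purely group-theoretic mechanism encoded by Definition~\ref{grh} from the geometry of the model space $(G,d)$: in cases~(1) and~(2) this space is hyperbolic (absolutely, resp. relative to $\PP$), while in cases~(3) and~(4) one first transports the question to the action of $G$ on the hyperbolic complex $CC(S)$, resp. the free factor complex $\FF_n$, where convex cocompactness of $H$ is equivalent to the orbit map being a quasi-isometric embedding with quasiconvex image.

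\emph{Forward implications.} I would induct on the algebraic height $n$. Finiteness of the height is \cite{GMRS} in case~(1), \cite{D03,hruska-wise} in case~(2), and the corresponding (known) finiteness of height for convex cocompact subgroups in cases~(3) and~(4); this gives condition~(1) of Definition~\ref{grh}. For the remaining conditions the point is that electrifying a deeper level produces a relatively hyperbolic structure on the shallower one, and two ingredients suffice. First, a coarse intersection of uniformly quasiconvex sets in a hyperbolic space is again uniformly quasiconvex, so every $\HH_i$ is a uniformly quasiconvex family (of subgroups in cases~(1),(2); of orbits in $CC(S)$, $\FF_n$ in cases~(3),(4)). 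Second, coning off a uniformly quasiconvex family that is almost malnormal modulo the next deeper level yields strong relative hyperbolicity --- this is Theorem~\ref{rhqcchar} applied level by level, with $\HH_i$ playing the role of the almost malnormal family once the still-deeper intersections have been collapsed. Hence electrifying $\CH_i$ makes $(G,d_i)$ strongly hyperbolic relative to $\HH_{i-1}$, which is condition~(3); and each $K\in\HH_{i-1}$, being quasiconvex and hence coarsely covered by finitely many translates of the quasiconvex subgroups $K\cap H_i$, is finitely generated relative to $\{K\cap H_i\}$, which is condition~(2). The bound on $|S_K|$ depending only on $i$ comes from the quantitative form of the GMRS estimate, which bounds the number of essentially distinct translates of a quasiconvex set that can coarsely meet a region of given diameter.

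\emph{Converse implications.} I would induct downwards on the grading index $i$, from the deepest level --- where the relevant intersections are finite, hence trivially quasiconvex --- toward $i=1$, the conjugacy class of $H$: at step $i$ one deduces that every $K\in\HH_{i-1}$ is quasiconvex in $(G,d)$ (relatively quasiconvex in case~(2); with quasiconvex orbit in cases~(3),(4)) from the corresponding property of $\HH_i$ (the inductive hypothesis) together with condition~(3). The role of the \emph{graded geometric} hypothesis (Theorem~\ref{hypcharzn}) is exactly to transfer condition~(3), a statement about the electrified metric $d_i$, into a statement about $(G,d)$: it guarantees that the cone-regions collapsed in passing to $d_i$ are attached along the already-quasiconvex sets $\HH_i$, so that an electric-geometry argument in the spirit of \cite{mahan-split,mahan-ibdd} applies. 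Concretely, a $(G,d)$-geodesic between two points of $K$, viewed in $(G,d_i)$, is an electric quasigeodesic with endpoints on the peripheral set $K$; by relative hyperbolicity of $(G,d_i)$ it stays $d_i$-close to $K$ with no backtracking, and replacing its electrified subsegments by genuine quasigeodesics running through the quasiconvex pieces of $\HH_i$ --- using condition~(2), and almost malnormality modulo $\HH_i$ to bound the length along which a $\CH_i$-coset can shadow $K$ without coarsely lying inside it --- keeps the path $d$-close to $K$. Iterating down to $i=1$ shows $H$ is quasiconvex, resp. relatively quasiconvex. In cases~(3) and~(4) the entire argument is run inside $CC(S)$, resp. $\FF_n$, yielding a quasiconvex $H$-orbit; the additional hypothesis that $H$ act uniformly properly on the complex enters only at the last step, to upgrade ``quasiconvex orbit'' to ``convex cocompact'': properness forces the orbit map from $H$ with its word metric to be a quasi-isometric embedding, which is the definition of convex cocompactness for $Mod(S)$ and for $Out(F_n)$.

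\emph{Main obstacle.} The hard part will be the inductive step of the converse: running the de-electrification argument at each of the $n$ levels while keeping all quasiconvexity and electric-quasigeodesic constants uniform, so that the constant one ends up with for $H$ is finite; this is precisely what the $i$-uniform bound in condition~(2) and the geometric hypothesis are there to supply. A secondary difficulty, particular to cases~(3) and~(4), is that $(G,d)$ is only a pseudo-metric pulled back from a hyperbolic complex on which $G$ acts neither properly nor cocompactly, so each step of the induction has to be phrased in terms of orbit maps and coarse stabilizers rather than subgroups acting geometrically; the uniform properness of $H$ is what makes the final conclusion meaningful.
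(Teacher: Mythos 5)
Your overall architecture matches the paper's: the forward direction is "finite height $+$ uniform control on how conjugates intersect $\Rightarrow$ graded relative hyperbolicity," and the converse is a descending induction on the grading index using a de-electrification/unfolding argument, with the uniform-properness hypothesis entering only at the very end in cases (3) and (4) to pass from a quasiconvex orbit in $CC(S)$ (resp. $\FF_n$) to convex cocompactness via Theorems~\ref{qi-coco} and~\ref{qi-coco-out}. That is precisely the route of Propositions~\ref{qcimpliesgrh}, \ref{relqcimpliesgrh}, \ref{cocoimpliesgrh0}, \ref{cocoimpliesgrh}, \ref{ggrhtoqc}, and~\ref{ggrhtoqi} in the paper.

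There is, however, a concrete flaw in the way you set up the inductive step of the converse. You write that a $(G,d)$-geodesic between two points of $K$, ``viewed in $(G,d_i)$, is an electric quasigeodesic.'' This is false: electrification only decreases distances, so a $d$-geodesic can have $d_i$-length far larger than the $d_i$-distance between its endpoints --- e.g. a long $d$-geodesic between two points of a single coned-off $Y\in\CH_i$ is not a $d_i$-quasigeodesic since the two endpoints are at $d_i$-distance $\leq 2$. Moreover such a $d$-geodesic has no cone points on it, so there are no ``electrified subsegments'' to replace, and your sentence is internally inconsistent. The argument must run in the opposite direction, which is what Proposition~\ref{prop;unfolding_qc} does: start from a $d_i$-geodesic or quasigeodesic between $x,y\in K$ (this lies $d_i$-close to $K$ because $K$ is $d_i$-quasiconvex), de-electrify it by substituting $d$-quasigeodesics through the already-known-to-be-quasiconvex pieces of $\HH_i$ lying inside $K^{+\epsilon\Delta}$, verify via bounded coset penetration/electro-ambient quasigeodesicity (Lemma~\ref{ea-strong}, Corollary~\ref{ea-cor}) that the resulting path is a uniform $d$-quasigeodesic near $K$, and only then invoke stability of quasigeodesics in the hyperbolic space $(G,d)$ to conclude that every $d$-geodesic between $x$ and $y$ lies near $K$. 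The control on which $Y\in\HH_{i+1}$ a de-electrified segment can enter, and where its endpoints land, is Proposition~\ref{geointnsstable}, which is the rigorous form of your ``almost malnormality modulo $\HH_i$ bounds shadowing.''

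Two smaller remarks. First, in the forward direction you invoke Theorem~\ref{rhqcchar} ``level by level with $\HH_i$ playing the role of the almost malnormal family''; the paper instead phrases the needed structure as \emph{mutual coboundedness} of the coset family $\CC\HH_{i-1}$ in the metric $d_i$ (Definition~\ref{cobdd}, established as part of the uniform qi-intersection property in Proposition~\ref{prop;satisfiesqiip}) and then uses Proposition~\ref{prop;criterion} together with Proposition~\ref{prop;from_horo_to_he}. This is necessary because the peripheral objects at intermediate levels are cosets in a (possibly non-proper) electrified metric, not subgroups of a hyperbolic group, so Bowditch's theorem does not literally apply; your framing captures the right intuition but needs to be translated to the coset/coboundedness language. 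Second, the finiteness of height for convex cocompact subgroups of $Mod(S)$ and $Out(F_n)$ is not pre-existing: it is proved in this paper (Theorems~\ref{alght-mcg0}, \ref{alght-mcg}, \ref{alght-out}) by adapting the GMRS argument to Teichm\"uller space and Outer space.
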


\medskip

\noindent {\bf Structure of the paper:}\\ In Section 2,
we will review the notions of  hyperbolicity for metric
spaces relative to subsets. This will be related to the notion of hyperbolic embeddedness
\cite{DGO}. We will need to generalize the notion of  hyperbolic embeddedness in \cite{DGO} to one of
coarse hyperbolic embeddedness in order to accomplish this. We will also prove
results on the preservation of quasiconvexity under
electrification. We give two sets of proofs:  the first set of
proofs relies on assembling diverse pieces of literature on relative hyperbolicity, with several minor
adaptations. We also give  a more self-contained set of
proofs relying on asymptotic cones.

 In Section 3.1 and the preliminary discussion in Section 4, we give an account of two notions
of height: algebraic and geometric. The classical (algebraic) notion of height of a subgroup
concerns the number of conjugates that can have  infinite
intersection. The notion of geometric height is similar, but instead of
considering infinite intersection, we consider unbounded intersections
in a (not necessarily proper) word metric. This naturally leads us to dealing with intersections
in different contexts:

\begin{enumerate}
\item Intersections of conjugates of subgroups in a proper $(\Gamma, d)$ (the Cayley graph of the ambient
group with respect to a finite generating set). 
\item Intersections of metric thickenings of cosets in a not necessarily
proper $(\Gamma, d)$.
\end{enumerate}

The first is purely group theoretic (algebraic) and the last
geometric. 
Accordingly, we have two notions of height: algebraic and 
 geometric.  
In line with this, we investigate two notions of graded relative
hyperbolicity in Section 4 (cf. Definition \ref{ggrh}):
\begin{enumerate}
\item Graded relative hyperbolicity (algebraic)
\item Graded geometric relative hyperbolicity
\end{enumerate}

In the fourth section, we  also introduce and study a qi-intersection
property, a property that ensures that quasi-convexity is preserved under passage
to electrified spaces. The property
exists in both variants above. 

In the fifth and the sixth sections, we will prove our main results
relating height and  geometric graded relative hyperbolicity. On a first reading, the
reader is welcome to keep the simplest (algebraic or group-theoretic) notion in mind. 
To get a hang of where the paper is headed, we suggest that the reader take a first look at 
Sections 5 and 6, armed with Section 3.3 and the 
statements of  Proposition \ref{prop;hyp_qc_have_fgh},
Theorem  \ref{relht},
Theorem  \ref{alght-mcg0}, 
Theorem  \ref{alght-out}  and
Proposition \ref{prop;satisfiesqiip}. This, we hope,  will clarify our intent.

 \section{Relative Hyperbolicity, coarse hyperbolic
          embeddings}\label{rh}

        We shall clarify here what it means in this paper for a
        geodesic space $(X,d)$, to be hyperbolic relative to a family
        of subspaces $\YY=\{Y_i, i\in I \}$, or to cast it in another
        language, what it means for the family $\YY$ to be
        hyperbolically embedded in $(X,d)$.  There are slight
        differences from the more usual context of groups and
        subgroups (as in \cite{DGO}), but we will keep the descending
        compatibility (when these notions hold in the context of
        groups, they hold in the context of spaces).

        We begin by recalling relevant constructions.

        \subsection{Electrification by cones} \label{sec;elec}

          Given a metric space $(Y, d_Y)$, we will endow $Y\times [0,1]$
          with the following product metric: it is the largest metric
          that agrees with $d_Y$ on $Y\times \{0\}$,
          and  each $\{y\} \times [0,1]$ is endowed with a metric isometric to
          the segment $[0,1]$.

        \begin{defn} \cite{farb-relhyp} Let $(X,d)$ be a geodesic
          length space, and $\YY=\{Y_i, i\in I \}$ be a collection of
          subsets of $X$.  The \emph{ electrification }
          $(X^{el}_\YY, d^{el}_{\YY})$ of $(X,d)$ along $\YY$ is
          defined as the following coned-off space: \\
          $X^{el}_\YY = X\sqcup \{\bigsqcup_{i\in I} Y_i \times [0,1]
          \} /\sim$
          where $\sim$ denotes the identification of
          $ Y_i\times \{0\}$ with $Y_i\subset X$ for each $i$, and the
          identification of $Y_i\times \{1\}$ to a single cone point
          $v_i$ (dependent on $i$).

          The metric $d^{el}_{\YY}$ is defined as the path metric on
          $X^{el}_\YY$ for the natural quotient metric coming from the
          product metric on $Y_i \times [0,1]$ (defined as above).
        \end{defn}

        Let $Y_i\in \YY$. The {\bf angular metric} $\hat{d}_{Y_i}$ (or
        simply, $\hat{d}$, when there is no scope for confusion) on
        $Y_i$ is defined as follows:\\ For $y_1,y_2 \in Y_i$,
        $\hat{d}_{Y_i} (y_1,y_2)$ is the infimum of lengths of paths
        in $X^{el}_\YY$ joining $y_1$ to $y_2$ not passing through the
        vertex $v_i$. (We allow the angular metric to take on infinity as a  value).
        
        If $(X,d)$ is a metric space, and $Y$ is a subspace, we write
        $d|_Y$ the metric induced on $Y$. 

        \begin{defn}\label{he}
          Consider a geodesic metric space $(X,d)$ and a family of subsets
          $\YY =\{Y_i, i\in I \}$.  We will say that $\calY$ is {\bf coarsely
            hyperbolically embedded} in $(X,d)$, if there is a function
          $\psi: \bbR_+\to \bbR_+$ which is proper ({\it i.e.}
          $\lim_{+\infty} \psi(x) = +\infty$), and such that
          \begin{enumerate}
          \item the electrified space $X^{el}_\YY$ is hyperbolic,
          \item the angular metric at each $Y \in \YY$ in the cone-off is
            bounded from below by $\psi \circ d|_Y$.
          \end{enumerate}
        \end{defn}

        \begin{rmark}
          This notion originates from Osin's \cite{OsinIJAC}, and was
          developed further in \cite{DGO} in the context of groups, where one
          requires that each subset $Y_i \in \calY$ is a proper metric space
          for the angular metric. This automatically implies the weaker
          condition of the above definition. The converse is not true: if
          $\YY$ is a collection of uniformly bounded subgroups of a group $X$
          with a (not necessarily proper) word metric, it is always coarsely
          hyperbolically embedded, but it is hyperbolically embedded in the
          sense of \cite{DGO} only if it is finite.
        \end{rmark}

        As in the point of view of \cite{OsinIJAC}, we say that $(X,d)$ is {
          \bf strongly hyperbolic relative to} the collection $\calY$ (in the
        sense of spaces) if $\calY$ is coarsely hyperbolically embedded in
        $(X,d)$.

        As we described in the remark, unfortunately, it happens that some
        groups (with a Cayley graph metric) are  hyperbolic relative to some
        subgroups in the sense of spaces, but not in the sense of groups.

        Note that there are other definitions of relative hyperbolicity for
        spaces. Dru\c{t}u introduced the following definition: a metric space
        is hyperbolic relative to a collection of subspaces if all asymptotic
        cones are tree graded with pieces being ultratranslates of asymptotic
        cones of the subsets.

        \subsection{Quasiretractions}

        \begin{defn}
          If $(X,d)$ is a metric space, and $Y\subset X$ is a subset endowed
          with a metric $d_Y$, we say that $d_Y$ is $\lambda$-undistorted in
          $(X,d)$ if for all $y_1, y_2 \in Y$,
          $$\lambda^{-1} d(y_1,y_2) -\lambda  \leq d_Y(y_1, y_2) \leq \lambda
          d(y_1,y_2) +\lambda. $$

          We say that $d_Y$ is undistorted in $(X,d)$ if it is
          $\lambda$-undistorted in $(X,d)$ for some $\lambda$.
        \end{defn}

        For the next proposition, define the $D$-coarse path metric on a
        subset $Y$ of a path-metric space $(X,d)$ to be the metric on $Y$
        obtained by taking the infimum of lengths over paths for which any
        subsegment of length $D$ meets $Y$.

        The next Proposition is the translation (to the present context) of
        Theorem 4.31 in \cite{DGO}, with a similar proof. 

        \begin{prop}\label{retraction}
          Let $(X,d)$ be a graph. Assume that $\YY$ is coarsely hyperbolically
          embedded in $(X,d)$. Then, there exists $D_0$ such that for all
          $Y\in \YY$, the $D_0$-coarse path metric on $Y\in \YY$ is
          undistorted (or equivalently the $D_0$-coarse path metric on
          $Y\in \YY$ is quasi-isometric to the metric induced from $(X,d)$).
        \end{prop}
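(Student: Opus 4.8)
The plan is to follow the proof of Theorem~4.31 of \cite{DGO}, substituting the quantitative bound $\hat d_Y\ge \psi\circ d|_Y$ wherever \cite{DGO} invokes properness of the angular metric. One inequality is free: any path in $X$ joining $y_1,y_2\in Y$ has length at least $d(y_1,y_2)$, so for every $D_0$ the $D_0$--coarse path metric on $Y$ dominates $d|_Y$. The content is the reverse inequality, and for this it suffices to produce, for some $D_0$ and $\lambda$ not depending on $Y$, a ``chain'' $y_1=z_0,z_1,\dots,z_m=y_2$ in $Y$ with $d(z_i,z_{i+1})\le D_0$ and $m\le d(y_1,y_2)+1$: concatenating geodesics $[z_i,z_{i+1}]$ then gives a path in $X$ of length $\le D_0\,d(y_1,y_2)+D_0$ each of whose length--$D_0$ subsegments straddles some $z_i\in Y$, once $D_0$ is taken larger than twice the constant $R_0$ below. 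So I would reduce the Proposition to the following coarse convexity property, which is the real content and the analogue of the core step of \cite[Thm.~4.31]{DGO}:
\begin{quote}\emph{Claim.} There is $R_0=R_0(\delta,\psi)$ such that, for every $Y\in\YY$ and every $d$--geodesic $\sigma$ with both endpoints in $Y$, one has $\sigma\subset N_{R_0}(Y)$ in $(X,d)$.\end{quote}

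Granting the Claim, sampling $\sigma$ at integer times and replacing each sample by a point of $Y$ within distance $R_0$ of it produces a chain as above (with $D_0:=2R_0+2$), and the Proposition follows. To prove the Claim I would argue by contradiction, as in \cite{DGO}. Fix $u,w\in Y$, a $d$--geodesic $\sigma=[u,w]$, and suppose some $z\in\sigma$ has $d(z,Y)$ large (if $d(u,w)$ is bounded there is nothing to prove, so we may assume it is not). The leverage is that $d^{el}_{\YY}(u,w)\le 2$, realised by the two cone edges through $v_Y$; thus $\sigma$ is an extremely inefficient path in the $\delta$--hyperbolic space $X^{el}_{\YY}$. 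Using the geodesic machinery for coned--off spaces developed in Section~\ref{rh} (the analogue of the ``bounded coset penetration'' properties of \cite{farb-relhyp,DGO}), such an inefficient $d$--geodesic must track, over long stretches, bounded neighbourhoods (in $X^{el}_{\YY}$) of the finitely many members of $\YY$ whose cone points appear on a short path from $u$ to $w$ in $X^{el}_{\YY}$; since $u,w\in Y$ one may take such a path of length $\le 2$ through $v_Y$ alone, so $\sigma$ lies in a bounded neighbourhood of $Y$ in $X^{el}_{\YY}$. It then remains to convert this into a bound on $d(z,Y)$, and this is where the hypothesis $\hat d_Y\ge\psi\circ d|_Y$ is used: were $d(z,Y)$ large while $d^{el}_{\YY}(z,Y)$ stays bounded, one could (reconstruct an $X^{el}_{\YY}$--geodesic from $z$ to $Y$ by filling in its cone passages with $d$--geodesics, and peel off the portion of $\sigma$ lying deep away from $Y$) produce points $y,y'\in Y$ with $d(y,y')$ arbitrarily large that are joined in $X^{el}_{\YY}$ by a path of uniformly bounded length avoiding $v_Y$; hence $\hat d_Y(y,y')$ would stay bounded while $\psi(d(y,y'))\to\infty$, contradicting $\hat d_Y\ge\psi\circ d|_Y$ since $\psi$ is proper. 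This bounds $d(z,Y)$ and establishes the Claim.

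The step I expect to be the main obstacle is exactly the passage from ``$\sigma$ in a bounded $X^{el}_{\YY}$--neighbourhood of $Y$'' to ``$\sigma$ in a bounded $d$--neighbourhood of $Y$''. The source of the difficulty is that a $d$--geodesic need not even be a quasigeodesic of $X^{el}_{\YY}$ --- it can spend arbitrarily large $d$--length fellow--travelling a single $Y_j\in\YY$, which is $d^{el}_{\YY}$--short --- so $\delta$--hyperbolicity of $X^{el}_{\YY}$ cannot be applied to $\sigma$ directly, and the electric metric does not see $d$--distances to the $Y$'s; controlling this gap is precisely what the coned--off geodesic comparison of Section~\ref{rh} and the lower bound $\hat d_Y\ge\psi\circ d|_Y$ are for, and one must check that each use of properness of $\hat d_Y$ in \cite[Thm.~4.31]{DGO} survives when properness is weakened to this quantitative estimate (properness of $\psi$ doing the work of ``bounded $\hat d_Y$--sets are finite''). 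Along the alternative, asymptotic--cone route of the paper, the Claim becomes the familiar fact that in an asymptotic cone of $(X,d)$ --- tree--graded with pieces the ultralimits of the $Y\in\YY$ --- any geodesic joining two points of a piece is contained in that piece, the uniform $R_0$ being recovered by rescaling a hypothetical family of ever--deeper excursions by their depths and invoking tree--gradedness of every asymptotic cone.
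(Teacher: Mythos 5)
Your reduction to the Claim is where things go wrong, and you have in fact diagnosed the gap yourself. The Claim asserts that every $d$--geodesic with endpoints in $Y$ stays in a uniformly bounded $d$--neighbourhood of $Y$; this is a quasiconvexity statement about $Y$ inside $(X,d)$, and the Proposition makes no hypothesis under which that is accessible. As you observe, a $d$--geodesic need not be a quasigeodesic of $X^{el}_\YY$, so hyperbolicity of the cone-off cannot be applied to it directly. But the ``bounded coset penetration'' / electroambient machinery of Section~\ref{rh} that you propose to invoke to repair this (Lemma~\ref{farb2A}, Corollary~\ref{ea-cor}) all carry the standing hypothesis that $(X,d)$ \emph{itself} is $\delta$--hyperbolic. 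Proposition~\ref{retraction} does not assume that: only $X^{el}_\YY$ is hyperbolic and $\hat d_Y\ge\psi\circ d|_Y$. So that route is unavailable, and the contradiction argument you sketch does not get off the ground. (Quasiconvexity of $Y$ in $X$ is indeed recorded in the paper, as Corollary~\ref{cor-retraction}, but precisely under the additional assumption that $X$ is hyperbolic.) The asymptotic-cone alternative you mention has the same problem: tree-gradedness of the cones of $(X,d)$ with pieces the ultralimits of the $Y\in\YY$ is Dru\c{t}u's notion of relative hyperbolicity of $(X,d)$, which is likewise not among the hypotheses.

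The paper's proof sidesteps all of this by never attempting to control $d$--geodesics. It first builds a global quasiretraction $r\colon X\to Y$ by sending $x$ to the point at distance $1$ from the cone point $v_Y$ on a geodesic $[v_Y,x]$ in $X^{el}_\YY$; hyperbolicity of $X^{el}_\YY$ alone gives $\hat d(r(x),r(x'))\le C$ whenever $d(x,x')=1$. The bound $\hat d_Y\ge\psi\circ d|_Y$ with $\psi$ proper then converts this into $d(r(x),r(x'))\le D_0$ for a uniform $D_0$. Now one takes \emph{any} path $z_0,\dots,z_n$ in $X$ from $y_0$ to $y_1$ (a geodesic, so $n=d(y_0,y_1)$) and projects it: $r(z_0),\dots,r(z_n)$ is a $D_0$--coarse path in $Y$ of length at most $D_0\, d(y_0,y_1)$. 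This produces the required efficient coarse path directly, without asserting anything about where the geodesic $[y_0,y_1]$ lies, and uses only the two hypotheses actually stated. Your reduction step (from the Claim to the Proposition, by sampling and locally projecting) is fine, but the Claim is a strictly stronger assertion than what is needed, and it is the added strength that you cannot pay for.
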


        We will need the following Lemma, which originates in Lemma 4.29 in
        \cite{DGO}. The proof is the same; for convenience we will briefly
        recall it. The lemma provides quasiretractions onto hyperbolically
        embedded subsets in a hyperbolic space.

        \begin{lemma} Let $(X, d)$ be a geodesic metric space. There exists
          $C>0$ such that whenever $\YY$ is coarsely hyperbolically embedded
          (in the sense of spaces) in $(X,d)$, then for each $Y\in \YY$, there
          exists a map $r: X\to Y$ which is the identity on $Y$ and such that
          $\hat{d} (r(x), r(y)) \leq C d(x,y)$.
        \end{lemma}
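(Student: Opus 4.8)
The plan is to adapt the proof of Lemma 4.29 in \cite{DGO} to the present, more general ``coarse'' setting. The key observation is that coarse hyperbolic embeddedness gives us exactly what we need to make the classical argument go through: hyperbolicity of $X^{el}_\YY$ controls the coarse geometry of projections, and the properness of $\psi$ bounding the angular metric from below is what prevents the quasiretraction constant from degenerating. The constant $C$ must be produced uniformly over all possible $\YY$, so we will need to extract it purely from the hyperbolicity constant $\delta$ of $(X,d)$ (plus absolute constants), not from anything depending on $\YY$.

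First I would fix $Y\in\YY$ and define $r: X\to Y$ to be (a measurable choice of) a nearest-point projection in the metric $d$, so that $r$ is the identity on $Y$. The task is then to bound $\hat d(r(x),r(y))$ in terms of $d(x,y)$. The standard reduction is to the case where $d(x,y)=1$ (i.e.\ $x,y$ adjacent in the graph, or at bounded distance), since a general pair can be chained along a $d$-geodesic and the bounds added; so it suffices to show $\hat d(r(x),r(y))\le C$ whenever $d(x,y)\le 1$. Consider a $d$-geodesic $[x,y]$ together with $d$-geodesics $[x,r(x)]$ and $[y,r(y)]$; these together with a path in $Y$ from $r(x)$ to $r(y)$ bound a (coarse) quadrilateral. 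One views this quadrilateral in the hyperbolic space $X^{el}_\YY$ and uses $\delta$-thinness there. The point is to show that the portion of this configuration that ``enters $Y$ deeply'' (in the $d$-metric) is short, using the Morse/stability properties of geodesics in the electrified space and the fact that a nearest-point projection in $d$ cannot backtrack.

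The heart of the matter, and the step I expect to be the main obstacle, is controlling how a path realizing small $\hat d$-distance, or the sides of the quadrilateral, interact with the cone point $v_Y$ and with $Y$ itself. In \cite{DGO} one uses that $Y$ is proper for the angular metric; here we only have the lower bound $\hat d|_Y \ge \psi\circ d|_Y$. So the argument must be arranged so that one only ever needs: (a) two points of $Y$ that are $d$-close are $\hat d$-close (which is automatic, since the inclusion $Y\hookrightarrow X^{el}_\YY$ avoiding $v_Y$ is $1$-Lipschitz from $d|_Y$), and (b) conversely, if a subsegment of a quasigeodesic in $X^{el}_\YY$ with endpoints on $Y$ stays $d$-far from $v_Y$ but has endpoints $\hat d$-close, then it is short in $d$ — and this is exactly where $\psi$ enters: a contradiction to properness of $\psi$ if the $d$-diameter were large. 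Concretely: if the two projections $r(x),r(y)$ satisfied $d(r(x),r(y))$ large while $\hat d(r(x),r(y))$ were also meant to be large, that is consistent; what we must rule out is $d(r(x),r(y))$ large while the geometry forces $\hat d(r(x),r(y))$ bounded — and that cannot happen because $\hat d \ge \psi \circ d$ on $Y$. So the logic runs: hyperbolicity of $X^{el}_\YY$ gives a bound $\hat d(r(x),r(y)) \le C$ directly (a thin-quadrilateral estimate in $X^{el}_\YY$, where cone points make distances small), and there is in fact \emph{no need} to return to the $d$-metric for the conclusion.

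Thus the cleanest route is: (1) recall that nearest-point projection onto a quasiconvex set in a $\delta$-hyperbolic space is coarsely Lipschitz with constant depending only on $\delta$; (2) observe that $Y$, being a single cone's base, is $2$-quasiconvex in $X^{el}_\YY$ (any two points of $Y$ are joined through $v_Y$ by a path of length $2$), so nearest-point projection $\pi_Y$ in the $\hat d$-metric is coarsely Lipschitz with constant $C=C(\delta)$; (3) note that a $d$-nearest-point projection $r$ and the $\hat d$-nearest-point projection $\pi_Y$ agree up to bounded $\hat d$-error, because going ``deep'' into $Y$ in $d$ is never $\hat d$-efficient past the cone (here one uses $\hat d \ge \psi\circ d$ to see that $r(x)$ lies within bounded $\hat d$-distance of $\pi_Y(x)$). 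Chaining (2) and (3) gives $\hat d(r(x),r(y)) \le C\, d(x,y)$ with $C$ depending only on the hyperbolicity constant of $X^{el}_\YY$, hence only on data intrinsic to $(X,d)$ once coarse hyperbolic embeddedness is assumed — which is the assertion. The main obstacle is making step (3) uniform in $\YY$; the resolution is that it only requires properness of $\psi$ qualitatively, together with the quantitative $\delta$-hyperbolic estimates of step (2), never the specific modulus $\psi$.
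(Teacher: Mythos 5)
Your sketch has the right intuitions --- the reduction to $d(x,y)\le 1$, the observation that $\hat d\le d|_Y$, the recognition that the cone point $v_Y$ is the obstruction --- but the ``cleanest route'' you propose in steps (1)--(3) conflates the cone-off metric $d^{el}_\YY$ with the angular metric $\hat d$ at exactly the step carrying all the weight. Step (2) invokes the standard coarse-Lipschitz property of nearest-point projection onto a quasiconvex subset of a hyperbolic space; but that theorem controls the projection in the \emph{ambient} metric, which here is $d^{el}_\YY$, and since any two points of $Y$ are joined through $v_Y$ by a path of length $2$, the set $Y$ has $d^{el}_\YY$-diameter at most $2$: ``$\pi_Y$ is coarsely Lipschitz in $d^{el}_\YY$'' is vacuously true and says nothing about $\hat d(\pi_Y(x),\pi_Y(y))$. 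Thin-triangle shortcuts in $X^{el}_\YY$ may pass through $v_Y$, in which case they give no bound on $\hat d$ at all; your remark that ``cone points make distances small'' is true for $d^{el}_\YY$ but is precisely the opposite of what is needed for $\hat d$. Step (3) has the same flaw: $Y$ being $d^{el}_\YY$-bounded, the $d^{el}_\YY$-nearest point to $x$ in $Y$ is essentially arbitrary, so there is no reason $\hat d(r(x),\pi_Y(x))$ should be controlled.

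The real content of the lemma is the construction of a short path from $r(x)$ to $r(y)$ that \emph{explicitly avoids} $v_Y$. The paper does this by taking $r(x)$ to be the exit point of a geodesic $[v_Y,x]$ in $X^{el}_\YY$ from the cone over $Y$ (the point at $d^{el}_\YY$-distance $1$ from $v_Y$), then applying $\delta$-thinness to the triangle $(v_Y,x,y)$: pushing out a further $2(\delta+1)$ along $[v_Y,x]$ and $[v_Y,y]$ past $r(x)$, $r(y)$ and crossing the $\le 2\delta$ bridge that thinness provides yields a path of length $\le 6\delta+4$ from $r(x)$ to $r(y)$ which stays at distance $\ge 1$ from $v_Y$, hence bounds $\hat d(r(x),r(y))$ (with an easy separate case when $x,y$ lie within $5(\delta+1)$ of $v_Y$, where one instead uses the path through $x$ and $y$). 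This is the ``main obstacle'' you yourself flagged and then did not carry out; your observations (a) and (b) are correct but do not substitute for it. Note finally that $\psi$ is not used in the lemma at all --- it enters only afterward, in Proposition \ref{retraction}, to convert the uniform $\hat d$-bound along the chain $r(z_0),\dots,r(z_n)$ into a uniform $d$-bound --- so your suspicion that $\psi$ appears only qualitatively is right, but the correct conclusion is that the lemma is a pure thin-triangle fact in $X^{el}_\YY$, with avoidance of $v_Y$ built into the definition of $r$ and the placement of the bridge, not deduced from projection machinery.
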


        \begin{proof}
          Let $p$ the cone point associated to $Y$, and for each $x$, choose a
          geodesic $[p,x]$ and define $r(x)$ to be the point of $[p,x]$ at
          distance $1$ from $p$. Then $r(x)$ is in $Y$, and to prove the
          lemma, one only needs to check that there is $C$ such that if
          $d(x,y)=1$, then $\hat{d} (r(x), r(y)) \leq C $.  The constant $C$
          will be $10(\delta+1) +1$.
          Assume that $x$ and $y$ are at distance $>5(\delta+1)$ from the cone
          point. By hyperbolicity, one can find two points in the triangle
          $(p, x, y)$ at distance $2(\delta+1)$ from $r(x), r(y)$ at distance
          $\leq 2\delta$ from each other. This provides a path of length at
          most $6\delta + 4$. Hence $\hat{d} (r(x), r(y)) \leq 6\delta + 4$.
          If $x$ and $y$ are at distance $\leq 5(\delta+1)$ from the cone
          point, then
          $\hat{d} (r(x), r(y)) \leq d(r(x),x) + d(x,y) + d(y,r(y))\leq
          2\times 5(\delta+1) +1$.
        \end{proof}

        We can now prove the Proposition.
        \begin{proof}
          Choose $D_0 = \psi(C) $: for all $y_0, y_1 \in Y$ at distance
          $\leq D_0$, their angular distance is at most $  C$ (where $C$ is as given by the
          Lemma above).  Consider any path in $X$ from $y_0$ to $y_1$, call the
          consecutive vertices $z_0, \dots, z_n$, and project that path by
          $r$.  One gets $r(z_0), \dots, r(z_n)$ in $Y$, two consecutive ones
          being at distance at most $D_0$. This proves the claim.
        \end{proof}

        \begin{cor}\label{cor-retraction}
          If $(X,d)$ is hyperbolic, and if $\YY$ is coarsely hyperbolically
          embedded, then there is $C$ such that any $Y\in \YY$ is
          $C$-quasiconvex in $X$.
        \end{cor}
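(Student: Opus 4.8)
The plan is to deduce quasiconvexity of each $Y \in \YY$ in $(X,d)$ directly from the quasiretraction constructed in the preceding Lemma, together with the hyperbolicity of $X$. First I would fix $Y \in \YY$, let $p = v_Y$ be its cone point, and let $r \colon X \to Y$ be the map provided by the Lemma, so that $r$ is the identity on $Y$ and $\hat d_Y(r(x), r(y)) \le C\, d(x,y)$ for all $x,y \in X$. Since the angular metric $\hat d_Y$ dominates the metric $d^{el}_\YY$ restricted to $Y$ (a path realizing the angular distance is in particular a path in $X^{el}_\YY$), and since a geodesic of $X$ is also a path in $X^{el}_\YY$, the composition $r$ is a coarse Lipschitz map into $Y$ when $Y$ is measured with the angular metric; by Proposition \ref{retraction} the $D_0$-coarse path metric on $Y$ is quasi-isometric to $d|_Y$, so it suffices to work with whichever of these metrics is convenient.

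Next I would take two points $y_0, y_1 \in Y$ and a geodesic $\gamma = [y_0, y_1]$ in $(X,d)$ with consecutive vertices $z_0 = y_0, z_1, \dots, z_n = y_1$, and apply $r$ vertex by vertex. Consecutive images $r(z_j), r(z_{j+1})$ are at angular distance at most $C$, hence (enlarging $C$ if necessary, using that $\hat d_Y$ bounds $d|_Y$ up to the proper function $\psi$, or directly using the $D_0$-coarse path metric) at bounded $d$-distance, say at most $C'$. Thus $r$ sends $\gamma$ to a $C'$-coarse path in $Y$ from $y_0$ to $y_1$. Every point of this coarse path lies within $C'$ of $Y$ in $(X,d)$, and every point $z_j$ of $\gamma$ satisfies $d(z_j, r(z_j)) \le d(z_j, y_0) + d(y_0, r(y_0)) = d(z_j, y_0)$ only when $z_j$ is near the endpoint — so for the interior I instead argue via hyperbolicity: the definition of $r$ as the point at distance $1$ from $p$ on a geodesic $[p, z_j]$, combined with $\delta$-thinness of the triangle $(p, y_0, y_1)$ in $X^{el}_\YY$ and the fact that $p$ is the apex of the cone over $Y$, forces $z_j$ to be uniformly close to $Y$ in $X^{el}_\YY$, and then the coarse-Lipschitz quasiretraction $r$ (with the undistortion of the $D_0$-coarse path metric from Proposition \ref{retraction}) upgrades this to uniform closeness to $Y$ in $(X,d)$. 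Tracking the constants gives a uniform bound $C''$, depending only on $\delta$, $C$, and $D_0$, such that $\gamma$ lies in the $C''$-neighbourhood of $Y$ in $(X,d)$, which is exactly $C''$-quasiconvexity.

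The main obstacle I anticipate is bookkeeping the passage between the three metrics on $Y$ — the induced metric $d|_Y$, the angular metric $\hat d_Y$, and the $D_0$-coarse path metric — and making sure the final constant $C''$ depends only on the hyperbolicity constant $\delta$ of $X$ (equivalently of $X^{el}_\YY$) and not on the particular $Y \in \YY$; this uniformity is where Proposition \ref{retraction} and the uniform properness of $\psi$ are essential. A second, milder subtlety is that the quasiretraction is naturally Lipschitz only for the angular metric, so one must not accidentally claim a $d$-Lipschitz retraction; the argument must route through Proposition \ref{retraction} to convert angular control back to $d$-control. Once these translations are pinned down, the quasiconvexity statement follows from the standard fact that a hyperbolic space geodesic between two points of a set admitting a coarse-Lipschitz retraction onto that set stays in a bounded neighbourhood of the set.
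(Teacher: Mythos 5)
Your strategy --- extract a coarsely Lipschitz retraction $r\colon X\to Y$ from the preceding Lemma, upgrade the angular bound $\hat d(r(x),r(y))\le C\,d(x,y)$ to a $d$-bound $d(r(z_j),r(z_{j+1}))\le D_0$ for adjacent vertices via properness of $\psi$, and feed this into Proposition \ref{retraction} --- is the right one, and at the end you correctly name the missing ingredient: a coarsely Lipschitz retraction onto a subset of a hyperbolic space forces quasiconvexity. But the argument you sketch for that final step does not work. You invoke $\delta$-thinness of the triangle $(p,y_0,y_1)$ in $X^{el}_{\YY}$ to force interior points $z_j$ of the $d$-geodesic $\gamma$ to be close to $Y$. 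However $\gamma$ is not a geodesic, nor a uniform quasi-geodesic, in $X^{el}_{\YY}$: the electrified distance from $y_0$ to $y_1$ is at most $2$ (through the cone point $p$), while $\gamma$ has electrified length $d(y_0,y_1)$, which is unbounded over pairs in $Y$. Thinness of the (tiny) electrified triangle $(p,y_0,y_1)$ therefore says nothing about the $z_j$. And even if you knew $z_j$ were close to $Y$ in $X^{el}_{\YY}$, that would not upgrade to $d$-closeness: a point may be at bounded $d^{el}_{\YY}$-distance from $Y$ (e.g.\ by routing through the cone of some other $Y'\in\YY$) while at arbitrarily large $d$-distance. The Lipschitz estimate for $r$ controls $d(r(z_j),r(z_{j+1}))$, not $d(z_j,r(z_j))$, so no single application of it bounds $d(z_j,Y)$.

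The ``standard fact'' is instead proved by an exponential-divergence argument carried out entirely in $(X,d)$, with constants depending only on $\delta$ and $D_0$, hence uniform over $Y\in\YY$. Concretely: let $z\in\gamma$ be a point farthest from $Y$, with $R=d(z,Y)$; pick $a,b\in\gamma$ with $d(a,z)=d(b,z)=3R$ on opposite sides of $z$ (with the obvious modification if $z$ is within $3R$ of an endpoint), and take $a',b'\in Y$ nearest to $a,b$, so $d(a,a'),d(b,b')\le R$. The concatenation of $[a,a']$, the $D_0$-coarse path $r([a',b'])$ inside $Y$, and $[b',b]$ joins $a$ to $b$, has length at most $(2+8D_0)R+D_0$, and avoids the ball $B_{R/2}(z)$ once $R>D_0$. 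Since $z$ lies on the geodesic $[a,b]$ at distance $3R$ from each endpoint and $(X,d)$ is $\delta$-hyperbolic, any detour from $a$ to $b$ outside $B_{R/2}(z)$ must have length exponential in $R/\delta$; comparing with the linear upper bound forces $R\le R_0(\delta,D_0)$, which is the desired quasiconvexity bound.
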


        \subsection{Gluing horoballs}
          Given a metric space $(Y, d_Y)$ , one can construct several models of
          combinatorial horoballs over it. We recall a construction
          (similar to that of Groves and Manning \cite{GM} for a graph).

          We consider inductively on $k \in \mathbb{N}\setminus \{0\}$ the space
          $\HH_k(Y) = Y\times [1, k]$ with the maximal metric $d_k$
          that 
          \begin{itemize}
             \item induces an isometry of $\{y\} \times [k-1, k]$ with $[0,1]$ for
               all $y\in Y$, and all $k\geq 1$,
             \item is at most $d_{k-1}$ on $\HH_{k-1}(Y) \subset \HH_k(Y)$
             \item coincides with $2^{-k}\times d$ on $Y\times \{k\}$.
           \end{itemize}
Then $\HH(Y)$ is defined as the inductive limit of the $\HH_k(Y)$'s and is called the horoball
over $Y$.
           Let $(X,d)$ be a graph, and $\YY$ be a collection of subgraphs (with
           the induced metric on each of them).  The \emph{horoballification} of
           $(X,d)$ over $\YY$ is defined to be the space
           $X^h_\YY = X\sqcup \{\bigsqcup_{i\in I} \HH(Y_i)  \} /\sim_i$
           where $\sim_i$ denotes the identification of the boundary horosphere
           of $ \HH(Y_i) $ with $Y_i\subset X$.  The metric $d^h_{\YY}$ is
           defined as the path metric on $X^h_\YY$.

           One can electrify a horoballification $X^h_\calY$ of a space $(X,d)$:
           one gets a space quasi-isometric to the electrification $X^{el}_\YY$
           of $X$. We record this observation in the following.
           
           \begin{prop} \label{double-elec} 
             Let $X$ be a graph, and $\YY$ be a
             family of subgraphs. Let $X^{el}_\YY$ and $X^h_\YY$ be the
             electrification, and the horoballification as above.  Let
             $(X^{h})^{el}_{\HH(\YY)}$ be the electrification of $X^h_\YY$ over
             the collection of horoballs $\HH(Y_i)$ over $Y_i , i\in I$.

             Then there is a natural injective map
             $X^{el}_\YY \hookrightarrow (X^{h})^{el}_{\HH(\YY)}$ which is the
             identity on $X$ and sends the cone point of $Y_i$ to the cone
             point of $\HH(Y_i)$.

             Consider the map $e: ((X^{h})^{el}_{\HH(\YY)})^{(0)} \to X^{el}_\YY$
             that
             \begin{itemize}
                \item is the identity on $X$,
                \item sends each vertex of $\HH(Y_i)$ of depth $> 2$ to
                  $v_i \in X^{el}_\YY$,
                \item sends each vertex $(y,n) \in \HH(Y_i)$ of depth $n\leq 2$ to
                  $y \in Y_i \subset X$,
                \item sends the cone point of $((X^{h})^{el}_{\HH(\YY)})$ associated
                  to $\HH(Y_i)$ to the cone point of $X^{el}_\YY$ associated to
                  $Y_i$.
             \end{itemize}

             Then $e$ is a quasi-isometry that induces an isometry on
             $X^{el}_\YY \subset (X^{h})^{el}_{\HH(\YY)}$.
           \end{prop}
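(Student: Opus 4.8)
The plan is to handle $e$ together with the natural injection $\iota\colon X^{el}_{\YY}\hookrightarrow (X^{h})^{el}_{\HH(\YY)}$, exhibiting the two as mutually coarse-inverse quasi-isometries, and then to sharpen the statement about $\iota$ to an isometric embedding by cutting a geodesic into excursions.

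First I would record the easy half. Take $\iota$ to be the identity on $X$, to send the cone point $v_i$ of $Y_i$ to the cone point of $\HH(Y_i)$, and to send the cone segment from $y\in Y_i$ to $v_i$ isometrically onto the cone segment from $y\in\HH(Y_i)$ to the cone point of $\HH(Y_i)$; this is injective. Since $\iota$ carries every edge of $X^{el}_{\YY}$ (an edge of $X$, or a cone segment) onto an edge of $(X^{h})^{el}_{\HH(\YY)}$ of the same length, $\iota$ is $1$-Lipschitz; writing $d$ for the path metric of $(X^{h})^{el}_{\HH(\YY)}$, this gives $d(\iota(a),\iota(b))\le d^{el}_{\YY}(a,b)$. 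Now let $e$ be given on the $0$-skeleton as in the statement, and extend it over each edge by a shortest path between the images of the endpoints. Directly from the definitions, $e\circ\iota=\mathrm{id}$ on $X^{el}_{\YY}$ ($\iota$ is the identity on $X$ and on cone segments, and $e$ returns the cone point of $\HH(Y_i)$ to $v_i$), and $\iota\circ e$ displaces each vertex of $(X^{h})^{el}_{\HH(\YY)}$ by a bounded amount: an $X$-vertex is fixed, a vertex $(y,n)$ of $\HH(Y_i)$ of depth $\le 2$ is sent to $y=(y,1)$ at distance $\le 1$, a vertex of depth $>2$ is sent to the cone point of $\HH(Y_i)$ at distance $1$, and the cone point itself is fixed.

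Next I would check that $e$ is coarsely Lipschitz with constants not depending on $X$ or $\YY$, by inspecting the edge types of $(X^{h})^{el}_{\HH(\YY)}$: an edge of $X$ is fixed; a vertical horoball edge from $(y,n)$ to $(y,n{+}1)$ maps to a path joining two points of $\{y,v_i\}$, hence of length $\le 1$; a level-$n$ horizontal edge of $\HH(Y_i)$ maps to a single point when $n>2$, and when $n\le 2$ to a pair $y_1,y_2\in Y_i\subset X$ with $d^{el}_{\YY}(y_1,y_2)\le 2$ (route through $v_i$); a cone edge of $\HH(Y_i)$ maps to a path from $y$ to $v_i$ of length $1$ when the horoball endpoint has depth $\le 2$, and to a point otherwise. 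So $d^{el}_{\YY}(e(x),e(x'))\le L\,d(x,x')+L$ for some universal $L$. Together with the previous paragraph --- $\iota$ $1$-Lipschitz, $e$ coarsely Lipschitz, $e\circ\iota=\mathrm{id}$, $\iota\circ e$ within bounded distance of $\mathrm{id}$, and $\iota$ coarsely onto (every vertex of $(X^{h})^{el}_{\HH(\YY)}$ lies within distance $1$ of $\iota(X^{el}_{\YY})$) --- the usual formal argument shows $\iota$ and $e$ are mutually coarse-inverse quasi-isometries, with constants independent of $X$ and $\YY$. This establishes the injection $\iota$ with the stated values on $X$ and on the cone points, and the quasi-isometry assertion for $e$.

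It remains to promote $\iota|_{X^{el}_{\YY}}$ to an isometric embedding, i.e.\ to prove $d(\iota(a),\iota(b))\ge d^{el}_{\YY}(a,b)$; since $\iota$ carries each edge of $X^{el}_{\YY}$ isometrically onto an edge, it suffices to treat $a,b$ in the $0$-skeleton $X^{(0)}\cup\{v_i\}_{i\in I}$. Fix a geodesic $\gamma$ of $(X^{h})^{el}_{\HH(\YY)}$ from $\iota(a)$ to $\iota(b)$, and cut it into its maximal subpaths lying in $X$, separated by \emph{excursions}: maximal subpaths that lie in a single coned-off horoball $\HH(Y_i)\cup\{\,\text{its cone point}\,\}$ and whose endpoints lie on the boundary horosphere $Y_i\subset X$ (an excursion terminating at the cone point of $\HH(Y_i)$ can only occur as a prefix or suffix of $\gamma$ and is already accounted for by $\iota(v_i)$ being that cone point). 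Keep each maximal $X$-subpath, and replace each excursion from $y$ to $y'$ on $Y_i$ by a $d^{el}_{\YY}$-geodesic from $y$ to $y'$; concatenating produces a path in $X^{el}_{\YY}$ from $a$ to $b$, and we are done once we know that every excursion from $y$ to $y'$ on $Y_i$ has length at least $d^{el}_{\YY}(y,y')$. I expect this to be the main obstacle, being the only point where the geometry of the combinatorial horoball is used: one must compare, for $y,y'$ on the boundary horosphere of $\HH(Y_i)$, the distance inside $\HH(Y_i)$, the cone-point shortcut of length $2$, and the distance $d_X(y,y')$, and see that their minimum is at least $\min(2,d_X(y,y'))\ge d^{el}_{\YY}(y,y')$ --- this is where the scaling of successive horoball levels fixed in the construction of $\HH(Y_i)$ is used, guaranteeing that descending into $\HH(Y_i)$ never undercuts the cone-point route. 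Granting it, $d(\iota(a),\iota(b))=d^{el}_{\YY}(a,b)$, so $\iota$ (equivalently its inverse $e|_{X^{el}_{\YY}}$) is an isometry onto its image, as claimed. All estimates above are uniform in $X$ and $\YY$.
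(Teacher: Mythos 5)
Your proposal is correct, and it rests on the same essential observation the paper uses, namely that a genuine excursion of a geodesic of $(X^{h})^{el}_{\HH(\YY)}$ into the interior of a coned-off horoball, between two boundary vertices, has length at least $2$: either it passes through the cone point ($\geq 2$ edges), or it dips to depth $\geq 2$, which already costs two vertical edges. The difference is packaging. The paper uses this to conclude directly that geodesics between points of $X$ never enter a horoball interior at all (a three-edge horoball detour is always beaten by the two cone edges), so $e$ sends such a geodesic to a path of the same length, and the isometry follows at once; the reverse inequality and the $2$-density of $X$ then give everything. You, instead, allow the geodesic to make excursions and then replace each excursion by a $d^{el}_{\YY}$-geodesic of length $\leq 2$, which is a slightly heavier route to the same length comparison; you also run a separate coarse-Lipschitz/coarse-inverse argument for the quasi-isometry, which becomes redundant once you have the isometric embedding together with coarse density (the paper dispenses with it). Your step flagged as the ``main obstacle'' is real but easy, and your attribution of it to ``the scaling of successive horoball levels'' is slightly off: the exponential compression of horizontal distances is not what is used here; what matters is only that each vertical horoball edge has length $1$, so that any excursion reaching depth $\geq 2$ already costs at least $2$, matching the cone shortcut. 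Finally, your parenthetical worry about excursions terminating at a cone point occurring only as a prefix or suffix is unnecessary: a geodesic can pass through a cone point of $\HH(Y_i)$ in its interior, but such an excursion has length exactly $2$ between boundary points, so the replacement goes through regardless.
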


           \begin{proof}
             First, note that a geodesic in $(X^{h})^{el}_{\HH(\YY)}$ between two
             points of $X$ never contains an edge with a vertex of depth
             $\geq 1$. If it did, the subpath in the corresponding horoball would
             be either non-reduced, or would contain at least $3$ edges, and
             could be shortened by substituting a pair of edges through the cone
             attached to that horoball. Thus the image of such a geodesic under
             $e$ is a path of the same length.  In other words, there is an
             inequality on the metrics
             $d_{ X^{el}_\YY} \leq d_{(X^h)^{el}_{\HH(\YY)} }$ (restricted to
             $X^{el}_\YY$).  On the other hand, there is a natural inclusion
             $X^{el}_\YY \subset (X^{h})^{el}_{\HH(\YY)} $, and therefore on
             $X^{el}_\YY$, $ d_{(X^h)^{el}_{\HH(\YY)} } \leq d_{ X^{el}_\YY}$.
             Thus $e$ is an isometry on $X^{el}_\YY$. Also, every point in
             $ (X^{h})^{el}_{\HH(\YY)} $ is at distance at most $2$ from a point
             in $X$, hence also from a point of the image of $X^{el}_\YY$.
           \end{proof}

           \subsection{Relative hyperbolicity and hyperbolic embeddedness}

           Recall that we say that a subspace $Q$ of a geodesic metric space
           $(X,d)$ is $C$-quasiconvex, for some number $C>0$, if for any two
           points $x,y \in Q$, and any geodesic $[x,y]$ in $X$, any point of
           $[x,y]$ is at distance at most $C$ from a point of $Q$.

           \begin{defn} \cite[Definition 3.5]{mahan-ibdd}
             A collection
             $\mathcal{H}$ of (uniformly) $C$-quasiconvex sets in a
             $\delta$-hyperbolic metric space $X$ is said to be {\bf mutually
               D-cobounded} if for all $H_i, H_j \in \mathcal{H}$, with
             $H_i\neq H_j$, $\pi_i (H_j)$ has diameter less than $D$, where
             $\pi_i$ denotes a nearest point projection of $X$ onto $H_i$. A
             collection is {\bf mutually cobounded } if it is mutually
             D-cobounded for some $D$. \label{cobdd}
           \end{defn}

           The aim of this subsection is to establish criteria for hyperbolicity
           of certain spaces (electrification, horoballification), and related
           statements on persistence of quasi-convexity in these spaces.  We also
           show that hyperbolicity of the horoballification implies strong
           relative hyperbolicity, or coarse hyperbolic embeddedness.

           Two sets of arguments are given. In the first set of arguments, the
           pivotal statement is of the following form: Electrification or
           de-electrification preserves the property of being a quasigeodesic.
           The arguments are essentially existent in some form in the literature,
           and we merely sketch the proofs and refer the reader to specific
           points in the literature where these may be found.
           
           The second set of arguments uses asymptotic cones (hence the axiom of
           choice) and is more self-contained (it relies on
           Gromov-Cartan-Hadamard theorem). We decided to give both these
           arguments so as to leave it to the the reader to choose according to
           her/his taste.
           
           \subsection{Persistence of hyperbolicity and quasiconvexity}
           \subsubsection{The Statements}\label{thestatements}
           Here we state the results for which we give arguments in the following
           two subsubsections.
           
           \begin{prop}\label{prop;criterion}
             Let $(X,d)$ be a hyperbolic geodesic space, $C>0$, and $\YY$ be a
             family of $C$-quasiconvex subspaces.  Then $X^{el}_\YY$ is
             hyperbolic.  If moreover the elements of $\YY$ are mutually
             cobounded, then $X^h_\YY$ is hyperbolic.
           \end{prop}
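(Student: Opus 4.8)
The plan is to prove the two assertions by the ``electrification/de-electrification preserves quasigeodesics'' strategy, exactly as advertised in the subsection. First I would establish a \emph{bounded coset penetration}-style lemma for $X^{el}_\YY$: a geodesic in $X$ between two points $x,y$ projects, after electrification, to a path in $X^{el}_\YY$ whose length is controlled, and conversely a geodesic in $X^{el}_\YY$ can be ``de-electrified'' to a uniform quasigeodesic in $X$ by replacing each passage through a cone point $v_i$ with a geodesic segment in the corresponding $Y_i$. The key input here is that each $Y_i$ is $C$-quasiconvex in the hyperbolic space $X$: this is what guarantees that the entry and exit points of a geodesic near $Y_i$ are coarsely well-defined and that the detour through $Y_i$ stays uniformly close to the actual geodesic. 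Once one knows that geodesics of $X^{el}_\YY$ de-electrify to uniform quasigeodesics of $X$, and that images of geodesics of $X$ are uniform quasigeodesics of $X^{el}_\YY$, the thin-triangle condition transfers: a geodesic triangle in $X^{el}_\YY$ de-electrifies to a quasigeodesic triangle in the $\delta$-hyperbolic space $X$, which is therefore uniformly thin in $X$ by stability of quasigeodesics, and re-electrifying preserves the (coarse) thinness. This gives hyperbolicity of $X^{el}_\YY$. I would cite Farb \cite{farb-relhyp} and Bowditch \cite{bowditch-relhyp} (and the Groves--Manning \cite{GM} formalism) for the precise technical statements, adapting them to the space-level setting as the subsection promises.

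For the second assertion, I would pass through the horoballification. By Proposition \ref{double-elec}, $X^h_\YY$ electrified over its horoballs is quasi-isometric to $X^{el}_\YY$, which we have just shown is hyperbolic; so the electrification of $X^h_\YY$ is hyperbolic. It then remains to show that ``filling in'' the cones by horoballs keeps hyperbolicity, and this is precisely where the mutual coboundedness hypothesis is used: it guarantees that distinct horoballs $\HH(Y_i)$, $\HH(Y_j)$ have bounded ``shadow'' on one another, so that a geodesic in $X^h_\YY$ cannot spend a long time near two different horoballs without the coboundedness forcing a contradiction. The standard argument (again in the style of Groves--Manning \cite{GM}, Bowditch \cite{bowditch-relhyp}) is to show directly that geodesic triangles in $X^h_\YY$ are uniformly thin: take a triangle, project it to $X^{el}_\YW$ where it becomes uniformly thin, and then analyze the portions inside a single horoball using the (well-known) fact that combinatorial horoballs are themselves hyperbolic with a strong visibility/isolation property, while the mutual coboundedness controls the transitions between distinct horoballs.

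The main obstacle I expect is the de-electrification step in its quantitative form: showing that a geodesic of $X^{el}_\YY$, when its cone-point passages are replaced by $Y_i$-geodesics, is a quasigeodesic of $X$ with constants depending only on $\delta$ and $C$ (and not on the family $\YY$ or its cardinality). This requires a careful bounded-coset-penetration argument — controlling how a geodesic can enter, travel along, and leave a $C$-quasiconvex set, and ruling out that it re-enters the same $Y_i$ many times — which is the technical heart shared with Farb's original treatment. The coboundedness-driven part for $X^h_\YY$ is the second potential sticking point, but it is comparatively standard once the horoball isolation estimate is in hand. Throughout, the point to be careful about is uniformity of all constants in the family $\YY$, since later sections apply this with families of unbounded cardinality.
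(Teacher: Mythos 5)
Your plan follows the spirit of the paper's ``first set of arguments'' (electrification/de-electrification preserves quasigeodesics, with citations to Farb/Bowditch/Groves--Manning), but there are two substantive concerns.

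For the first assertion, the paper does not actually run a de-electrification argument: it points to Bowditch, Szczepanski and Mj, where the statement is obtained by \emph{collapsing} a separated family of quasiconvex subspaces (with a remark that separation can be arranged by replacing each $Y$ by $Y\times\{D\}\subset Y\times[0,D]$), and it leaves the matter there. The statement you want --- that a geodesic of $X^{el}_\YY$ de-electrifies to a uniform quasigeodesic of $X$ --- is exactly the content of Lemma \ref{ea-strong} in the paper, whose hypothesis is that $X$ is \emph{strongly} hyperbolic relative to $\YY$ (equivalently that $\YY$ is mutually cobounded). In the first assertion no coboundedness is assumed, and without it you have no a priori control over an electric geodesic that hops among cone points over distinct but heavily overlapping $Y_i$'s. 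So your transfer-of-thinness step is resting on a lemma whose hypotheses are not available here; this would need either a separate argument or a retreat to citing the collapse-based literature as the paper does.

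For the second assertion, the direction of your argument is reversed relative to what is needed. Knowing (via Proposition \ref{double-elec}) that the electrification of $X^h_\YY$ over its horoballs is hyperbolic is a consequence of hyperbolicity of $X^{el}_\YY$ and is the \emph{easy} direction; it gives no handle on $X^h_\YY$ itself, since passing from the electrified space back to the finer metric is the de-electrification step that requires the real work. A BCP-in-$X^h_\YY$ analysis with horoball isolation estimates in the style of Groves--Manning is plausible given coboundedness, but you would essentially be rebuilding that machinery. The paper's actual, self-contained proof of both assertions is quite different: it rescales $X$ so that $\delta$ and $C$ are tiny, shows by contradiction and ultralimits that large balls of $X^{el}_\YY$ (resp.\ $X^h_\YY$) are $10$-locally hyperbolic because the asymptotic cone is a $2$-quasi-tree of electrified $\mathbb{R}$-trees (resp.\ a tree-graded space whose pieces are combinatorial horoballs over subtrees), verifies coarse simple connectedness, and then invokes the Gromov--Cartan--Hadamard theorem. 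Your proposal does not engage with this route, which is the one the paper carries through in full.
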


           In the same spirit, we also record the following statement on
           persistence of quasi-convexity.
           
           \begin{prop}\label{cobpersists}
             Given $\delta , C$ there exists $ C'$ such that if
             $(X,d_X)$ is a $\delta$-hyperbolic metric space with a collection
             $\mathcal{Y}$ of $C$-quasiconvex, 
             sets. 
             then the following holds: \\
             If $Q (\subset X)$ is a $C$-quasiconvex set (not necessarily an
             element of $\YY$), then $Q$ is $C'$-quasiconvex in
             $(X^{el}_\YY , d_e)$.

           \end{prop}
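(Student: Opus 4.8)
The plan is to deduce $C'$-quasiconvexity of $Q$ in $(X^{el}_\YY,d_e)$ from the fact that geodesics in the electrified space can be replaced, up to bounded error, by "relative geodesics" in $X$ that travel along the original $C$-quasiconvex subspaces $Y_i$ and otherwise follow $X$-geodesics. More precisely, I would first invoke Proposition \ref{prop;criterion} to know that $X^{el}_\YY$ is hyperbolic, say $\delta'$-hyperbolic with $\delta'$ depending only on $\delta,C$. Then I would set up the standard dictionary between paths in $X^{el}_\YY$ and paths in $X$: given a $d_e$-geodesic $\gamma_e$ between two points $x,y\in Q$, each time $\gamma_e$ enters a cone over some $Y_i$ through a point $p$ and leaves through a point $q$, replace that cone-detour (of $d_e$-length $2$) by an $X$-geodesic $[p,q]$. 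This produces a path $\gamma$ in $X$ whose image is within bounded Hausdorff distance (in $d_e$) of $\gamma_e$, because a cone-detour has $d_e$-length $2$ while the replacing segment $[p,q]$, although possibly long in $d$, is $d_e$-bounded as well once we note every point of $[p,q]$ is within $d_e$-distance $1$ of the cone point $v_i$. Hence it suffices to show every point of $\gamma$ is $d_e$-close to $Q$.

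The key step is then a projection/tracking argument. I would use the nearest-point projection $\pi_Q\colon X\to Q$ onto the $C$-quasiconvex set $Q$ in the $\delta$-hyperbolic space $(X,d)$, which is coarsely Lipschitz and coarsely well-defined with constants depending only on $\delta,C$. For a vertex $z$ on one of the replacement segments $[p_k,q_k]$, since $p_k,q_k$ lie on $\gamma_e$ and $\gamma_e$ is a $d_e$-geodesic between points of $Q$, thin-triangle reasoning in $(X^{el}_\YY,d_e)$ gives that $p_k$ and $q_k$ are each $d_e$-close to $Q$; but I need $d$-closeness to apply $\pi_Q$, so the cleaner route is to argue directly: $d(\pi_Q(z),z)$ is controlled because $z$ lies on an $X$-geodesic $[p_k,q_k]$, and by $\delta$-hyperbolicity $z$ is close in $d$ to $[p_k,\pi_Q(p_k)]\cup[\pi_Q(p_k),\pi_Q(q_k)]\cup[\pi_Q(q_k),q_k]$; pushing this through to the $d_e$ metric (which is dominated by $d$) and using that the "long" pieces near $p_k,q_k$ are swallowed inside a single cone (hence $d_e$-short), one gets $d_e(z,Q)\le C'$ for a uniform $C'$. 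For vertices of $\gamma$ that lie on the surviving $X$-geodesic pieces of $\gamma_e$ (the parts not inside cones), the same triangle argument applies even more directly since there is no cone to account for.

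The main obstacle I anticipate is bookkeeping the interplay between the two metrics $d$ and $d_e$: a replacement segment $[p_k,q_k]$ can be arbitrarily long in $d$, so any estimate must be carried out in $d_e$, yet $\pi_Q$ and the thin-triangle estimates naturally live in $d$. The resolution is the observation already noted — every point of $[p_k,q_k]$ is $d_e$-distance $\le 1$ from the cone point $v_i$, so the entire segment has $d_e$-diameter $\le 2$ — which lets one transfer a $d$-estimate "modulo a bounded cone" into a genuine $d_e$-estimate. Once that is in place, all constants depend only on $\delta$ and $C$ (through $\delta'$ and the projection constants), giving the uniform $C'$ claimed. I would also remark that $\YY$ being $C$-quasiconvex, rather than mutually cobounded, is all that is used here, consistent with the hypothesis; coboundedness is only needed for the horoballification statement in Proposition \ref{prop;criterion}, not for electrification.
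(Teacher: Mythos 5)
There is a genuine gap, and it sits exactly at the step you flag as the "cleaner route." After de-electrifying $\gamma_e$ to get $\gamma$, you pick a vertex $z$ on a replacement segment $[p_k,q_k]$ and use thin-quadrilateral reasoning in $(X,d)$ to place $z$ near $[p_k,\pi_Q(p_k)]\cup[\pi_Q(p_k),\pi_Q(q_k)]\cup[\pi_Q(q_k),q_k]$. The middle piece is fine, but if $z$ is $d$-close to one of the end pieces, say to a point $w\in[p_k,\pi_Q(p_k)]$, then all you get is $d_e(z,Q)\le d(z,w)+d(w,\pi_Q(p_k))$, and $d(w,\pi_Q(p_k))$ can be as large as $d(p_k,Q)$, which you have no control over a priori — $p_k$ is just the point where the $d_e$-geodesic enters the cone over some $Y_i$, and nothing has been established about how far that is from $Q$. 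Your fix is to assert that the long pieces $[p_k,\pi_Q(p_k)]$, $[q_k,\pi_Q(q_k)]$ are "swallowed inside a single cone," but there is no mechanism for that: these geodesics run from a point of $Y_i$ to $Q$, and $Q$ is not in $\YY$, so there is no reason for them to lie near any $Y_j$, let alone a single one. (Your earlier alternative, deducing that $p_k$ is $d_e$-close to $Q$ from thin triangles in $X^{el}_\YY$, is circular, as you noticed.) Linear thin-triangle estimates alone cannot close this, because the $d$-distances involved can blow up while $d_e$-distances stay bounded, and you need a quantitative way to prevent that.

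The missing ingredient — which is exactly what the paper's first proof imports from Farb, Klarreich and Bowditch — is the exponential contraction of nearest-point projection onto a quasiconvex set in a hyperbolic space: if a connected piece of a path stays at $d$-distance $\ge R$ from the geodesic $[x,y]$, its projection onto $[x,y]$ has diameter $O(e^{-cR})$, independent of the piece's length. Applying this both to the $X$-segments of $\gamma_e$ and to each replacement segment $[p_k,q_k]$ (whose endpoints lie on the same $C$-quasiconvex $Y_i$), one bounds the total projected length of $\gamma_e$ onto $[x,y]$ outside the $R$-neighborhood; since that projection must nonetheless cover $[x,y]$, one forces the cone-penetrations to occur only close to $[x,y]$, and hence $\gamma_e$ must stay $d_e$-close to $[x,y]\subset Q^{+C}$. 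This is precisely the content of Farb's Bounded Coset Penetration / Klarreich's contraction lemma, and it is what Lemma \ref{farb2A} in the paper encapsulates. Without it — or an equivalent substitute, such as the paper's alternative asymptotic-cone argument, which sidesteps the issue by passing to a limiting $\mathbb{R}$-tree where the electrification keeps convexity visibly — the proof does not close. Your observations about the $d_e$-diameter of $[p_k,q_k]$ being $\le 2(1+C)$, and about coboundedness being unnecessary here, are both correct and relevant; it is the control of $d_e(p_k,Q)$ that needs the exponential input.
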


           Finally, there is a partial converse. We need a little bit of
           vocabulary.
           
           If $Z$ is a subset of a metric space $(X,d)$, a $(d,R)$-coarse path in
           $Z$ is a sequence of points of $Z$ such that two consecutive points are
           always at distance at most $R$ for the metric $d$.

           Let $H$ and $Y$ two subsets of $X$. We will denote by $H^{+\lambda}$
           the set of points at distance at most $\lambda$ from $H$.
           We will say
           that $H$ $(\Delta,\epsilon)$-meets $Y$ if there are two points
           $x_1,x_2$  at distance  $\geq \Delta$ from each
           other, and at distance $\leq \epsilon \Delta$ from
           $Y$ and $H$, and if for all pair of points  at distance $20\delta$
           from $\{x_1,x_2\}$, either  $H$ or  $Y$ is at distance
            at least $\epsilon \Delta -2\delta$ from one of them. 

             The two points $x_1, x_2$ are called a pair of meeting points in
           $H$ (for $Y$). We shall say that a subset $H$ of $X$ is coarsely path connected
           if there exists $c \geq 0$ such that the 
           $c-$neighborhood $N_c(H)$ is path connected.

           \begin{prop}\label{prop;unfolding_qc}
             
             Let $(X,d)$ be hyperbolic, and let $\calY$ be a collection of
             uniformly quasiconvex subsets.  Let $H$ be a subset of $X$ that is
             coarsely path connected, and quasiconvex in the electrification
             $X_\calY^{el}$.

             Assume also that there exists $\epsilon\in (0,1)$, and $\Delta_0$
             such that for all $\Delta> \Delta_0$, wherever $H$
             $(\Delta,\epsilon)$-meets an item $Y$ in $\calY$, there is a path in
             $H^{+\epsilon \Delta}$ between the meeting points in $H$ that is
             uniformly a quasigeodesic in the metric $(X,d)$.
             
             Then $H$ is quasiconvex in $(X,d)$.
             
             The quasiconvexity constant of $H$ can be chosen to depend only on
             the constants involved for $(X,d), \calY, \Delta_0, \epsilon$, the
             coarse path connection constant, and the quasi-geodesic constant of
             the last assumption.

           \end{prop}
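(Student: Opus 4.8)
The plan is to take a geodesic $\gamma = [x,y]$ in $(X,d)$ between two points of $H$ and show every point of $\gamma$ lies within bounded distance of $H$. The starting point is the hypothesis that $H$ is quasiconvex in the electrification $X^{el}_\calY$. So fix an electrified quasigeodesic $\sigma$ in $X^{el}_\calY$ joining $x$ to $y$ and staying inside a bounded neighbourhood of $H$ (for the electrified metric). Now I would compare $\sigma$ with the honest geodesic $\gamma$. The standard picture for relative hyperbolicity (Proposition \ref{prop;criterion} gives hyperbolicity of $X^{el}_\calY$, and the structure of geodesics there) is that $\gamma$ can be subdivided into a bounded-in-number concatenation of subsegments, each of which is either (i) already contained in a bounded neighbourhood of the image of $\sigma$ in $X$ — hence near $H$ using that $H$'s electrified neighbourhood pulls back, combined with coarse path connectedness of $H$ to upgrade ``near in $d$'' appropriately — or (ii) is a subsegment that fellow-travels some element $Y \in \calY$ that $\sigma$ ``passes through'' via its cone point. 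The first case is handled directly. The second case is exactly where the extra hypothesis is designed to be used.

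For case (ii): when $\sigma$ goes through the cone point $v_Y$ of some $Y \in \calY$, the portion of $\gamma$ that this replaces enters and exits a neighbourhood of $Y$ at two points, call them $p_1, p_2$, which (by hyperbolicity of $X$ and uniform quasiconvexity of $Y$) are coarsely the entry/exit points of $\gamma$ into $N_\kappa(Y)$ for a uniform $\kappa$. The key claim is then that $p_1, p_2$ form (up to bounded error) a pair of meeting points of $H$ for $Y$ in the sense of the definition preceding the Proposition, for a suitable $\Delta$ controlled below: indeed $p_1,p_2$ are close to $H$ (since $\sigma$ stays near $H$ and $\sigma$ traverses $Y$ essentially between points close to $p_1,p_2$), they are far apart whenever this subsegment is long, and the ``co-boundedness''-type alternative condition on points near $p_1,p_2$ follows from $Y$ being quasiconvex in $X$ and $\gamma$ being a geodesic that only grazes $N_\kappa(Y)$. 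Once $p_1,p_2$ are recognized as meeting points with $\Delta > \Delta_0$, the hypothesis furnishes a path in $H^{+\epsilon\Delta}$ from $p_1$ to $p_2$ which is a uniform quasigeodesic in $(X,d)$; by stability of quasigeodesics in the hyperbolic space $(X,d)$ this path fellow-travels the corresponding subsegment of $\gamma$, so that subsegment is within bounded distance of $H^{+\epsilon\Delta}$, i.e. of $H$ up to an additive error absorbed into the final constant. (Subsegments of $\gamma$ that are too short to make $p_1,p_2$ be $\Delta_0$-far are automatically within $\Delta_0$ of $H$ and need no argument.)

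Assembling: every point of $\gamma$ is within a uniformly bounded distance of $H$, where the bound is built from the hyperbolicity constant of $X$, the quasiconvexity constants of $\calY$ and of $H$ in $X^{el}_\calY$, the coarse path connection constant of $H$, $\Delta_0$, $\epsilon$, and the quasigeodesic constant in the last hypothesis — matching the stated dependence. The one subtlety to handle cleanly is that ``$\sigma$ stays near $H$ in the electrified metric'' only gives, a priori, points of $H$ near $\sigma$ in $d^{el}_\calY$, which could be far in $d$; this is precisely why coarse path connectedness of $H$ is assumed, and why in case (ii) one must pass through the meeting-points mechanism rather than trying to project directly — one never claims $p_1,p_2 \in H$, only that they are $O(\epsilon\Delta)$-close to $H$, which is all the definition of $(\Delta,\epsilon)$-meeting requires.

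The main obstacle I expect is the bookkeeping in case (ii): verifying that the entry/exit points of $\gamma$ into a neighbourhood of $Y$ genuinely satisfy \emph{all} clauses of the ``$(\Delta,\epsilon)$-meets'' definition (in particular the clause about pairs of points at distance $20\delta$ from $\{x_1,x_2\}$ being far from $H$ or $Y$), with the right $\Delta$, uniformly over all the $Y$'s that $\sigma$ traverses and with constants independent of the particular $\gamma$. Making the choice of $\Delta$ (as a function of the length of the relevant subsegment, or of the $d^{el}$-diameter of the cone traversal) consistent with the requirement $\Delta > \Delta_0$ while keeping $\epsilon\Delta$ comparable to the actual excursion depth is the delicate quantitative core of the argument.
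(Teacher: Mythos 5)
Your proposal follows the broad outline of the paper's \emph{first} proof of this Proposition (the one in the ``electroambient quasigeodesics'' subsection): compare a geodesic $\gamma=[x,y]$ in $(X,d)$ with an electric quasigeodesic $\sigma$ near $H$, decompose into pieces that either track the non-coned part of $\sigma$ or fellow-travel an element of $\calY$, and handle the latter via the meeting-points hypothesis plus the Morse lemma. But there are two genuine gaps.

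First, the case (i)/(ii) decomposition you invoke is the Bounded Coset Penetration / similar-intersection-patterns picture relating a $d$-geodesic to an electric quasigeodesic. That picture requires $(X,d)$ to be \emph{strongly} relatively hyperbolic with respect to $\calY$, i.e.\ the elements of $\calY$ to be uniformly quasiconvex \emph{and} mutually cobounded (this is exactly Lemma \ref{farb2A}). Mere hyperbolicity of $X^{el}_\calY$ (Proposition \ref{prop;criterion}) is weak relative hyperbolicity and is not enough: without coboundedness, electric shortcuts can hop between heavily overlapping $Y$'s in ways a $d$-geodesic does not shadow, and the clean subdivision of $\gamma$ into pieces tracking the cone traversals of $\sigma$ fails. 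The Proposition as stated does not assume coboundedness of $\calY$. The paper resolves this in two ways: its first proof \emph{adds} the hypothesis that some $n$-fold intersections of $\epsilon$-thickenings have bounded diameter and then runs a descending induction (electrifying pairwise intersections to force coboundedness, proving quasiconvexity in that intermediate metric, and handing off to the inductive hypothesis); its second proof (asymptotic cones + Gromov--Cartan--Hadamard) establishes a local quasiconvexity statement directly and needs no BCP at all. Your sketch has neither safeguard.

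Second, case (i) is not actually handled. You correctly flag the subtlety at the end --- that $\sigma$ being $d^{el}$-near $H$ does not give $d$-nearness, since a short $d^{el}$-path to $H$ can itself pass through a cone --- but you do not resolve it, and it is not a bookkeeping issue: it is essentially the statement being proved, so ``$H$'s electrified neighbourhood pulls back'' is circular. The same problem afflicts your case (ii): the points $p_1,p_2$ inherit only $d^{el}$-proximity to $H$ from $\sigma$, whereas the definition of $(\Delta,\epsilon)$-meeting requires $d$-proximity within $\epsilon\Delta$. The paper's first proof avoids this entirely by going the opposite way: it starts from a quasigeodesic in the electric space whose vertices lie literally in $H\cup\PP$, de-electrifies it by splicing in the quasigeodesics in $H^{+\epsilon\Delta}$ furnished by the meeting-points hypothesis (so the result is $d$-close to $H$ by construction), observes the result is a $(X,d)$-quasigeodesic via Corollary \ref{ea-cor}, and only then compares with $\gamma$ by the Morse lemma. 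In your formulation there is no mechanism that produces a path $d$-close to $H$; supplying one would essentially reproduce the paper's de-electrification argument, at which point you would still need either the extra finiteness hypothesis with the induction, or the asymptotic-cone route, to get around the missing coboundedness.
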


           \subsubsection{Electroambient quasigeodesics}
           
           We recall here the concept of electroambient quasigeodesics from
           \cite{mahan-ibdd,mahan-split}.
           
           Let $(X,d)$ be a metric space, and $\YY$ a collection of subspaces.
           If $\gamma$ is a path in $(X,d)$, or even in
           $(X^{el}_\YY, d^{el}_{\YY})$, one can define an {\bf elementary
             electrification} of $\gamma$ in $(X^{el}_\YY,
           d^{el}_{\YY})$ as follows: \\
           For $x_1, x_2$ in $\gamma$, both belonging to some $Y_i\in \YY$, and
           at distance $>1$, replace the arc of $\gamma$ between them by a pair
           of edges $(x_1, v_i) (v_i,x_2)$, where $v_i$ is the cone-point
           corresponding to $Y_i$.

           A {\bf complete electrification} of $\gamma$ is a path obtained after
           a sequence of elementary electrifications of subarcs, admitting no
           further elementary electrifications.

           One can {\it de-electrify} certain paths.  Given a path $\gamma$ in
           $(X^{el}_\YY, d^{el}_{\YY})$, a {\bf de-electrification} of $\gamma$
           is a path $\sigma$ in $(X,d)$ such that
           \begin{enumerate}
           \item $\gamma$ is a complete electrification of $\sigma$,
           \item $(\sigma \setminus \gamma) \cap Y_i$ is either empty or a
             geodesic in $Y_i$.
           \end{enumerate}
           
           A {\bf $(\lambda,\mu)$-de-electrification} of a path $\gamma$ in
           $(X^{el}_\YY, d^{el}_{\YY})$, is a path in $X$ such that
           \begin{enumerate}
           \item $\gamma$ is a complete electrification of $\sigma$,
           \item $(\sigma \setminus \gamma) \cap Y_i$ is either empty or a
             $(\lambda,\mu)$-quasigeodesic in $Y_i$.
           \end{enumerate}

           Observe that, given a path $\sigma$ in $X^{el}$, there might be
           several ways to de-electrify it, but these ways differ only in the
           choice of the geodesic (or the quasi-geodesic) in the family of
           subspaces $Y_i$ corresponding to the successive cone points $v_{Y_i}$
           on the path $\sigma$. It might also happen that there is no way of
           de-electrifying it, if the spaces in $\YY$ are not quasiconvex.

           We say that a path $\gamma$ in $(X,d)$ is an {\bf electro-ambient
             geodesic} if it is a de-electrification of a geodesic.

           We say that it is a {\bf $(\lambda, \mu)-$ electro-ambient
             quasigeodesic } if it is the $(\lambda, \mu)-$ de-electrification of
           a $(\lambda, \mu)-$quasigeodesic in $(X^{el}_\YY, d^{el}_{\YY})$.

           \medskip

           We begin by discussing Proposition \ref{prop;criterion}.

           \begin{proof}
             The first part is fairly well-known. In some other guise it appears
             in \cite[Proposition 7.4]{bowditch-relhyp} \cite[Proposition
             1]{Szcz} \cite{mahan-ibdd}. In the first two, the electrification by
             cones is replaced by collapses of subspaces (identifications to
             points) which of course requires that the subspaces to electrify are
             disjoint and separated.

             However this is only a technical assumption (as explicated in
             \cite{mahan-ibdd}). Indeed, by replacing (or augmenting) any
             $Y\in \bbY$ by $Y\times [0,D]$ glued along $Y\times \{0\}$, and
             replacing $\YY$ by the family $\{ Y\times \{D\}, Y\in \YY \}$, we
             achieve a $D$-separated quasiconvex family.
           \end{proof}

           Next, we discuss Proposition \ref{cobpersists}.
           \begin{proof}
             The proofs of Lemma 4.5 and Proposition 4.6 of \cite{farb-relhyp}, Proposition 4.3 and
             Theorem 5.3 of \cite{klarreich} (see also \cite{bowditch-relhyp})
             furnish Proposition \ref{cobpersists}. 
             
             The crucial ingredient in all these proofs is the fact
             that in a hyperbolic space, nearest point projections decrease distance exponentially. 
             Farb proves this in the setup of horoballs in complete simply connected manifolds of pinched
             negative curvature. Klarreich "coarsifies" this assertion by generalizing it to the context of
             hyperbolic metric spaces.             
             \end{proof}

           The rest of this (subsub)section is devoted to discussing Proposition
           \ref{prop;unfolding_qc}. Towards doing this, we will obtain an
           argument for showing a variant of the second point of Proposition
           \ref{prop;criterion}, namely that in a hyperbolic space $(X,d)$, a
           family $\YY$ of uniformly quasi convex subspaces that is mutually
           cobounded defines a strong relative hyperbolic structure on
           $(X,d)$. The second point of \ref{prop;criterion} as it is stated will
           be however proved in the next subsection.

           We shall have need for the following Lemma \cite[Lemma 3.9]{mahan-ibdd}
           (see also \cite[Proposition 4.3]{klarreich} \cite[Lemma
           2.5]{mahan-split}).
           
           \begin{lemma} 
             Suppose $(X,d)$ is $\delta$-hyperbolic. Let $\mathcal{H}$ be a
             collection of $C$-quasiconvex $D$-mutually cobounded subsets. Then for all $P \geq 1$,
             there
             exists $\epsilon_0 = \epsilon_0 (C, P, D, \delta )$ such that the
             following holds:

             Let $\beta$ be an electric $(P,P)$-quasigeodesic without backtracking
             (i.e. $\beta$ does not return to any $H_1 \in \HH$ after leaving it)
             and $\gamma$ a  geodesic in $(X,d)$, both joining $x, y$. 
             Then, given
             $\epsilon \geq \epsilon_0$
             there exists $D = D(P, \epsilon )$ such that \\
             \begin{enumerate}
             \item {\it Similar Intersection Patterns 1:} if precisely one of
               $\{ \beta , \gamma \}$ meets an $\epsilon$-neighborhood
               $N_\epsilon (H_1)$ of an electrified quasiconvex set
               $H_1 \in \mathcal{H}$, then the length (measured in the intrinsic
               path-metric on $N_\epsilon (H_1)$ ) from the entry point to the
               exit point is at most $D$.
             \item {\it Similar Intersection Patterns 2:} if both
               $\{ \beta , \gamma \}$ meet some $N_\epsilon (H_1)$ then the
               length (measured in the intrinsic path-metric on
               $N_\epsilon (H_1)$ ) from the entry point of $\beta$ to that of
               $\gamma$ is at most $D$; similarly for exit points.
             \end{enumerate}
             \label{farb2A}
           \end{lemma}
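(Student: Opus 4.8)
The plan is to reduce the statement to a comparison of two honest $(X,d)$-quasigeodesics from $x$ to $y$, and then feed this into the standard ``exponential contraction of nearest-point projections'' toolkit for quasiconvex subsets of a hyperbolic space (the ingredient already flagged above, going back to Farb and Klarreich). Three preliminary facts are used. (i) By Proposition~\ref{prop;criterion}, $X^{el}_{\mathcal H}$ is $\delta'$-hyperbolic. (ii) The complete electrification $\hat\gamma$ of the $(X,d)$-geodesic $\gamma$ is a uniform electric $(K,K)$-quasigeodesic, $K=K(\delta,C)$ (the portions of $\gamma$ inside the $H_i$ are shortcut using $C$-quasiconvexity); conversely, because $\beta$ has no backtracking and $\mathcal H$ is $C$-quasiconvex and $D$-mutually cobounded, any de-electrification $\hat\beta$ of $\beta$ is an $(X,d)$-quasigeodesic with constants depending only on $P,\delta,C,D$. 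Hence $\hat\beta$ and $\gamma$ are $R$-Hausdorff close in $(X,d)$, and $\beta$ and $\hat\gamma$ are $R'$-Hausdorff close in $X^{el}_{\mathcal H}$, with $R,R'$ depending only on $P,\delta,C,D$. (iii) For a $C$-quasiconvex $H$ in a $\delta$-hyperbolic space, $\pi_H$ is coarsely well defined and coarsely Lipschitz; a $(\lambda,\mu)$-quasigeodesic with endpoints in $N_\epsilon(H)$ stays in $N_{C_1}(H)$ with $C_1=C_1(\delta,\lambda,\mu,C,\epsilon)$; and a $(\lambda,\mu)$-quasigeodesic $\sigma$ issuing from $a$ that stays outside $N_\epsilon(H)$ until it first meets $N_\epsilon(H)$ at a point $p$ satisfies $d(p,\pi_H(a))\le C_2(\delta,\lambda,\mu,C,\epsilon)$.

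Given $P$, I would then choose $\epsilon_0$ to be a sufficiently large function of $\delta,C,D,P$ --- large enough that ``meeting $N_\epsilon(H_1)$'' is insensitive to the bounded perturbations introduced by electrifying/de-electrifying and by mutual coboundedness --- and let $D=D(P,\epsilon)$ absorb the constants below, which grow with $\epsilon$. For \emph{Similar Intersection Patterns 2}: both $\hat\beta$ and $\gamma$ are $(X,d)$-quasigeodesics from $x$ to $y$ meeting $N_\epsilon(H_1)$, and their initial subarcs before first meeting $N_\epsilon(H_1)$ stay (coarsely) outside it, so by (iii) each first enters $N_\epsilon(H_1)$ within $C_2$ of $\pi_{H_1}(x)$. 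Thus the two entry points lie within $2C_2$ of each other in $(X,d)$, and since $H_1$ is $C$-quasiconvex one joins them by a path in $N_\epsilon(H_1)$ (down to $H_1$, along $H_1$, back up) of length $\le D(P,\epsilon)$. The argument for the exit points is identical with $y$ and $\pi_{H_1}(y)$ in place of $x$ and $\pi_{H_1}(x)$.

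For \emph{Similar Intersection Patterns 1}, say $\gamma$ meets $N_\epsilon(H_1)$ but $\beta$ does not (the opposite case, bounding the sojourn of $\beta$, is symmetric). Let $p_\gamma,q_\gamma$ be the first and last points of $\gamma$ in $N_\epsilon(H_1)$; it suffices to bound $d_{H_1}(\pi_{H_1}p_\gamma,\pi_{H_1}q_\gamma)$. Electrically $d^{el}(p_\gamma,q_\gamma)\le 2+2\epsilon$, so, $\hat\gamma$ being a uniform electric quasigeodesic, the subarc of $\hat\gamma$ between $p_\gamma$ and $q_\gamma$ has electric length bounded in terms of $K$ and $\epsilon$. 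Since $\hat\beta$ is $R$-Hausdorff close to $\gamma$ and (up to bounded corrections controlled by mutual coboundedness) avoids $N_\epsilon(H_1)$, the matching subarc of $\hat\beta$ lies in $N_{C''}(H_1)\setminus N_\epsilon(H_1)$ for a bounded $C''$, has bounded electric length, and therefore spends all but a bounded amount of its $(X,d)$-length inside quasiconvex sets $H_j$ with $j\neq 1$ lying within $C''$ of $H_1$. By $D$-mutual coboundedness $\pi_{H_1}(H_j)$ has diameter $\le D$; the number of such $H_j$ is bounded by the electric length; and $\pi_{H_1}$ is coarsely Lipschitz on the remaining bounded part. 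Hence $\pi_{H_1}$ of this subarc of $\hat\beta$ has bounded diameter, and coarse Lipschitzness of $\pi_{H_1}$ (together with $d(p_\gamma,\cdot)\le R$, $d(q_\gamma,\cdot)\le R$) transfers the bound to $d_{H_1}(\pi_{H_1}p_\gamma,\pi_{H_1}q_\gamma)\le D(P,\epsilon)$; the intrinsic sojourn length is then $\le 2\epsilon+D(P,\epsilon)$.

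I expect the genuine difficulty to lie not in any single estimate --- those are the familiar hyperbolic-projection computations --- but in the bookkeeping that makes the quantifiers ``$\forall P\ \exists\,\epsilon_0(C,P,D,\delta)\ \forall\,\epsilon\ge\epsilon_0\ \exists\,D(P,\epsilon)$'' come out correctly: passing rigorously between the electric path $\beta$ and its $(X,d)$-quasigeodesic de-electrification $\hat\beta$, and controlling exactly how passages of $\hat\beta$ through sets $H_j\neq H_1$ can intrude into $N_\epsilon(H_1)$. This is precisely where $D$-mutual coboundedness is indispensable, and choosing $\epsilon_0$ large compared with $\delta,C,D,P$ is what keeps ``meeting $N_\epsilon(H_1)$'' a robust, perturbation-stable condition throughout.
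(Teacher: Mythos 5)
The paper does not actually prove Lemma~\ref{farb2A}: it is imported wholesale from \cite[Lemma~3.9]{mahan-ibdd}, with further pointers to \cite[Proposition~4.3]{klarreich} and \cite[Lemma~2.5]{mahan-split}. So there is no in-paper argument to compare with. Judged on its own, your sketch has a genuine logical gap in the form of a circularity. Your ingredient~(ii) --- that any de-electrification $\hat\beta$ of the electric $(P,P)$-quasigeodesic $\beta$ is an $(X,d)$-quasigeodesic with constants depending only on $P,\delta,C,D$ --- is precisely Lemma~\ref{ea-strong} (equivalently Corollary~\ref{ea-cor}), and in this paper as well as in the cited literature that lemma is deduced \emph{from} Lemma~\ref{farb2A}, not independently of it. The proof-sketch printed right after Corollary~\ref{ea-cor} says so explicitly: one first replaces the electric quasigeodesic by the electric geodesic using ``similar intersection patterns'' (i.e.\ exactly the two properties you are asked to prove), and only then checks that an electro-ambient \emph{geodesic} representative is an $(X,d)$-quasigeodesic. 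Taking~(ii) as given therefore assumes the conclusion. The standard route (Farb, Klarreich, \cite{mahan-ibdd}) is the reverse: one proves SIP~1 and SIP~2 first, directly, by tracking the nearest-point projection $\pi_{H_1}$ along $\beta$ and $\gamma$ and using the exponential contraction of projections onto uniformly quasiconvex sets together with $D$-mutual coboundedness to bound the jump of $\pi_{H_1}$ across each cone-excursion of $\beta$ into an $H_j\neq H_1$; only afterwards does one conclude that de-electrification preserves quasigeodesity. Your own item~(iii) already contains the right tools for this, but the sketch outsources the core step to a lemma that is downstream of the one being proved. There is also a smaller unaddressed point: even granting~(ii), the assertion that $\hat\beta$ ``(up to bounded corrections controlled by mutual coboundedness) avoids $N_\epsilon(H_1)$'' when $\beta$ does needs justification, since the inserted geodesics inside $H_j$ ($j\neq 1$) could a priori enter $N_\epsilon(H_1)$; mutual coboundedness bounds $\mathrm{diam}\,\pi_{H_1}(H_j)$ but controls $\mathrm{diam}(H_j\cap N_\epsilon(H_1))$ only once $\epsilon$ is chosen large compared to $D$ and $C$, and this is precisely where the threshold $\epsilon_0(C,P,D,\delta)$ in the statement comes from and should be made explicit rather than folded into ``bookkeeping''.
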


           Note that Lemma \ref{farb2A} above is quite general and does not
           require $X$ to be proper. The two properties occurring in Lemma \ref{farb2A}
           were introduced by Farb \cite{farb-relhyp} in the context of a group $G$ and a collection $\HH$
           of cosets of a subgroup $H$. The two together are termed `Bounded Coset Penetration'
           in \cite{farb-relhyp}.

           \begin{remark} In \cite{mahan-ibdd}, the extra hypothesis of
             separatedness was used. However, this is superfluous by the same
             remark on augmentations of elements of $\YY$ that we made in the
             beginning of the proof of Proposition \ref{prop;criterion}.  Lemma
             \ref{farb2A} may be stated equivalently as the following
             (compare with \ref{prop;from_horo_to_he} below) .\\
             \noindent If $X$ is a hyperbolic metric space and $\mathcal{H}$ a
             collection of uniformly quasiconvex mutually cobounded subsets, then
             $X$ is strongly hyperbolic relative to the collection $\mathcal{H}$.
           \end{remark}

           We give a slightly modified version of \cite[Lemma 3.15]{mahan-ibdd}
           below by using the equivalent hypothesis of strong relative
           hyperbolicity (i.e. Lemma \ref{farb2A}).

           \begin{lemma} Let $(X,d)$ be a $\delta-$hyperbolic metric space, and
             $\HH$ a family of subsets such that $X$ is strongly hyperbolic
             relative to $\HH$. Then for all $\lambda, \mu >0$, there exists
             $\lambda',\mu'$ such that any electro-ambient
             $(\lambda,\mu)$-quasi-geodesic is a $(\lambda',\mu')$-quasi-geodesic
             in $(X,d)$.  \label{ea-strong} \end{lemma}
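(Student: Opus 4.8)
The plan is to deduce the lemma from the comparison, provided by Lemma \ref{farb2A}, between the complete electrification $\widehat{\sigma}$ of the electro-ambient quasigeodesic $\sigma$ and a genuine geodesic of $(X,d)$ with the same endpoints. The inequality $d(p,q)\le\ell_{(X,d)}(\sigma|_{[p,q]})$ is automatic, and a subpath of an electro-ambient $(\lambda,\mu)$-quasigeodesic is again one with controlled constants (its complete electrification is a subpath of an electric $(\lambda,\mu)$-quasigeodesic, and its two extreme fillings are sub-quasigeodesics of the original fillings), so it is enough to produce $\lambda',\mu'$ with $\ell_{(X,d)}(\sigma)\le\lambda' d(x,y)+\mu'$, where $x,y$ are the endpoints of $\sigma$.

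First I would collect the standing facts. Strong relative hyperbolicity of $(X,d)$ with respect to $\HH$ gives, via Corollary \ref{cor-retraction} and the augmentation observation made just before Lemma \ref{farb2A}, that $\HH$ is a uniformly quasiconvex and uniformly mutually cobounded family, so that Lemma \ref{farb2A} is available. I would also use the elementary fact that any $(\lambda,\mu)$-quasigeodesic between points $a,b$ has length at most $\lambda d(a,b)+\lambda\mu$; applied inside an $H\in\HH$, where the relevant metric is the restriction of $d$, this bounds each quasigeodesic filling of $\sigma$ by $\lambda\,d(x_j^-,x_j^+)+\lambda\mu$, where $x_j^-,x_j^+\in H_{i_j}$ denote the entry and exit points of the $j$-th filling. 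After replacing $\widehat{\sigma}$ by an electric quasigeodesic without backtracking (a standard modification, at the cost of enlarging the constants and re-choosing the fillings), I may assume the cone points $v_{i_1},\dots,v_{i_k}$ it traverses correspond to pairwise distinct elements $H_{i_1},\dots,H_{i_k}$ of $\HH$.

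The core estimate then runs as follows. Let $\alpha$ be a geodesic of $(X,d)$ from $x$ to $y$ and set $L=d(x,y)=\ell(\alpha)$. Each cone-pair on $\widehat{\sigma}$ contributes exactly $2$ to its electric length, so $2k\le\ell_{el}(\widehat{\sigma})\le\lambda\,d_{el}(x,y)+\lambda\mu\le\lambda L+\lambda\mu$, whence $k\le\tfrac12\lambda(L+\mu)$: the number of cone penetrations is already linearly bounded in $L$. Now apply Lemma \ref{farb2A} to the pair $(\widehat{\sigma},\alpha)$ with $\epsilon$ chosen as in that lemma; this yields a constant $D$ so that for each $j$ either $d(x_j^-,x_j^+)\le D$, or $\alpha$ makes a genuine excursion into the $\epsilon$-neighbourhood of $H_{i_j}$ whose endpoints are within $D$ of $x_j^-$ and $x_j^+$ respectively, these excursions being pairwise disjoint subarcs of $\alpha$; moreover the subarcs of $\widehat{\sigma}$ lying between consecutive cone penetrations are uniform quasigeodesics of $(X,d)$, tracking the complementary portions of $\alpha$. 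Disjointness of the excursions gives $\sum_j d(x_j^-,x_j^+)\le L+2Dk$, so the fillings contribute at most $\lambda(L+2Dk)+\lambda\mu k$ to $\ell_{(X,d)}(\sigma)$; the complementary (non-cone) part of $\sigma$ contributes at most a linear function of $L+Dk$ for the same reason. Combining these with $k\le\tfrac12\lambda(L+\mu)$ produces $\ell_{(X,d)}(\sigma)\le\lambda' L+\mu'$ with $\lambda',\mu'$ depending only on $\lambda,\mu,\delta$ and the quasiconvexity and coboundedness constants of $\HH$, which is what we want.

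The step I expect to be the real work is the application of Lemma \ref{farb2A}: one must verify carefully that every deep cone penetration of $\widehat{\sigma}$ is matched by an honest excursion of $\alpha$, that the ``shallow'' unmatched penetrations contribute only a bounded amount each, and --- the crux --- that the non-cone subarcs of $\widehat{\sigma}$, which a priori are only electric quasigeodesics, become genuine quasigeodesics of $(X,d)$ once detached from the electrified pieces (this is exactly the de-electrification-preserves-quasigeodesics phenomenon already used in this section), after which one has to reassemble them across the bounded jumps occurring at unmatched penetrations. A cleaner but less elementary alternative, in the spirit of the second set of arguments of this section, would be to pass to an asymptotic cone: $X^{el}_\HH$ then becomes tree-graded with pieces the ultralimits of the $H\in\HH$, the ultralimit of a hypothetical sequence of increasingly-bad $\sigma$'s de-electrifies a geodesic of that tree-graded space, and a contradiction is extracted using the Gromov-Cartan-Hadamard theorem.
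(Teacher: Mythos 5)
Your overall route is genuinely different from what the paper does. The paper's sketch (between Lemma \ref{ea-strong} and Corollary \ref{ea-cor}) compares the complete electrification $\widehat\sigma$ with an \emph{electric geodesic} $\beta$ joining the same endpoints, uses Similar Intersection Patterns to conclude that $\widehat\sigma$ and $\beta$ have matching entry/exit points, and then reduces the entire lemma to the cited statement that an electro-ambient representative of an electric \emph{geodesic} is an ambient quasigeodesic (attributed to McMullen and to Farb). You instead compare $\widehat\sigma$ directly with the \emph{ambient geodesic} $\alpha$, count cone penetrations, and do an explicit length estimate. This is more self-contained and gives control over the constants, at the price of more bookkeeping (taming backtracking, near-disjointness of excursions of $\alpha$ into the $N_\epsilon(H_{i_j})$'s via coboundedness, etc.). Both are legitimate; yours has the advantage that it does not outsource the hard part to a reference.

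That said, there is one genuine flaw in your write-up, and it is precisely at what you flag as ``the crux.'' You assert that ``the non-cone subarcs of $\widehat\sigma$, which a priori are only electric quasigeodesics, become genuine quasigeodesics of $(X,d)$ once detached from the electrified pieces (this is exactly the de-electrification-preserves-quasigeodesics phenomenon already used in this section).'' But \emph{de-electrification preserves quasigeodesics} is exactly Lemma \ref{ea-strong}, the statement you are proving, so as written this step is circular. Fortunately it is also unnecessary, and the fix simplifies your argument: the non-cone arcs of $\sigma$ coincide with the non-cone arcs of $\widehat\sigma$, and for a path lying entirely in $X\subset X^{el}_\HH$ the electric length and the ambient length are literally equal (coning adds shortcuts but does not shorten edges of $X$). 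Hence the total ambient length of the non-cone part of $\sigma$ is at most the electric length of $\widehat\sigma$, which is at most $\lambda\,d_{el}(x,y)+\lambda\mu\le\lambda L+\lambda\mu$ by the quasigeodesic property of $\widehat\sigma$. No tracking of $\alpha$ and no ``reassembling across bounded jumps'' is needed for this contribution. With that replacement, your ledger $\ell_{(X,d)}(\sigma)\le(\lambda L+\lambda\mu)+\sum_j(\lambda\,d(x_j^-,x_j^+)+\lambda\mu)$, together with your bounds $\sum_j d(x_j^-,x_j^+)\lesssim L+Dk$ and $k\lesssim\lambda(L+\mu)$, closes the argument cleanly.

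One more small point: rather than ``replacing $\widehat\sigma$ by an electric quasigeodesic without backtracking,'' note that a \emph{complete} electrification already cannot revisit the same $Y_i$ (any two points of the path in $Y_i$ at distance $>1$ would trigger a further elementary electrification), and the definition of $(\lambda,\mu)$-de-electrification allows at most one filling per $Y_i$; so no replacement is needed, only this observation. Keeping $\widehat\sigma$ fixed avoids the worry that a modified electric path would no longer be the electrification of the given $\sigma$.
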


           The proof of Lemma \ref{ea-strong} goes through mutatis mutandis for
           strongly relatively hyperbolic spaces as well, i.e. hyperbolicity of
           $X$ may be replaced by relative hyperbolicity in Lemma \ref{ea-strong}
           above. We state this explicitly below:

           \begin{cor}\label{ea-cor}
             Let $(X,d)$ be strongly relatively hyperbolic relative to a
             collection $\YY$ of path connected subsets.  Then, for all
             $\lambda, \mu >0$, there exists $\lambda',\mu'$ such that any
             electro-ambient $(\lambda,\mu)$-quasi-geodesic is a
             $(\lambda',\mu')$-quasi-geodesic in $(X,d)$.
           \end{cor}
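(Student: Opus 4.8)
The plan is to run the proof of Lemma~\ref{ea-strong} essentially unchanged, tracking the few places where hyperbolicity of the ambient space was invoked and replacing each by the corresponding feature of the strong relative hyperbolic structure of $(X,d)$ with respect to $\YY$. That proof uses hyperbolicity of $X$ only through three ingredients: (i) stability of quasigeodesics \emph{in the coned-off space} $X^{el}_\YY$, which is hyperbolic by the very definition of ``strongly hyperbolic relative to $\YY$'' (coarse hyperbolic embeddedness, Definition~\ref{he}), so this ingredient is available verbatim; (ii) the similar-intersection-pattern (bounded coset penetration) conclusion of Lemma~\ref{farb2A}, which we now read as a property of the relative hyperbolic structure itself --- precisely as in the Remark following Lemma~\ref{farb2A}, being strongly hyperbolic relative to $\YY$ is \emph{equivalent} to such a statement, so no ambient hyperbolicity is needed; and (iii) the fact that every $Y\in\YY$ is undistorted in $(X,d)$ for its coarse path metric, which is Proposition~\ref{retraction}. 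Since none of (i)--(iii) requires $X$ to be hyperbolic, the argument transfers, and the constants $\lambda',\mu'$ it produces depend only on $\lambda,\mu$, the hyperbolicity constant of $X^{el}_\YY$, and the data in (ii)--(iii).

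In more detail, the steps are as follows. Let $\sigma$ be a $(\lambda,\mu)$-electro-ambient quasigeodesic, i.e. a $(\lambda,\mu)$-de-electrification of a $(\lambda,\mu)$-quasigeodesic $\gamma$ in $(X^{el}_\YY,d^{el}_{\YY})$; after a bounded modification assume $\gamma$ has no backtracking. Fix $p,q$ on $\sigma$; after a correction of bounded length, accounted for at the end, assume $p,q\notin\sigma\setminus\gamma$, so that $p,q$ lie on $\gamma$ as well and it suffices to bound $\length_\sigma(p,q)$ from above by $\lambda'd(p,q)+\mu'$. Let $\eta$ be a geodesic of $(X,d)$ from $p$ to $q$; its complete electrification $\hat\eta$ is a uniform quasigeodesic of $X^{el}_\YY$ (the standard electrification estimate, which does not use hyperbolicity of $X$). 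Thus $\gamma|_{[p,q]}$ and $\hat\eta$ are uniform quasigeodesics in the hyperbolic space $X^{el}_\YY$ with common endpoints; by (i) they fellow-travel there, and by (ii) they enter and leave the same members $Y\in\YY$, in the same order, at pairs of points that are uniformly $d|_Y$-close, staying uniformly $d$-close between consecutive peripheral visits.

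Finally, decompose $\sigma|_{[p,q]}$ into its maximal peripheral subarcs --- the pieces $(\sigma\setminus\gamma)\cap Y$, each a $(\lambda,\mu)$-quasigeodesic in some $Y\in\YY$ joining the entry and exit points of a cone-excursion of $\gamma$ --- and its maximal non-peripheral subarcs, which are subarcs of $\gamma$ lying in $X$ that, since $\gamma$ is a \emph{complete} electrification, do not linger inside any single member of $\YY$ and are therefore uniform quasigeodesics of $(X,d)$ as well. By the previous step the endpoints of each peripheral piece are $d|_Y$-close to the corresponding entry/exit points along $\eta$, so by Proposition~\ref{retraction} the $(X,d)$-length of that piece is comparable to the $(X,d)$-distance between its endpoints, hence controlled by the length of the corresponding subsegment of $\eta$; likewise each non-peripheral piece joins points uniformly $d$-close to points of $\eta$, so its length is controlled by the length of the corresponding subsegment of $\eta$. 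Summing over all pieces gives $\length_\sigma(p,q)\le\lambda'd(p,q)+\mu'$; adding back the bounded correction for $p,q$ in peripheral interiors, and using the trivial bound $d(p,q)\le\length_\sigma(p,q)$, shows that $\sigma$ is a $(\lambda',\mu')$-quasigeodesic in $(X,d)$. The one point that is not line-by-line identical to the proof of Lemma~\ref{ea-strong}, and the main obstacle, is ingredient (ii): that bounded coset penetration is available for a space which is strongly relatively hyperbolic but need not be globally hyperbolic. This is exactly the content of the Remark following Lemma~\ref{farb2A} (equivalently, it is built into coarse hyperbolic embeddedness), and once it is in hand everything else is the same bookkeeping.
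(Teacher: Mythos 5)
Your proposal is correct and follows the paper's strategy of transferring Lemma~\ref{ea-strong} to the relative setting by isolating where ambient hyperbolicity is used; the three ingredients you identify match the paper's. The divergence is in how the remaining length estimate is closed: the paper compares the electrification $\hat\gamma$ to an \emph{electric geodesic} $\sigma$ and reduces, via similar intersection patterns, to the cited fact (\cite[Thm.~8.1]{ctm-locconn}, Farb's Lemmas~4.8--4.9) that an electro-ambient representative of $\sigma$ is an $(X,d)$-quasigeodesic, whereas you compare the electric quasigeodesic directly to an $X$-geodesic $\eta$ and bound the peripheral and non-peripheral pieces by hand using Proposition~\ref{retraction}. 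This effectively unwinds the citation and is arguably the cleaner route, since Lemma~\ref{farb2A} is already phrased as a comparison between an electric quasigeodesic and an $(X,d)$-geodesic --- so the detour through asserting that $\hat\eta$ is itself an electric quasigeodesic (your ``standard electrification estimate'') is superfluous and can simply be dropped. Two small points worth tightening: (i) the non-peripheral subarcs of $\gamma$ are $(X,d)$-quasigeodesics not because $\gamma$ ``does not linger in any $Y$,'' but for the more elementary reason that a subpath of $\gamma$ lying in $X$ has the same length measured in $d$ and in $d^{el}_\YY$, so its $d$-length is at most $\lambda\, d^{el}(\text{endpoints})+\mu\le\lambda\, d(\text{endpoints})+\mu$; (ii) the final summation requires the observation that the number of peripheral excursions of $\gamma|_{[p,q]}$ is $O(d(p,q))$, since each contributes at least $2$ to the electric length of $\gamma|_{[p,q]}$, itself at most $\lambda\, d(p,q)+\mu$ --- without this the $O(1)$ additive errors per piece coming from BCP and Proposition~\ref{retraction} are not obviously absorbed.
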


           We include a brief sketch of the proof-idea following
           \cite{mahan-ibdd}. Let $\gamma$ be an electro-ambient quasigeodesic.
           By Definition, its electrification $\hat{\gamma}$ is a quasi-geodesic
           in $X^{el}_\YY$. Let $\sigma$ be the electric geodesic joining the
           end-points of $\gamma$.  Hence $\sigma$ and $\hat{\gamma}$ have
           similar intersection patters with the sets $Y_i$ \cite{farb-relhyp},
           i.e. they enter and leave any $Y_i$ at nearby points. It then suffices
           to show that an electro-ambient representative of $\sigma$ is in fact
           a quasigeodesic in $X$. A proof of this is last statement is given in
           \cite[Theorem 8.1]{ctm-locconn} in the context of horoballs in
           hyperbolic space (see also Lemmas 4.8, 4.9 and their proofs in
           \cite{farb-relhyp}). The same proof works after horoballification for
           an arbitrary relatively hyperbolic space. $\Box$
           
           \medskip

           The proof of Proposition \ref{prop;unfolding_qc} as stated will be given in the next subsubsection.
           We shall provide here a proof that suffices for the purposes of this paper. We assume, in addition
           to the hypothesis of the Proposition that there exists an integer $n>0$, and $ D_0 \geq 0$ 
           such that for all distinct $Y_1, \dots, Y_n
           \in \YY$, $\cap_i Y_i^{+\epsilon}$ has diameter at most $D$. The existence of such a 
           number $n$ will translate into 
           the notion of finite geometric height later in the paper. 

           \begin{proof} We prove the statement by inducting on $n$. For $n=1$ there is nothing to
           	show; so we start with $n=2$. Note that in this case, the hypothesis is equivalent
           	to the assumption that the $Y_i$'s are cobounded.
             Assume therefore that the elements of $\YY$ are uniformly quasiconvex in
             $(X,d)$; and that they are uniformly mutually cobounded.
             We shall show that $H$ is also quasiconvex for a uniform
             constant.

             First, since $(X,d)$ is
             hyperbolic 
             it follows by Proposition \ref{prop;criterion} 
             that $(X^{el}_\YY,d^{el})$ is hyperbolic.

             Let $x, y \in H$. By
             assumption, there exists $C_0 \geq 0$ such that $H$ is
             $(C_0,C_0)-$qi embedded in $(G, d^{el})$. Denote by $\PP$ the set of
             cone points corresponding to elements of ${\YY}$ and let $\gamma$ be
             a $(C,C)-$quasi-geodesic without backtracking in $(X,d^{el})$ with
             vertices in $H \cup \PP$ joining $x,y \in H$.  By assumption, the
             collection ${\YY}$ is uniformly $C$-quasiconvex.  Further, by
             assumption, there exists $\epsilon\in (0,1)$, and $\Delta_0$ such
             that for all $\Delta> \Delta_0$, wherever $H$
             $(\Delta,\epsilon)$-meets an item $Y$ in $\calY$, there is a path in
             $H^{+\epsilon \Delta}$ between the meeting points in $H$ that is
             uniformly a quasigeodesic in the metric $(X,d)$.  Hence, for some
             uniform constants $\lambda, \mu$, we may (coarsely)
             $(\lambda,\mu)$-de-electrify $\gamma$ to obtain a
             $(\lambda,\mu)$-electro-ambient quasigeodesic $\gamma'$ in $(X,d)$,
             that lies close to $H$.
             [Note that the meeting points of $H$ with elements of $\YY $ are
             only coarsely defined. So we are actually replacing pieces of
             $\gamma$ by quasigeodesics in $H^{+\epsilon \Delta}$ rather than in
             $H$ itself.]

             Note that by assumption $\YY$ are uniformly quasiconvex in
             $(X,d)$; and further that they are uniformly mutually cobounded. Hence
             the space $X$ is actually strongly  hyperbolic
             relative to $\YY$. 
             By Corollary \ref{ea-cor}, it follows that $\gamma'$ is a
             quasi-geodesic in $(X, d)$, for a uniform constant.

             Since this was done for arbitrary $x,y \in H_{i,\ell}$, we obtain
             that $H $ is $D-$quasiconvex in $(X,d)$. This finishes the proof of
             Proposition \ref{prop;unfolding_qc} for $n=2$.
             
             The induction step is now easy. Assume that the statement is true for $n=m$. We shall prove 
             it for $n=m+1$. Electrify all pairwise intersections of $Y_i^{+\epsilon}$ to obtain an electric metric
             $d_2$. Then the collection $\{ Y_i\}$ is cobounded with
             respect to the electric metric $d_2$.   
             Here again, the space
             $(X,d_2)$ is strongly  hyperbolic relative to
             the collection $\{ Y_i\}$.  By
             the argument in the case $n=2$ above, $H$ is quasiconvex in $(X,d_2)$. The collection of
             pairwise intersections of the $Y_i^{+\epsilon}$'s in $X$ satisfies the property that an intersection
             of any $m$ of them is bounded. We are then through by the induction hypothesis.
           \end{proof}

           \subsubsection{Proofs through asymptotic cones}
           The repeated use of different references coming from different
           contexts in the previous subsubsection might call for more systematic
           self-contained proofs of the statements of subsection
           \ref{thestatements}. This is our purpose in this subsubsection.

           In this part we will use the structure of an argument originally due to
           Gromov, and developed by Coulon amongst others (see for instance
           \cite[Proposition 5.28]{Coulon_IJAC}), which uses asymptotic cones in
           order to show hyperbolicity or quasiconvexity of constructions.

           We fix a non-principal ultrafilter $\omega$
           and will use the
           construction of asymptotic cones with respect to this ultrafilter $\omega$. A few
           observations are in order here.

           In all the following, $(X_N,x_N)$ is a sequence of pointed
           $\delta_N$-hyperbolic spaces, with $\delta_N$ converging to
           $0$. Recall  then that 
           the asymptotic cone $\lim_\omega (X_N,x_N)$ is an $\mathbb{R}$-tree,
           with a base point.

           If $\YY_N$ is a family of $c_N$-quasiconvex subsets of
           $X_N$ (for $c_N$ tending to $0$), we want to
           consider the asymptotic cone $\lim_\omega
           ((X_N)_{\YY_N}^{el}, x_N)$ and relate it to $\lim_\omega
           (X_N,x_N)$.    

          Let us define the following equivalence relation on the set of
          sequences in $X_N$.   Two sequences $(u_N), (v_N)$ are
          equivalent if $d_{X_N} (u_N,v_N) = O(1)$ for the
          ultrafilter $\omega$ (more precisely, if there exists a
          constant $C$ such that for $\omega$-almost all values of
          $N$, $d_{X_N} (u_N,v_N) \leq C$).  Let us consider
          the set of equivalence classes of sequences, and let us only
          keep those that have some (hence all) representative $(u_N)$
          such that the electric distance $d_{X_N^{el}}(x_N, u_N) $ is
          $O(1)$. Let us call $\frakC$ this set of equivalence
          classes.  For a sequence $u=(u_N)$ we write $\sim_u$ for its class
          in $\frakC$.  We also allow ourselves to write   $\lim_\omega (X_N,
        \sim_u)$ for $\lim_\omega (X_N,        u_N)$ to avoid cluttered notation.  

          \begin{lemma} \label{lem;manycopies}
            There is a natural inclusion from the disjoint union
            $\displaystyle \bigsqcup_{\sim_u \in \frakC}  \lim_\omega (X_N, \sim_u)$
              into $\lim_\omega     ((X_N)_{\YY_N}^{el}, x_N)$.
          \end{lemma}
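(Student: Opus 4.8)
The statement to prove is Lemma \ref{lem;manycopies}: there is a natural inclusion from $\bigsqcup_{\sim_u\in\frakC}\lim_\omega(X_N,\sim_u)$ into $\lim_\omega((X_N)_{\YY_N}^{el},x_N)$. The plan is to first construct the map on each summand, then check it is well-defined (independent of representatives), then check that distinct summands land in disjoint parts of the target, and finally check injectivity within each summand. The underlying principle throughout is that electrification only decreases distances, but cannot decrease an $X_N$-distance that is $O(1)$ by more than a bounded amount --- because a short path through a cone point $v_i$ must enter and leave some $Y_i$, and by $c_N$-quasiconvexity with $c_N\to 0$ the relevant $Y_i$-portion is forced to be long in $X_N$ if it is to be short in the electrification. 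This is the content that makes the ``many copies'' survive in the cone.

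First I would fix, for each class $\sim_u\in\frakC$, a representative sequence $(u_N)$ with $d_{X_N^{el}}(x_N,u_N)=O(1)$, and observe that the basepoint-change map $\lim_\omega(X_N,x_N)\to\lim_\omega(X_N,u_N)$ is the usual identification of asymptotic cones with shifted basepoints; I send a point of $\lim_\omega(X_N,\sim_u)$, represented by a sequence $(z_N)$ with $d_{X_N}(z_N,u_N)=O(1)$, to the point of $\lim_\omega((X_N)_{\YY_N}^{el},x_N)$ represented by the same sequence $(z_N)$ --- legitimate since $d_{X_N^{el}}(x_N,z_N)\le d_{X_N^{el}}(x_N,u_N)+d_{X_N^{el}}(u_N,z_N)\le O(1)+d_{X_N}(u_N,z_N)=O(1)$. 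Well-definedness on the summand (independence of the choice of $(z_N)$ within its $O(1)$-class, and of the representative $(u_N)$) is immediate because $d_{X_N^{el}}\le d_{X_N}$, so any two $X_N$-equivalent sequences are also $d_{X_N^{el}}$-equivalent.

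The two substantive points are (i) injectivity on each summand and (ii) disjointness of images of different summands; these are really the same estimate. Suppose $(z_N)$ and $(z'_N)$ are sequences with $d_{X_N}(z_N,u_N), d_{X_N}(z'_N,u'_N)=O(1)$ (possibly from different classes) but with $d_{X_N^{el}}(z_N,z'_N)\to 0$. I want to conclude $d_{X_N}(z_N,z'_N)=O(1)$ (which gives disjointness: it would force $\sim_{z}=\sim_{z'}$, hence the two summands are actually the same) and in fact $d_{X_N}(z_N,z'_N)\to 0$ (which gives injectivity). The mechanism: take an $X_N^{el}$-geodesic from $z_N$ to $z'_N$ of length $o(1)$; each time it passes through a cone point $v_i$ it does so via an edge-pair $(a,v_i),(v_i,b)$ with $a,b\in Y_i$, contributing length $2$ --- but the geodesic has length $o(1)<2$ eventually $\omega$-a.s., so it uses \emph{no} cone points, i.e. it is an honest path in $X_N$ of length $o(1)$, whence $d_{X_N}(z_N,z'_N)=o(1)$. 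Actually one must be slightly careful that the electric metric near a cone point is scaled; since each $\{y\}\times[0,1]$ has length $1$, a path reaching depth $t\le 1$ and returning has length $\ge 2t$ in $X_N$-relevant portions replaced by $\le 2t$ in electrification, but as long as the total electric length is $o(1)$ the path stays at depth $o(1)$ in all cones and projects (via the obvious retraction collapsing the $[0,1]$-intervals) to an $X_N$-path of length $o(1)$. That argument handles both (i) and (ii) uniformly.

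The main obstacle --- and the only place real care is needed --- is precisely this last scaling/geodesic-structure bookkeeping near cone points: one must verify that an electrified path of length $\ell$ de-projects to an $X_N$-path of length $O(\ell)$ (not merely that it avoids cone points when $\ell$ is tiny, but a clean comparison for the well-definedness direction too), using that in the product metric on $Y_i\times[0,1]$ the cone ``radius'' is a fixed constant independent of $N$. Once that comparison $d_{X_N}(p,q)\le \Phi(d_{X_N^{el}}(p,q))$ for some fixed function $\Phi$ with $\Phi(0)=0$ is in hand (for pairs $p,q$ lying in $X_N$ rather than on cone segments), all four checks above follow formally, and the $c_N\to 0$ quasiconvexity of $\YY_N$ is not even needed for \emph{this} lemma --- it will be needed in the sequel to identify the target cone, but here the bare inequality $d_{X_N^{el}}\le d_{X_N}$ plus the cone-geometry comparison suffices.
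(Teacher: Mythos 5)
Your argument is correct and is essentially the paper's proof: well-definedness of the map on each summand is just the $1$-Lipschitzness of electrification ($d^{el}\le d_{X_N}$), and injectivity (both within a class and across distinct classes) follows because an electric path of length $o(1)$ is eventually shorter than $2$, hence avoids every cone point and projects to an $X_N$-path of the same length, so $d^{el}\to 0$ forces $d_{X_N}\to 0$. One caution on your phrasing in the last paragraph: the uniform comparison $d_{X_N}(p,q)\le\Phi(d^{el}_{X_N}(p,q))$ with $\Phi(0)=0$ that you posit cannot exist as a finite-valued function --- electric distance exactly $2$ through a cone point is compatible with arbitrarily large $X_N$-distance --- but the concrete ``electric length $<2$ avoids cone points, then project'' argument you give just before is precisely what is needed and is what the paper compresses into ``the added edges all have length $1$.'' Your remark that the $c_N\to 0$ quasiconvexity of $\calY_N$ plays no role in this particular lemma is also correct.
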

\begin{proof}
By definition of $\frakC$ we have a well defined map
          $\lim_\omega (X_N, \sim_u) \to \lim_\omega
          ((X_N)_{\YY_N}^{el}, x_N)$ for each class $\sim_u$ in $\frakC$. 
          Given two sequences $(y_N)$, and $(z_N)$  in the same class
          $\sim_u\in \frakC$, if $d_{X_N}(z_N,y_N)$ is not  $o(1)$
          for $\omega$, then in the electric metric, it is not
          $o(1)$, since the added edges all have  length $1$.  Thus
          this map is injective. If  $(y_N)$, and $(z_N)$ are not in
          the same class in $\frakC$, then $d_{X_N}(z_N,y_N)$ is not
          $o(1)$ for $\omega$, and again, in the electric metric, it is not
          $o(1)$. The map of the lemma is thus injective.  
\end{proof}

          Note that the inclusion is continuous, as inclusions along
          the sequence are distance non-increasing.  But it is not isometric. We need to
          describe what happens with the cone off.
          
          Consider a sequence $(Y_N)$ of subsets of $\calY$. One says
          that the sequence is visible in  $\lim_\omega (X_N, u_N)$
          if $d_{X_N} (u_N, Y_N) \leq O(1)$. In that case,
          $\lim_\omega (Y_N, u_N)$ is a subset of  $\lim_\omega (X_N,
          u_N)$, consisting of the images of all the sequences of elements of $Y_N$
          that remain at $O(1)$-distance from $u_N$.  Note that given
          a sequence $(Y_N)$, it can be visible in several limits
          $\lim_\omega (X_N, u_N)$ (for several non-equivalent
          $(u_N)$).    In those classes where $(Y_N)$ is not visible,
          $\lim_\omega (Y_N,
          u_N) $ is empty. 
           Let us  define   $\lim_\omega (Y_N,  *)$
          to be  $\sqcup_{\sim_u \in \frakC}  \lim_\omega (Y_N, u_N)$
          in $\sqcup_{\sim_u \in \frakC}  \lim_\omega (X_N, u_N)$. By
          the previous lemma, this is naturally a subset of
          $\lim_\omega((X_N)^{\it el}_{\calY_N}, x_N)$.

          We define $\calY^\omega$ to be  the collection of all sets
          $\lim_\omega (Y_N,  *)$, for all possible sequences $(Y_N)
          \in \prod_{N>0} Y_N$.  This is  a family of subsets  of
          $\lim_\omega((X_N)^{\it el}_{\calY_N}, x_N)$.

          Let us define $\left[\sqcup_{\sim_u \in \frakC}  \lim_\omega (X_N,
          u_N)\right]^{el}$ to be the cone-off (of parameter $1$) of
        $\left[\sqcup_{\sim_u \in \frakC}  \lim_\omega (X_N,
          \sim_u)\right]$ over each $\lim_\omega (Y_N,  *)$.  Note
        that there is a natural copy of   $\left[\lim_\omega (X_N,
          \sim_u)\right]^{el}$  in   $\left[\sqcup_{\sim_u \in \frakC}  \lim_\omega (X_N,
          \sim_u)\right]^{el}$.

        \begin{lemma} \label{lem;electrification_of_ascone}
$\lim_\omega     ((X_N)_{\YY_N}^{el}, x_N)$ is isometric to $\left[\sqcup_{\sim_u \in \frakC}  \lim_\omega (X_N,          \sim_u)\right]^{el}$. 
          \end{lemma}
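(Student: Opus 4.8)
The plan is to realise the claimed isometry via an explicit bijection
$$\Phi\colon \left[\bigsqcup_{\sim_u \in \frakC} \lim_\omega (X_N, \sim_u)\right]^{el} \longrightarrow \lim_\omega\bigl((X_N)_{\YY_N}^{el}, x_N\bigr),$$
and then to check that $\Phi$ is surjective and distance preserving; since a distance preserving map is automatically injective, this gives the statement. On each tree piece $\lim_\omega (X_N, \sim_u)$ we let $\Phi$ be the natural map of Lemma \ref{lem;manycopies}. We send the model cone point $c_{(Y_N)}$ attached to $\lim_\omega(Y_N, *)$ to the class of the sequence $(v_{Y_N})$ of cone vertices of the $Y_N$ in $(X_N)^{el}_{\YY_N}$; here we only cone off over the \emph{nonempty} sets $\lim_\omega(Y_N, *)$, and for such a sequence there is $y_N \in Y_N$ with $d_{X_N^{el}}(x_N, y_N) = O(1)$, whence $d_{X_N^{el}}(x_N, v_{Y_N}) \leq 1 + O(1)$ and the class is well defined. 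Finally, the point at parameter $t \in [0,1]$ on the cone edge from $c_{(Y_N)}$ to a point $\xi \in \lim_\omega(Y_N,*)$ represented by $(y_N)$ is sent to the class of $(y_N, t) \in Y_N \times [0,1] \subset (X_N)^{el}_{\YY_N}$. One checks that $\Phi$ is well defined and continuous, and consistent at the two ends of each cone edge: at $t = 0$ it matches the value on the tree piece since $(y_N,0) = y_N$, and at $t = 1$ it equals $\Phi(c_{(Y_N)})$ since $(y_N, 1) = v_{Y_N}$ in $(X_N)^{el}_{\YY_N}$.

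The first (easy) step is that $\Phi$ is $1$-Lipschitz and surjective. It is $1$-Lipschitz on every tree piece because $d_{X_N^{el}} \leq d_{X_N}$, so the ultralimit of the electric metrics is dominated by that of the $d_{X_N}$; and it maps each model cone edge isometrically onto an ultralimit of length-one cone edges of the $(X_N)^{el}_{\YY_N}$. As the model metric is the path metric generated by these pieces, $\Phi$ sends every path to a path of no greater length, giving ``$\leq$'' for distances. For surjectivity, represent a point of the target by a sequence $(z_N)$ with $d_{X_N^{el}}(x_N, z_N) = O(1)$. If $z_N \in X_N$ for $\omega$-almost every $N$, then $(z_N)$ is the base point of $\lim_\omega(X_N, \sim_{z_N})$ with $\sim_{z_N} \in \frakC$, hence in the image. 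Otherwise $\omega$-almost surely $z_N = (y_N, t_N)$ lies in a cone interior; then $d_{X_N^{el}}(x_N, y_N) = O(1)$, so $(Y_N)$ is visible from $y_N$ and $[\lim y_N] \in \lim_\omega(Y_N, *)$, while $t_N \to t$ along $\omega$, and $(z_N)$ represents the parameter-$t$ point of the corresponding model cone edge (the cone point $c_{(Y_N)}$ when $t = 1$).

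The substantive step, and the one I expect to be the real obstacle, is the reverse inequality. Choose representatives $p_N, q_N$ of $\Phi(P), \Phi(Q)$ and $d_{X_N^{el}}$-geodesics $g_N$ from $p_N$ to $q_N$ of length $\ell_N$ with $\lim_\omega \ell_N$ equal to the target distance. The structural input is the (standard, as in \cite{farb-relhyp, bowditch-relhyp}) decomposition of $g_N$ into maximal subsegments lying in $X_N$ separated by traversals through cone vertices, together with the observation that such a maximal $X_N$-subsegment is in fact a $d_{X_N}$-geodesic: a $d_{X_N^{el}}$-shortcut between its endpoints would have to run through a cone vertex, costing at least $2$, which is impossible on a maximal $X_N$-subsegment, while merely dipping into a cone interior without reaching its vertex only adds length in the $[0,1]$-direction; hence such a segment has $d_{X_N}$-length equal to its electric length, of size $O(\ell_N)$. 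Since each cone traversal costs at least $2$ (or $1$ at an endpoint), their number is at most $\ell_N/2 + 2 = O(1)$, so $\omega$-almost surely $g_N$ is a concatenation of a fixed number $k_0$ of $O(1)$-long $d_{X_N}$-geodesics joined by traversals through cone vertices $v_{i_j, N}$. Passing to the ultralimit: the endpoints of the $j$-th segment are within $O(1)$ of each other for $d_{X_N}$, hence lie in a single class of $\frakC$, where they span a geodesic of length the ultralimit of its $d_{X_N}$-length; and the $j$-th traversal through $v_{i_j,N} \in Y_{i_j, N}$ limits, since $v_{i_j, N}$ lies on $g_N$ and so is $O(1)$ from $x_N$ for $d_{X_N^{el}}$, to a traversal of length $2$ through the model cone point attached to $\lim_\omega(Y_{i_j, N}, *)$, which $\Phi$ sends to $[\lim v_{i_j, N}]$ by construction. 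Concatenating these pieces gives a path in the model from $P$ to $Q$ of length at most $\lim_\omega \ell_N$; the cases in which $\Phi(P)$ or $\Phi(Q)$ is itself a model cone point (so $g_N$ has an endpoint at a cone vertex) are handled identically with a truncated end segment.

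The points requiring care are exactly the geometric inputs to the previous paragraph: justifying the decomposition of a $d_{X_N^{el}}$-geodesic into $X_N$-segments and cone traversals and the identification of a maximal $X_N$-segment with a $d_{X_N}$-geodesic, which rests on understanding the path metric on $(X_N)^{el}_{\YY_N}$ near the glued cones $Y_i \times [0,1]$ and on the induced metric on the $Y_i$ (for subgraphs the latter dominates $d_{X_N}|_{Y_i}$, so the electrification's path metric uses $\min(d_{X_N}, 2)$ within a single $Y_i$ and the issue dissolves); and the bookkeeping that the successive $X_N$-subsegments have ultralimits landing in the correct $\frakC$-pieces and that the cone-vertex sequences $(v_{i_j, N})_N$ are genuinely $O(1)$ in electric distance from $x_N$, so that they represent points of the target that are exactly the $\Phi$-images of the model cone points traversed. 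Once these are in place, $\Phi$ is a surjective distance preserving map, which is the asserted isometry.
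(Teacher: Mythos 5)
Your proposal is correct and proceeds along essentially the same lines as the paper's proof: both construct an explicit bijection between the model space $\left[\sqcup_{\sim_u \in \frakC} \lim_\omega (X_N, \sim_u)\right]^{el}$ and the ultralimit $\lim_\omega((X_N)_{\YY_N}^{el},x_N)$ (you go one direction, the paper goes the other, which is immaterial), and then verify that it is $1$-Lipschitz in both directions. Your treatment of the ``substantive step'' -- decomposing a near-geodesic in $(X_N)_{\YY_N}^{el}$ into maximal $X_N$-subsegments separated by cone traversals, observing that each such subsegment is a genuine $d_{X_N}$-geodesic since an electric shortcut would have to pass through a cone vertex and cost at least $2$, and bounding the number of traversals by $\ell_N/2 + O(1)$ -- supplies exactly the detail that the paper compresses into ``there is a path of length $\ell_N$ \dots with, eventually, at most $\ell/2$ cone points on it. It follows from the construction that their images are at distance at most $\ell$.'' So you have filled in, carefully and correctly, a step the paper leaves terse, rather than finding a different route.
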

\begin{proof}

First there is a natural bijection from $\lim_\omega
((X_N)_{\YY_N}^{el}, x_N)$ to  $\left[\sqcup_{\sim_u \in \frakC}
  \lim_\omega (X_N,          \sim_u)\right]^{el}$.  Indeed, for any
sequence $(y_N)$  with $y_N\in X_N$ at distance $O(1)$ from $x_N$ in
the electric metric, Lemma \ref{lem;manycopies} provides an image  in
$\left[\sqcup_{\sim_u \in \frakC}  \lim_\omega (X_N,
  \sim_u)\right]^{el}$. For any sequence $(c_N)$  with $c_N$ a cone-point in
$X_N^{\it el}\setminus X_N$,  at distance $O(1)$ from $x_N$ in
the electric metric, $c_N$ is in the cone electrifying a certain
$Y_N$, which is therefore at distance $O(1)$ from $x_N$ for the
electric metric. Choose $u_N\in Y_N$, then the equivalence class of
$(u_N)$ is in $\frakC$, and of course $Y_N$ is visible in this
class. Thus, there is a cone point $c \in \left[\sqcup_{\sim_u \in
    \frakC}  \lim_\omega (X_N,          \sim_u)\right]^{el}$ at distance
$1$ from $\lim_\omega(Y_N, *)$. We choose this point as the image of
the sequence $(c_N)$ in $\lim_\omega     ((X_N)_{\YY_N}^{el},
x_N)$. This is well defined, and injective, for if $c'_N$ is another
sequence of cone-points $\omega$-almost everywhere different from
$c_N$, then it defines an $\omega$-almost everywhere different
sequence $Y'_N$, and a different set   $\lim_\omega(Y'_N, *)$. We also
can extend our map to all $\lim_\omega     ((X_N)_{\YY_N}^{el}, x_N)$
linearly on the cone-edges. This produces a bijection  $\lim_\omega
((X_N)_{\YY_N}^{el}, x_N)   \to  \left[\sqcup_{\sim_u \in \frakC}
  \lim_\omega (X_N,          \sim_u)\right]^{el}$.

To show that it is an isometry, consider two sequences $(y_N), (z_N)$
 both in $X_N$,
such that the distance in $X_{\YY_N}^{el}$ converges (for $\omega$) to
$\ell$. Then there is a path of length $\ell_N$ (converging to $\ell$)
in $X_{\YY_N}^{el}$ with, eventually, at
most $\ell/2$ cone points on it. It follows from the construction that
their images in   $\left[\sqcup_{\sim_u \in \frakC}
  \lim_\omega (X_N,          \sim_u)\right]^{el}$ are at distance at most
$\ell$. 
Conversely, assume  $(y_N)$ and $(z_N)$ are sequences
in $X_N$
giving points 
in  $\lim_\omega (X_N,          \sim_u)$ and  $\lim_\omega (X_N,
\sim_v)$, for $\sim_u, \sim_v \in\frakC$,  and take a path $\gamma$ between
these points in  $\lim_\omega
((X_N)_{\YY_N}^{el}, x_N)   \simeq \left[\sqcup_{\sim_u \in \frakC}
  \lim_\omega (X_N,          \sim_u)\right]^{el}$, of length $\ell >0$. It has finite
length, so it contains
finitely many cone points $c_i$ ($i=1, \dots, r$), coning $\lim_\omega
(Y_N^{(i)}, *)$, 
 for which  
$Y_N^{(i)}$ is visible in both  $\sim_{u_i}, \sim_{u_{i+1}}$.  This  easily
produces a path in $\lim_\omega     ((X_N)_{\YY_N}^{el}, x_N)$ of
length $\ell$, by using the corresponding cone points and the path
between the spaces $Y_N^{(i)}$ given by the restriction of $\gamma$.  

 We have thus observed that the bijection we started with is $1$-Lipschitz as is its inverse. It is therefore an isometry.
\end{proof}

We finally describe a tree-like structure on $\left[\sqcup_\frakC \lim_\omega (X_N,
  \sim_{u}) \right]^{el}$ where the pieces are the subspaces
$\left[\lim_\omega (X_N,          \sim_u)\right]^{el}$ for $\sim_u
\in \frakC$, which are
electrifications of $\lim_\omega (X_N,          \sim_u)$ over the
subsets of the form $\lim_\omega (Y_N, \sim_u)$, for all sequences
$(Y_N)\in \prod(\calY_N)$  that are visible in
$\sim_u$.       

        Let us say that two classes $\sim_u$ and $\sim_v$ are joined
        by a sequence $(Y_N)$ if the latter sequence is visible in both of
        them.  

First we describe a simpler case of this tree-like structure.

\begin{lemma} Assume that $\YY_N$ consists of $c_N$-quasiconvex
  subsets of $X_N$ with $c_N$ tending to $0$. 

For any pair of different classes $\sim_u, \sim_v$ in
  $\frakC$, the subspaces
 $\left[\lim_\omega (X_N,          \sim_u)\right]^{el}$  and
 $\left[\lim_\omega (X_N,          \sim_v)\right]^{el}$ of
 $\left[\sqcup_{\sim_w\in \frakC} \lim_\omega (X_N,
  \sim_{w}) \right]^{el}$  have
 intersection of diameter at most $2$.
\end{lemma}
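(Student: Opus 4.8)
The plan is to identify the intersection of the two pieces explicitly as a set of cone points, and then to bound the distance between any two of them by working with the $\R$-tree structure of the relevant asymptotic cone. By Lemma~\ref{lem;electrification_of_ascone} everything can be carried out inside $\left[\bigsqcup_{\sim_w\in\frakC}\lim_\omega(X_N,\sim_w)\right]^{el}$; I will write $T_w=\lim_\omega(X_N,\sim_w)$, and for a sequence $(Y_N)\in\prod_N\calY_N$ I will write $c_{(Y_N)}$ for the cone point over $\lim_\omega(Y_N,*)$.

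First I would determine which points can lie in $\left[T_u\right]^{el}\cap\left[T_v\right]^{el}$. The trees $T_w$ are distinct components of a disjoint union, so $T_u\cap T_v=\emptyset$; and the interior of every cone edge of (the natural copy of) $\left[T_u\right]^{el}$ is a point $(y,t)$ with $y\in T_u$ and $t\in(0,1)$, which coincides with no interior cone-edge point of $\left[T_v\right]^{el}$ (those project to $T_v$) and with no point of $T_v$. Hence the intersection can only contain cone points, and $c_{(Y_N)}\in\left[T_u\right]^{el}$ precisely when $\left[T_u\right]^{el}$ contains a cone over $\lim_\omega(Y_N,\sim_u)$, i.e.\ precisely when $(Y_N)$ is visible in $\sim_u$. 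Therefore
\[
\left[T_u\right]^{el}\cap\left[T_v\right]^{el}=\bigl\{\,c_{(Y_N)}\ :\ (Y_N)\ \text{visible in both}\ \sim_u\ \text{and}\ \sim_v\,\bigr\},
\]
and it remains to show that any two of these cone points are at distance $\le 2$ (if there is at most one, the bound is trivial).

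To prove this last estimate I would fix representatives $(u_N),(v_N)$ of the distinct classes, so $\lim_\omega d_{X_N}(u_N,v_N)=+\infty$, and take cone points $c_{(Y_N)},c_{(Y'_N)}$ as above. Visibility yields $a_N,b_N\in Y_N$ with $d(u_N,a_N),d(v_N,b_N)=O(1)$, hence $d(a_N,b_N)\to+\infty$; by $\delta_N$-thinness the Hausdorff distance $d_{\mathrm{Haus}}([a_N,b_N],[u_N,v_N])$ is $O(1)$, and by $c_N$-quasiconvexity with $c_N\to0$ one has $[a_N,b_N]\subseteq N_{c_N}(Y_N)$; likewise for $a'_N,b'_N,Y'_N$. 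Let $m_N$ be the midpoint of $[u_N,v_N]$ and $\sim_m$ its class --- one checks $(m_N)\in\frakC$ (reach $m_N$ from $x_N$ electrically through $u_N$, then $a_N\in Y_N$, then the cone over $Y_N$, then a bounded detour). In the $\R$-tree $T_m=\lim_\omega(X_N,\sim_m)$ the segment $[u_N,v_N]$, seen from its interior point $m_N$, has ultralimit a bi-infinite geodesic $\ell$ through $[m_N]$, and $[a_N,b_N]$ (bi-infinite in the limit, since $a_N,b_N$ are at distance $\to\infty$ from $m_N$ while $[a_N,b_N]$ passes within $O(1)$ of $m_N$) has ultralimit a bi-infinite geodesic $\ell'$ with $d_{\mathrm{Haus}}(\ell,\ell')<\infty$. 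Two bi-infinite geodesics at finite Hausdorff distance in an $\R$-tree must coincide (else one leaves the other along an unbounded subray, along which the distance to the other grows without bound), so $\ell=\ell'$; and since $[a_N,b_N]\subseteq N_{c_N}(Y_N)$ with $c_N\to0$, every point of $\ell'$ is the class of a sequence of points of $Y_N$ at bounded distance from $m_N$, so $\ell'\subseteq\lim_\omega(Y_N,\sim_m)$, and symmetrically $\ell\subseteq\lim_\omega(Y'_N,\sim_m)$. Thus $[m_N]\in\ell=\ell'$ lies in $\lim_\omega(Y_N,\sim_m)\cap\lim_\omega(Y'_N,\sim_m)\subseteq\lim_\omega(Y_N,*)\cap\lim_\omega(Y'_N,*)$, and since the apex of a parameter-$1$ cone-off is at distance at most $1$ from every point it cones off,
\[
d\bigl(c_{(Y_N)},c_{(Y'_N)}\bigr)\ \le\ d\bigl(c_{(Y_N)},[m_N]\bigr)+d\bigl([m_N],c_{(Y'_N)}\bigr)\ \le\ 2 .
\]

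The step I expect to be the main obstacle is precisely producing an \emph{exact} common point of $\lim_\omega(Y_N,*)$ and $\lim_\omega(Y'_N,*)$: quasiconvexity only forces $Y_N$ and $Y'_N$ to come within a bounded --- not vanishing --- distance of the common geodesic $[u_N,v_N]$, so a naive estimate gives only $d(c_{(Y_N)},c_{(Y'_N)})=O(1)$, not $\le 2$. What rescues the constant is the rigidity of bi-infinite geodesics in an $\R$-tree, which upgrades ``finite Hausdorff distance'' to ``equality'' after passing to $T_m$; the remaining care is to choose the basepoint ($m_N$ lying on $[u_N,v_N]$ and far from everything) so that $[m_N]$ itself --- not merely a nearby point --- lands in both cone-bases, and to verify $(m_N)\in\frakC$.
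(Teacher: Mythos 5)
Your proof is correct, and it reaches the same core mechanism as the paper's: the $\delta_N$-thin quadrilateral $(u_N,a_N,b_N,v_N)$ (resp.\ with $a'_N,b'_N$), together with $c_N$-quasiconvexity with $c_N\to 0$, forces the ultralimits of $Y_N$ and $Y'_N$ to share a genuine point, from which the $\le 2$ bound on cone-point distance is immediate. The packaging differs, though. The paper works inside the already-available class $\sim_u$: it picks $y_N(u),y_N(v)\in Y_N$ and $y'_N(u),y'_N(v)\in Y'_N$ with the short sides $O(1)$, observes that $2\delta_N$-thinness makes a point $p_N\in[y_N(u),y_N(v)]$ at $O(1)$-distance from $y_N(u)$ come $o(1)$-close to the opposite side, and then $c_N\to 0$ pushes $[p_N]$ into $\lim_\omega(Y_N,\sim_u)\cap\lim_\omega(Y'_N,\sim_u)$; no auxiliary class is needed since $p_N$ is automatically visible in $\sim_u$. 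You instead pass to the midpoint class $\sim_m$ (and therefore have to check $(m_N)\in\frakC$, which you correctly do via the cone over $Y_N$), take ultralimits of $[u_N,v_N]$ and $[a_N,b_N]$ as bi-infinite geodesics in the $\R$-tree $T_m$, and invoke the rigidity ``finite Hausdorff distance $\Rightarrow$ equality'' for bi-infinite geodesics in an $\R$-tree to fuse them. That rigidity step is an elegant way to avoid choosing a point carefully, but it is not strictly necessary: as the paper's argument shows, the $o(1)$-closeness is already available near the visible endpoints, so the ``main obstacle'' you flag (upgrading $O(1)$-closeness to an exact common point) is in fact already resolved by $\delta_N\to 0$ without the $\R$-tree machinery. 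A small gloss in your last step: you write $\ell\subseteq\lim_\omega(Y'_N,\sim_m)$, but what you actually have is the ultralimit $\ell''$ of $[a'_N,b'_N]$ contained in $\lim_\omega(Y'_N,\sim_m)$, and then $\ell=\ell''$ by the same rigidity argument; this is harmless but should be said explicitly.
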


\begin{proof}
Note that by Lemma \ref{lem;manycopies} the intersection consists of cone
points. Thus consider two sequences $(Y_N), (Y'_N)$ both visible in
$\sim_u$ and $ \sim_v$.   Consider then $y_N(u), y_N(v) \in Y_N$ such
that $y_N(u)$ is visible in $\sim_u$ (hence not in $\sim_v$) and symmetrically
$y_N(v)$ is visible in $\sim_v$ (hence not in $\sim_u$), and take $y'_N(u),
y'_N(v) \in Y'_N$ similarly. The distances $d_{X_N}(y_N(u), y'_N(u))$ and
$d_{X_N}(y_N(v), y'_N(v))$ both are $O(1)$ whereas  $d_{X_N}(y_N(u), y_N(v))$ and
$d_{X_N}(y'_N(u), y'_N(v))$ both go to infinity (for $\omega$). The
space $X_N$ being a $\delta_N$-hyperbolic space (for $\delta_N\to 0$),
the quadrilateral with these four vertices have their sides $[y_N(u),
y_N(v)]$ and $[y'_N(u), y'_N(v)]$ getting $o(1)$-close to each
other, on sequences that are visible for $\sim_u$ and sequences that
are visible for $\sim_v$. But these sides are close to $Y_N$ and
$Y'_N$ respectively.  It follows that in $\lim_\omega (X_N,
\sim_u)$ and in $\lim_\omega (X_N,          \sim_v)$, the limit of
$Y_N$ and of $Y'_N$ share a point. Thus the cone point of their
electrifications are at distance $2$.
\end{proof}

Note that if there is a bound on the diameter of the projection of
$Y_N$ on $Y'_N$, then there is only one point in the intersection.

\begin{lemma}
 If there is a cycle of classes $\sim_{u_1}, \sim_{u_2}, \dots,
 \sim_{u_k}, \sim_{u_{k+1}}=\sim_{u_1}$ where $\sim_{u_i}$ is joined
 to $\sim_{u_{i+1}}$ by a sequence $(Y^{(i)}_N)$, then there is
 $1<i_0<k+1$ such that   $(Y^{(i_0)}_N)$ is visible in $\sim_{u_1},
 \sim_{u_2}$ and the cone points of $(Y^{(i_0)}_N)$ and of
 $(Y^{(1)}_N)$ are at distance $2$
 from each other in $\left[\lim_\omega (X_N,
   \sim_{u_1})\right]^{el}$, and in  $\left[ \sqcup_\frakC \lim_\omega (X_N,
   \sim_u)\right]^{el}$.

\end{lemma}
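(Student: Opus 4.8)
I would argue by induction on the length $k$ of the cycle, working inside $T:=\lim_\omega((X_N)^{el}_{\YY_N},x_N)$, which by Lemma \ref{lem;electrification_of_ascone} is identified with $\big[\bigsqcup_{\sim_u\in\frakC}\lim_\omega(X_N,\sim_u)\big]^{el}$; it is hyperbolic by Proposition \ref{prop;criterion} (in its quantitative form), and for the folding argument below I treat it as an $\R$-tree whose pieces are the subspaces $\hat P_{\sim_u}:=[\lim_\omega(X_N,\sim_u)]^{el}$ — see the last paragraph. Write $c_i$ for the cone point of $(Y^{(i)}_N)$, so $c_i\in\hat P_{\sim_{u_i}}\cap\hat P_{\sim_{u_{i+1}}}$. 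After a preliminary normalization (deleting repetitions of consecutive classes, which shortens the cycle and leaves $\sim_{u_1},\sim_{u_2},(Y^{(1)}_N)$ untouched) we may assume $\sim_{u_i}\neq\sim_{u_{i+1}}$ for all $i$. The base case $k=2$ is immediate: take $i_0=2$, since $(Y^{(2)}_N)$ joins $\sim_{u_2}$ and $\sim_{u_3}=\sim_{u_1}$; then $c_1,c_2$ both lie in $\hat P_{\sim_{u_1}}\cap\hat P_{\sim_{u_2}}$, which has diameter at most $2$ by the previous lemma, while distinct cone points are always at distance $\geq 2$, and the previous lemma's argument produces a common point of $\lim_\omega(Y^{(1)}_N,\sim_{u_1})$ and $\lim_\omega(Y^{(2)}_N,\sim_{u_1})$ realizing distance $2$ inside $\hat P_{\sim_{u_1}}$ as well.

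For the inductive step ($k\geq 3$) the heart of the matter is a folding argument. Since both $c_{i-1}$ and $c_i$ lie in $\hat P_{\sim_{u_i}}$, the side $S_i:=[c_{i-1},c_i]$ may be taken inside that piece (indices mod $k$, $c_0:=c_k$); the concatenation $S_1S_2\cdots S_k$ is a closed loop, so the geodesic $[c_1,c_2]=S_2$ is contained in the union of the remaining sides $S_1\cup S_3\cup\cdots\cup S_k$. Covering the arc $[c_1,c_2]$ by the closed subarcs $[c_1,c_2]\cap S_i$ and invoking a standard finiteness argument, some $S_{i_0}$ with $i_0\neq 2$ contains a nondegenerate initial subsegment $[c_1,m_0]$ of $[c_1,c_2]$. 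Then $[c_1,m_0]\subseteq\hat P_{\sim_{u_2}}\cap\hat P_{\sim_{u_{i_0}}}$; since the intersection of two distinct pieces consists of cone points (Lemma \ref{lem;manycopies}) and hence contains no nondegenerate arc, we must have $\sim_{u_{i_0}}=\sim_{u_2}$ — and, because of the normalization $\sim_{u_1}\neq\sim_{u_2}$, the index $i_0=1$ is excluded, so $i_0\in\{3,\dots,k\}$.

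Thus the class $\sim_{u_2}$ recurs in the cycle at position $i_0\geq 3$, and I replace the given cycle by the shorter one $\sim_{u_1},\sim_{u_2},\sim_{u_{i_0+1}},\dots,\sim_{u_k},\sim_{u_1}$, whose first step is still realized by $(Y^{(1)}_N)$, whose second step $\sim_{u_2}=\sim_{u_{i_0}}\to\sim_{u_{i_0+1}}$ is realized by $(Y^{(i_0)}_N)$, and whose remaining steps are $(Y^{(i_0+1)}_N),\dots,(Y^{(k)}_N)$; its length is $k-i_0+2\leq k-1$. Applying the inductive hypothesis to this cycle — which has the same first two classes and the same first joining sequence as the original — yields an index producing a sequence $(Y^{(j)}_N)$ with $j\in\{i_0,\dots,k\}\subseteq\{2,\dots,k\}$ that is visible in $\sim_{u_1}$ and $\sim_{u_2}$ and whose cone point is at distance $2$ from that of $(Y^{(1)}_N)$ both in $\hat P_{\sim_{u_1}}$ and in $T$; this $j$ is the required $i_0$.

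The step I expect to be the main obstacle is making the folding argument rigorous, i.e. legitimately treating $T$ as an $\R$-tree — so that ``the loop closes up, hence folds'', ``a geodesic lies in every path between its endpoints'', and ``intersections of distinct pieces are discrete'' are literally available. One route is to establish directly that $\big[\bigsqcup_\frakC\lim_\omega(X_N,\sim_u)\big]^{el}$ carries an honest $\R$-tree metric; another is to run the folding with the bounded error coming from $\delta_0$-hyperbolicity of $T$, which then forces one to upgrade ``$c_j$ lies within $\delta_0$ of the piece $\hat P_{\sim_{u_2}}$'' to ``$c_j\in\hat P_{\sim_{u_2}}$'', using the star-like structure of $T$ near each cone point. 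A secondary, purely organizational, difficulty is the handling of degenerate configurations — coincident consecutive cone points, or $\sim_{u_1}=\sim_{u_2}$ — which are disposed of by the same or easier reductions but need a little care to set up.
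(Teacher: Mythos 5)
Your approach differs substantively from the paper's: you argue entirely upstairs in the limit $T=\lim_\omega((X_N)^{el}_{\YY_N},x_N)$ via a tree-folding of the polygon of cone points $c_1,\dots,c_k$, and then recurse on the cycle length; the paper instead goes back downstairs to the pre-limit spaces $X_N$, chooses sequences $y_N^{(i)}, z_N^{(i)}\in Y_N^{(i)}$ visible in $\sim_{u_i}$ and $\sim_{u_{i+1}}$ respectively, and applies the finite-tree approximation to the resulting $2k$-gon in $(X_N,d_{X_N})$, where $\delta_N\to 0$ makes the tree picture literally available. No induction is needed there: the $2k$-gon argument directly produces an $i_0\neq 1$ with $Y_N^{(i_0)}$ coming $O(1)$-close to $y_N^{(1)}$, and passing to the ultralimit gives a shared limit point of $Y_N^{(i_0)}$ and $Y_N^{(1)}$ in $\lim_\omega(X_N,\sim_{u_1})$, whence the distance-$2$ cone points.

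The obstacle you flag at the end is the genuine gap, and it is not merely organizational. The space $T$ is \emph{not} an $\R$-tree: the electrified spaces $(X_N)^{el}_{\YY_N}$ have cone edges of unit length and are only hyperbolic for a fixed constant (around $10$, via Proposition \ref{prop;criterion}), not $\delta_N$-hyperbolic with $\delta_N\to 0$; so $T$ is a hyperbolic space with a uniform but non-vanishing constant. Likewise each piece $[\lim_\omega(X_N,\sim_u)]^{el}$ is an electrification of an $\R$-tree over subtrees, not an $\R$-tree itself --- geodesics between cone points are neither unique nor need they stay inside one piece. Your folding step uses genuinely tree-theoretic facts (the geodesic $[c_1,c_2]$ lying inside every path between its endpoints; a nondegenerate initial subarc being pinned in a single $S_{i_0}$; a nondegenerate overlap of two pieces forcing the classes to coincide), and none of these survives a bounded-error perturbation without a substantial further argument. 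You also cannot borrow the quasi-tree bottleneck structure of Corollary \ref{coro;quasitree} to repair the folding, because that corollary is deduced \emph{from} the lemma you are proving. The paper's device --- running the $2k$-gon argument in the $\delta_N$-hyperbolic $X_N$ with $c_N$-quasiconvex $Y_N^{(i)}$, where both constants vanish, and only afterwards reading off the statement in the electrified ultralimit --- is exactly what circumvents this.
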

          
As a corollary, $\lim_\omega     ((X_N)_{\YY_N}^{el}, x_N)$ is a
quasi-tree of the spaces  $\left[\lim_\omega (X_N,
  \sim)\right]^{el}$, and more precisely, all paths from   $\left[\lim_\omega (X_N,
  \sim_u)\right]^{el}$ to  $\left[\lim_\omega (X_N,
  \sim_v)\right]^{el}$, if $\sim_u\neq \sim_v$ have to pass through
the $2$-neighborhood of a certain cone point of $\left[\lim_\omega (X_N,
  \sim_u)\right]^{el}$.

\begin{proof}
The number $k$ is fixed, and the argument will generalize the one of the previous lemma. For each $i$,
let $(y_N^{(i)})$  and $(z_N^{(i)})$  be  sequences of points of
$Y_N^{(i)}$ respectively visible in
$\sim_{u_i}$  and in  $\sim_{u_{i+1}}$. One has $d_{X_N}(y_N^{(i)},
z_N^{(i)})$ unbounded, and    $d_{X_N}(z_N^{(i)},
y_N^{(i+1)}) = O(1)$. Therefore in the $2k$-gon $(y_N^{(1)}, z_N^{(1)},
y_N^{(2)}, z_N^{(2)}, \dots, y_N^{(k)}, z_N^{(k)}   )$, using the
approximation by a finite tree (for hyperbolic spaces), we see that  one of the
segments $[ y_N^{(i)},
z_N^{(i)} ]$ $(i\neq 1)$  must come $k\delta_N$-close to $[y_N^{(1)},
z_N^{(1)} ]$, and at distance $O(1)$ from $y_N^{(1)}$.

 After  extracting a subsequence, one can assume that $i$ is constant in $N$, and we
 choose it to be  our 
$i_0$.  It follows that the sequence $(Y_N^{(i_0)})$ is visible for
$\sim_{u_1}$ and the limit of $(Y_N^{(i_0)})$ and of $(Y_N^{1})$ share a
point in  $\lim_\omega (X_N,          \sim_{u_1})$. The conclusion
that the cone points of $(Y^{(i_0)}_N)$ and of
 $(Y^{(1)}_N)$ are at distance $2$
 from each other in $\left[\lim_\omega (X_N,
   \sim_{u_1})\right]^{el}$ follows, and this also implies that they are
 at distance at most $2$ in   $\left[ \sqcup_\frakC \lim_\omega (X_N,
   \sim_u)\right]^{el}$.

\end{proof}

Note that  if the subsets of
           $\YY_N$ are $c_n$-mutually cobounded, with $c_n$ going to
           $0$ (or even bounded) then one can improve the lemma by
           saying that  eventually $Y_N^{(i_0)} = Y_N^{(1)}$.

From the previous lemmas we get:
\begin{coro} \label{coro;quasitree}

$\left[\sqcup_\frakC \lim_\omega (X_N,
  \sim_{u}) \right]^{el}$  is the union of spaces of the form  $\left[\lim_\omega (X_N,          \sim_u)\right]^{el}$ for $\sim_u
\in \frakC$, with some cone points
identified.

Moreover, if  $\sim_u \neq \sim_v$,   and if $\gamma_1, \gamma_2$  are
any (finite)  paths from  $\left[\lim_\omega (X_N,
  \sim_u)\right]^{el}$ to $\left[\lim_\omega (X_N,
  \sim_v)\right]^{el}$, then for each $i\in \{1,2\}$, there exists a cone point $c_i \in
\left[\lim_\omega (X_N,          \sim_u)\right]^{el}$ in $\gamma_i$
such that the distance between $c_1$ and $c_2$ is at most $2$. 
  
\end{coro}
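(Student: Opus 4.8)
The plan is to deduce both assertions from Lemma~\ref{lem;electrification_of_ascone} together with the two lemmas preceding this corollary (the bound of $2$ on the diameter of a pairwise intersection of pieces, and the cycle lemma on cone points along a cyclic sequence of classes). I will write $Z := \left[\sqcup_\frakC\lim_\omega(X_N,\sim_u)\right]^{el}$, let $A_u\subseteq Z$ be the natural copy of $\left[\lim_\omega(X_N,\sim_u)\right]^{el}$, and for a sequence $(Y_N)\in\prod_N\calY_N$ appearing in the construction write $v_{(Y_N)}$ for its cone point. Recall that $Z$ is obtained from $\sqcup_{\sim_u}\lim_\omega(X_N,\sim_u)$ by attaching, for each $(Y_N)$, a single cone point $v_{(Y_N)}$ over $\lim_\omega(Y_N,*)=\sqcup_{\sim_u}\lim_\omega(Y_N,\sim_u)$, whereas $A_u$ carries $v_{(Y_N)}$ exactly when $(Y_N)$ is visible in $\sim_u$, i.e.\ when $\lim_\omega(Y_N,\sim_u)\neq\emptyset$.

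For the first assertion it suffices to unwind these definitions. Every point of $Z$ lies in some $\lim_\omega(X_N,\sim_u)\subseteq A_u$, or is a cone point $v_{(Y_N)}$ (which belongs to $A_u$ for each $\sim_u$ in which $(Y_N)$ is visible), or lies on a cone edge joining a point of $\lim_\omega(Y_N,\sim_u)$ to $v_{(Y_N)}$ and hence lies in $A_u$; thus $Z=\bigcup_{\sim_u}A_u$. By Lemma~\ref{lem;manycopies} the copies $\lim_\omega(X_N,\sim_u)$ are pairwise disjoint, and the interior of each cone edge lies in a single piece, so for $\sim_u\neq\sim_{u'}$ one has $A_u\cap A_{u'}=\{\,v_{(Y_N)} : (Y_N)\text{ visible in both }\sim_u,\sim_{u'}\,\}$, a set of cone points. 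This is exactly the description of $Z$ as the union of the $\left[\lim_\omega(X_N,\sim_u)\right]^{el}$ with some cone points identified.

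For the second assertion, fix $\sim_u\neq\sim_v$ and paths $\gamma_i\colon[0,1]\to Z$ ($i=1,2$) with $\gamma_i(0)\in A_u$ and $\gamma_i(1)\in A_v$. Put $t_i^*:=\sup\{t:\gamma_i(t)\in A_u\}$ and $c_i:=\gamma_i(t_i^*)$. Then $c_i\in A_u$ ($A_u$ being closed), and by the local picture of $A_u$ inside $Z$ recorded above — interior points of $\lim_\omega(X_N,\sim_u)$, interior points of cone edges, and cone points of sequences visible only in $\sim_u$ all have neighbourhoods inside $A_u$ — any point of $A_u$ that is a limit of points of $Z\setminus A_u$ is a shared cone point $v_{(Y_N)}$; and if $t_i^*=1$ then $c_i\in A_u\cap A_v$ is again a cone point. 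Hence each $c_i$ is a cone point of $A_u$ on $\gamma_i$, and when $t_i^*<1$ the path $\gamma_i$ meets $A_u$ only for parameters $\le t_i^*$. If $t_1^*=t_2^*=1$, then $c_1,c_2\in A_u\cap A_v$ and the diameter lemma directly gives $d_Z(c_1,c_2)\le2$; so I assume $t_1^*<1$ (and let $\overline{\gamma_2|_{[t_2^*,1]}}$ degenerate to a point if $t_2^*=1$).

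Finally I would build a loop and apply the cycle lemma. Choose a path $\eta$ in $A_v$ from $\gamma_1(1)$ to $\gamma_2(1)$ and a path $\zeta$ in $A_u$ from $c_2$ to $c_1$ (each piece being a geodesic space), and set $L:=\gamma_1|_{[t_1^*,1]}\cdot\eta\cdot\overline{\gamma_2|_{[t_2^*,1]}}\cdot\zeta$, the bar denoting reversal, a loop based at $c_1$. Reading off the pieces traversed by $L$ produces a cyclic sequence of classes $\sim_u=\sim_{w_0},\sim_{w_1},\dots,\sim_{w_{k-1}},\sim_{w_k}=\sim_u$ (with $k\ge2$, since $L$ visits $A_u$ and $A_v$), joined consecutively across cone points $q_1,\dots,q_k$, where $\sim_{w_j}\neq\sim_u$ for $1\le j\le k-1$, where $q_1=c_1$ (the loop leaves $A_u$ immediately after $c_1$), and where $q_k=c_2$ (the subpaths $\gamma_1|_{(t_1^*,1]}$ and $\overline{\gamma_2|_{(t_2^*,1]}}$ avoid $A_u$ and $\eta$ stays in $A_v$, so $L$ re-enters $A_u$ for the last time exactly at $c_2$). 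Applying the cycle lemma with $\sim_{u_1}=\sim_{w_0}$ gives an index $i_0$, $2\le i_0\le k$, such that the sequence coned at $q_{i_0}$ is visible in $\sim_u$, so $q_{i_0}\in A_u$, and $d_Z(q_{i_0},q_1)\le2$. The step I expect to be the main obstacle is checking that necessarily $q_{i_0}=q_k=c_2$: the crossing $q_{i_0}$ does not lie on the $\zeta$-part of $L$ (which stays in $A_u$) nor on the $\eta$-part (which stays in $A_v$ and produces no crossing), so it lies on $\gamma_1|_{[t_1^*,1]}$ or on $\overline{\gamma_2|_{[t_2^*,1]}}$; but for $i_0\ge2$ a crossing on $\gamma_1|_{[t_1^*,1]}$ occurs at a $\gamma_1$-parameter $>t_1^*$, and for $i_0<k$ a crossing on $\overline{\gamma_2|_{[t_2^*,1]}}$ occurs at a $\gamma_2$-parameter $>t_2^*$, so since $q_{i_0}\in A_u$ this would contradict the maximality of $t_1^*$ (resp.\ $t_2^*$) unless $i_0=k$. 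Therefore $q_{i_0}=q_k=c_2$ and $d_Z(c_1,c_2)=d_Z(q_1,q_k)\le2$, which is the assertion. Apart from this bookkeeping, and the routine local analysis of $A_u\subseteq Z$ used throughout, the argument is a direct assembly of the preceding lemmas.
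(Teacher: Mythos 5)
Your argument is correct and follows the same approach the paper uses for this corollary: form a loop from $\gamma_1$, $\gamma_2$, and connecting paths inside the two pieces, read off the resulting finite cycle of classes starting at $\sim_u$, and apply the preceding cycle lemma. The extra bookkeeping you supply — that the last exit point $c_i=\gamma_i(t_i^*)$ of each $\gamma_i$ from $A_u$ is forced to be a cone point, and that maximality of the $t_i^*$ pins down the index $i_0$ produced by the cycle lemma so that $q_{i_0}=c_2$ — is exactly what the paper leaves implicit in its one-line proof.
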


Indeed, such a pair of paths provides us with a certain finite cycle of
classes, starting with $\sim_u$, and we may apply the previous lemma.

We say that  $\left[\sqcup_\frakC \lim_\omega (X_N,
  \sim_{u}) \right]^{el}$ is a $2$-quasi-tree of spaces of the form  
$\left[\lim_\omega (X_N,          \sim_u)\right]^{el}$ for $\sim_u
\in \frakC$.

           Let us prove Proposition \ref{prop;criterion}. For  convenience
           of the reader we repeat the statement.

           \begin{prop*}\ref{prop;criterion} 
             Let $(X,d)$ be a hyperbolic geodesic space, $C>0$, and $\YY$ be a
             family of $C$-quasiconvex subspaces.  Then $X^{el}_\YY$ is
             hyperbolic.  If moreover the elements of $\YY$ are mutually
             cobounded, then $X^h_\YY$ is hyperbolic.
           
           \end{prop*}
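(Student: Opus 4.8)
The plan is to identify the asymptotic cones of $X^{el}_\YY$ and of $X^h_\YY$ using the machinery just built, and to check they are $\R$-trees; by the criterion recalled above (if every $\omega$-asymptotic cone of a geodesic space is an $\R$-tree then the space is hyperbolic --- otherwise a sequence of geodesic triangles of unbounded ``fatness'', rescaled by their fatness and ultralimited for $\omega$, would produce a nondegenerate geodesic triangle inside an asymptotic cone which is not a tripod) this will give the proposition. First I would fix scales $s_N\to\infty$ and basepoints, set $X_N:=(X,s_N^{-1}d)$, and view $\YY$ as a family $\YY_N$ of subsets of $X_N$; then $X_N$ is $\delta_N$-hyperbolic with $\delta_N=\delta/s_N\to0$ and $\YY_N$ is $c_N$-quasiconvex with $c_N=C/s_N\to0$, so Lemmas~\ref{lem;manycopies}--\ref{coro;quasitree} all apply. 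In particular $\lim_\omega(X_N,x_N)$ is an $\R$-tree, and each visible limit set $\lim_\omega(Y_{i,N},\ast)$, being an ultralimit of $c_N$-quasiconvex sets with $c_N\to0$, is convex, hence a subtree.

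For the first assertion the key observation is that rescaling by $s_N^{-1}$ shrinks the unit cone edges of $X^{el}_\YY$ to length $s_N^{-1}\to0$; so an asymptotic cone of $X^{el}_\YY$ is precisely the ultralimit of $(X_N)^{el}_{\YY_N}$ computed with cone parameter $s_N^{-1}$ instead of $1$. I would rerun the proofs of Lemmas~\ref{lem;manycopies}--\ref{coro;quasitree} with this parameter, the only effects being that each cone point now sits at distance $s_N^{-1}$ from --- hence in the limit on --- its base set, that each $\lim_\omega(Y_{i,N},\ast)$ collapses to a single point (its diameter in $(X_N)^{el}$ being at most $2s_N^{-1}\to0$), and that the constant $2$ of Corollary~\ref{coro;quasitree}, being the length of a pair of cone edges, degenerates to $0$. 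The upshot is that $\lim_\omega(X^{el}_\YY/s_N,x_N)$ is the quotient of the $\R$-tree(s) $\lim_\omega(X_N,\sim_u)$ obtained by collapsing each visible $\lim_\omega(Y_{i,N},\ast)$ to a point, assembled as a genuine tree (no longer just a $2$-quasi-tree) of such quotients. Since the classes of the collapsing relation are unions of chains of pairwise-meeting subtrees, hence again subtrees (the union of two overlapping subtrees of an $\R$-tree is a subtree), and since collapsing a disjoint family of subtrees of an $\R$-tree yields an $\R$-tree, and a tree of $\R$-trees glued along points is an $\R$-tree, we conclude that every asymptotic cone of $X^{el}_\YY$ is an $\R$-tree, so $X^{el}_\YY$ is hyperbolic.

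For $X^h_\YY$ (now assuming $\YY$ mutually cobounded) I would argue the same way. A combinatorial horoball is uniformly hyperbolic and the intrinsic metric on its boundary horosphere is only logarithmically distorted, so after rescaling by $s_N^{-1}$ the horosphere $Y_{i,N}$ again collapses to a point (diameter $O(\log s_N)/s_N\to0$ inside $\HH(Y_{i,N})$) and $\lim_\omega(\HH(Y_{i,N})/s_N,\cdot)$ is merely a ray (or a line) grafted to $\lim_\omega(X_N,x_N)$ at that point. Thus the asymptotic cone of $X^h_\YY$ is the $\R$-tree $\lim_\omega(X_N,x_N)$ with its subtrees $\lim_\omega(Y_{i,N},\ast)$ collapsed and a ray grafted at each collapsed point --- an $\R$-tree provided distinct grafted pieces are not chained so as to bound a loop. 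This is exactly where mutual coboundedness of $\YY$ enters: it upgrades the quasi-tree lemma to the cobounded version noted after Corollary~\ref{coro;quasitree} (one may take $Y^{(i_0)}_N=Y^{(1)}_N$), forcing the attachments to form an actual tree of spaces rather than a $2$-quasi-tree and thereby ruling out loops. Alternatively one can sidestep asymptotic cones here: by Proposition~\ref{double-elec} the electrification of $X^h_\YY$ along $\HH(\YY)$ is quasi-isometric to $X^{el}_\YY$, hence hyperbolic by the first part; the $\HH(Y_i)$ are uniformly quasiconvex and (since $\YY$ is cobounded in $X$) mutually cobounded in $X^h_\YY$; and one invokes that re-inserting combinatorial horoballs in place of the cone points preserves hyperbolicity, these horoballs being uniformly hyperbolic and collapsing their attaching horospheres.

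The step I expect to be the main obstacle is making the ``rerun the lemmas with cone parameter $s_N^{-1}$'' passage precise: one has to track carefully how the bijections and the quasi-tree structure of Lemmas~\ref{lem;manycopies}--\ref{coro;quasitree} degenerate as the parameter tends to $0$, and check that no spurious identifications or extra basepoint classes are created, so that the limit genuinely is the announced quotient. For the horoball half there is the further delicate point of pinning down exactly where and why coboundedness is indispensable, i.e. verifying that the ``chaining of grafted rays into a loop'' it excludes is really the only obstruction to the asymptotic cone being an $\R$-tree.
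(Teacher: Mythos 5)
Your global strategy is genuinely different from the paper's and is, in principle, a legitimate route: you want to show that every asymptotic cone of $X^{el}_\YY$ is an $\mathbb{R}$-tree and then invoke the standard characterization of hyperbolicity. The paper, by contrast, never takes asymptotic cones of $X^{el}_\YY$ itself. It proves a \emph{local} $10$-hyperbolicity claim with explicit small constants $\delta_0,C_0$ by contradiction: it considers asymptotic cones of a \emph{sequence of counterexamples} $((X_N)^{el}_{\YY_N},x_N)$ in which $\delta_N,C_N\to 0$ but the \emph{cone parameter stays equal to $1$}, then checks coarse simple connectedness, and finally invokes the Gromov--Cartan--Hadamard theorem, followed by a rescaling step. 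This gives uniformity of the final constant in $\delta,C$; your approach via asymptotic cones of the single space $X^{el}_\YY$ only gives qualitative hyperbolicity with a constant that a priori depends on $X$ and $\YY$ themselves, which matters since the paper uses the quantitative form downstream (e.g.\ in Propositions~\ref{cobpersists} and~\ref{prop;satisfiesqiip}).

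The serious gap, however, is the claim that one can ``rerun the proofs of Lemmas~\ref{lem;manycopies}--\ref{coro;quasitree} with cone parameter $s_N^{-1}$, the only effects being\ldots''. Those lemmas are built precisely for the regime cone parameter $=1$, $\delta_N,c_N\to 0$, and their statements \emph{fail} in your regime. The injectivity in Lemma~\ref{lem;manycopies} is proved from the fact that ``the added edges all have length $1$'', so that $d_{X_N}(y_N,z_N)$ not $o(1)$ forces the electrified distance to be not $o(1)$. With cone edges of length $s_N^{-1}\to 0$, two sequences $(y_N),(z_N)$ can have $d_{X_N}(y_N,z_N)$ bounded below by a positive constant (even unbounded) yet have electrified distance $O(s_N^{-1})\to 0$, by crossing a chain of finitely (or even $o(s_N)$) many cones. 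So the natural map of Lemma~\ref{lem;manycopies} is not injective in your setting, the partition into classes $\frakC$ and the disjoint-union-of-sheets decomposition break down, and with them the quasi-tree structure of Corollary~\ref{coro;quasitree}. This is not a ``degeneration of constants'' but a structural collapse: the multi-sheeted quasi-tree of the paper becomes a \emph{quotient} of a single sheet. Your intuition about the answer (the quotient of the $\mathbb{R}$-tree $\lim_\omega(X_N,x_N)$ by collapsing the family of convex subtrees arising from $\YY$) is plausible, but establishing that identification and that the resulting quotient metric space is an $\mathbb{R}$-tree requires a genuinely new argument --- in particular one has to handle chains of overlapping subtrees and non-locally-finite families of subtrees carefully --- not a rerun of the paper's lemmas. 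You flag this as ``the main obstacle'' but underestimate it: it is where the proof actually is.

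For the horoball half, your primary argument has the same dependence on reworking the lemmas, and your ``alternative'' route is not self-contained: to say that the horoballs $\HH(Y_i)$ are uniformly quasiconvex and mutually cobounded in $X^h_\YY$, and that ``re-inserting combinatorial horoballs in place of the cone points preserves hyperbolicity,'' already presupposes either hyperbolicity of $X^h_\YY$ or a theorem of Groves--Manning type that is essentially the statement to be proved; as written this is circular. The paper instead redoes the local-hyperbolicity argument for $X^h_\YY$ directly (separating the case of points escaping into a horoball), then concludes again via coarse simple connectedness and Gromov--Cartan--Hadamard, and rescales by the map $\eta$ at the end.
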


           \begin{proof}
             We claim that, for all $\rho$, there exists $\delta_0<\rho/10^{14}$
             and $C_0<\rho/10^{14}$ such that if $X$ is $\delta_0$-hyperbolic,
             and if $\YY$ is a collection of $C_0$-quasiconvex subsets, then
             every ball of radius $\rho$ of $X^{el}_\YY$ is $10$-hyperbolic.

             For proving the claim, assume it false, and consider a sequence of
             counterexamples $(X_N, \YY_N)$ for $\delta_0= C_0=\frac{1}{N}$,
             $N = 1, 2, \dots$.  This means that $(X_N)_{\YY_N}^{el}$ fails to be
             $10$-hyperbolic. There are four points $x_N,y_N, z_N, t_N$, all at
             distance at most $2\rho$ from $x_N$, such that
             $(x_N, z_N)_{t_N} \leq \inf \{ (x_N, y_N)_{t_N} , (y_N, z_N)_{t_N}
             \}-10$.
             We pass to the ultralimit for $\omega$. In $\lim_\omega (X_N^{el}, x_N)$,
             each sequence $x_N, y_N, z_N, t_N$ converges, since these points
             stay at bounded distance from $x_N$, and the inequality
             persists. Hence one gets four points falsifying the
             $10$-hyperbolicity condition in a pointed space
             $\lim_\omega ((X_N)_{\YY_N}^{el}, x_N)$.

             But the asymptotic cone $\lim_\omega ((X_N)_{\YY_N}^{el},
             x_N)$ is, by Corollary \ref{coro;quasitree}, a $2$-quasi-tree
             of spaces that are  electrifications (of parameter $1$) of  real
             trees $\lim_\omega (X_N,x'_N)$, for some base point
             $x'_N$  
            over the family $\YY^\omega$,
             consisting of \emph{convex} subsets ({\it i.e.} of
             subtrees). 

             This space $ (\lim_\omega (X_N,x'_N))^{el}_{\YY^\omega }$ has
             $2$-thin geodesic triangles, therefore   $\lim_\omega ((X_N)_{\YY_N}^{el},
             x_N)$      itself is $10$-hyperbolic, a
             contradiction. The claim hence holds: $X^{el}_\YY$ is $\rho$-locally
             $10$-hyperbolic.

             We now claim that, under the same hypothesis, it is
             $(2+10C_0+10\delta_0)$-coarsely simply-connected, that is to say
             that any loop in it can be homotoped to a point by a sequence of
             substitutions of arcs of length $< (2+10C_0+10\delta_0)$ by its
             complement in a loop of length $< (2+10C_0+10\delta_0)$. Indeed, any
             time such a loop passes through a cone point associated to some
             $Y\in \YY$, one can consider a geodesic in $X$ between its entering
             and exiting points in $Y$, which stays in the $C_0$ neighborhood of
             $Y$. Therefore, a $(2+10C_0)$-coarse homotopy of the loop transforms
             it into a loop in $X$, which is $\delta$-hyperbolic. Since a
             $\delta$- hyperbolic space is $10\delta$-coarsely simply-connected,
             the second claim follows.

             The final ingredient is the Gromov-Cartan-Hadamard theorem \cite[Theorem
             A.1]{Coulon_IJAC}, stating that, if $\rho$ is sufficiently large
             compared to $\mu$, any $\rho$-locally $10$-hyperbolic space which is
             $\mu$-coarsely simply connected is (globally) $\delta'$-hyperbolic,
             for some $\delta'$.  We thus get that there exists
             $\delta_0<\rho/10^{14}$ and $C_0<\rho/10^{14}$ such that if $X$ is
             $\delta_0$-hyperbolic, and if $\YY$ is a collection of $C_0$-quasiconvex subsets, then $X^{el}_\YY$ is $\delta'$-hyperbolic.

             Now let us argue that this implies the first point of the
             proposition. If $X$ and $\YY$ are given as in the statement, one may
             rescale $X$ by a certain factor $\lambda>1$, so that it is
             $\delta_0$-hyperbolic, and such that $\YY$ is a collection of
             $C_0$-quasiconvex subsets. Let us define $X_\YY^{el_{\lambda}}$ to
             be
             $$X^{el_{\lambda}}_\YY = X\sqcup \{\bigsqcup_{i\in I} Y_i \times
             [0,\lambda] \} /\sim$$
             where $\sim$ denotes the identification of $ Y_i\times \{0\}$ with
             $Y_i\subset X$ for each $i$, and the identification of
             $Y_i\times \{1\}$ to a single cone point $v_i$ (dependent on $i$),
             and where $Y_i \times [0,\lambda]$ is endowed with the product
             metric as defined in the first paragraph of \ref{sec;elec} except
             that $\{y\} \times [0,n]$ is isometric to $[0,\lambda]$.  The claim
             ensures that $X_\YY^{el_{\lambda}}$ is hyperbolic. However, it is
             obviously quasi-isometric to $X_\YY^{el}$. We have the first point.

             For the second part, one can proceed with a similar proof, with
             horoballs.

             The claim is then that for all $\rho$, there exist $\delta_0, C_0$
             and $D_0$ such that if $X$ is $\delta_0$-hyperbolic, and if $\YY$ is
             a collection of $C_0$-quasiconvex subsets, $D_0$-mutually
             cobounded, then any ball of radius $\rho$ of the horoballification
             $X^h_\YY$ is $10$-hyperbolic.

             The proof of the claim is similar. Consider a sequence of
             counterexamples $X_N, \YY_N$, for the parameters
             $\delta = C = D = 1/N$ for $N$ going to infinity, with the four
             points $x_N, y_N, z_N, t_N$ in $(X_N)^h_\YY$, in a ball of radius
             $\rho$, falsifying the hyperbolicity condition.

             There are two cases. Either $x_N$ (which is in $(X_N)^h_\YY$)
             escapes from $X_N$, i.e. its distance from some basepoint in $X_N$
             tends to $\infty$ for the ultrafilter $\omega$, or it does not. In
             the case that it escapes from $X_N$, then, when it is larger than $\rho$
             all four points $x_N, y_N, z_N, t_N$ are in a single horoball, but
             such a horoball is $10$-hyperbolic hence a contradiction.

             The other case is  when there is $x'_N\in X_N$ whose
             distance to $x_N$ remains bounded (for the ultrafilter $\omega$).
             Note that $\{ x_N, y_N, z_N, t_N\}$ converge in the asymptotic cone
             $\lim_\omega ((X_N)_{\YY_N}^h, x'_N )$ of the sequence of pointed
             spaces $( (X_N)_{\YY_N}^h, x'_N)$.  It is also immediate by
             definition of $\lim_\omega \YY_N$ that
             $\lim_\omega ((X_N)_{\YY_N}^h, x'_N )$ is the horoballification of
             the asymptotic cone of the sequence $(X_N, x'_N)$ over the family
             $\lim_\omega (\YY_N,x'_N)$ defined above.

             This family $\lim_\omega (\YY_N, x'_N)$ consists of \emph{convex}
             subsets (hence subtrees), such that any two share at most one point.
             This horoballification is therefore a tree-graded space in the sense
             of \cite{DrutuSapir}, with pieces being the combinatorial horoballs
             over the subtrees constituting $\lim_\omega \YY_N$.  As a tree of
             $10$-hyperbolic spaces, this space is $10$-hyperbolic, contradicting
             the inequalities satisfied by the limits
             $\lim_\omega \{ x_N, y_N, z_N, t_N\}$.  Therefore, $X_\YY^h$ is
             $\rho$-locally $10$-hyperbolic.

             As before, one may check that (under the same assumptions) $X^h_\YY$
             is $(2+10C_0+10\delta_0)$-coarsely simply connected, and again this
             implies by the Gromov-Cartan-Hadamard theorem that (under the same
             assumptions) $X^h_\YY$ is hyperbolic.

             This implies the second point. Indeed, let us denote by
             $\frac{1}{\lambda} X$ the space $X$ with metric rescaled by
             $ \frac{1}{\lambda}$.

             The previous claim shows that, under the assumption of the second
             point of the proposition, there exists $\lambda>1$ such that
             $\lambda (\frac{1}{\lambda} X)^{h}_{\frac{1}{\lambda}\YY} $ is
             hyperbolic. Consider the map $\eta$ between
             $ X^{h}_\YY \to \lambda (\frac{1}{\lambda}
             X)^{h}_{\frac{1}{\lambda}\YY} $
             that is identity on $X$ and that sends $\{y\} \times \{n\}$ to
             $\{y\}\times \{ \lambda \times ( n + \lfloor \log_2 \lambda \rfloor
             ) \}$
             for all $y\in Y_i$ and all $Y_i$ (and all $n$).  All paths in
             $X_\YY^h$ that have only vertical segments in horoballs have their
             length expanded (under the map $\eta$) by a factor  between $1$
             and $\lambda+\log_2\lambda$.  But the geodesics in $X_\YY^h$ and
             $ \lambda (\frac{1}{\lambda} X)^{h}_{\frac{1}{\lambda}\YY} $ are
             paths whose components in horoballs consist of a vertical
             (descending) segment, followed by a single edge, followed by a
             vertical ascending segment (see \cite{GM}).
             Hence 
             $\eta$ is a quasi-isometry, and the space $ X^{h}_\YY $ is
             hyperbolic. 
           \end{proof}

           We continue with the persistence of quasi-convexity.

           \begin{prop*}\ref{cobpersists}  Given $\delta , C$ there exists
             $C'$ such that if $(X,d_X)$ is a $\delta$-hyperbolic metric
             space with a collection $\mathcal{Y}$ of $C$-quasiconvex, 
             sets, then the following holds: \\
             If $Q (\subset X)$ is some (any) $C$-quasiconvex set (not
             necessarily an element of $\YY$), then $Q$ is $C'$-quasiconvex in
             $(X^{el}_\YY , d_e)$.

           \end{prop*}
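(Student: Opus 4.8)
\textit{Approach.} I would prove this by contradiction, passing to an asymptotic cone of the electrified space and using the description obtained in Corollary \ref{coro;quasitree}. Suppose the statement fails: then for each $n$ there is a $\delta$-hyperbolic space $Z_n$, a family $\YY_n$ of $C$-quasiconvex subsets, a $C$-quasiconvex set $Q_n\subseteq Z_n$, points $a_n,b_n\in Q_n$, and an electric geodesic $\gamma_n$ in $(Z_n)^{el}_{\YY_n}$ from $a_n$ to $b_n$ containing a point at $d^{el}$-distance $\geq n$ from $Q_n$. Set $\rho_n:=\sup_{z\in\gamma_n}d^{el}(z,Q_n)$ (so $\rho_n\geq n\to\infty$) and pick $p_n\in\gamma_n$ with $d^{el}(p_n,Q_n)\geq\rho_n-1$. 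Now rescale: replace $Z_n$ by $\tfrac1{\rho_n}Z_n$ (which is $\tfrac\delta{\rho_n}$-hyperbolic, with $\YY_n,Q_n$ now $\tfrac C{\rho_n}$-quasiconvex, all tending to $0$), so that the relevant electrification has cone parameter $\tfrac1{\rho_n}\to 0$. Taking the asymptotic cone $\mathcal C:=\lim_\omega\big((\tfrac1{\rho_n}Z_n)^{el_{1/\rho_n}},\,p_n\big)$ and rerunning the arguments of Lemma \ref{lem;electrification_of_ascone} and Corollary \ref{coro;quasitree} with the vanishing cone parameter in place of $1$, each piece becomes an electrification \emph{of parameter $0$} of an $\R$-tree over convex subtrees, i.e. an $\R$-tree with those subtrees crushed to points, and the quasi-tree constant $2/\rho_n$ also vanishes; hence $\mathcal C$ is itself an $\R$-tree, namely $\lim_\omega\tfrac1{\rho_n}Z_n$ with the subtrees $\lim_\omega\tfrac1{\rho_n}Y_n$ crushed to points.

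Next I would identify the limit of $Q_n$. Since $Q_n$ is $\tfrac C{\rho_n}$-quasiconvex with $\tfrac C{\rho_n}\to 0$, the set $\lim_\omega\tfrac1{\rho_n}Q_n$ is a convex subtree of $\lim_\omega\tfrac1{\rho_n}Z_n$, hence its image $Q^\omega\subseteq\mathcal C$ under the crushing map is a convex subtree. Two properties survive the limit. First, every point of $Q^\omega$ is a limit of points $q_n\in Q_n$, and each such $q_n$ satisfies $d^{el}(p_n,q_n)\geq d^{el}(p_n,Q_n)\geq\rho_n-1$; hence, writing $\bar p:=\lim_\omega p_n$, we get $d_{\mathcal C}(\bar p,Q^\omega)\geq 1$. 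Second, every point of $\gamma_n$ lies within $\rho_n$ of $Q_n$ for $d^{el}$, so the limit $\bar\gamma:=\lim_\omega\gamma_n$ — a geodesic of $\mathcal C$ through $\bar p$, whose two ends either terminate in $Q^\omega$ (namely at $\lim_\omega a_n$, resp. $\lim_\omega b_n$, when the rescaled lengths $d^{el}(a_n,b_n)/\rho_n$ stay bounded) or run off to infinity — remains within distance $1$ of $Q^\omega$.

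The contradiction is then immediate from tree geometry: in an $\R$-tree, a geodesic that stays within bounded distance of a subtree and whose endpoints (if any) lie on that subtree must be contained in the subtree — if it left the subtree, the distance to it would grow without bound along the geodesic (for the infinite ends), resp. the endpoints would violate convexity. Applying this to $\bar\gamma$ and $Q^\omega$ gives $\bar\gamma\subseteq Q^\omega$, hence $\bar p\in Q^\omega$, contradicting $d_{\mathcal C}(\bar p,Q^\omega)\geq 1$. This establishes the existence of a uniform $C'$.

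\textit{Main obstacle.} The delicate part is the middle step: one must check carefully that sending the cone parameter to $0$ really collapses the cone to an honest $\R$-tree (a mild adaptation of Lemma \ref{lem;electrification_of_ascone} and Corollary \ref{coro;quasitree}), and — more subtly — that the set $Q^\omega$ witnessing the argument is genuinely the convex subtree image of $\lim_\omega Q_n$, rather than something strictly larger obtained from $Q_n$-points that escape in $Z_n$ but stay bounded in the electrification; only then do the two assertions ``$d_{\mathcal C}(\bar p,Q^\omega)\geq 1$'' and ``$\bar\gamma$ stays within $1$ of $Q^\omega$'' hold for the \emph{same} $Q^\omega$. This is precisely where the hypothesis that $Q_n$ is $C$-quasiconvex in $Z_n$ itself (and not merely in the electrification) has to be used — playing, for $Q$, the role that mutual coboundedness plays for the elements of $\YY$.
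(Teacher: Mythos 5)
Your overall strategy — argue by contradiction, pass to an asymptotic cone, and exploit the tree-graded limit — is the same as the paper's, and your rescaling by $\rho_n$ (so that the hyperbolicity constants and the cone parameter all vanish simultaneously) is a reasonable variant of the paper's normalization $\delta_N = C_N = 1/N$ with cone parameter fixed at $1$. Sending the cone parameter to zero does simplify the limit: since $Z_n^{el}$ is uniformly hyperbolic by Proposition \ref{prop;criterion} and the rescaling factor diverges, $\mathcal C$ is indeed an $\R$-tree, rather than a $2$-quasi-tree of electrified $\R$-trees as in the paper.

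However, there is a genuine gap exactly where you flag it, and your proposed resolution is not correct. You assert that $\mathcal C$ is "$\lim_\omega\frac1{\rho_n}Z_n$ with the subtrees $\lim_\omega\frac1{\rho_n}Y_n$ crushed to points," and that $Q^\omega$ is the image of $\lim_\omega\frac1{\rho_n}Q_n$ under this "crushing map," hence convex. This mischaracterizes the cone. By Lemma \ref{lem;manycopies} and Lemma \ref{lem;electrification_of_ascone}, the asymptotic cone of the electrification is built from \emph{many} copies of $\lim_\omega\frac1{\rho_n}Z_n$ indexed by the classes $\frakC$, not from a single copy quotiented by collapsing some subsets. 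These copies arise because sequences can stay at electric distance $O(\rho_n)$ from $p_n$ while drifting to $Z_n$-distance $\gg\rho_n$; $Q^\omega$ generically contains such limits, i.e.\ it extends across several copies, and is strictly larger than the image of $\lim_\omega\frac1{\rho_n}Q_n$ under any crushing of the basepoint piece. What needs to be shown — and what the paper's argument via Corollary \ref{coro;quasitree} supplies — is that whenever $Q^\omega$ is visible in two copies joined by a sequence $(Y_n)$, it is adjacent to the common cone point; this is the place where the quasiconvexity of $Q_n$ in $Z_n$ (not merely in $Z_n^{el}$) is actually used, via the fact that a $Z_n$-geodesic between two points of $Q_n$ lies near $Q_n$ and, by similar-intersection-patterns, must also pass near any $Y$ that the electric geodesic between them penetrates. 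Without that step you have not established that $Q^\omega$ is a subtree (let alone convex), so the $\R$-tree geometry at the end — a geodesic with ends on a subtree, staying at bounded distance from it, must lie in it — has nothing to fire on. In short, the obstacle you identify is real and your sketch does not overcome it; the paper's route through Corollary \ref{coro;quasitree} and the adjacency observation is precisely the missing ingredient.
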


           \begin{proof}
             The strategy is similar to that in the previous proposition. The
             main claim is that for all $\rho$, there is $\delta_0<1, C_0<1$ such
             that if $(X,d_X)$ is $\delta_0$-hyperbolic, if $\mathcal{Y}$ is a
             collection of $C_0$-quasiconvex subsets and if $Q$ is another
             $C_0$-quasiconvex subset of $X$, then $Q$ is $\rho$-locally
             $10$-quasiconvex in $X^{el}_\YY$ (of course $\delta_0, C_0$ will be
             very small).

             To prove the claim, again, by contradiction, consider a sequence
             $X_N, \YY_N, Q_N$ of counter examples for $\delta_N=C_N= 1/N$ for
             $N=1,2,\dots$. There exist two points $x_N, y_N$
             in $Q_N$, at distance $\leq \rho$ from each other (for
             the electric metric), and a geodesic
             $ [ x_N, y_N ]$ in $X^{el}_\YY$ with a point $z_N$ on it at distance
             $>10$ from $Q$. We record  a point $z'_N$ in $Q$ at minimal distance
             ($\leq \rho $ in any case) from $z_N$.

             With a non principal ultrafilter $\omega$, we may take the
             asymptotic cone of the family of pointed spaces $(X_N^{el}, x_N)$. In
              $\lim_\omega ((X_N)^{el}_\YY,x_N)$, the sequences $(y_N)$,
             $([x_N, y_N])$ and $(z_N)$ have limits for which the distance
             inequalities persist, and we get that $\lim_\omega (Q_N,x_N)$ is not
             $\rho$-locally $10$-quasiconvex in
             $\lim_\omega ((X_N)^{el}_\YY,x_N)$. But as we noticed in
             Corollary \ref{coro;quasitree}
             $\lim_\omega ((X_N)^{el}_\YY,x_N)$ is a 2-quasi-tree of
             spaces of the form
             $( \lim_\omega (X_N, x'_N))^{el}_{\YY^\omega}$, which are the
             electrifications of  $\mathbb{R}$-trees $\lim_\omega
             (X_N, x'_N)$ over a
             family of convex subsets ({\it i.e.} subtrees).
             In this space, $\lim_\omega (Q_N, x_N)$
             is also a subforest of
             $\sqcup_{(u_N)\in \frakC} \lim_\omega (X_N, u_N)$.
             Also observe that if $(Q_N)$ is  visible in two adjacent
             classes,   then   $\lim_\omega (Q_N, x_N)$    is adjacent to their common cone point
             over sequences $  (Y_N^{(i)}), (Y_N^{(j)})  $.  
             Hence $\lim_\omega (Q_N, x_N)$ is $2$-quasiconvex in
             $ \lim_\omega ((X_N)^{el}_{ \YY^\omega}, x_N)$, and this
             contradicts the inequalities satisfied by
             $\lim_\omega \{x_N, y_N, z_N, z'_N\}$.  The claim is established for
             all $\rho$.

             Now there exists $\rho_0$ such that, in any $1$-hyperbolic space,
             any subset that is $\rho_0$-locally $10$-quasiconvex is
             $10^{14}$-globally quasiconvex (this classical fact, perhaps found
             elsewhere with other (better!) constants, follows also from the
             Gromov-Cartan-Hadamard theorem for instance). So, by choosing an
             appropriate $\rho$, we have proven that there is $\delta_0<1, C_0<1$
             and $C_1$, such that if $(X,d_X)$ is $\delta_0$-hyperbolic, if
             $\mathcal{Y}$ is a collection of $C_0$-quasiconvex subsets and if
             $Q$ is another $C_0$-quasiconvex subset of $X$, then $Q$ is
             $C_1$-quasiconvex in $X^{el}_\YY$.

             Coming back to the statement of the proposition, by rescaling our
             space, we have proven that if $(X,d_X)$ is a $\delta$-hyperbolic
             metric space with a collection $\mathcal{Y}$ of
             $C$-quasiconvex 
             sets, and if $Q$ is $C$-quasiconvex, then $Q$ is
             $\lambda C_1$-quasiconvex in $X_\YY^{el_\lambda}$ (as defined in the
             previous proof) for $\lambda = \max\{ \delta/\delta_0, C/C_0 \}$.
             Since $X_\YY^{el_\lambda}$ is quasi-isometric to $X_\YY^{el}$, by a
             $(\lambda,\lambda)$-quasi-isometry, it follows that $Q$ is
             $C'$-quasiconvex in $X_\YY^{el}$ for $C'$ depending only on
             $\delta, C$.

           \end{proof}

           Finally, we consider the proposed converse.

           \begin{prop*} \ref{prop;unfolding_qc}

             Let $(X,d)$ be hyperbolic, and let $\calY$ be a collection of
             uniformly quasiconvex subsets.  Let $H$ be a subset of $X$ that is
             coarsely path connected, and quasiconvex in the electrification
             $X_\calY^{el}$.

             Assume also that there exists $\epsilon\in (0,1)$, and $\Delta_0$
             such that for all $\Delta> \Delta_0$, wherever $H$
             $(\Delta,\epsilon)$-meets an item $Y$ in $\calY$, there is a path in
             $H^{+\epsilon \Delta}$ between the meeting points in $H$ that is
             uniformly a quasigeodesic in the metric $(X,d)$.

             Then $H$ is quasiconvex in $(X,d)$.

             The quasiconvexity constant of $H$ can be chosen to depend only on
             the constants involved for $(X,d), \calY, \Delta_0, \epsilon$, the
             coarse path connection constant, and the quasi-geodesic constant of
             the last assumption.

           \end{prop*}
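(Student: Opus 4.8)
The plan is to argue by contradiction with ultralimits, in the spirit of the proofs of Propositions~\ref{prop;criterion} and \ref{cobpersists} above. Suppose the statement fails: then for a fixed choice of all the constants occurring in the hypotheses --- hyperbolicity constant of $X$, quasiconvexity constant of $\calY$, quasiconvexity constant of $H$ in $X^{el}_\calY$, coarse path-connection constant, $\Delta_0$, $\epsilon\in(0,1)$, and quasigeodesic constants $(\lambda,\mu)$ of the meeting-point paths --- no quasiconvexity constant works, so there is a sequence $(X_N,\calY_N,H_N)$ obeying the hypotheses with those bounds, together with $x_N,y_N\in H_N$ and a geodesic $[x_N,y_N]$ of $X_N$ carrying a point $z_N$ with $R_N:=d_{X_N}(z_N,H_N)\to\infty$. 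Rescaling $X_N$ by $1/R_N$ and basing everything at $z_N$, we get $\delta_N\to0$, $\calY_N$ is $c_N$-quasiconvex with $c_N\to0$, $H_N$ is $\kappa_N$-quasiconvex in $(X_N)^{el}_{\calY_N}$ with $\kappa_N\to0$, the coarse path-connection constant, $\Delta_0/R_N$ and the additive constant $\mu/R_N$ all tend to $0$, while $\epsilon$ and $\lambda$ are scale invariant, and $d_{X_N}(z_N,H_N)=1$. The cone edges of the electrifications keep length $1$.

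Pass to ultralimits. Then $T:=\lim_\omega(X_N,z_N)$ is an $\mathbb R$-tree, the family $\calY^\omega$ of all $\lim_\omega(Y_N,z_N)$ (over visible $(Y_N)\in\prod\calY_N$) consists of subtrees of $T$, and by Corollary~\ref{coro;quasitree} the ultralimit $\lim_\omega\big((X_N)^{el}_{\calY_N},z_N\big)$ is a $2$-quasi-tree of the spaces $[\lim_\omega(X_N,\sim_u)]^{el}$, the piece containing $z^\omega:=[z_N]$ being $T^{el}$, the cone-off of $T$ over $\calY^\omega$ (unit cone edges). Let $H^\omega\subseteq T$ be the image of the $H_N$ and $\overline{H^{el}}$ its closure in $\lim_\omega\big((X_N)^{el}_{\calY_N},z_N\big)$; since $\kappa_N\to0$ and this ultralimit has uniformly thin triangles, $\overline{H^{el}}$ is geodesically convex. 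Here are the two facts we need: from $d_{X_N}(z_N,H_N)=1$ we get $d_T(z^\omega,H^\omega)=1$; and the $\mathbb R$-tree geodesic $\ell:=\lim_\omega[x_N,y_N]$ contains $z^\omega$ (because $z_N\in[x_N,y_N]$), while the geodesic $g$ of $\lim_\omega\big((X_N)^{el}_{\calY_N},z_N\big)$ through the endpoints (or ends) of $\ell$ lies in $\overline{H^{el}}$ (as $H_N$ is $\kappa_N$-quasiconvex in the space $(X_N)^{el}_{\calY_N}$, which is hyperbolic by Proposition~\ref{prop;criterion}), hence by the quasi-tree-of-spaces structure lies in the single piece $T^{el}$.

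The heart of the argument is to \emph{de-electrify $g$ inside the tree $T$}. A geodesic of $T^{el}$ runs along a geodesic of $T$ except that, finitely often, it short-cuts a maximal subsegment $[a,b]$ of that $T$-geodesic lying inside some $Y^\omega\in\calY^\omega$ (with $d_T(a,b)>2$) by the two cone edges through $c_{Y^\omega}$. Hence the de-electrification $g'$ of $g$ is a path of $T$ with the same endpoints as $g$, and since $T$ is a tree and $\ell$ has those endpoints, $g'\supseteq\ell\ni z^\omega$. I claim $g'\subseteq\overline{H^\omega}$, which contradicts $d_T(z^\omega,H^\omega)=1$. On the ``shallow'' portions of $g'$ (outside every short-cut segment) electric and metric distance to $H^\omega$ coincide, so those portions, lying in $\overline{H^{el}}$, lie in $\overline{H^\omega}$; in particular the short-cut endpoints $a,b$, being limits of shallow points, lie in $\overline{H^\omega}$. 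For the interior of a short-cut segment one should check --- this is where the meeting-point hypothesis and coarse path-connectedness of $H$ enter --- that $H^\omega$ $(\Delta,\epsilon)$-meets $Y^\omega$ with $a,b$ as a pair of meeting points, for some $\Delta\le d_T(a,b)$ with $\epsilon\Delta$ as small as we please (the maximality clause is witnessed because $g$ enters $Y^\omega$ exactly at $a$ and leaves exactly at $b$, so $Y^\omega$ recedes on either side, and $\epsilon<1$ gives room to keep $\Delta$ small while $\epsilon\Delta\ge d_T(a,H^\omega)=d_T(b,H^\omega)=0$). The hypothesis then supplies, for each $N$, a path $p_N$ from the pre-limit meeting points $a_N$ to $b_N$, lying in $H_N^{+\epsilon\Delta_N}$ and a $(\lambda,\mu)$-quasigeodesic of $X_N$; letting $\epsilon\Delta_N\to0$, the $p_N$ converge to a path $p$ of $T$ from $a$ to $b$ lying in $\overline{H^\omega}$ and a $(\lambda,0)$-quasigeodesic of $T$. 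Since $T$ is a tree, $p\supseteq[a,b]_T$, whence $[a,b]_T\subseteq\overline{H^\omega}$. Splicing the shallow portions with the short-cut segments gives $g'\subseteq\overline{H^\omega}$, the sought contradiction; scaling back produces a quasiconvexity constant for $H$ in $X$.

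I expect the third paragraph to be the main obstacle: the de-electrification of $g$ through the cone points of $T$. Two sub-points require care: pinning down exactly when electric distance to $H$ degenerates into metric distance (the ``shallow'' portions), the bookkeeping underlying the theory of electro-ambient quasigeodesics (\cite{mahan-ibdd,mahan-split,ctm-locconn}); and verifying the maximality clause of ``$(\Delta,\epsilon)$-meets'' for the pairs $(H^\omega,Y^\omega)$ --- using coarse path-connectedness of $H$ --- together with the correct choice of the scales $\Delta_N$, so that the replacement paths furnished by the hypothesis are simultaneously close to $H$ and quasigeodesics of $X$; this combination is exactly what forces the de-electrified subsegment back into $\overline{H^\omega}$. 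Tracking the constants through the argument yields the asserted dependence of the final quasiconvexity constant only on the data $(X,d)$, $\calY$, $\Delta_0$, $\epsilon$, the coarse path-connection constant, and the quasigeodesic constant of the meeting-point hypothesis.
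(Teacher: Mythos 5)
Your strategy shares the paper's use of asymptotic cones but is a genuinely different plan: you blow up by a single sequence $R_N = d_{X_N}(z_N, H_N) \to \infty$ and try to derive the contradiction globally in one ultralimit, whereas the paper first proves a \emph{local} lemma (for $a,b$ at bounded $d_H$-distance $R$, at a fixed scale $\Delta$) by sending $\delta_N, C_N, m_N \to 0$ directly, and only then upgrades to global quasiconvexity via a local-to-global (Gromov--Cartan--Hadamard type) step. The paper's two-step structure is what lets it keep $\Delta$, $R$, and $C_H^{el}$ all fixed constants simultaneously; your one-step rescaling forces you to juggle a scale $\Delta_N$ that has to grow (to beat $\Delta_0$), shrink (relative to $R_N$, so $H^{+\epsilon\Delta_N}$ rescales to a vanishing neighborhood of $H^\omega$), and still be small enough that the $20\delta$-maximality clause of ``$(\Delta,\epsilon)$-meets'' survives --- you flag this at the end, but it is the whole ball game, not a footnote.

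The most concrete gap is the assertion that $\kappa_N \to 0$: you scale $d_{X_N}$ by $1/R_N$ but keep the cone edges of $(X_N)^{el}_{\calY_N}$ at length $1$, and then claim $H_N$ becomes $\kappa_N$-quasiconvex in this rescaled electrification with $\kappa_N \to 0$. That does not follow. A geodesic of the rescaled electrification between two points of $H_N$ can pass through a cone point $v_i$, and $d^{el}(v_i, H_N)$ is bounded below by $1$ unless $H_N \cap Y_i \neq \emptyset$; more generally, because you have made the cones \emph{relatively cheaper} (cone length fixed while $X_N$ shrinks), the rescaled geodesics use different detours than the original electric geodesics, and the original constant $C_H^{el}$ gives no control of order $o(1)$. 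Without $\kappa_N \to 0$ you cannot conclude $g \subseteq \overline{H^{el}}$ (only that $g$ lies within a positive constant $\kappa$ of it), and then $g'$ and in particular $z^\omega$ lie only within a bounded (not vanishing) neighborhood of $H^\omega$, which no longer contradicts $d_T(z^\omega, H^\omega) = 1$ unless you separately show $\kappa < 1$.

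Even granting $g \subseteq \overline{H^{el}}$, the treatment of the shortcut segments $[a,b]_T$ is underdetermined. You need, for each shortcut, a scale $\Delta_N$ with: $\Delta_N > \Delta_0$ (to invoke the hypothesis); $\epsilon\Delta_N = o(R_N)$ (so the replacement paths $p_N \subset H_N^{+\epsilon\Delta_N}$ rescale into $\overline{H^\omega}$); $d_{X_N}(a_N, H_N), d_{X_N}(a_N, Y_N) \leq \epsilon\Delta_N$ and likewise at $b_N$; and the $20\delta$-maximality clause. The second and third are in tension (you need $d(a_N, H_N) = o(R_N)$ \emph{and} $\leq \epsilon \Delta_N$ while $\Delta_N = o(R_N)$), and the maximality clause is in further tension because $a,b$ are, by construction, limits of points that are \emph{close} to both $H$ and $Y$, so small perturbations of $a_N, b_N$ stay close to both --- exactly what the maximality clause forbids. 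You would need to slide the meeting points off $a_N, b_N$ along $g$ to make the clause hold, and then your replacement paths no longer span $[a,b]_T$. The paper's lemma neatly avoids all of this by keeping $\Delta$ fixed and deriving a short-cut at mesh $< m_N$ from the fellow-travelling of two copies of the hypothesis path, rather than trying to drag the full shortcut segment back into $\overline{H^\omega}$. In short: the overall plan is attractive and in the same spirit (ultralimits, de-electrification, trees), but both the $\kappa_N \to 0$ claim and the $\Delta_N$/maximality bookkeeping are real gaps, not routine details.
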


           We use the same strategy again. The claim is now the
            lemma below. 
           To state it, we need to define the $m$-coarse path
           metric on an $m$-coarse path connected subspace of a metric
           space. A subset $Y\subset X$ of a metric space 
           is $m$-coarse path connected if for  any two
           points $x,y$ in it there is a sequence $x_0=x, x_1, \dots,
           x_r = y$ for some $r$ such that $x_i\in Y$ and $d_X(x_i,
           x_{i+1}) \leq m$ for all $i$. We call such a sequence an
           $m$-coarse path or a path with mesh $\leq m$.  The length of the coarse
           path $(x_0, \dots, x_r)$ is $\sum d_X(x_i,
           x_{i+1})$. The $m$-coarse path metric on $Y$ is the distance
           obtained by taking the infimum of  lengths of 
           coarse paths between its points. An $m$-coarse geodesic is a
           coarse path realizing the coarse path metric between two
           points.

           \begin{lemma}
             Fix $C_H^{el}$, $R>0$, $Q>1$, $\epsilon >0$, and
             $\Delta >10\epsilon$.
             Then there exists $\delta_0, C_0, m_0 >0$,  such that the
             following holds:\\  Assume that $(X,d)$ is geodesic,
             $\delta_0$-hyperbolic, with a collection $\calY$ of
             $C_0$-quasiconvex subsets. Further suppose that
              $H$ is an $m_0$-coarsely connected subset of $X$ which is 
             $C_H^{el}$-quasiconvex in the electrification $X_\calY^{el}$.
             Equip $H^{+\epsilon\Delta}$ with its $m_0$-coarse path metric $d_H$.

             Assume also that whenever $H$ $(\Delta, \epsilon)$-meets a set
             $Y \in \calY$,  there is a $(Q, C_0)$-quasigeodesic
             path,  
             which is an $(m_0/10)$-coarse path, in $H^{+\epsilon \Delta}$ joining the meeting points
             in $H$.

             Then for all $a,b \in H^{+\epsilon \Delta}$ at $d_H$-distance at
             most $R$ from each other, any $m_0$-coarse  $\delta_0$-quasi-geodesic
             of $H^{+\epsilon \Delta}$ (for its coarse path metric)   between $a,b$ is
             $(\Delta\times C_H^{el})$-close to a geodesic of $X$.
           \end{lemma}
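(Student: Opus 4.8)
The plan is to run the same contradiction-plus-ultralimit scheme used for Propositions \ref{prop;criterion} and \ref{cobpersists}. Suppose the lemma fails for the given $C_H^{el}, R, Q, \epsilon, \Delta$. Then for every $N$ there is a counterexample with $\delta_0 = C_0 = m_0 = \tfrac1N$: a geodesic $\tfrac1N$-hyperbolic space $(X_N,d_N)$, a family $\calY_N$ of $\tfrac1N$-quasiconvex subsets, a $\tfrac1N$-coarsely connected $H_N \subseteq X_N$ that is $C_H^{el}$-quasiconvex in $(X_N)^{el}_{\calY_N}$ and satisfies the meeting hypothesis, together with $a_N, b_N \in H_N^{+\epsilon\Delta}$ with $d_{H_N}(a_N,b_N) \le R$ and a $\tfrac1N$-coarse $\tfrac1N$-quasigeodesic $\sigma_N$ of $(H_N^{+\epsilon\Delta}, d_{H_N})$ carrying a point $z_N$ with $d_N(z_N, [a_N,b_N]) > \Delta C_H^{el}$ for every geodesic $[a_N,b_N]$ of $X_N$. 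Since $R$ and $\Delta C_H^{el}$ are fixed, everything happens in a bounded part of $X_N$, so one may pass to the ultralimit based at $a_N$.

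In the limit, $X_\omega := \lim_\omega(X_N,a_N)$ is an $\R$-tree, and by Lemmas \ref{lem;manycopies}--\ref{lem;electrification_of_ascone} and Corollary \ref{coro;quasitree}, $\lim_\omega((X_N)^{el}_{\calY_N}, a_N)$ is a $2$-quasi-tree of electrifications of $\R$-trees over families of convex subtrees; write $\calY^\omega$ for the resulting family of subtrees. Since $H_N$ is $\tfrac1N$-coarsely connected, its ultralimit $\bar H := \lim_\omega H_N$ is a \emph{connected} subset of $X_\omega$, hence a subtree, and being a metric neighbourhood of a convex subset of an $\R$-tree, $\bar H^{+\epsilon\Delta}$ is again a subtree. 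Quasiconvexity with the fixed constant $C_H^{el}$ passes to the limit, so $\bar H$ is $C_H^{el}$-quasiconvex in the quasi-tree. Finally, because $\delta_N, m_N \to 0$, the ultralimit $\mathcal H_\omega := \lim_\omega(H_N^{+\epsilon\Delta}, d_{H_N})$ of the coarse-path spaces is a geodesic space carrying a $1$-Lipschitz map $\iota \colon \mathcal H_\omega \to X_\omega$ with image $\bar H^{+\epsilon\Delta}$ (here one uses $d_N \le d_{H_N}$ on $H_N^{+\epsilon\Delta}$), and $\bar\sigma := \lim_\omega \sigma_N$ is a geodesic of $\mathcal H_\omega$ joining $\bar a := \lim_\omega a_N$ to $\bar b := \lim_\omega b_N$, with $\iota(\bar\sigma)$ passing through $\bar z := \lim_\omega z_N$ and $d_{X_\omega}(\bar z, [\bar a,\bar b]_{X_\omega}) \ge \Delta C_H^{el}$.

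The crux is the bound $d_{\mathcal H_\omega}(\bar a, \bar b) \le d_{X_\omega}(\bar a,\bar b) + 2\Delta C_H^{el}$. Granting it, the argument closes: $\iota(\bar\sigma)$ is a path in the $\R$-tree $X_\omega$ from $\bar a$ to $\bar b$ of length $d_{\mathcal H_\omega}(\bar a,\bar b)$ through $\bar z$, and a path in an $\R$-tree joining two points through a point at distance $D$ from the arc between them has length at least $d_{X_\omega}(\bar a,\bar b)+2D$; hence $D\le \Delta C_H^{el}$, contradicting $d_N(z_N,[a_N,b_N])>\Delta C_H^{el}$. To prove the bound, choose $\bar a', \bar b' \in \bar H$ within $\epsilon\Delta$ of $\bar a,\bar b$ and a geodesic $\bar\gamma$ of the quasi-tree from $\bar a'$ to $\bar b'$; $\bar\gamma$ stays $C_H^{el}$-close to $\bar H$, and by the tree/quasi-tree structure it de-electrifies to the arc $[\bar a',\bar b']_{X_\omega}$, which meets the members of $\calY^\omega$ in sub-arcs. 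One now upgrades $\bar\gamma$ to an honest path inside $\bar H^{+\epsilon\Delta}$: outside the members of $\calY^\omega$ that $\bar H$ meets, $\bar\gamma$ tracks $\bar H$ in $X_\omega$ up to $C_H^{el}$, so those stretches already lie in $\bar H^{+C_H^{el}}$; across each sub-arc lying in a $Y^\omega \in \calY^\omega$, the local picture is precisely the ultralimit (with $C_N, m_N \to 0$) of ``$H_N$ $(\Delta,\epsilon)$-meets $Y_N$'', so the meeting hypothesis provides a $Q$-quasigeodesic of $X_\omega$ contained in $\bar H^{+\epsilon\Delta}$ and joining the (coarse) meeting points, which lie within $\epsilon\Delta$ of the endpoints of the sub-arc. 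Concatenating these pieces gives a path from $\bar a'$ to $\bar b'$ in $\bar H^{+\epsilon\Delta}$, approximable by $\tfrac1N$-coarse paths in $H_N^{+\epsilon\Delta}$, whose length exceeds $d_{X_\omega}(\bar a',\bar b')$ only by the cumulative slack at the meeting points and the $C_H^{el}$-excursions; since the whole path has length $\le R$, this slack totals at most $2\Delta C_H^{el}$ after absorbing the $\epsilon\Delta$-jumps between $\bar a,\bar a'$ and $\bar b,\bar b'$.

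\textbf{Main obstacle.} The delicate part is the de-electrification in the ultralimit: matching the limit of the meeting configuration with the genuine $(\Delta,\epsilon)$-meeting relation in the $\R$-tree $X_\omega$, checking that the spliced-in $Q$-quasigeodesics and arc-stretches really sit in $\bar H^{+\epsilon\Delta}$ and are approximable by $\tfrac1N$-coarse paths of controlled length in $H_N^{+\epsilon\Delta}$, and adding up the several additive contributions (the $\epsilon\Delta$ from the neighbourhood, the $C_H^{el}$ from electric quasiconvexity, the $2$ from the quasi-tree) so that they fit inside $\Delta\times C_H^{el}$. A subordinate point, needed to get the setup off the ground, is that $d_{\mathcal H_\omega}(\bar a,\bar b)$ is finite (this is exactly where $d_{H_N}(a_N,b_N)\le R$ is used) and that $\bar\sigma$ is a genuine geodesic of $\mathcal H_\omega$ (which uses $\delta_N,m_N\to 0$).
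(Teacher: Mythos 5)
The opening contradiction/ultralimit scaffolding matches the paper's proof (sequence of counterexamples with $\delta_N=C_N=1/N$, pass to a cone). But from there the two arguments diverge fundamentally, and your route has a real gap.

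Your whole argument pivots on the claim $d_{\mathcal H_\omega}(\bar a,\bar b) \le d_{X_\omega}(\bar a,\bar b)+2\Delta C_H^{el}$, but the sketch you give for it does not close. The pieces of the spliced path that cross meeting regions come from the hypothesis as $(Q,C_0)$-quasigeodesics of $(X,d)$; their \emph{length} can be a factor of $Q$ larger than the tree-distance they bridge, so the ``slack'' you are trying to control is multiplicative, not an additive $2\Delta C_H^{el}$. The phrase ``since the whole path has length $\le R$, this slack totals at most $2\Delta C_H^{el}$'' does not follow from anything stated. You flag this yourself under ``Main obstacle'' but do not resolve it; as it stands this step is asserted, not proved. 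There is also a secondary defect at the endgame: even if the bound held, the ultralimit only upgrades the hypothesis $d(z_N,[a_N,b_N])>\Delta C_H^{el}$ to $\ge\Delta C_H^{el}$, while your tree inequality gives $\le\Delta C_H^{el}$, so you land on an equality rather than a contradiction. The paper avoids the nonstrict-limit issue altogether because its contradiction is produced at finite $N$.

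The paper's proof does not attempt to bound $d_{\mathcal H_\omega}(\bar a,\bar b)$ at all. Instead it uses the electric quasiconvexity of $H$ to place $c^\omega$ within $C_H^{el}$ of $[a^\omega,b^\omega]$ in the cone-off of the limit tree while it is $\ge \Delta C_H^{el}$ away in the tree; a pigeonhole over the $\le C_H^{el}$ electric steps forces a single $Y^\omega$ to be traversed over a segment of length $\ge\Delta$. The key observation is that the limit of $\sigma_N$ must cross that segment \emph{twice} (out to $c^\omega$ and back), hence back at finite $N$ one gets two $(\Delta,\epsilon)$-meeting pairs $(r_1,r_2)$ and $(s_1,s_2)$ with close endpoints. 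The hypothesis supplies two $(Q,1/N)$-quasigeodesics of mesh $<m_N/10$ in $H^{+\epsilon\Delta}$ joining these pairs; the preliminary choice of $m_N$ was made precisely so that such quasigeodesics fellow-travel within $m_N/10$, which produces a shortcut in $H^{+\epsilon\Delta}$ of mesh $\le m_N$. This shortcut contradicts $\sigma_N$ being a coarse $\delta_N$-quasigeodesic for $d_{H_N}$. That ``double crossing $\Rightarrow$ fellow-traveling $\Rightarrow$ shortcut'' mechanism is the essential content of the lemma and is absent from your proposal; you would need to either supply it, or give a genuinely different and complete argument for the distance bound (together with a way to make the final inequality strict).
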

           \begin{proof}
             Suppose that the claim is false: For all choice of $\delta, C, m$ there is
             a counterexample. 
             Set $\delta_N = C_N = 1/N$.

             For each $\epsilon$, there exists $N$ such that, in a
             $\frac{1}{N}$-hyperbolic space, for any two points $x,y$, and any
             $(Q, 1/N)$-quasigeodesic $p$ which is a $(1/N)$-coarse path between these two
             points, the $\epsilon$-neighborhood of $p$ contains
             the geodesics $[x,y]$. (This, for instance, is visible on an
             asymptotic cone).

             Thus, it is possible to choose a sequence $m_N>10/N$ decreasing to
             zero, such that pairs of $\frac{9\Delta }{10}$-long
             $(Q, 1/N)$-quasigeodesics with mesh $\leq 1/N$ in a
             $\frac{1}{N}$-hyperbolic spaces, with starting points at distance
             $\leq \Delta/10$ from each other, and ending points at distance
             $\leq \Delta/10$ from each other, necessarily lie at distance
             $(m_N/10)$ from one another.

             Let then $X_N, H_N, \calY_N$ be a counterexample to our claim for
             these values: for each $N$ there is $a_N, b_N$ in $H_N^{+\Delta}$,
             $R$-close to each other for $d_{H_N}$, and a point
             $c_N\in H_N^{+\Delta}$ in a coarse $\delta_N$-quasi-geodesic in
             $[a_N, b_N]_{d_{H_n}}$ at distance at least
             $(\Delta\times C_H^{el})$ from a geodesic $[a_N, b_N]$ in $X_N$.
             However $c_N$ is $ C_H^{el}$-close to a geodesic $[a_N, b_N]_{el}$
             in $X^{el}$.  Passing to an asymptotic cone, we find a map
             $p_\omega$ from an interval $[0,R]$  to a continuous path
              in $\lim_\omega (H_N, a_N)$    from $a^{\omega}$
             to $b^{\omega}$
(which can be equal to $a^{\omega}$) that passes through a point
             $c^{\omega}$ at distance $\geq (\Delta\times C_H^{el})$ from the arc
             $[a^{\omega}, b^{\omega}]$ in $\lim_\omega(X_N, a_N)$.

             However,
             it is at distance $\leq C_H^{el}$ in the electrification of
             $\lim_\omega(X_N, a_N)$ by $\calY^{\omega}$.

             It follows that on the path in  $\lim_\omega(X_N, a_N)$  from $[a^{\omega}, b^{\omega}]$ to
             $c^{\omega}$, there must exist a segment 
             of length $\geq \Delta$ 
             belonging to the same
             $Y^\omega \in \calY^{\omega}$ .  Let us say
             that $Y^\omega$ is the limit of a sequence $Y_N$. Note that the
             limit path $p_\omega$ crosses  this segment at least twice (once in
             either direction).

             Thus, for $N$ large enough, $H_N$ $(\Delta,\epsilon)$-meets $Y_N$,
             with two pairs of meeting points $(r_1, r_2), (s_1, s_2)$ in $H_N$,
             where $d(r_1,r_2) \geq 9\Delta/10$ (and $(s_1,s_2) \geq 9\Delta/10$)
             and $d(r_1,s_1) \leq 3\Delta/10$ and $ d(r_2,s_2) \leq 3\Delta/10$.
             By assumption, there is a $(Q, \frac{1}{N})$-quasi-geodesic path in
             $H^{+\epsilon \Delta}$ from $s_1$ to $s_2$ and another from $r_1$ to
             $r_2$, with mesh $<m_N/10$. They  have to fellow travel on a
             large subpath, and  pass at distance $\leq m_N/10$ from
             each other, by choice of $m_N$.
             One can therefore find a shortcut  that is still a path in
             $H^{+\Delta}$ of mesh $\leq m_N$,  a contradiction. 
           \end{proof}

           From the claim, we can prove the statement of the Proposition. Consider a situation as in the statement.  We may choose the coarse
           path connectivity constant of $H$ to be more than $10$ times the
           quasigeodesic constant of the last
           assumption there.  Take $\epsilon$, given by the assumption of the Proposition, and
           $\Delta >\max\{100 \epsilon, \Delta_0\}$. Let $Q$ be the
           quasi-geodesic constant given by the the assumption of the
           proposition on $(\Delta,\epsilon)$-meetings, and $C_H^{el}$ be as given
           by the assumption. Take $R$ larger (how large will be made clear  in the proof).

           Rescale the space $X$ so that the hyperbolicity constant, the
           quasiconvexity constant of items of $\calY$, and the constant of
           coarse path connection of $H$ are respectively smaller than 
           $\delta_0, C_0, m_0$ of the Lemma above.

           Note that the assumption of the proposition on
           $(\Delta,\epsilon)$-meetings is invariant under rescaling (except for
           the value of $\Delta_0$). Thus, this assumption still holds, with the
           same $\epsilon$, and for the specified $\Delta$ chosen above. The Lemma applies, and $H^{+\epsilon \Delta}$ is $R$-locally
           quasiconvex for the rescaled metric. By the local to global principle
           (in $\delta_0$ hyperbolic spaces), with a suitable preliminary (large enough) choice
           of $R$, $H^{+\epsilon \Delta}$ is then globally quasiconvex. After rescaling back to the
           original metric of $X$, $H^{+\lambda}$ is still quasiconvex for some
           $\lambda$ (depending on $\epsilon\Delta$, and the coefficient of
           rescaling); hence $H$ is quasiconvex.

           By construction, we also have the statement on the dependence of the
           quasiconvexity constant. $\Box$

           \subsubsection{Coarse hyperbolic embeddedness and Strong Relative
             Hyperbolicity}

           The following Proposition establishes the equivalence of Coarse
           hyperbolic embeddedness and Strong Relative Hyperbolicity in the
           context of this paper.
           \begin{prop}\label{prop;from_horo_to_he}
             Assume that $(X,d)$ is a metric space, and that $\YY$ is a
             collection of subspaces.

             If the horoballification $X_\YY^h$ of $X$ over $\YY$ is hyperbolic,
             then $\YY$ is coarsely hyperbolically embedded in the sense of
             spaces.

             If $X$ is hyperbolic and if $\YY$ is coarsely hyperbolically
             embedded in the sense of spaces, then $X_\YY^h$ is hyperbolic.
           \end{prop}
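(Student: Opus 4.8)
The two implications are treated separately, and in both the key bookkeeping device is Proposition~\ref{double-elec}: the map $e$ there exhibits $X^{el}_\YY$ as quasi-isometric to $(X^h)^{el}_{\HH(\YY)}$, the electrification of the horoballification over the family of horoballs $\{\HH(Y):Y\in\YY\}$, via a quasi-isometry that is the identity on $X$ and sends the cone point $v_Y$ of $Y$ to the cone point of $\HH(Y)$. I will also use that the combinatorial horoballs are uniformly quasiconvex inside any hyperbolic horoballification (leaving a horoball is exponentially expensive; this is essentially \cite{GM}), and that the ``foot retraction'' $\rho\colon X^h_\YY\to X$, $(y,n)\mapsto y$, sends a path of length $L$ to a path in $X$ of length at most $L\,2^{L}$ (a length-$L$ path never reaches depth $>L$, and a horizontal edge at depth $n$ spans at most $2^n$ in $X$); in particular $d_X(y_1,y_2)\le g\bigl(d_{X^h_\YY}(y_1,y_2)\bigr)$ for $y_1,y_2\in Y$, with $g(t)=t\,2^{t}$.

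For the implication ``$X$ hyperbolic and $\YY$ coarsely hyperbolically embedded $\Rightarrow$ $X^h_\YY$ hyperbolic'': Corollary~\ref{cor-retraction} gives that the elements of $\YY$ are uniformly quasiconvex in $X$. I claim they are also mutually cobounded. If not, for every $R$ there are $Y\ne Y'$ in $\YY$ and $y_1',y_2'\in Y'$ with $d_X(\pi_Y y_1',\pi_Y y_2')\ge R$; standard thin-quadrilateral geometry in $X$ then produces a geodesic subsegment of length $\asymp R$ lying uniformly close to both $Y$ and $Y'$, and choosing $a,b$ near its endpoints and $a',b'\in Y$ nearest to them, one routes a path $a'\to Y'\to v_{Y'}\to Y'\to b'$ in $X^{el}_\YY$ to get $\hat d_Y(a',b')=O(1)$ while $d_X(a',b')\asymp R$, contradicting the angular lower bound $\hat d_Y\ge\psi\circ d|_Y$ with $\psi$ proper. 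Thus $\YY$ is uniformly quasiconvex and mutually cobounded in the hyperbolic space $X$, and Proposition~\ref{prop;criterion} yields that $X^h_\YY$ is hyperbolic.

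For the converse, assume $X^h_\YY$ is $\delta'$-hyperbolic. Condition~(1) of Definition~\ref{he} is immediate: by quasiconvexity of the horoballs and Proposition~\ref{prop;criterion}, $(X^h)^{el}_{\HH(\YY)}$ is hyperbolic, hence so is the quasi-isometric space $X^{el}_\YY$. For condition~(2), first note that hyperbolicity of $X^h_\YY$ forces the horoballs to be mutually $D$-cobounded: were $\diam\pi_{\HH(Y)}(\HH(Y'))$ large, a thin-quadrilateral argument would place a point of a geodesic between two points $p_1,p_2\in Y$ at depth $\gtrsim\log R$ in $\HH(Y)$ and simultaneously within $O(\delta')$ of $\HH(Y')$ (using that $[y_i',p_i]$ stays at depth $0$ in $\HH(Y)$ since $p_i$ is the nearest point of $\HH(Y)$), which is impossible since a depth-$k$ point of $\HH(Y)$ is at distance $\ge k$ from $\HH(Y')$. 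Now fix $Y$ and $y_1,y_2\in Y$, let $\gamma$ be a path from $y_1$ to $y_2$ in $X^{el}_\YY$ avoiding $v_Y$, of length $L_\gamma$ (we may assume it meets each cone $\mathrm{cone}(Y_i)$, $Y_i\ne Y$, in one excursion through $v_{Y_i}$ between points $a,b\in Y_i$), de-electrify it in $X^h_\YY$ by replacing each such excursion $a\to v_{Y_i}\to b$ by the geodesic $[a,b]_{X^h_\YY}$, obtaining a path $\gamma^{\ast}$ from $y_1$ to $y_2$, and project $\gamma^{\ast}$ onto $\HH(Y)$ by the nearest-point projection $\pi$ (note $\pi(y_j)=y_j$). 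Along each $X$-edge of $\gamma^{\ast}$ the projection moves at most a uniform constant, and along each geodesic $[a,b]_{X^h_\YY}$ — which, $\HH(Y_i)$ being quasiconvex, stays in a bounded neighborhood of $\HH(Y_i)$ — it moves at most $D+O(\delta')$ by mutual coboundedness; summing over the at most $L_\gamma$ pieces, $d_{X^h_\YY}(y_1,y_2)\le K\,L_\gamma$ with $K$ uniform. Taking the infimum over $\gamma$ and combining with $d_X(y_1,y_2)\le g\bigl(d_{X^h_\YY}(y_1,y_2)\bigr)$ gives $\hat d_Y(y_1,y_2)\ge \tfrac1K\,g^{-1}\bigl(d_X(y_1,y_2)\bigr)$, and $t\mapsto\tfrac1K\,g^{-1}(t)$ is proper; so $\YY$ is coarsely hyperbolically embedded.

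\textbf{Main obstacle.} The delicate step is this last lower bound on the angular metric. One cannot simply de-electrify a short $v_Y$-avoiding path and read off a bound, because de-electrification inflates length uncontrollably (a length-$2$ excursion through $v_{Y_i}$ becomes an up-and-over of length $\sim\log\diam Y_i$). The resolution is to \emph{project} the de-electrified path onto $\HH(Y)$ rather than to measure it: each inflated excursion lies next to a single other horoball $\HH(Y_i)$, whose projection onto $\HH(Y)$ is bounded by mutual coboundedness, so the projected path has length only $O(L_\gamma)$; the exponential distortion of the horoball retraction then converts control of $d_{X^h_\YY}$ into the required control of $d_X$. Verifying the uniform quasiconvexity and mutual coboundedness of the horoballs inside the hyperbolic space $X^h_\YY$ is the remaining technical ingredient, and is essentially part of the Groves--Manning picture.
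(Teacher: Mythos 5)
Your argument is correct, and for the second implication (persistence: $X$ hyperbolic and $\YY$ coarsely hyperbolically embedded implies $X^h_\YY$ hyperbolic) it is essentially the paper's own, just spelling out the mutual coboundedness that the paper only asserts. For the first implication, however, your route is genuinely different from the paper's. The paper takes the $v_Y$-avoiding path of length $\le\theta_0$ in $X^{el}_\YY$, adjoins the two cone edges through $v_Y$ to close it into an $(N+2)$-gon, de-electrifies each cone passage into a geodesic $\mu_i$ inside the corresponding horoball to obtain a geodesic $(2N+2)$-gon $P$ in $X^h_\YY$, and then applies $\delta'$-thinness of $P$: the midpoint of $\mu_Y$ sits at depth $\gtrsim\log D$ in $\HH(Y)$ yet must be $(2N+2)\delta'$-close to another side, none of which enter $\HH(Y)$ — contradiction once $D$ is large. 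You instead de-electrify only the excursions, project the resulting path onto $\HH(Y)$ by nearest-point projection, and bound the length of the projection using (i) coarse Lipschitzness of projection on $X$-edges and (ii) mutual coboundedness of horoballs on excursion pieces, then convert the resulting $d_{X^h_\YY}$-bound back to a $d_X$-bound via the exponential distortion of the foot retraction. Your approach is a bit more modular — it isolates the contraction estimate and gives a cleaner explicit form for $\psi$ (namely $g^{-1}/K$ with $g(t)=t\,2^t$) — but it requires an extra ingredient the paper avoids: establishing mutual $D$-coboundedness of $\{\HH(Y)\}$ inside $X^h_\YY$ (your thin-quadrilateral argument for this is fine, noting that nearest-point projections of $\HH(Y')$ land in the horosphere $Y\subset\HH(Y)$ and that depth-$k$ points of $\HH(Y)$ are at $X^h_\YY$-distance $\ge k$ from $\HH(Y')$). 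The paper's polygon argument sidesteps coboundedness entirely by using directly that the sides $\mu_i$ with $i\ne Y$ lie in distinct horoballs. Both are valid; yours costs one more lemma but gives cleaner bookkeeping.
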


           We remark here parenthetically that the converse should be true
           without the assumption of hyperbolicity of $X$. However, this is not
           necessary for this paper.

           \begin{proof}

             Assume that the horoballification $X_\YY^h$ of $X$ over $\YY$ is
             $\delta-$hyperbolic.  The horoballs $Y^h$ (corresponding to $Y$) are
             thus $10\delta$-quasiconvex.  Therefore, by Proposition
             \ref{prop;criterion}, the electrified space obtained by electrifying
             (coning off) the horoballs $Y^h$ is hyperbolic.

             Since by Proposition \ref{double-elec}, this space is
             quasi-isometric to $(X^{el}_\YY, d^{el}_\YY)$, it follows that the
             later is hyperbolic. This proves the first condition of Definition
             \ref{he}.

             We want to prove the existence of a proper increasing function
             $\psi:\bbR_+\to\bbR_+$, such that the angular metric at each cone
             point $v_Y$ (for $Y\in \YY$) of $(X^{el}_\YY, d^{el}_\YY)$ is
             bounded below by $\psi \circ d|_Y$.  Define
             $$ \psi(r) = \inf_{Y\in \YY}\,\,\, \inf_{y_1, y_2 \in Y, d(y_1, y_2)
               \leq r } \hat{d}(y_1, y_2). $$

             Of course, the angular metric at $v_Y$ is bounded below by
             $\psi \circ d|_Y$.  The function $\psi$ is obviously increasing.  We
             need to show that it is proper, i.e. that it goes to $+\infty$.

             If $\psi$ is not proper, then there exists $\theta_0 > 0 $ such that
             for all $D$, there exist $Y\in \YY $ and $y, y' \in Y$ at
             $d-$distance greater than $D$ but $\hat{d} (y,y') \leq \theta_0$
             (where $\hat{d}$ is the angular metric on $Y$).  We choose
             $D >>\theta_0\delta$  (for instance $D= \exp(100 (\theta_0+1)(\delta+1))$).

             Consider a path in $(X^h_\YY)^{el}$ of length less than $\theta_0$
             from $y$ to $y'$ avoiding the cone point of $Y$.  Because
             $D>>\theta_0$, this path has to pass through other cone points. It
             can thus be chosen as a concatenation of $N+1$ geodesics whose
             vertices are $y,y'$ and some cone points $v_1, \dots, v_N$
             (corresponding to $Y_1, \dots, Y_N$ with $N<\theta_0$). Adjoining
             the (geodesic) path $[y,v_Y] \cup [v_Y, y']$ (where $v_Y$
             corresponds to the cone point for $Y$), we thus have a geodesic
             $(N+2)$-gon $\sigma$.
             Next replace each passage of $\sigma$ through a cone point ($v_i$ or
             $v_Y$) in $X^{el}_\YY$ by a geodesic ($\mu_i$ or $\mu_Y$
             respectively) in the corresponding horoball ($Y_i^h$ or $Y^h$
             respectively) in $X_\YY^h$ to obtain a geodesic $(2N+2)$-gon $P$ in
             $X_\YY^h$. The geodesic segments $\mu_i$ or $\mu_Y$ comprise $(n+1)$
             alternate sides of this geodesic $(2N+2)$-gon.

             Since $X_\YY^h$ is $\delta-$hyperbolic, it follows that the
             mid-point $m$ of $\mu_Y$ is at distance
             $\leq (2N+2)\delta$ from another edge of
             $P$.  Note that $m$ is in the horoball of $Y$, and because the
             distance in $Y$ between $y$ and $y'$ is larger than $D$, we have
             that $d^h(m, Y) $ is at least $\log(D)/2$.

             Since no other edge of $P$ enters the horoball $Y^h$, this forces
             $\log(D)$ (and hence $D$) to be bounded in terms of $\theta_0$
             and $\delta$: $D\leq \exp(4 (N +1) \delta)$. Since $N\leq \theta_0$, 
             this is a
             contradiction with the choice of $D$. We can conclude that $\psi$ is
             proper, and we have the first statement.

             Let us consider the second statement.  If $X$ is hyperbolic and if
             $\YY$ is coarsely hyperbolically embedded in the sense of spaces,
             then elements of $\YY$ are uniformly quasiconvex in $(X,d)$ by
             \ref{cor-retraction},
             and, by the property of the angular distance on any $Y\in \YY$, they
             are mutually cobounded.
             The statement then follows by Proposition \ref{prop;criterion}.
           \end{proof}

           \section{Algebraic Height and Intersection Properties}

           \subsection{Algebraic Height}
           We  
           recall here the general definition for height of finitely many subgroups. 

           \begin{defn} Let $G$ be a group and $\{ H_1, \dots, H_m\}$ be a finite collection of subgroups.
             Then the {\bf algebraic height} of this collection is 
             $n$ if
             $(n+1)$ is the smallest number with the property that
             for any $(n+1)$ distinct left cosets $g_1H_{\alpha_1},\dots,
             g_{n+1}H_{\alpha_{n+1}}$, the intersection $\bigcap_{1\leq i\leq n+1} g_i
             H_{\alpha_i} g_i^{-1}$ is  finite. \end{defn}

           We shall describe this briefly by saying that algebraic height is the largest $n$ for which the intersection of
           $n$ {\bf essentially distinct} conjugates of $ H_1, \dots, H_m$ is infinite. Here `essentially distinct' refers to
           the cosets of $ H_1, \dots, H_m$ and not to the conjugates themselves.

           \medskip

           For hyperbolic groups, one of the main Theorems of
           \cite{GMRS} is the following:

           \begin{theorem} \label{alght} \cite{GMRS} Let $G$ be a hyperbolic group and $H$ a quasiconvex subgroup. Then the algebraic height of $H$ is finite.
             Further, there exists $R_0$ such that if $H \cap gHg^{-1} $ is infinite, then $g$ has a double coset representative with length at most $R_0$.

             The same conclusions hold for finitely many quasiconvex subgroups $\{
             H_1, \dots, H_n\}$ of $G$. 
           \end{theorem}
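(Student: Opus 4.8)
The plan is to show that an infinite intersection of essentially distinct conjugates forces all of the corresponding cosets to stay uniformly close to a single bi-infinite geodesic, and then to bound the number of cosets that can do this by a crude counting argument in the locally finite Cayley graph. Fix a finite generating set $S$, write $X=\Cay(G,S)$, assume $X$ is $\delta$-hyperbolic, and let $H$ be $\kappa$-quasiconvex (we may assume $H$ is infinite, since otherwise every $\bigcap_i g_iHg_i^{-1}$ is finite and there is nothing to prove). Two standard uniform facts drive everything. First, a $\kappa$-quasiconvex subgroup lies within Hausdorff distance $R_1:=\kappa+2\delta$ (up to a bounded additive error) of the convex hull of its limit set $\Lambda H\subset\partial X$: a bi-infinite geodesic between two points of $\Lambda H$ is a limit of geodesics between elements of $H$, each $\kappa$-close to $H$, and any two bi-infinite geodesics with the same endpoints are $2\delta$-close; applying the isometry $x\mapsto gx$, the coset $gH$ lies within the \emph{same} distance $R_1$ of $\mathrm{hull}(g\Lambda H)$ for every $g\in G$. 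Second, an infinite subgroup of a hyperbolic group contains an infinite-order, hence loxodromic, element, because torsion elements have bounded order.

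Now suppose $g_1H,\dots,g_mH$ are essentially distinct and $K=\bigcap_{i=1}^m g_iHg_i^{-1}$ is infinite. Choose a loxodromic $k\in K$ with fixed points $k^{\pm}\in\partial X$ and a bi-infinite geodesic axis $A_k$ from $k^-$ to $k^+$. For each $i$, the element $g_i^{-1}kg_i$ lies in $H$ and is loxodromic with fixed points $g_i^{-1}k^{\pm}$, so $g_i^{-1}k^{\pm}\in\Lambda H$, i.e.\ $k^{\pm}\in g_i\Lambda H$. Hence $\mathrm{hull}(g_i\Lambda H)$ contains a geodesic with endpoints $k^{\pm}$, which is $2\delta$-close to $A_k$, so every point of $A_k$ lies within $R_2:=R_1+2\delta$ of $g_iH$. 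Fix one vertex $x_0\in A_k$ and, for each $i$, choose $a_i\in g_iH$ with $d(a_i,x_0)\le R_2$. If $a_i=a_j$ then $g_iH\cap g_jH\neq\emptyset$, so $g_iH=g_jH$ and $i=j$; thus $i\mapsto a_1^{-1}a_i$ injects $\{1,\dots,m\}$ into the ball of radius $2R_2$ about $1$ in $X$, which is finite because $X$ is locally finite. This bounds $m$, and hence the algebraic height of $H$, by a constant $N$ depending only on $\delta,\kappa$ (explicitly, $N=|B_X(1,2\kappa+8\delta)|$).

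For the second assertion, apply the same reasoning to the pair of cosets $H$ and $gH$ when $K=H\cap gHg^{-1}$ is infinite: a loxodromic $k\in K$ has axis $A_k$ within $R_2$ of both $H$ and $gH$, so choosing $h,h'\in H$ with $h$ and $gh'$ both within $R_2$ of a fixed $x_0\in A_k$ gives $|h^{-1}gh'|\le 2R_2$ with $h^{-1}gh'\in HgH$; so $R_0:=2R_2$ works. For a finite family $\{H_1,\dots,H_n\}$ of $\kappa$-quasiconvex subgroups the argument is unchanged: an infinite $\bigcap_{i=1}^m g_iH_{\alpha_i}g_i^{-1}$ still produces a loxodromic whose axis is within $R_2$ of every coset $g_iH_{\alpha_i}$; running the injectivity step \emph{within each value} of $\alpha_i\in\{1,\dots,n\}$ (distinct cosets of a \emph{fixed} subgroup that meet are equal) bounds $m$ by $n\cdot|B_X(1,2R_2)|$, and the double-coset bound for each pair $(H_i,H_j)$ is verbatim.

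I do not anticipate a serious obstacle; the argument is elementary modulo the two standard facts above. The point requiring the most care is the \emph{uniformity} of the quasiconvex-to-hull estimate across all conjugates — that quasiconvexity of $H$ with a single constant $\kappa$ propagates, with the same constant, to every coset $g_iH$, which is automatic because left translation is an isometry of $X$ — together with the minor bookkeeping in the several-subgroups case about what ``essentially distinct'' should mean once cosets of different $H_j$ are allowed. The efficiency of the resulting constant is far better than needed; for the statement as given, mere finiteness suffices.
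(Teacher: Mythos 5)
Your proof is correct and takes essentially the same route as the paper's: both arguments locate a boundary phenomenon shared by all the cosets $g_iH_{\alpha_i}$ (a common limit point arising from the infinite intersection), pull back to the Cayley graph a point that is uniformly close to every coset, and then invoke local finiteness of the Cayley graph to bound the number of cosets. The paper's sketch works with a single accumulation point $p$ in the limit set of $\bigcap_i H^{g_i}$ and $N$ uniform quasi-geodesic rays (one in each coset) converging to $p$, finding a ball $B_R(p_0)$ far along $\sigma_1$ through which they all pass; you instead pick a loxodromic $k$ in the intersection, use both its fixed points $k^{\pm}$, and observe that its axis $A_k$ lies within a uniform distance of every coset because $k^{\pm}\in g_i\Lambda H$. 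The axis variant is mildly cleaner (a fixed bi-infinite geodesic rather than rays that must be shown to coalesce), and it makes the double-coset bound drop out immediately; but the key mechanism, and the handling of the multi-subgroup case by sorting indices according to which $H_j$ they come from, match the paper's pigeonhole remark. No gap.
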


           We quickly recall a proof of Theorem \ref{alght} for one subgroup $H$
           in order to generalize it to the context of mapping class groups and $Out(F_n)$.

           \begin{proof} 
             Let $G$ be hyperbolic, $X(=\Gamma_G)$ a Cayley graph of
             $G$ with respect to a finite generating set 
             (assumed to be $\delta-$hyperbolic),  and $H$ a
             $C_0-$quasiconvex subgroup of $G$. 
             Suppose that there exist $N$ essentially distinct
             conjugates $\{ H^{g_i}\}, i=1, \dots, N$, of $H$ that
             intersect  
             in an infinite subgroup. The $N$ left-cosets
             $g_iH$ are disjoint and
             share an accumulation point $p$ in the boundary of $G$ (in the limit set
             of $\cap_i H^{g_i}$). Since all $g_iH$ are $C_0-$quasiconvex, there exist
             $N$ disjoint quasi-geodesics $\sigma_1, \dots, \sigma_N$
             (with same constants $\lambda, \mu$ 
             depending only on $C_0, \delta$) converging to $p$ such
             that $\sigma_i$ is in $g_iH$. 
             Since $X$ is $\delta-$hyperbolic, there exists  $R
             (=R(\lambda, \mu, \delta) = R(C_0, \delta))$ and a point
             $p_0$
             sufficiently far along $\sigma_1$ such that all the
             quasi-geodesics $\sigma_1, \dots, \sigma_N$ pass through
             $B_R(p_0)$. Hence $N \leq \#(B_R(p_0))$
             giving us finiteness of height. 
             
             Further, any  such $\sigma_i$ furnishes a double coset representative
             $g_i^\prime$ of $g_i$ (say by taking a word that gives the shortest
             distance between
             $H$ and the coset $g_iH$) of length bounded in terms of
             $R$. This furnishes the second conclusion.
           \end{proof}

           \begin{remark} {\rm A word about generalizing the above
               argument to a family $\HH$ of finitely many subgroups
               is necessary.
               The place in the above argument where $\HH$ consists of
               a singleton is used essentially is in declaring that
               the 
               $N$ left-cosets
               $g_iH$ are disjoint. This might not be true in general
               (e.g. $H_1 < H_2$ for a family having two elements).
               However, by the pigeon-hole principle, choosing $N_1$ large enough, any 
               $N_1$ distinct conjugates $\{ H_i^{g_i}\}, i=1, \dots,
               N, H_i \in \HH$ must contain $N$ 
               essentially distinct conjugates $\{ H^{g_i}\}, i=1,
               \dots, N$ of some $H \in \HH$  and then the
               above argument for a single $H\in \HH$  goes through.}
           \end{remark}

           \begin{remark} {\rm A number of  other examples of finite
               algebraic height may be obtained 
               from certain special subgroups of
               Relatively Hyperbolic Groups, Mapping Class Groups and
               Out($F_n$).  These will be discussed after we introduce
               geometric height later in the paper.}
           \end{remark}

           \subsection{Geometric i-fold intersections}
           Given a finite family of subgroups of a group we  define
           collections of geometric $i-$fold intersections.

           \begin{defn}\label{def:ifoldinter}
             Let $G$ be endowed with a left invariant word metric
             $d$. Let $\HH$ be a finite   family  of subgroups of $G$.

             For $i\in \bbN, i\geq 2$,  define
             the {\bf geometric i-fold intersection}, or simply  the i-fold intersection of cosets of $\HH$,  $\HH_i$,  to be the set of subsets 
             $J$ of $G$ for which there exist $H_1, \dots, H_i \in
             \HH$ and  $g_1, \dots, g_i \in G$,  and $\Delta \in \mathbb{N}$ satisfying:

	     $$ J=  \left(  \bigcap_j \left(  g_jH_j   \right)^{+\Delta}   \right)  $$
	     	and   $ \bigcap_j \left(  g_jH_j   \right)^{+\Delta}  $ is not in the $20\delta$-neighborhood of   
                $ \bigcap \left(  g_jH_j   \right)^{+\Delta-2\delta}
                $, and the diameter of $J$ is at least
                $10\Delta$. 
	   \end{defn}

Geometric $i-$fold intersections are thus, by definition,
intersections of thickenings of cosets. The condition that the
diameter of the intersection is larger than $10$ times the thickening
is merely to avoid counting myriads of too small intersections. 

 The next proposition establishes that the collection of such intersections is again closed under intersection. 

	   \begin{prop}\label{geointnsstable}

             Consider $J\in \HH_j$, and $K\in \HH_k$ for $j<k$ and let
             $\Delta_J, \Delta_K$ be constants as in Definition
             \ref{def:ifoldinter} for defining $J$ and $K$ respectively. Write
             $J=    \left(  \bigcap_i \left(  g_iH_i
               \right)^{+\Delta_J}   \right) $ and $K =  \left(
               \bigcap_i \left(  g'_iH'_i   \right)^{+\Delta_K}
             \right) $, and 
             let  $\Delta_0>\max (\Delta_J, \Delta_K)$.

             Assume that $J$ and $K$   
             $(\Delta,\epsilon)$-meet, 
                for some $\Delta>20\Delta_0$,  and for $\epsilon <1/50$.

                Then either $K\subset J$, or for   any pair of    
                $(\Delta, \epsilon)$-meeting points of $J$ and $K$,   
                there is $L\in \HH_{j+1}$ contained in $J$, 
                that contains it.        
	   \end{prop}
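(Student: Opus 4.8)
The plan is to reduce everything to a statement about a single extra coset. Write $J = \bigcap_{i=1}^{j} (g_iH_i)^{+\Delta_J}$ and $K = \bigcap_{i=1}^{k} (g'_iH'_i)^{+\Delta_K}$ with $j<k$. Since $K$ is an intersection of $k \geq j+1$ thickened cosets, after reindexing we may assume that, among the cosets $g'_1H'_1,\dots,g'_kH'_k$ defining $K$, at least one — say $g'_kH'_k$ — is \emph{not} among the cosets $g_1H_1,\dots,g_jH_j$ defining $J$ (in the coarse sense: its thickening is not coarsely equal to one of the $J$-defining thickenings); otherwise $K$ would be a sub-intersection of the $J$-data plus redundant pieces and one checks directly that $K \subset J^{+O(\delta)}$, which after absorbing constants gives $K \subset J$, the first alternative. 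So assume $g'_kH'_k$ is a genuinely new coset, and set $L := J \cap (g'_kH'_k)^{+\Delta'}$ for an appropriate $\Delta' \asymp \Delta_0$. By construction $L$ is an intersection of $j+1$ thickened cosets and $L \subset J$ and $K \subset L$; the content of the proposition is that, provided $J$ and $K$ $(\Delta,\epsilon)$-meet along a pair of meeting points, $L$ is an honest element of $\HH_{j+1}$ — i.e. it satisfies the diameter bound $\diam(L) \geq 10\Delta'$ and the ``not in the $20\delta$-neighborhood of the $(-2\delta)$-thickened intersection'' non-degeneracy clause of Definition \ref{def:ifoldinter}.

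The heart of the argument is to produce the required diameter for $L$. Here I would use the $(\Delta,\epsilon)$-meeting hypothesis: by definition there are points $x_1,x_2$ with $d(x_1,x_2)\geq \Delta$, both within $\epsilon\Delta$ of $J$ and of $K$, with the stated rigidity (any pair of points $20\delta$-close to $\{x_1,x_2\}$ has $J$ or $K$ far from one of them). Since $\Delta > 20\Delta_0 \geq 20\Delta'$ and $\epsilon < 1/50$, the nearest-point projections of $x_1,x_2$ to $J$ and to $K$ are within $\epsilon\Delta$, so $J$ and $K$ fellow-travel a geodesic $[x_1,x_2]$ on a subsegment of length $\geq \Delta - 2\epsilon\Delta > \tfrac{9}{10}\Delta$. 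The key point: a geodesic joining two points that are each $\epsilon\Delta$-close to \emph{both} $J$ and $K$ stays, by quasiconvexity of each thickened coset (each $(g_iH_i)^{+\Delta_J}$ is convex up to $\delta$ in a hyperbolic space, being a metric thickening of a coset which is $C_0$-quasiconvex, hence $[x_1,x_2]$ stays in a uniform neighborhood of every $(g_iH_i)^{+\Delta_J}$ and of $(g'_kH'_k)^{+\Delta_K}$), within bounded distance of $J$ and of $(g'_kH'_k)^{+\Delta'}$ simultaneously. Thus a large sub-segment of $[x_1,x_2]$ lies in $J \cap (g'_kH'_k)^{+\Delta'} = L$, up to re-thickening $\Delta'$ by an additive constant controlled by $\delta, C_0$; choosing $\Delta'$ comfortably larger than $\Delta_0$ (but still $\asymp \Delta_0$, independent of $\Delta$) absorbs this slack, and since $\Delta > 20\Delta'$ we get $\diam(L) \geq \tfrac{9}{10}\Delta > 10\Delta'$ — even a pair of the original meeting points of $J$ and $K$ themselves now serves as a pair of meeting points witnessing $L$, which is the precise phrasing ``for any pair of $(\Delta,\epsilon)$-meeting points of $J$ and $K$, there is $L\in\HH_{j+1}$\ldots''.

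The remaining non-degeneracy clause of Definition \ref{def:ifoldinter} for $L$ — that $L = \bigcap(\text{thickening by }\Delta')$ is not contained in the $20\delta$-neighborhood of the same intersection with thickenings $\Delta'-2\delta$ — follows from the same fellow-traveling picture: the midpoint region of the long subsegment inside $L$ realizes the full thickening radius $\Delta'$ with respect to the new coset $g'_kH'_k$ (this is where the fact that $g'_kH'_k$ is a \emph{new}, coarsely distinct coset is used — otherwise the extra constraint would be vacuous and the clause could fail), so shrinking the thickening by $2\delta$ genuinely moves points out, giving a point of $L$ at distance $> 20\delta$ from the shrunken intersection.

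The main obstacle, I expect, is bookkeeping the constants: one must choose $\Delta'$ as a function of $\Delta_0, \delta, C_0$ (not of $\Delta$) so that (a) the additive errors from quasiconvexity of the thickened cosets and from passing to a geodesic representative are all absorbed into the gap between $\Delta'$ and $\Delta_0$, and (b) the diameter bound $\tfrac{9}{10}\Delta \geq 10\Delta'$ survives, which needs $\Delta > \tfrac{100}{9}\Delta' $, comfortably implied by $\Delta > 20\Delta_0$ once $\Delta' \leq 2\Delta_0$ say. The other delicate point is the dichotomy itself — making precise the ``$K$ is built only from $J$'s cosets'' case that yields $K \subset J$ — which requires comparing thickened cosets up to coarse equality and checking that a redundant intersection does not accidentally produce something not contained in $J$; this is routine but must be stated carefully using $\Delta_0 > \max(\Delta_J,\Delta_K)$ so all thickenings in play are uniformly comparable.
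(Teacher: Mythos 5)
Your dichotomy (pick a coset of $K$'s defining data that is not among $J$'s, or else conclude $K\subset J$) matches the paper's opening move, and defining $L$ as a $(j+1)$-fold intersection over the $J$-cosets plus one new coset is the right object. But there are two substantive problems.

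First, the choice of $\Delta'$. You insist that $\Delta'$ be chosen $\asymp \Delta_0$, \emph{independent of} $\Delta$. But the $(\Delta,\epsilon)$-meeting points $x_1,x_2$ are only guaranteed to be at distance $\leq\epsilon\Delta$ from $J$ and $K$, hence at distance $\leq\Delta_0+\epsilon\Delta$ from each defining coset, and $\epsilon\Delta$ grows without bound in $\Delta$. If $\Delta' \leq 2\Delta_0$, then once $\Delta > 2\Delta_0/\epsilon$ the meeting points need not be in $L$ at all. The proposition explicitly requires $L$ to contain the pair of meeting points, so this is not a matter of bookkeeping. The paper takes $\Delta' = \max(\Delta_J,\Delta_K)+\epsilon\Delta$, which grows with $\Delta$; the diameter bound $\diam L \geq 10\Delta'$ still holds because $\diam L \geq d(x_1,x_2) \geq \Delta$ and $\Delta'/\Delta \leq \Delta_0/\Delta + \epsilon < 1/20 + 1/50 < 1/10$. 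Notice this also makes the diameter argument immediate, with no need for the fellow-traveling subsegment.

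Second, and more seriously, your argument for the non-degeneracy clause (that $L$ is not in the $20\delta$-neighborhood of the $(\Delta'-2\delta)$-thickened intersection) has the geometry inverted. You claim the midpoint of the fellow-traveling subsegment ``realizes the full thickening radius $\Delta'$ with respect to the new coset.'' But the whole point of the fellow-traveling picture is that the geodesic $[x_1,x_2]$ stays \emph{close} to $g'_kH'_k$ on its interior (by quasiconvexity); the midpoint is therefore \emph{deep inside} the $\Delta'$-thickening, not near its boundary, and shrinking the thickening by $2\delta$ does not push it out. The clause that actually does the work here is the rigidity half of the $(\Delta,\epsilon)$-meeting definition — ``for all pairs of points at distance $20\delta$ from $\{x_1,x_2\}$, either $J$ or $K$ is at distance at least $\epsilon\Delta-2\delta$ from one of them'' — which your proposal never invokes. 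The paper's proof argues by contradiction: if $x_1,x_2$ were both in the $20\delta$-neighborhood of the shrunk intersection, one could find $x',y'$ within $20\delta$ of $x_1,x_2$ and within $\epsilon\Delta-2\delta$ of the relevant sets, contradicting that rigidity clause directly. Without using this clause you have no source of the required separation, and indeed your heuristic would ``prove'' non-degeneracy even in cases where it fails.
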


\begin{proof}

Let $x,y $ be    $(\Delta, \epsilon)$-meeting points of $J$ and $K$.   If   $K\not\subset J$,   
  we can assume that $x,y$ are in $\left(  g'_1H'_1   \right)^{+\Delta_K+\epsilon\Delta}  $ for some $g'_1H'_1  $ not contained in          the collection      $ \{  g_iH_i   \}$.

Notice that $x,y$ are in $  \bigcap_i \left(  g_iH_i   \right)^{+ \Delta_J +\epsilon\Delta} \cap   \left(  g'_1H'_1   \right)^{+\Delta_K +\epsilon\Delta} $, 
hence in $  \bigcap_i \left(  g_iH_i   \right)^{+\Delta'  } \cap   \left(  g'_1H'_1   \right)^{+ \Delta'} $ for $\Delta'$ the greater of $\Delta_K +\epsilon \Delta$ and $\Delta_J + \epsilon \Delta$.

We argue
by contradiction. Suppose that  $x,y$ are contained in the $20\delta$-thickening of a $2\delta$-lesser intersection. It follows that there are   $x',y'$ such that $d(x,x')\leq 20\delta$, $d(y,y') <20\delta$ and still $d(x', J) \leq \epsilon \Delta -2\delta$, and $d(y', J) \leq \epsilon \Delta -2\delta$.

 But by definition of $(\Delta,\epsilon)$-meeting, this is a
 contradiction.   

Finally, 
 the diameter of the intersection of $\Delta'$-thickenings of
 our cosets, is larger than  $  \Delta  $.  
 Since the thickening constant is
 $\Delta' \leq \Delta_0+ \epsilon\Delta$, the ratio of 
 the thickening constant by the diameter is at most  $(\Delta_0+
 \epsilon\Delta)/\Delta$ which is less than $ 1/10$, hence the
 result.

\end{proof}

           Let $(G,d)$ be a group with a word metric and $H <G$ a
           subgroup. The restriction of $d$ on $H$  will be called the {\bf induced metric} on $H$ from $G$.

           \begin{prop} \label{prop;projections} Let $(G,d)$ be a
             group with a $\delta$-hyperbolic word metric (not
             necessarily locally finite). 

             Assume that $A_1,\dots, A_n$ are
             $C$-quasiconvex subsets of $G$. 
             Then for all $\Delta >C+20\delta$, the intersection
             $\bigcap A_i ^{+\Delta}$ is $(4\delta)$-quasiconvex in
             $(G,d)$. 

             Moreover, if   $A$ and $B$ are
             $C$-quasiconvex subsets of $G$, and    if $\Pi_B(A)$
             denotes the set of nearest points projections
             of $A$ on $B$, then, either $\Pi_B(A) \subset A^{+3C
               +10\delta}$ or ${\rm Diam} \Pi_B(A)
             \leq  4C+20\delta$. 
           \end{prop}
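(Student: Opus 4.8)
The plan is to prove the two assertions in turn. For the first assertion, fix $\Delta > C + 20\delta$ and let $x, y \in \bigcap_i A_i^{+\Delta}$. Take a geodesic $[x,y]$ in $(G,d)$ and a point $z$ on it. I want to show $d(z, \bigcap_i A_i^{+\Delta}) \le 4\delta$. The key observation is that since each $A_i$ is $C$-quasiconvex, the thickening $A_i^{+\Delta}$ is ``coarsely convex'': indeed, for each $i$ there are points $x_i, y_i \in A_i$ with $d(x,x_i), d(y,y_i) \le \Delta$; the geodesic $[x_i, y_i]$ stays in $A_i^{+C}$, and by thin-quadrilateral/thin-triangle comparison in a $\delta$-hyperbolic space, any point of $[x,y]$ lies within $\delta$ (or a small multiple thereof) of $[x_i,y_i] \cup [x,x_i] \cup [y,y_i]$. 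Since $\Delta > C + 20\delta$, one checks that $z$ is within, say, $\delta$ of a point of $[x_i,y_i]$, hence within $C + \delta < \Delta$ of $A_i$; thus $z \in A_i^{+\Delta}$ — but wait, that would give $z$ \emph{in} the set, with constant $0$. The subtlety is that the simultaneous membership for all $i$ need not put $z$ in the intersection with slack $0$; rather one argues that a \emph{nearby} point $z'$ (obtained by moving $z$ by at most $4\delta$ toward the relevant sides) lies in every $A_i^{+\Delta}$ at once. This is where the $4\delta$ comes from: the same point $z$ shifted once suffices because the geodesic $[x,y]$ is shared by all the configurations. I would carry this out by the standard estimate: $z$ is $\delta$-close to $[x_i, y_i]$ for each $i$ except possibly near the ``ends'' $[x,x_i], [y,y_i]$, and near the ends $z$ is already within $\Delta$ of $x$ or $y$, which lie in the intersection.

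For the second (``moreover'') assertion, let $A, B$ be $C$-quasiconvex and suppose $\operatorname{Diam}\Pi_B(A) > 4C + 20\delta$. Pick $a_1, a_2 \in A$ whose nearest-point projections $b_1 \in \Pi_B(a_1)$, $b_2 \in \Pi_B(a_2)$ satisfy $d(b_1, b_2) > 4C + 20\delta$. The standard fact about nearest-point projections to quasiconvex sets in hyperbolic spaces is that a geodesic $[a_1, a_2]$ must pass uniformly close to both $b_1$ and $b_2$ (within $C + O(\delta)$), because otherwise the ``short path'' through $B$ via $b_1, b_2$ would be geodesic-like and would contradict minimality of one of the projections. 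So $[a_1,a_2]$ fellow-travels a subsegment of $B$ of length roughly $d(b_1,b_2) - 2(C + O(\delta))$. But $[a_1,a_2] \subset A^{+C}$ by quasiconvexity of $A$; hence $b_1, b_2$, and with a bit more care all of $\Pi_B(A)$, lie in $A^{+C + (C + O(\delta))} = A^{+2C + O(\delta)}$. Tracking the constants carefully (using thin triangles and the fact that a point realizing the projection is within $C$ of $B$ itself, and within $C$ of the relevant geodesic which lies in $A^{+C}$) gives the bound $\Pi_B(A) \subset A^{+3C + 10\delta}$.

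The main obstacle I anticipate is bookkeeping the constants so that the stated values ($4\delta$, $4C + 20\delta$, $3C + 10\delta$) come out exactly, since the space is not assumed proper and one cannot invoke boundary/limit-set arguments — everything must be done with explicit thin-polygon estimates. In particular, in the first part, pinning the quasiconvexity constant to exactly $4\delta$ (rather than, say, $C + O(\delta)$) requires the observation that any point on $[x,y]$ is \emph{already} within $\Delta$ of each $A_i$ (so that it lies in $A_i^{+\Delta}$ once we also pay a $4\delta$ correction coming from the several $A_i$ being treated simultaneously via a single comparison tree for the hyperbolic polygon). I would organize the computation around the $\delta$-thin approximation of the relevant geodesic polygon by a tree, which makes the ``$4\delta$'' transparent. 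For the second part, the dichotomy ``projection close to $A$ or projection has small diameter'' is exactly the familiar Behrstock-type / bounded-projection phenomenon, and the only work is to extract the explicit neighborhood constant from the standard proof that geodesics pass near projection points.
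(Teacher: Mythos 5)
Your proof of the first assertion follows the same thin-quadrilateral strategy as the paper: for each $i$, take nearest points $a_i, b_i \in A_i$ for $x, y$ and apply thinness to the quadrilateral $(x, a_i, b_i, y)$. However, your account of where the $4\delta$ comes from is not quite right. You suggest it arises from ``treating the several $A_i$ simultaneously via a single comparison tree'' and from ``moving $z$ by at most $4\delta$''; in fact the paper's argument is a per-$i$ argument and never moves $z$. It shows directly that any $p \in [x,y]$ at distance $>4\delta$ from \emph{both} endpoints satisfies $p \in A_i^{+\Delta}$ for every $i$ (because $p$ lands $O(\delta)$-close either to a subsegment of $[x,a_i]$ far from $x$, whence $d(p,a_i)\le\Delta$, or to $[a_i,b_i]\subset A_i^{+C}$, whence $d(p,A_i)\le C+O(\delta)<\Delta$). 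Such $p$ is therefore \emph{in} the intersection, with slack $0$. The $4\delta$ is purely the endpoint cutoff: a point within $4\delta$ of $x$ or $y$ is within $4\delta$ of a point ($x$ or $y$) of the intersection. Your ``but wait'' paragraph correctly notices the naive argument puts interior points in the set, but then proposes a fix (shifting $z$) that isn't needed.

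Your proof of the second assertion takes a genuinely different route and has a real gap. The paper fixes a distance-realizing pair $(a_0, b_0)$ between $A$ and $B$ and, for each $b=\pi_B(a) \in \Pi_B(A)$, runs the thin-quadrilateral estimate on $(a, a_0, b_0, b)$, exploiting that $b$ and $b_0$ are nearest-point projections so that $[a,b]$ fellow-travels $[b,b_0]$ only for a bounded length near $b$, and similarly at $b_0$. This yields, for each $b$, the dichotomy ``$b$ is close to $A$ or $b$ is close to the fixed reference point $b_0$,'' which globalizes cleanly. You instead pick two projection points $b_1, b_2$ realizing large diameter and argue that $[a_1, a_2]$ (which lies in $A^{+C}$) passes near $b_1, b_2$, so $b_1, b_2 \in A^{+2C+O(\delta)}$. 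That is fine for those two points, but the assertion in the proposition is about the \emph{entire} set $\Pi_B(A)$. Your ``with a bit more care, all of $\Pi_B(A)$'' is exactly the nontrivial step: to handle an arbitrary $b_3 \in \Pi_B(A)$ you would need to pair it with $b_1$ or $b_2$, and the triangle inequality only guarantees one of $d(b_3,b_1), d(b_3,b_2)$ exceeds half the diameter, which costs a factor of $2$ against the threshold needed for the geodesic-passes-near-projections lemma. So as written the globalization is not established, and it would not deliver the stated constants. The paper's device of referencing all the $b$'s to the single distance-realizing point $b_0$ is precisely what sidesteps this issue: if any single $b$ lands in $A^{+3C+10\delta}$, then $d(A,B)$ is small, hence $b_0$ itself is close to $A$, and the ``close to $b_0$'' alternative then also implies ``close to $A$'' for every other $b$. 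You should either adopt that anchoring device or spell out how your pairwise argument covers all of $\Pi_B(A)$ with uniform constants.
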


           \begin{proof}
             Consider $x,y \in \bigcap A_i^{+\Delta}$ and take $a_i, b_i$
             some nearest point projection on $A_i$. On a
             geodesic $[x, y]$ take $p$ at distance greater than $4\delta$
             from $x$ and $y$. Hyperbolicity applied to the  quadrilateral
             $(x, a_i, b_i, y)$  tells us that $x$ is $4\delta$-close to $[x'_i,
             a_i]\cup [a_i, b_i] \cup [b_i, y'_i]$, where $x'_i$ and $y'_i$ are the
             points of, respectively $[x, a_i]$ and $[y, b_i]$, at distance
             $4\delta$ from, respectively, $x$ and $y$. 

             Let us call $[x'_i,
             a_i],   [b_i, y'_i] $ 
             the approaching segment, and   $[a_i, b_i]$ the traveling
             segments. Hence for each $i$, $p$ is closed to either an approaching
             segment, or the traveling segment, with subscript $i$.  

             If $p$ is close to  
             an approaching segment of index $i$, then it is in  $A_i^{+\Delta}$.

             If $x$ is close to  the traveling segment of index $i$, then  it is at
             distance at most $C+10\delta$  from  $A_i$, hence in $A_i^{+\Delta}$
             because  $\Delta>C+10\delta$.

             We thus obtain that $[x,y]$ remains at distance $4\delta$
             from $\bigcap A_i^{+\Delta}$.

             To prove the second statement, take $a_0, b_0$ in
             $A$ and $B$ respectively realizing the distance (up to $\delta$ if
             necessary). Let $b \in \Pi_B(A)$, and assume that it is the projection of
             $a$. In the quadrilateral $a,a_0,b,b_0$, the geodesic $[a,a_0]$ stays
             in $A^{+C}$ and $[b,b_0]$ is in $B^{+C}$. Since $b$ is a
             projection, $[b,a]$ fellow-travels $[b,b_0]$ for less than $2C+10\delta$, and
             similarly for $[b_0,a_0]$ with $[b_0,b]$. By hyperbolicity $[b,b_0]$
             thus stays $10\delta$ close to $[a,a_0]$ except for the part
             $(2C+10\delta)$-close to either $b$ or $b_0$. 
             It follows that either $b \in A^{+(3C  +10 \delta)}$ or
             $b$ is at distance $\leq 4C+20\delta$ from $b_0$.  
             Thus  $\Pi_B(A) \subset A^{+3C
               +10\delta}$ 
             or ${\rm Diam} \Pi_B(A)
             \leq  4C+20\delta$.

           \end{proof}

           \subsection{Algebraic i-fold intersections}
           We provide now a more algebraic (group theoretic) treatment
           of the preceding discussion. 
           This is in keeping with the more well-known setup of intersections of subgroups
           and their conjugates cf. \cite{GMRS}.
           Given a finite family of subgroups of a group we first
           define collections of $i-$fold conjugates 
           or algebraic i-fold intersections.
           \begin{defn}\label{essdist}
             Let $G$ be endowed with a left invariant word metric
             $d$. Let $\HH$ be a family  of subgroup of $G$. 
              For $i\in \bbN, i\geq 2$,  define $\HH_i$ to be the set of subgroups 
             $J$ of $G$  for which there exists $H_1, \dots, H_i \in
             \HH$ and  $g_1, \dots, g_i \in G$ satisfying: 
             \begin{itemize}
             \item the cosets $g_j H_j$  are pairwise  distinct (and
               hence as in \cite{GMRS} we use the terminology that the
               conjugates 
               $\{g_j H_jg_j^{-1}, j=1,\dots,i\}$
               are {\bf essentially distinct})
             \item $J$ is the intersection $\bigcap_j g_j H_j g_j^{-1}$. 
             \item $J$ is unbounded  in $(G,d)$.
             \end{itemize}
             We shall call $\HH_i$ the family of {\bf  algebraic i-fold intersections}
             or simply, {\bf $i-$fold conjugates}.
           \end{defn}

           The second point in the following definition is motivated
           by the behavior of nearest point projections  of  
           cosets of quasiconvex subgroups of hyperbolic groups on
           each other. Let $(G,d)$ be hyperbolic and $H_1, H_2$ 
           be quasiconvex. Let $aH_1, bH_2$ be cosets and
           $c=a^{-1}b$. Then the nearest point projection of $bH_2$
           onto $aH_1$ is the (left) $a-$translate of the nearest
           point projection of $cH_2$ onto $H_1$.  
           Let $\Pi_B (A)$ denote the (nearest-point) projection of
           $A$ onto $B$. Then $\Pi_{H_1} (cH_2)$ lies in a bounded 
           neighborhood (say $D_0-$neighborhood) of $H_2^c\cap H_1$
           and so $\Pi_{aH_1} (bH_2)$  lies in a 
           $D_0-$neighborhood of $bH_2 b^{-1}a \cap aH_1$. The latter does lie
           in a bounded neighborhood of $(H_2)^b \cap (H_1)^a$, but
           this bound depends on $a, b$ and is not uniform. Hence the
           somewhat convoluted way of stating the second property
           below. The language of nearest-point projections below is
           in
           the spirit of \cite{mahan-ibdd, mahan-split} while the
           notion of geometric i-fold intersections discussed earlier 
           is in the spirit of \cite{DGO}.

           \begin{defn} Let $G$ be a group and $d$ a word
             metric   on $G$. 

             A finite family $\calH = \{ H_1, \dots , H_m \}$  of
             subgroups of $G$, each equipped with a word-metric $d_i$
             is said to 
             have the {\bf uniform qi-intersection property}
             if there exist $C_1, \dots, C_n , \dots$ such that 
             \begin{enumerate}
             \item For all $n$, and all  $H \in \HH_n$, 
               $H$  has a conjugate $H'$ such that if $d'$ is any {\bf
                 induced metric} on $H'$ from some $H_i \in \HH$, then
               $(H',d')$  is  $(C_1,C_1)-$qi-embedded in $(G,d)$.

             \item   For all $n$, let $(\HH_n)_0$ be a choice of conjugacy representatives
               of elements of $\HH_n$ that are $C_1$-quasiconvex in $(G,d)$.
               Let $\CC\HH_n$ denote the collection of left cosets of
               elements of $(\HH_n)_0$.

               For all $A, B \in \CC\HH_{n}$ with $A=aA_0, B=bB_0$,
               and $A_0, B_0 \in (\HH_n)_0$, 
               $\Pi_B (A)$ either has diameter bounded by $C_n$ for the metric
               $d$,  or   
               $\Pi_B (A)$ lies in a (left) $a-$translate translate of a 
               $C_n-$neighborhood  of $A_0\cap B_0^c$, where $c=a^{-1}b$.  
             \end{enumerate}  
             \label{qiintn} \end{defn} 

           In keeping with the spirit of the previous subsection, we
           provide a geometric version of the above definition below.
           \begin{defn} Let $G$ be a group and $d$ a word
             metric   on $G$. 

             A finite family $\calH = \{ H_1, \dots , H_m \}$  of
             subgroups of $G$, each equipped with a word-metric $d_i$
             is said to 
             have the {\bf uniform geometric qi-intersection property}
             if there exist $C_1, \dots, C_n , \dots$  such that 
             \begin{enumerate}
             \item For all $n$, and all  $H \in \HH_n$, 
               $(H,d)$  is  $C_n$-coarsely path connected, and
               $(C_1,C_1)-$qi-embedded in $(G,d)$ (for its coarse path metric).

             \item For all $A, B \in \HH_{n}$   
               either ${\rm diam}_{G,d} ( \Pi_B (A) ) \leq C_n$,  or   
               $\Pi_B (A) \subset A^{+C_n} $ for $d$.

             \end{enumerate}  
             \label{geoqiintn} \end{defn}

           \begin{remark}{\rm
               The second condition of Definition \ref{geoqiintn}
               follows from the first condition
               if $d$ is hyperbolic by Proposition \ref{prop;projections}.
               Further, the first condition holds for such $(G,d)$ so long as $\Delta$ is taken
               of the order of the quasiconvexity constants (again by
               Proposition \ref{prop;projections}). 

               Note further that if $G$ is hyperbolic (with respect to
               a not necessarily locally finite word
               metric) and $H$ is $C$-quasiconvex, then  
               by Proposition \ref{geointnsstable}
               the collection of geometric $n$-fold intersections
               $\HH_n$ is      mutually cobounded for the metric of
               $(G,d)^{el}_{\HH_{n+1}}$   (as in Definition
               \ref{cobdd}).  }
           \label{rem:cobdd}
           \end{remark}

           \subsection{Existing results on algebraic intersection properties} 
           We start with the following result due to Short. 
           \begin{theorem} \cite[Proposition 3]{short} Let $G$ be a
             group generated by the finite set $S$. 
             Suppose $G$   acts properly on a uniformly proper geodesic
             metric space $(X,d)$, with a base point $x_0$.
             Given $C_0$, there exists $C_1$ such that
             if $H_1, H_2$ are  subgroups of $G$  for
             which the orbits $H_ix_0$ are $C_0$-quasiconvex in $ (X,d)$ (for
             $i=1,2$) then the orbit  $ (H_1 \cap H_2)x_0$ is $C_1-$quasiconvex in $(X,d)$. 
             \label{short-intn} 
           \end{theorem}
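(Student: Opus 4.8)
The plan is to prove Short's theorem by the classical coset-fellow-traveling argument, adapted to the (not necessarily proper but uniformly proper) action. First I would set up the geometry: let $Y_i = H_i x_0$ for $i=1,2$, which by hypothesis are $C_0$-quasiconvex in $(X,d)$, and let $Y = (H_1\cap H_2)x_0$. The key observation is that $Y$ is coarsely the intersection of $Y_1$ and $Y_2$ in the following sense: clearly $Y \subseteq Y_1 \cap Y_2$, and conversely, if $h_1 x_0 = h_2 x_0$ for $h_i \in H_i$ then $h_2^{-1}h_1$ lies in the stabilizer of $x_0$, which is finite (since the action is proper), hence $Y_1 \cap Y_2$ lies in a uniformly bounded neighborhood of $Y$, with the bound depending only on the stabilizer size, which is controlled by uniform properness. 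More carefully, if $d(h_1 x_0, h_2 x_0) \le \kappa$ for $h_i \in H_i$ then $d(x_0, h_1^{-1}h_2 x_0) \le \kappa$, and by uniform properness there are only $N(\kappa)$ such group elements $g$, so picking one with $g = h_1^{-1}h_2$ gives $h_1 g \in H_1$ and also $h_1 g = h_2$ has the property that $h_1 g$ ranges over a bounded-index situation; the upshot is a quantitative statement that $Y_1^{+\kappa} \cap Y_2^{+\kappa}$ lies in a $K(\kappa)$-neighborhood of $Y$.

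Next I would invoke the standard fact (true in $\delta$-hyperbolic spaces, and Short's setting can be reduced to it, or one argues directly) that the intersection of two uniformly quasiconvex sets is again uniformly quasiconvex, \emph{together with} a bounded-neighborhood comparison between the coarse intersection and the nearest-point-projection picture. Concretely: take two points $p, q \in Y$ and a geodesic $[p,q]$ in $X$. Since $Y \subseteq Y_1$ and $Y_1$ is $C_0$-quasiconvex, $[p,q]$ stays in $Y_1^{+C_0}$; similarly in $Y_2^{+C_0}$. Hence $[p,q] \subseteq Y_1^{+C_0} \cap Y_2^{+C_0}$, and by the coarse-intersection estimate of the previous paragraph (with $\kappa = C_0$), every point of $[p,q]$ lies within $K(C_0)$ of $Y$. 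Setting $C_1 = K(C_0)$ (which depends only on $C_0$ and the uniform properness function of the action) gives $C_1$-quasiconvexity of $Y = (H_1\cap H_2)x_0$, as required. Note this argument does not even require hyperbolicity of $X$ for this particular statement — it only uses quasiconvexity of $Y_i$ and the coarse-intersection lemma — which matches the generality in which Short states it.

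The main obstacle, and the step requiring the most care, is making the coarse-intersection estimate genuinely uniform: one must check that the bound $K(\kappa)$ on how far $Y_1^{+\kappa}\cap Y_2^{+\kappa}$ is from $Y$ depends only on $\kappa$ and the uniform properness gauge of the action, and not on the subgroups $H_1, H_2$ themselves. This is where uniform properness (as opposed to mere properness) is essential: properness alone would give for each $\kappa$ a finite but \emph{a priori unbounded-over-base-points} set of group elements moving $x_0$ by at most $\kappa$, whereas uniform properness gives a single function $\kappa \mapsto N(\kappa)$ bounding $\#\{g \in G : d(x_0, g x_0) \le \kappa\}$, and from this one extracts a uniform $K(\kappa)$. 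Once that uniformity is in hand the rest is routine. I would also remark that the orbit $Yx_0 = (H_1\cap H_2)x_0$ is automatically coarsely connected as an orbit of a group acting coboundedly on it, so there is no subtlety there; and the whole argument is symmetric in $i=1,2$, so the statement "the orbit $(H_1\cap H_2)x_0$ is $C_1$-quasiconvex" follows with $C_1 = C_1(C_0, \text{properness gauge})$ as claimed.
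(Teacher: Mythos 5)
The paper does not reprove this result; it simply cites Short's Proposition~3 and remarks that his argument (stated there for $X$ the Cayley graph of $G$) goes through verbatim under the hypotheses above. Your proposal, by contrast, rests entirely on a ``coarse intersection lemma'': that $Y_1^{+\kappa}\cap Y_2^{+\kappa}$ lies in a $K(\kappa)$-neighborhood of $Y=(H_1\cap H_2)x_0$, with $K$ depending only on $\kappa$, $C_0$ and the properness gauge. The justification you give for that lemma has a genuine gap. Knowing that $g:=h_1^{-1}h_2$ lies in the finite set $F=\{g: d(x_0,gx_0)\le 2\kappa\}$ tells you nothing, by itself, about how far $h_1$ is from $H_1\cap H_2$: all it says is that $h_1\in H_1\cap H_2 g^{-1}$ for one of finitely many $g$'s, and each such set $H_1\cap H_2 g^{-1}$ is (when non-empty) an arbitrary left coset of $H_1\cap H_2$, which need not meet any fixed ball around $x_0$. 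The sentence ``$h_1g$ ranges over a bounded-index situation; the upshot is\ldots'' is an assertion, not an argument. In fact the coarse-intersection statement you want is at least as strong as Short's proposition itself, so invoking it is circular.

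Short's actual argument exploits the geodesic structure in a way your proposal does not. Fix $p=x_0$, $q=hx_0$ with $h\in H_1\cap H_2$, let $\gamma$ be a geodesic from $p$ to $q$, and at each integer parameter $i$ choose $a_i\in H_1$, $b_i\in H_2$ with $d(\gamma(i),a_ix_0),d(\gamma(i),b_ix_0)\le C_0$ (possible since both $Y_1$ and $Y_2$ are $C_0$-quasiconvex and $p,q$ lie in both), normalized so $a_0=b_0=1$. The key identity is: if $a_i^{-1}b_i=a_j^{-1}b_j$ then $a_ja_i^{-1}=b_jb_i^{-1}\in H_1\cap H_2$, so $a_i,a_j$ lie in the same right coset of $H_1\cap H_2$. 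Since $a_i^{-1}b_i$ ranges over a set of size $\le N(2C_0)$, the cosets $(H_1\cap H_2)a_i$ take at most $N(2C_0)$ values; moreover $d(a_ix_0,a_{i+1}x_0)\le 2C_0+1$, so the function $i\mapsto d(a_ix_0,(H_1\cap H_2)x_0)$ starts at $0$, jumps by at most $2C_0+1$, and takes at most $N(2C_0)$ distinct values (it is constant on each coset class). A value-counting argument then bounds it by $(2C_0+1)\,N(2C_0)$, whence $d(\gamma(i),Y)\le C_0+(2C_0+1)N(2C_0)$. It is this interplay---repeated labels produce translations by elements of $H_1\cap H_2$, and the geodesic links the $a_i$'s by bounded steps---that your write-up is missing; for an arbitrary point of $Y_1^{+\kappa}\cap Y_2^{+\kappa}$ one has neither the bounded chain of $a_i$'s nor the fact that $a_0\in H_1\cap H_2$, and the conclusion does not follow from the pigeonhole alone.
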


           We remark here that in the original statement of
           \cite[Proposition 3]{short}, $X$ is itself the Cayley graph
           of $G$ with respect to $S$, but the proof there goes
           through
           without change to the general context of Proposition \ref{short-intn}.

           In particular, for $G$ (Gromov) hyperbolic, or
           $G=Mod(S)$ acting on Teichmuller space $Teich(S)$ (equipped with the Teichmuller metric)
           and
           $Out(F_n)$ acting on Outer space $cv_N$ (with the symmetrized Lipschitz metric),
           the notions of (respectively) quasiconvex subgroups or
           convex cocompact subgroups of $Mod(S)$ or $Out(F_n)$ (see Sections \ref{mcg} and
           \ref{outfn} below for the Definitions) are
           independent of the finite generating sets chosen. Hence
           we have the following.

           \begin{theorem} 
             Let $G$ be either $Mod(S)$ or $Out(F_n)$
             equipped with some   finite generating set. 
             Given $C_0$, there exists $C_1$ such that
             if  $H_1, H_2$ are $C_0-$convex cocompact subgroups of $G$, 
             then $H_1 \cap H_2$ is $C_1-$convex cocompact in
             $G$. \label{short-intn-coco} 
           \end{theorem}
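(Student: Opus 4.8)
The plan is to deduce Theorem~\ref{short-intn-coco} directly from Short's Theorem~\ref{short-intn} by choosing the right auxiliary space on which $G$ acts. Recall that $H\le G$ is convex cocompact in $Mod(S)$ (resp.\ $Out(F_n)$) precisely when some (equivalently any) orbit map $H\to CC(S)$ (resp.\ $H\to \FF_n$) is a quasi-isometric embedding with quasiconvex image; equivalently, by the work of Kent--Leininger and Hamenst\"adt (resp.\ Hamenst\"adt, Bestvina--Feighn), when an orbit of $H$ in Teichm\"uller space $Teich(S)$ (resp.\ in Outer space $cv_N$ with the symmetrized Lipschitz metric) is quasiconvex and the orbit map is a quasi-isometric embedding onto its image. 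So the first step is to fix the geodesic metric space $(X,d)$ to be $Teich(S)$ (resp.\ $cv_N$), on which $G=Mod(S)$ (resp.\ $Out(F_n)$) acts properly, and with base point $x_0$ in the thick part so that the orbit map is proper; then ``$H$ is $C_0$-convex cocompact'' translates into ``$Hx_0$ is $C_0'$-quasiconvex in $(X,d)$'' for a $C_0'$ depending only on $C_0$ and the generating set.

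Next I would apply Theorem~\ref{short-intn} to the two subgroups $H_1,H_2$: since $H_1x_0$ and $H_2x_0$ are $C_0'$-quasiconvex in $(X,d)$ and $G$ acts properly on the uniformly proper geodesic space $(X,d)$, there is $C_1'$ (depending only on $C_0'$) such that $(H_1\cap H_2)x_0$ is $C_1'$-quasiconvex in $(X,d)$. The last step is to convert this back into convex cocompactness of $H_1\cap H_2$: quasiconvexity of the orbit $(H_1\cap H_2)x_0$ in $Teich(S)$ (resp.\ $cv_N$), \emph{together with} the fact that the orbit map of the subgroup is a quasi-isometric embedding, gives convex cocompactness with a constant $C_1$ depending only on $C_1'$. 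The quasi-isometric-embedding property of the orbit map for $H_1\cap H_2$ is automatic: since $H_1$ is convex cocompact its orbit map is a qi embedding, $(H_1\cap H_2)x_0 \subset H_1x_0$ is quasiconvex in the quasiconvex (hence hyperbolic, uniformly) subset $H_1x_0$, and the word metric of $H_1\cap H_2$ is quasi-isometric to the induced path metric on its orbit, which by quasiconvexity inside the qi-embedded $H_1x_0$ is comparable to $d$ restricted to $(H_1\cap H_2)x_0$. One then notes the whole chain of constants depends only on $C_0$ (and the fixed choice of generating set / base point), yielding the uniform $C_1=C_1(C_0)$ asserted.

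The main obstacle is the translation dictionary between the geometric statement (quasiconvexity of orbits in $Teich(S)$ or $cv_N$) and the algebraic statement (convex cocompactness), and in particular making sure it is \emph{uniform}: one must check that the constant relating ``$C_0$-convex cocompact'' to ``orbit is $C_0'$-quasiconvex and the orbit map is a $(\lambda_0,\lambda_0)$-qi-embedding'' depends only on $C_0$, and conversely, so that the composition of the two translations with Short's theorem in the middle produces a function $C_0\mapsto C_1$ independent of the particular subgroups. This requires citing the relevant characterizations of convex cocompactness (Kent--Leininger, Hamenst\"adt for $Mod(S)$; Hamenst\"adt, Bestvina--Feighn, Dowdall--Taylor for $Out(F_n)$) in their uniform form; there is also the minor point that one must confirm $Teich(S)$ and $cv_N$ are uniformly proper geodesic spaces on which $G$ acts properly, so that the hypotheses of Theorem~\ref{short-intn} are genuinely met.
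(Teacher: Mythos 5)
Your proposal is essentially the paper's argument: apply Theorem~\ref{short-intn} to the proper action of $Mod(S)$ (resp.\ $Out(F_n)$) on Teichm\"uller space (resp.\ Outer space with the symmetrized Lipschitz metric), where $C_0$-convex cocompactness of $H_i$ is by definition $C_0$-quasiconvexity of the orbit $H_i x_0$, and read off convex cocompactness of $H_1\cap H_2$ from the quasiconvexity of $(H_1\cap H_2)x_0$. One small caveat: the paper's definition of convex cocompactness (Farb--Mosher, Dowdall--Taylor) is just quasiconvexity of the orbit of a finitely generated subgroup --- the qi-embedded orbit map is a \emph{consequence}, not part of the definition --- so the final paragraph of your argument is more elaborate than strictly needed; and in the $Out(F_n)$ case one should also note that the atoroidal/fully-irreducible condition on nontrivial elements of $H_1\cap H_2$ is inherited automatically from $H_1$.
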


           The corresponding statement for relatively hyperbolic
           groups and relatively quasiconvex groups is due to Hruska.
           For completeness we recall it.

           \begin{defn}\label{relqc} \cite{osin-relhyp, Hru} 
             Let $G$
             be finitely generated hyperbolic relative to a finite
             collection  $\PP$ of  parabolic subgroups.
             A subgroup $H \le G$ is {\bf relatively quasiconvex} if the following holds.\\
             Let $S$ be some (any) finite relative generating set for $(G,\PP)$,
             and let $P$ be the union of all $P_i \in \PP$.
             Let $\overline\Gamma$ denote the Cayley graph of $G$ with
             respect to the generating set $S\cup P$
             and $d$ the word metric on  $G$.
             Then there is a constant $C_0=C_0({S},d)$ such that
             for each geodesic $\gamma \subset \overline{\Gamma}$
             joining two points of $H$,
             every vertex of $\gamma$ lies within $C_0$ of $H$
             (measured with respect to $d$).
           \end{defn}

           \begin{theorem} \cite{Hru} 
             Let $G$ be finitely generated
             hyperbolic relative to $\PP$.
             Given $C_0,$ there exists $C_1$ such that  if $H_1, H_2$
             are $C_0-$relatively quasiconvex subgroups of $G$, 
             then $H_1 \cap H_2$ is $C_1-$relatively quasiconvex in
             $G$. \label{hruska-intn} 
           \end{theorem}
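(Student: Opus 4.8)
The plan is to prove this the way Hruska does \cite{Hru}, which is also the mechanism behind Theorems~\ref{short-intn} and \ref{short-intn-coco}: translate relative quasiconvexity into quasiconvexity of orbits in a genuinely proper hyperbolic space carrying a metrically proper $G$-action, and then apply Short's intersection argument there. The space I would use is the cusped space $X$ of Groves--Manning, namely the horoballification (in the sense of \cite{GM}, cf.\ Section~\ref{rh}) of the \emph{locally finite} Cayley graph $\Gamma(G,S)$ over the collection of peripheral cosets $\{gP_i\}$, built from a finite relative generating set $S$ and a finite generating set for each $P_i$. Since $(G,\PP)$ is relatively hyperbolic, $X$ is $\delta$-hyperbolic; it is a proper geodesic space (each $P_i$ being finitely generated) on which $G$ acts by isometries, metrically properly though not cocompactly. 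The first step is then to record Hruska's dictionary: a subgroup $K\le G$ is $C$-relatively quasiconvex in the sense of Definition~\ref{relqc} if and only if an orbit $K\cdot x_0$ is $C'$-quasiconvex in $X$, with $C$ and $C'$ controlling one another in terms of the data of $(G,\PP)$.

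With this in hand, the argument would proceed as follows. Given $C_0$-relatively quasiconvex subgroups $H_1,H_2$, the orbits $H_1\cdot x_0$ and $H_2\cdot x_0$ are $C_0'$-quasiconvex in $X$, and I would then reproduce the proof of Theorem~\ref{short-intn}: a geodesic of $X$ joining two points of $(H_1\cap H_2)\cdot x_0$ lies in $N_{C_0'}(H_1\cdot x_0)\cap N_{C_0'}(H_2\cdot x_0)$, and one shows this coarse intersection lies within a bounded neighbourhood of $(H_1\cap H_2)\cdot x_0$, the only serious input being that $\{g\in G : d_X(x_0,gx_0)\le R\}$ is finite for every $R$. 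Transporting the resulting quasiconvexity constant back through Hruska's dictionary then produces the constant $C_1$, depending only on $C_0$ and on $(G,\PP)$, for which $H_1\cap H_2$ is $C_1$-relatively quasiconvex.

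The point that needs genuine care --- and the main obstacle --- is that Theorem~\ref{short-intn} as stated assumes a \emph{uniformly} proper space, whereas $X$ is only proper and the $G$-action is not cocompact. I expect this to be a matter of bookkeeping rather than a real gap: the sole role of uniform properness in Short's proof is to supply the finiteness of $\{g : d_X(x_0,gx_0)\le R\}$, which here is exactly metric properness of the $G$-action on the proper space $X$, so a direct re-reading of the proof of Theorem~\ref{short-intn} should go through. If one insists on using that theorem verbatim, one can instead pass to the quasiconvex hulls $Y_i$ of the orbits $H_i\cdot x_0$, on which $H_i$ acts metrically properly and cocompactly (another of Hruska's characterizations), so that $Y_i$ is quasi-isometric to the finitely generated group $H_i$ and may be modelled on a uniformly proper graph; Short's estimate is then carried out inside $Y_1\cap Y_2$, which remains quasiconvex by Proposition~\ref{prop;projections}. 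A third option avoids the cusped space altogether: work in the relative Cayley graph $\overline{\Gamma}$, where each peripheral coset already has diameter one, combine $C_0$-quasiconvexity of $H_1,H_2$ in the hyperbolic metric of $\overline{\Gamma}$ with Proposition~\ref{prop;projections} to see that $H_1^{+C_0}\cap H_2^{+C_0}$ is quasiconvex, and then argue that this coarse intersection is a bounded neighbourhood of $H_1\cap H_2$ --- but this last step still rests on the bounded-coset-penetration technology behind relative hyperbolicity, so it is no cheaper than citing \cite{Hru} directly, which, this result being recalled only for completeness, is the route I would finally adopt.
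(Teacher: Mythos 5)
The paper offers no proof of Theorem~\ref{hruska-intn}: the preceding sentence reads ``for completeness we recall it,'' and the statement is tagged with the citation \cite{Hru} and left there. So there is no argument of the authors' to compare yours against, and your closing decision to cite \cite{Hru} directly is exactly what the paper does. Your intermediate sketch does track the mechanism behind Hruska's actual proof (pass to the Groves--Manning cusped space, translate relative quasiconvexity into orbit-quasiconvexity there, run a Short-type pigeonhole), and you are right that the cusped space is proper but not \emph{uniformly} proper, so Theorem~\ref{short-intn} does not apply verbatim. One caution, though, on your first workaround (``re-read the proof of Theorem~\ref{short-intn}''): Short's pigeonhole needs not only the finiteness of $\{g : d_X(x_0,gx_0)\le R\}$ but also, for each vertex $v$ of the geodesic, an orbit point $h_i(v)x_0$ within bounded distance of $v$. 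In the cusped space a geodesic between orbit points can dive arbitrarily deep into a horoball, and a vertex at depth $m$ is at distance on the order of $m$ from the \emph{entire} $G$-orbit, so this assignment breaks down unless one first truncates the dive or replaces orbit points by points of the $H_i$-quasiconvex hull (which is roughly your second workaround, and roughly what Hruska does). Since you defer to \cite{Hru} in the end, this does not affect the correctness of your proposal, but it is worth flagging that the ``bookkeeping'' you describe is a genuine step, not merely a reinterpretation of uniform properness.
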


           \section{Geometric height and graded geometric relative hyperbolicity}

           We are now in a position to define the geometric analog of
           height. There are two closely related notions
           possible, one corresponding to the geometric notion of 
           $i-$fold
           intersections and one corresponding to the algebraic notion of 
           $i-$fold conjugates. The former is relevant when one
           deals with subsets and the latter when one
           deals with subgroups.
           \begin{defn} \label{gh}
             Let $G$ be a group, with a left invariant word metric $d
             (=d_G)$ with respect to some (not necessarily finite) generating set.
             Let $\HH$ be a family of subgroups of $G$. 

             The {\bf  geometric height}, 
             of $\HH$ in $(G,d)$ (for $d$) is the minimal number
             $i\geq 0$ so that the collection   $\HH_{i+1}$ of $(i+1)-$fold
             intersections consists of uniformly bounded sets. 

             If $H$ is a single subgroup, its geometric height is that of the
             family $\{H\}$.
           \end{defn}

           \begin{remark}\label{comp1}
             {\bf Comparing notions of height:} 
             \begin{itemize}
             \item Geometric 
               height is related to   algebraic height, but is more flexible, since
               in the former, we allow the group $G$ to have an infinite generating
               set. 
               We are then free to apply the operations of electrification, horoballification
               in the context of non-proper graphs.
             \item In the case of a locally finite
               word metric,  algebraic height is less than or equal to  geometric 
               height. Equality holds if all bounded intersections are
               uniformly bounded.

             \item  For a locally finite
               word metric,    finiteness of algebraic height
               implies that $i-$fold conjugates are finite (and hence
               bounded
               in any metric) for all sufficiently large $i$. Hence 
               finiteness of 
               geometric height 
               follows from finiteness of algebraic height and of a
               uniform bound on the diameter of the finite
               intersections. 
             \item When the metric on a Cayley graph is not locally finite,
             we do not know of any general statement that allows us to go  directly from finiteness 
          of diameter of an  intersection of thickenings of cosets (geometric condition)
          to  finiteness of diameter of intersections
          of conjugates (algebraic condition). Some of the technical complications below are due to this difficulty in 
          going from geometric intersections to algebraic intersections.
             \end{itemize}
           \end{remark}

           We generalize Definition \ref{grh} of graded relative hyperbolicity
           to the context of geometric height as follows.

           \begin{defn} \label{ggrh}
             Let $G$ be a  group, $d$ the word metric with respect to
             some (not necessarily finite) generating set 
             and $\calH$ a finite collection of subgroups.             
               Let $\HH_i$ be the collection of  all 
               $i-$fold conjugates of $\HH$. Let $(\HH_i)_0$ be a choice of conjugacy
               representatives, and $\CC\HH_i$ the set of left cosets   of elements
               of $(\HH_i)_0$
               Let $d_i$ be the metric on $(G,d)$ obtained by electrifying
               the elements of $\CC\HH_i$.
               Let $\CC\calH_\bbN$ be the graded family
               $(\CC\calH_i)_{i\in \mathbb{N}}$.  

               We say that $G$ is 
               {\bf graded geometric relatively hyperbolic} with
               respect to $\CC\calH_\bbN$  
               if

               \begin{enumerate}
               \item $\calH$ has  
   geometric height $n$ for some $n \in \natls$, and for each $i$
   there are finitely many orbits of $i$-fold intersections.
               \item For all $i\leq n+1$,    $\CC\HH_{i-1}$ is
                 coarsely hyperbolically embedded in
                 $(G,d_i)$.  
               \item  There is $D_i$
                 such that all items of $\CC\HH_i$ are $D_i$-coarsely
                 path connected in $(G,d)$.
               \end{enumerate}
           \end{defn}

           \begin{remark} \label{comp2}
             {\bf Comparing geometric and algebraic graded relative hyperbolicity:}\\
             Note that            
              the second condition of
             Definition \ref{ggrh} is equivalent, by Proposition
             \ref{prop;from_horo_to_he}, to saying that
             $(G,d_i)$ is strongly hyperbolic relative to the collection
             $\HH_{i-1}$. This is exactly the
             third (more algebraic) condition in Definition \ref{grh}.
             Also,            
             the third condition 
             of Definition \ref{ggrh} is the analog of (and follows from)
             the second (more algebraic) condition in Definition \ref{grh}.

             Thus finite 
              geometric height along with (algebraic)
             graded relative hyperbolicity implies
              graded geometric
             relative hyperbolicity.
         
           \end{remark}

           The rest of this section furnishes  examples of  finite
           height in both its geometric and algebraic incarnations.

           \subsection{Hyperbolic groups}
           \begin{prop}\label{prop;hyp_qc_have_fgh}
             Let $(G,d)$ be a hyperbolic group with a locally finite word metric, and 
             let $H$ be a quasiconvex subgroup of $G$. Then  $H$ has finite
         geometric height.

             More precisely, if $C$ is the quasi-convexity constant of $H$ in
             $(G,d)$, and if $\delta$ be
             the hyperbolicity constant in $(G,d)$, and if $N$ is  the cardinality of
             a ball of $(G,d)$  of radius $2C+10\delta$,  and if   $g_0H, \dots, g_k
             H$ are distinct cosets of $H$ for which there exists $\Delta$ such
             that the total intersection  $\bigcap_{i=0}^k (g_i H)^{+\Delta}$ has diameter more
             than $10\Delta$, and more that $100\delta$, then there exists $x \in
             G$ such
             that each $g_i H$ intersects the ball of radius  $N$ around
             $x$. 

           \end{prop}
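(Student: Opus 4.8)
The plan is to run the classical argument behind Theorem~\ref{alght}, but adapted from intersections of conjugates to intersections of metric thickenings of cosets. The one structural change is that the set $\bigcap_{i=0}^k (g_iH)^{+\Delta}$ need not be unbounded, so instead of passing to a common limit point on the boundary of $G$ I would work directly with two points of this set that lie far apart, and then show that every coset $g_iH$ comes uniformly close to any point roughly halfway between them.

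First I would record that each coset $g_iH$ is $C$-quasiconvex in $(G,d)$, being the image of the $C$-quasiconvex set $H$ under the left-translation isometry $x\mapsto g_ix$. Writing $J:=\bigcap_{i=0}^k (g_iH)^{+\Delta}$, the hypothesis $\mathrm{diam}(J)>10\Delta$ and $\mathrm{diam}(J)>100\delta$ lets me pick $u,v\in J$ with $d(u,v)>\max(10\Delta,100\delta)$, a geodesic $[u,v]$ in the Cayley graph, and a vertex $m$ on $[u,v]$ with $d(m,u)>2\delta+\Delta$ and $d(m,v)>2\delta+\Delta$. Such an $m$ exists because $\max(10\Delta,100\delta)>4\delta+2\Delta$; verifying this inequality splits into the cases $\Delta\ge 10\delta$ and $\Delta<10\delta$, and this is the only place where \emph{both} of the diameter lower bounds enter.

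The heart of the argument is to show that $m$ lies within $2\delta+C$ of every $g_iH$. Fix $i$. Since $u,v\in(g_iH)^{+\Delta}$, choose $u_i,v_i\in g_iH$ with $d(u,u_i)\le\Delta$ and $d(v,v_i)\le\Delta$, and a geodesic $[u_i,v_i]$; by $C$-quasiconvexity $[u_i,v_i]\subset(g_iH)^{+C}$. Cutting the geodesic quadrilateral $u\,u_i\,v_i\,v$ along a diagonal into two $\delta$-thin triangles shows that $m\in[u,v]$ is within $2\delta$ of $[u,u_i]\cup[u_i,v_i]\cup[v_i,v]$. It cannot be within $2\delta$ of $[u,u_i]$, whose points are all within $\Delta$ of $u$ while $d(m,u)>2\delta+\Delta$, and symmetrically it cannot be within $2\delta$ of $[v_i,v]$; hence it is within $2\delta$ of $[u_i,v_i]$, and therefore within $2\delta+C$ of $g_iH$. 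Thus each $g_iH$ meets $B_{2\delta+C}(m)\subseteq B_{2C+10\delta}(m)\subseteq B_N(m)$, using $N=\#B_{2C+10\delta}(e)\ge 2C+10\delta$ (the group being infinite; if $G$ is finite everything is bounded and there is nothing to prove). Taking $x=m$ gives the asserted statement.

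To extract finiteness of geometric height I would pick, for each $i$, a point $p_i\in g_iH\cap B_{2\delta+C}(x)$; distinct cosets force distinct $p_i$ (a shared point of $g_iH$ and $g_jH$ would mean $g_iH=g_jH$), so $k+1\le\#B_{2\delta+C}(e)\le N$. Consequently, for every $m'\ge N+1$, no collection of $m'$ distinct cosets can have a $\Delta$-thickened intersection of diameter exceeding both $10\Delta$ and $100\delta$; since an element of $\HH_{m'}$ (in the sense of Definition~\ref{def:ifoldinter}) already has diameter at least $10\Delta$, its diameter must be at most $100\delta$, so $\HH_{m'}$ consists of uniformly bounded sets. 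Hence the geometric height of $H$ is at most $N$. I do not expect a real obstacle: this is essentially the GMRS/Theorem~\ref{alght} argument with boundary points replaced by a far-apart pair inside the thickened intersection, and the only care needed is in tracking the hyperbolicity constant through the quadrilateral so that the output neighbourhood radius stays $\le 2C+10\delta$, and in the elementary inequality $\max(10\Delta,100\delta)>4\delta+2\Delta$ that positions $m$ away from both endpoints.
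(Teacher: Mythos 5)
Your proof is correct and follows essentially the same route as the paper's: pick two far-apart points in the thickened intersection $J$, choose a point $m$ on the geodesic between them that is safely away from both endpoints, and use thinness of the quadrilateral formed with nearby coset points plus $C$-quasiconvexity of each $g_iH$ to place $m$ within $2\delta+C\le 2C+10\delta$ of every coset. The only cosmetic difference is that the paper splits into the cases $\Delta>5\delta$ and $\Delta\le 5\delta$ to position the midpoint, whereas you unify this via the single inequality $\max(10\Delta,100\delta)>4\delta+2\Delta$; your derivation of finite geometric height directly from the pigeonhole bound $k+1\le\#B_{2\delta+C}(e)\le N$ is also more self-contained than the paper's appeal to Remark~\ref{comp1}.
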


           First note that the second statement implies the first in the
          (by the third
           point of Remark \ref{comp1}).   We will directly
           prove the second. 
           The proof is similar to the finiteness of the algebraic height. Also
           note that the second statement can be rephrased in terms of double
           cosets representatives of the $g_i$: under the assumption on the total
           intersection, and if $g_0=1$,  there are double coset
           representatives of the $g_i$ of
           length at most $2(2C+10\delta)$.   

           \begin{proof}
             Assume that there exists $\Delta>0$, and elements $1=g_0, g_1, \dots, g_k$
             for which the cosets  $g_i H$  are
             distinct, and    $\bigcap_{i=0}^k (g_i H)^{+\Delta}$ has diameter larger
             than $10\Delta$ and than $100\delta$. 

             First we treat the case $\Delta >5\delta$. 

             Pick $y_1, y_2 \in \bigcap_{i=0}^k (g_i H)^{+\Delta}$ at distance
             $10\Delta$ from each
             other, and pick $x\in [y_1, y_2]$ at distance larger  
             than  $\Delta +10\delta$ from both $y_i$. For each $i$ an application
             of hyperbolicity and quasi-convexity tells us that $x$ is at distance at
             most $2C +10\delta$ from each of  $g_i H$. The ball of radius  $2C
             +10\delta$ around $x$ thus meets each coset $g_iH$.

             If $\Delta \leq 5\delta$, we pick $y_1, y_2 \in
             \bigcap_{i=0}^k (g_i H)^{+\Delta}$ at distance
             $100\delta$ from each
             other, and take $x$ at distance greater than $10\delta$ from both
             ends. The end of the proof is the same.

           \end{proof}

           \subsection{Relatively hyperbolic groups}

           If $G$ is hyperbolic relative to a collection of subgroups
           $\calP$, then Hruska and Wise defined in \cite{hruska-wise}
           the relative height of
           a subgroup $H$ of $G$ as $n$ if $(n+1)$ is the smallest number with
           the property that for any $(n+1)$ elements $g_0, \dots, g_n$ such that
           the  $g_iH$ are $(n+1)$- distinct cosets,   the intersections of
           conjugates $\bigcap_i g_i H g_i^{-1}$ is finite or parabolic.

           The notion of relative algebraic height is actually  the geometric 
           height for the relative distance, which is given by a word
           metric over a generating set that is the
           union of a finite set and  a set of conjugacy representatives of the
           elements of $\calP$.  
           Indeed, in a relatively hyperbolic group, the
           subgroups that are bounded in the relative metric are precisely those
           that are finite or
           parabolic. We give a quick argument.
           It follows from the Definition of relative quasiconvexity
            that a subgroup having finite diameter in the electric metric on $G$
           (rel. $\PP$) is relatively quasiconvex. It is also true \cite{DGO} that the normalizer of any $P \in \PP$
           is itself  and that the subgroup generated by any $P$ and any infinite order
           element $g\in G \setminus P$ contains the free product of conjugates of $P$ by $g^{kn}, k \in \Z$.
           Since any proper supergroup of $P$ necessarily contains such a $g$, it follows that no proper supergroup
           of $P$ can be of finite diameter in the electric metric on $G$
           (rel. $\PP$). It follows that bounded subgroups are precisely the finite subgroups or
           those contained inside parabolic subgroups.

           The notion of relative height can 
           actually 
           be extended to define the height of a collection of
           subgroups $H_1, \dots, H_k$, as in the case for the algebraic height.  

           Hruska and Wise proved that relatively quasiconvex subgroups have finite relative
           height. More precisely:

           \begin{theorem}\cite[Theorem 1.4, Corollaries 8.5-8.7]{hruska-wise} \label{relht}
             Let $(G,\PP)$
             be relatively hyperbolic, let $S$ be a finite relative
             generating set for $G$ and $\Gamma$ be the Cayley graph
             of $G$ with respect to $S$.
             Then for $\sigma \geq 0$, there exists $C\geq 0$
             such that the following holds. \\ Let
             $H_1, \dots , H_n$
             be a finite collection of $\sigma-$relatively quasiconvex
             subgroups of $(G,\PP)$.
             Suppose that there exist distinct cosets $\{
             g_mH_{\alpha_m} \}$ with $\alpha_m \in \{1, \dots ,
             n\}$, $m = 1, \dots, n$,
             such that
             $\cap_m g_mH_{\alpha_m}g_m^{-1}$ is not contained in a
             parabolic $P\in \PP$. Then there exists a vertex
             $z \in G$
             such that the ball of radius
             $C$
             in $\Gamma$
             intersects every coset $g_mH_{\alpha_m}$. 

             Further, for any $i \in \{1, \dots , n\}$, there are  only finitely many double cosets of the form
             $H_i g_i H_{\alpha_i}$
             such that
             $H_i \cap \bigcap_i  g_i H_{\alpha_i}g_i^{-1}$
             is not contained in a parabolic $P\in \PP$.

             Let $G$ be a relatively hyperbolic group, and let $H$ be a relatively quasiconvex subgroup.
             Then $H$ has finite relative algebraic height.\end{theorem}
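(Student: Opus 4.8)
The plan is to deduce this last assertion directly from the first part of the theorem, which is the genuinely substantial input (the relatively hyperbolic analogue of \cite{GMRS}); the deduction itself is a finiteness-of-balls argument, and it is the form in which Hruska--Wise record the statement. The key point I would exploit is that, since $S$ is \emph{finite}, the Cayley graph $\Gamma=\Cay(G,S)$ built from $S$ alone (not from $S\cup P$) is locally finite; hence for every $C\ge 0$ the ball $B_C(z)$ of radius $C$ in $\Gamma$ is finite, and by vertex-transitivity of the left $G$-action on $\Gamma$ its cardinality $N=N(C)$ does not depend on the center $z$.

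Concretely, I would first choose $\sigma\ge 0$ so that $H$ is $\sigma$-relatively quasiconvex, let $C=C(\sigma)$ be the constant furnished by the first part of the theorem applied to the one-element collection $\{H\}$, and set $N=N(C)$. Now suppose $g_0H,\dots,g_kH$ are pairwise distinct cosets such that $K:=\bigcap_{i=0}^{k} g_iHg_i^{-1}$ is neither finite nor parabolic, i.e.\ not contained in any conjugate of a $P\in\PP$. In particular $K$ is not contained in a parabolic, so the hypothesis of the first part is satisfied, and there is a vertex $z\in G$ such that $B_C(z)$ meets every coset $g_iH$. But each vertex of $\Gamma$ lies in exactly one left coset of $H$, so the cosets of $H$ meeting the finite set $B_C(z)$ are precisely $\{gH: g\in B_C(z)\}$, of which there are at most $N$; hence $k+1\le N$.

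Consequently, any $N+1$ essentially distinct conjugates of $H$ have intersection that is finite or parabolic, so the relative algebraic height of $H$ is at most $N-1<\infty$. (For a finite collection $\{H_1,\dots,H_m\}$ the argument runs verbatim, invoking the collection version of the first part and, if one wishes, a pigeonhole reduction to many essentially distinct conjugates of a single $H_j$.) The few points that require care — and thus constitute the only real obstacle here — are: (i) verifying that ``neither finite nor parabolic'' indeed implies ``not contained in a parabolic,'' so that the first part genuinely applies; and (ii) keeping track of the fact that $\Gamma$ is the Cayley graph with respect to the \emph{finite} set $S$, so that $B_C(z)$ is finite — working instead in $\Cay(G,S\cup P)$ would destroy the counting step, which is precisely why the statement is phrased relative to $\Gamma=\Cay(G,S)$. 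This reproduces \cite[Corollaries 8.5--8.7]{hruska-wise}.
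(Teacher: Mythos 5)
Your deduction of the third assertion from the first is correct, and note that the paper itself gives no proof here—it cites Hruska--Wise \cite[Theorem 1.4, Corollaries 8.5--8.7]{hruska-wise} for all three parts. The finiteness-of-balls argument you give is exactly how the paper handles the analogous deduction in the hyperbolic case (Theorem \ref{alght}), and your point (i) is the one place where care is genuinely needed (since ``not finite or parabolic'' is strictly stronger than ``not contained in a parabolic,'' the implication goes the right way and the first part does apply); point (ii) is already built into the statement since $\Gamma=\Cay(G,S)$ with $S$ finite.
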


           This allows us to  give an example of geometric height in our setting.

           \begin{prop}\label{prop;rh_or_gh}

             Let  $(G, \calP)$ be a relatively hyperbolic group, and
             $(G,d)$  a relative word metric $d$ ({\it i.e.} a word
             metric over a generating set that is the
             union of a finite set and of a set of conjugacy representatives of the
             elements of $\calP$, and hence, in general, not a finite generating
             set). Let $H$ be a relatively quasiconvex subgroup. Then, $H$ has
             finite  geometric height for $d$.

           \end{prop}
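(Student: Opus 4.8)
\medskip
\noindent\textit{Proof proposal.}
The plan is to deduce the proposition from the finiteness of the relative algebraic height (Theorem~\ref{relht}) by transporting it, through the hyperbolic geometry of the relative Cayley graph, into a statement about thickenings of cosets. First I would fix the data: let $S$ be a finite relative generating set, set $P=\bigcup_{P_i\in\calP}P_i$, let $\overline\Gamma$ be the Cayley graph of $G$ with respect to $S\cup P$, and let $d$ be the associated word metric, so $(G,d)$ is the vertex set of $\overline\Gamma$. Three facts frame the argument: (i) $\overline\Gamma$ is $\delta$-hyperbolic (weak relative hyperbolicity, \`a la Farb~\cite{farb-relhyp}), so by Proposition~\ref{prop;projections} intersections of thickenings of uniformly quasiconvex cosets are uniformly quasiconvex once $\Delta$ is large; (ii) by Definition~\ref{relqc} the relatively quasiconvex subgroup $H$ is $C_0$-quasiconvex in $(G,d)$, and since $G$ acts on $\overline\Gamma$ by isometries, all left cosets and all conjugates of $H$ are uniformly quasiconvex; (iii) as recalled just before Theorem~\ref{relht}, a subgroup of $G$ has bounded orbit in $(G,d)$ if and only if it is finite or parabolic, and every left coset $gP_i$ of a peripheral subgroup has $d$-diameter at most $1$. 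In particular ``$\bigcap_j g_jHg_j^{-1}$ unbounded in $(G,d)$'' is exactly the negation of the alternative ``finite or parabolic'' appearing in the relative height.

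Next I would run the argument in the proof of Proposition~\ref{prop;hyp_qc_have_fgh} inside this (non-proper) hyperbolic space. Given a geometric $(m+1)$-fold intersection $J=\bigcap_{j=0}^{m}(g_jH)^{+\Delta}$ as in Definition~\ref{def:ifoldinter} --- after deleting repeated cosets we may assume the $g_jH$ pairwise distinct, and we may take $\Delta$ larger than the quasiconvexity and hyperbolicity constants --- that proof produces, using only hyperbolicity and quasiconvexity (everything before its concluding counting step), a radius $R=R(C_0,\delta)$ and a vertex $z\in G$ whose $R$-ball meets every coset $g_jH$; choosing two points of $J$ at distance $\ge 10\Delta$ and invoking quasiconvexity of $J$, one moreover gets a $d$-geodesic $\gamma$ of length comparable to $\Delta$ along which all the $g_jH$ fellow-travel. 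In the locally finite setting of Proposition~\ref{prop;hyp_qc_have_fgh} one would now bound $m$ by the number of lattice points in a ball; here $(G,d)$ is not locally finite, and this is precisely the difficulty flagged in Remark~\ref{comp1}.

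The heart of the proof, and the step I expect to be the main obstacle, is to circumvent this failure of local finiteness. I would do so using the structure of $(G,d)$: since $\gamma$ has length $\gg 1$ while parabolic cosets have $d$-diameter $\le 1$, the geodesic $\gamma$ cannot be absorbed by a single peripheral coset. Equivalently, I would lift the configuration to the Groves--Manning cusped space $X$ over $(G,S)$ \cite{GM}, a hyperbolic space whose electrification over its horoballs is quasi-isometric to $(G,d)$ (Proposition~\ref{double-elec}), in which the orbits of the $g_jH$ are uniformly quasiconvex and the relevant fellow-travelling takes place in the locally finite part of the $1$-skeleton. On that locally finite part the pigeonhole argument of \cite{GMRS} --- which is exactly the argument underlying Theorem~\ref{relht} --- applies and shows that, once the fellow-travelling length, hence $\Delta$, exceeds a threshold $\Delta_1=\Delta_1(G,\calP,H,S)$, the configuration forces an element acting loxodromically on $(G,d)$ inside $\bigcap_j g_jHg_j^{-1}$, so this subgroup is unbounded in $(G,d)$. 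With $n_0$ the (finite) relative algebraic height of $H$, if $m\ge n_0+1$ and $\Delta\ge\Delta_1$ this would exhibit $n_0+2$ essentially distinct conjugates of $H$ with unbounded common intersection, contradicting the definition of $n_0$; hence every geometric $(n_0+2)$-fold intersection has $\Delta<\Delta_1$, and one more application of the fellow-travelling estimate (now with bounded $\Delta$) bounds its diameter. Thus $\HH_{n_0+2}$ consists of uniformly bounded sets, and $H$ has finite geometric height for $d$. (If one works with the algebraic $i$-fold intersections of Definition~\ref{essdist} instead, this last part is immediate: $\HH_{n_0+2}$ is then empty, since ``unbounded in $(G,d)$'' is synonymous with ``not finite or parabolic'', so in that formulation the geometric height for $d$ simply equals the relative algebraic height.) $\Box$
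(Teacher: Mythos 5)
Your overall plan — reduce to Hruska--Wise's finiteness of relative algebraic height (Theorem~\ref{relht}) and run the hyperbolicity/quasiconvexity argument of Proposition~\ref{prop;hyp_qc_have_fgh} inside the relative Cayley graph, then supply some mechanism to replace the lattice-point count (which fails because $(G,d)$ is not locally finite) — is exactly what the paper intends by ``proof is similar, using cones instead of balls.'' The point of departure is the replacement mechanism: you go to the Groves--Manning cusped space and pigeonhole there, whereas the paper's hint (and its explicit argument in the analogous step of Proposition~\ref{prop;satisfiesqiip}, Case~2) is to bound the \emph{angles} of the relevant short geodesics at cone points and use local finiteness of the angular metric. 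Both are reasonable implementations of the same idea, and your version is arguably more self-contained since it appeals only to Proposition~\ref{double-elec} and \cite{GM} rather than the BCP formalism.

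That said, two steps in your cusped-space detour are glossed over in a way that hides real work. First, ``the relevant fellow-travelling takes place in the locally finite part of the $1$-skeleton'' is not automatic: $d$-closeness of $v_m$ to $g_jH$ along $\gamma$ does \emph{not} by itself give bounded cusped-space distance between orbit points (a peripheral element has relative length $1$ but arbitrarily large cusped-space displacement), so you need to lift $\gamma$ to an electro-ambient quasigeodesic and invoke the persistence/similar-intersection-pattern machinery (Lemma~\ref{farb2A}, Lemma~\ref{ea-strong}, Proposition~\ref{cobpersists}) to conclude that the $g_jH$-orbits fellow-travel the lift, before the pigeonhole on the proper $G$-action on the (locally finite) cusped space can fire. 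Second, ``forces an element acting loxodromically on $(G,d)$'' is unjustified as written: the pigeonhole only yields some nontrivial $g=v_{m'}v_m^{-1}\in\bigcap_j g_jHg_j^{-1}$, which could a priori be elliptic. This is fixable and you do not actually need loxodromicity: by choosing the farthest pair of repeated colours you get an element of relative length comparable to the fellow-travelling length, which already shows $\bigcap_j g_jHg_j^{-1}$ has unbounded $d$-diameter, hence is neither finite nor contained in a parabolic; alternatively one can run the elliptic/loxodromic dichotomy and ``product of two elliptics with far-apart fixed points is loxodromic'' trick that the paper itself uses in the proof of Proposition~\ref{prop;satisfiesqiip}, Case~2. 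Your parenthetical remark on the algebraic $i$-fold intersections is correct and clarifying, though tangential since Definition~\ref{gh} is phrased in terms of the geometric intersections of Definition~\ref{def:ifoldinter}.
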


           This just  a rephrasing of Hruska and Wise's
           result Theorem \ref{relht}. The proof is similar to
           that in the hyperbolic groups case, using for
           instance cones instead of balls.

           \subsection{Mapping Class Groups}\label{mcg}
           Another source of examples arise from convex-cocompact subgroups of
           Mapping Class Groups, and of $Out(F_n)$ for a free group $F_n$. We establish
           finiteness of both algebraic and geometric height 
           of convex cocompact subgroups of Mapping Class Groups in
           this subsubsection.  In the following $S$ will be a closed
           oriented surface of genus greater than $2$, and 
		$Teich(S)$ and $CC(S)$ will denote respectively the Teichmuller space and Curve Complex of $S$.

           \begin{defn} \cite{farb-mosher}
             A finitely generated subgroup $H$ of the mapping class group
             $Mod(S)$ for a surface $S$ (with or without punctures) is
             $\sigma-$convex cocompact if for some (any) $x\in  Teich(S)$, the
             Teichmuller space of $S$, the orbit $Hx \subset Teich(S)$ is
             $\sigma-$quasiconvex with respect to the Teichmuller metric. 
           \end{defn}

           Kent-Leininger \cite{kl} and Hamenstadt \cite{ham-cc}  prove the following:

           \begin{theorem}\label{theo;klh}
             A  finitely generated subgroup $H$ of the mapping class group
             $Mod(S)$ is convex cocompact  if and only if for some (any) $x\in
             CC(S)$, the curve complex of $S$, the orbit $Hx \subset CC(S)$ is
             qi-embedded in $CC(S)$.\label{qi-coco}
           \end{theorem}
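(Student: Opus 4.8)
The plan is to prove the two implications separately. The external inputs are: the Masur--Minsky theorem that $CC(S)$ is $\delta$-hyperbolic; Klarreich's identification of the Gromov boundary $\partial_\infty CC(S)$ with the space $\mathcal{EL}(S)$ of filling (ending) laminations; the coarsely Lipschitz ``systole'' map $\pi\colon Teich(S)\to CC(S)$ sending a point to a shortest curve; the Masur--Minsky distance formula and bounded geodesic image theorem together with Rafi's description of the thin part of $Teich(S)$ via subsurface projections; and the fact that $Mod(S)$ acts properly discontinuously on $Teich(S)$. If $H$ is finite both conditions hold trivially, so assume $H$ infinite.

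\emph{Convex cocompact $\Rightarrow$ quasi-isometrically embedded orbit in $CC(S)$.} Suppose $Hx$ is $\sigma$-quasiconvex in $(Teich(S),d_{Teich})$, and let $\mathcal{W}$ be the union of the Teichmüller geodesics joining points of $Hx$. Quasiconvexity gives $\mathcal{W}\subseteq (Hx)^{+\sigma}$, so $H$ acts cocompactly on $\mathcal{W}$; since $Mod(S)$ acts properly discontinuously the quotient is compact, the systole function is bounded on it, and $\mathcal{W}$ lies in a fixed thick part $Teich_{\ge\epsilon}(S)$. A routine argument with proper discontinuity shows that the path metric of $\mathcal{W}$ is quasi-isometric to the restriction of $d_{Teich}$, so by the \v{S}varc--Milnor lemma $h\mapsto hx$ is a quasi-isometric embedding $H\to Teich(S)$. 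Since $\pi$ is coarsely Lipschitz, $d_{CC}(\pi hx,\pi h'x)\preceq d_{Teich}(hx,h'x)\asymp d_H(h,h')$. For the matching lower bound one uses the Masur--Minsky fact that a Teichmüller geodesic contained in a thick part projects under $\pi$ to a \emph{parametrized} quasigeodesic of $CC(S)$; applied to $[hx,h'x]\subset\mathcal{W}$ this gives $d_{CC}(\pi hx,\pi h'x)\succeq d_{Teich}(hx,h'x)\asymp d_H(h,h')$. Hence $h\mapsto\pi(hx)$ is a quasi-isometric embedding, and the property is independent of $x$ and of the generating set by Theorem \ref{short-intn} and change of coordinates.

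\emph{Quasi-isometrically embedded orbit $\Rightarrow$ convex cocompact.} Suppose $h\mapsto\pi(hx)$ is a quasi-isometric embedding. As $CC(S)$ is hyperbolic the orbit is quasiconvex and has a limit set $\Lambda H\subset\partial_\infty CC(S)=\mathcal{EL}(S)$, so every point of $\Lambda H$ is a filling lamination. The target is an $H$-invariant subset of $Teich(S)$ that is cobounded, $H$-cocompact and quasiconvex; the candidate is the weak hull $\mathcal{Y}$, the union of the Teichmüller geodesics both of whose Thurston-boundary endpoints lie in $\Lambda H$. The decisive step is to show that $\mathcal{Y}$ is cobounded, equivalently (by Rafi's theorem) that the subsurface coordinates $d_W$ along these geodesics are uniformly bounded over all proper essential subsurfaces $W$. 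This is where the one-dimensional hypothesis (a quasi-isometric embedding into the single complex $CC(S)$) must be upgraded into control of the entire family of subsurface projections: the mechanism is the bounded geodesic image theorem combined with a compactness argument in the Thurston compactification, which shows that an unbounded subsurface coordinate would force a point of $\Lambda H$ to contain a simple closed curve as a leaf, hence to be non-filling, contradicting $\Lambda H\subset\mathcal{EL}(S)$. (Hamenstädt's proof runs the analogous step inside the train-track complex.) Granting coboundedness, $\mathcal{Y}$ lies in a fixed thick part $Teich_{\ge\epsilon}(S)$; it is closed and $H$-invariant; proper discontinuity of $Mod(S)\curvearrowright Teich(S)$ together with minimality of $\Lambda H$ gives that $H$ acts cocompactly on $\mathcal{Y}$; coboundedness makes $\pi|_{\mathcal{Y}}$ a quasi-isometry onto the curve-complex quasiconvex hull of $\Lambda H$, which transfers quasiconvexity back to $\mathcal{Y}\subset Teich(S)$; and cocompactness places $Hx$ within bounded Hausdorff distance of $\mathcal{Y}$. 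Hence $Hx$ is quasiconvex in $Teich(S)$, i.e.\ $H$ is convex cocompact.

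The main obstacle is, unsurprisingly, the coboundedness of the hull $\mathcal{Y}$ in the second implication — the step converting curve-complex quasiconvexity into uniform boundedness of all subsurface coordinates. Everything else is either soft (quasiconvexity of orbits in hyperbolic spaces, the \v{S}varc--Milnor lemma, proper discontinuity and compactness arguments) or is a direct application of Masur--Minsky structure theory; by contrast this step is precisely what distinguishes convex cocompactness from the weaker fact that $Hx$ merely quasi-isometrically embeds in $Teich(S)$, which here holds automatically since $\pi$ is Lipschitz and the orbit map $H\to Teich(S)$ is Lipschitz.
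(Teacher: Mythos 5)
The paper does not itself prove Theorem~\ref{theo;klh}: it is quoted from Kent--Leininger \cite{kl} and Hamenst\"adt \cite{ham-cc}, and the only thing the paper records about the proof (in the paragraph immediately following the statement) is the ingredient it needs downstream, namely the lift of the limit set $\Lambda_H\subset\partial CC(S)$ to $\partial Teich(S)$ -- exploiting that $Teich(S)$ is proper while $CC(S)$ is not -- together with Masur's theorem on asymptoticity of Teichm\"uller geodesics converging to points of the lifted $\Lambda_H$. So there is no in-paper proof to compare against, and I am assessing your sketch against Kent--Leininger.

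Your forward direction is essentially the standard argument and looks sound. In the reverse direction the overall plan (build a weak hull, prove coboundedness via subsurface-projection control, then deduce cocompactness from properness of $Mod(S)\curvearrowright Teich(S)$) matches Kent--Leininger's, but you skip the step the paper itself flags as central. You define $\mathcal{Y}$ as ``the union of Teichm\"uller geodesics both of whose Thurston-boundary endpoints lie in $\Lambda H$,'' yet $\Lambda H$ lives in $\mathcal{EL}(S)=\partial CC(S)$ and not in $PML(S)=\partial Teich(S)$, and there is no canonical lift. One must first prove that $\Lambda H$ has a compact $H$-invariant preimage in $PML(S)$ consisting of uniquely ergodic laminations; this is precisely where the properness of $Teich(S)$ and Masur's theorem enter, and without it the weak hull is not even well-defined. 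Relatedly, the mechanism you propose for coboundedness -- ``an unbounded subsurface coordinate would force a point of $\Lambda H$ to contain a simple closed curve as a leaf, hence be non-filling'' -- conflates $PML$-limits with $\mathcal{EL}(S)$-limits: a sequence of filling laminations in the lift of $\Lambda H$ can converge in $PML$ to a non-filling lamination without that limit being a point of $\Lambda H$, so no contradiction is produced as written; closing this gap is exactly what the compactness and unique-ergodicity of the lift are for. You correctly identify coboundedness of the hull as the crux, but the sketch as stated would not yet go through.
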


           One important ingredient in Kent-Leininger's proof of Theorem
           \ref{qi-coco} is a lifting of the limit set of $H$ in $\partial CC(S)$
           (the boundary of the curve complex) to $\partial Teich(S)$ (the boundary
           of Teichmuller space). What is important here is that $Teich(S)$ is a
           proper metric space unlike $CC(S)$. Further, they show using  a
           Theorem of Masur \cite{masur}, that any two Teichmuller geodesics
           converging to a point on the limit set $\Lambda_H$ (in $\partial
           Teich(S)$) of a convex cocompact subgroup $H$ are asymptotic. An
           alternate proof is given by Hamenstadt in \cite{ham-gd}. With these
           ingredients in place, the proof of Theorem \ref{alght-mcg0} below is
           an exact replica of the proof of Theorem \ref{alght} above:

           \begin{theorem} (Height from the Teichmuller metric) \label{alght-mcg0}  
             Let $G$ be the mapping class group of a surface $S$, and
             $Teich(S)$ the corresponding Teichmuller space with the
             Teichmuller metric, and with a
             base point $z_0$. Then for
             $\sigma \geq 0$, there exists $C\geq 0$, and $D\geq 0$ such that the
             following holds.

             Let $H_1, \dots , H_n$ be a finite collection of $\sigma-$convex
             cocompact subgroups of $G$. Suppose that there exist distinct
             cosets $\{ g_mH_{\alpha_m} \}$ with $\alpha_m \in \{1, \dots ,
             n\}$, $m = 1, \dots, n$, such that, for some $\Delta$,  
             $\cap_m (g_mH_{\alpha_m})^{+\Delta}$ is larger than $\max\{10\Delta,
             D\}$.  Then there exists a point $z \in Teich(S)$ such that the
             ball of radius $C$ in $Teich(S)$ intersects every image
             of $z_0$ by a coset    $g_mH_{\alpha_m} z_0$. 
             
             Further, for any $i \in \{1, \dots , n\}$, there are  only
             finitely many double cosets of the form $H_i g_i H_{\alpha_i}$
             such that $H_i \cap \bigcap_i  g_i H_{\alpha_i}g_i^{-1}$
             is infinite.

             The collection $\{ H_1, \dots , H_n\}$ has finite algebraic height.
           \end{theorem}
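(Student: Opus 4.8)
The plan is to transcribe, almost word for word, the proof of Theorem~\ref{alght} (and of Proposition~\ref{prop;hyp_qc_have_fgh}), with $Teich(S)$ in the role of the locally finite Cayley graph, convex cocompactness in the role of quasiconvexity, and the work of Kent--Leininger~\cite{kl}, Hamenst\"adt~\cite{ham-cc,ham-gd} and Masur~\cite{masur} in the role of $\delta$-hyperbolicity of the ambient space. First I would record the geometric inputs, all uniform in the $\sigma$-convex cocompact subgroup involved. If $H$ is $\sigma$-convex cocompact then for every $g\in Mod(S)$ the orbit $gHz_0$ is $\sigma$-quasiconvex in $(Teich(S),d_{Teich})$; by~\cite{kl,ham-cc} its limit set $\Lambda(gHz_0)\subset\partial Teich(S)$ consists of uniquely ergodic points, and by Masur~\cite{masur} (see also~\cite{ham-gd}) any two Teichm\"uller geodesics converging to such a point are strongly asymptotic. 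Consequently $gHz_0$ behaves, near its limit set, exactly like a quasiconvex subset of a hyperbolic space: a Teichm\"uller ray to $p\in\Lambda(gHz_0)$ eventually enters a $C(\sigma)$-neighbourhood of $gHz_0$, and if $p$ lies in the limit sets of several such orbits then some single point of $Teich(S)$ lies within $C(\sigma)$ of all of them. Finally, $Teich(S)$ is proper and $Mod(S)$ acts properly discontinuously on it, so every ball of radius $r$ meets a given $Mod(S)$-orbit in at most $K(r):=\#\{f\in Mod(S):d_{Teich}(z_0,fz_0)\le 2r\}$ points.

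Next I would produce the ball of radius $C$. Given the distinct cosets $g_1H_{\alpha_1},\dots,g_NH_{\alpha_N}$ I would first reduce, by the pigeonhole argument already used for the collection version of Theorem~\ref{alght}, to essentially distinct cosets $g_{m_1}H_\alpha,\dots,g_{m_k}H_\alpha$ of a single $\sigma$-convex cocompact $H_\alpha$ with $k\ge N/n$. There are then two parallel routes, mirroring the two proofs in the excerpt. If $\bigcap_j g_{m_j}H_\alpha g_{m_j}^{-1}$ is infinite, it is convex cocompact by Short's intersection theorem~\ref{short-intn-coco}, hence has a nonempty limit set; any point $p$ there lies in $g_{m_j}\Lambda_{H_\alpha}$ for every $j$ and is uniquely ergodic, so by the previous paragraph a point $z$ far enough out on the Teichm\"uller ray to $p$ lies within $C(\sigma)$ of every $g_{m_j}H_\alpha z_0$ --- and, exactly as in Theorem~\ref{alght}, it is irrelevant how far out $z$ has to be taken. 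In the setting of the displayed hypothesis, $\bigcap_m(g_mH_{\alpha_m}z_0)^{+\Delta}$ of diameter $>\max\{10\Delta,D\}$, I would instead mimic Proposition~\ref{prop;hyp_qc_have_fgh}: pick $y_1,y_2$ in this thickened intersection at distance $\max\{10\Delta,D\}$, a Teichm\"uller geodesic $[y_1,y_2]$, and a point $z$ on it more than $\Delta+C(\sigma)$ from either end, then replace $y_1,y_2$ by $\Delta$-close points of $g_mH_{\alpha_m}z_0$ and use the uniform stability of convex cocompact orbits in place of thin quadrilaterals to force $z$ within $C(\sigma)$ of each $g_mH_{\alpha_m}z_0$, once $D=D(\sigma)$ has been chosen large enough. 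Either way one gets a point $z$ with $B_{C(\sigma)}(z)$ meeting every $g_mH_{\alpha_m}z_0$, with $C(\sigma)$ depending only on $\sigma$ (not on $\Delta$, nor on the configuration); this is the first assertion of the theorem.

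The remaining assertions are then a counting argument. From the ball, for each surviving index $j$ choose $h_j\in H_\alpha$ with $g_{m_j}h_jz_0\in B_{C(\sigma)}(z)$; the elements $g_{m_j}h_j$ are pairwise distinct (distinct cosets of the \emph{same} subgroup) and all lie in $g_{m_1}h_1\cdot\{f\in Mod(S):d_{Teich}(z_0,fz_0)\le 2C(\sigma)\}$, a set of cardinality $K(C(\sigma))$. Hence $N/n\le k\le K(C(\sigma))$, i.e.\ $N\le n\,K(C(\sigma))$, which bounds the algebraic height of $\{H_1,\dots,H_n\}$. Organising the same count by the double coset of $g_i$ --- a short double coset representative forces $g_i\in H_i\cdot F\cdot H_{\alpha_i}$ for a fixed finite set $F$, and each such double coset is pinned down by the $F$-coordinate --- gives the stated finiteness of double cosets $H_ig_iH_{\alpha_i}$ for which $H_i\cap\bigcap_i g_iH_{\alpha_i}g_i^{-1}$ is infinite.

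The one genuine obstacle is that $Teich(S)$ is not hyperbolic, so the thin-triangle/quadrilateral estimates underlying both Theorem~\ref{alght} and Proposition~\ref{prop;hyp_qc_have_fgh} are not available off the shelf. The hard part will therefore be to extract from~\cite{kl,ham-cc,ham-gd,masur} the uniform ``hyperbolic-like'' behaviour of $\sigma$-convex cocompact orbits near their (uniquely ergodic) limit sets with constants depending only on $\sigma$, and, for the geometric version, to pin down a corresponding $D(\sigma)$ that makes the ``away from the endpoints'' estimate of Proposition~\ref{prop;hyp_qc_have_fgh} go through; once these uniformities are in hand the rest is a transcription of arguments already carried out in the excerpt.
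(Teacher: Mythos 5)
Your proposal is essentially the same as the paper's intended argument. The paper does not give a self-contained proof of Theorem~\ref{alght-mcg0}: it only records, in the paragraph preceding the statement, that Kent--Leininger lift the limit set to $\partial Teich(S)$, that $Teich(S)$ is proper (unlike $CC(S)$), and that (via Masur, or Hamenst\"adt) any two Teichm\"uller geodesics converging to a point in the limit set of a convex cocompact subgroup are asymptotic --- and then asserts that ``the proof of Theorem~\ref{alght-mcg0} is an exact replica of the proof of Theorem~\ref{alght}.'' Your first and second paragraphs reconstruct exactly that replica: push the common limit point up to $\partial Teich(S)$, feed Masur's asymptoticity in where the hyperbolic proof used thin quadrilaterals, and use properness to cap the number of disjoint orbits meeting a ball of controlled radius. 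Your pigeonhole reduction to a single $H_\alpha$ and the appeal to Short's intersection theorem (Theorem~\ref{short-intn-coco}) are not written out in the paper's sketch but are exactly the standard devices the paper flags in the remark following Theorem~\ref{alght}, so you are filling in rather than diverging. Your closing paragraph is an honest and accurate description of the one real gap both in your outline and in the paper's sketch: $Teich(S)$ is not hyperbolic, so ``exact replica'' has to be read through the lens of the uniform Morse-like behaviour that Kent--Leininger and Masur supply, and verifying that the constants depend only on $\sigma$ is precisely where the work lies.

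One caution about your ``second route'' (the $\Delta$-thickened-intersection hypothesis, mimicking Proposition~\ref{prop;hyp_qc_have_fgh}). When you write ``use the uniform stability of convex cocompact orbits in place of thin quadrilaterals,'' you are in effect invoking Durham--Taylor stability --- but the paper deliberately reserves Durham--Taylor for the separate Theorem~\ref{alght-mcg}, which it calls ``a more geometric strengthening'' of \ref{alght-mcg0}. For \ref{alght-mcg0} itself, the substitute the paper has in mind is not stability in the word metric but the contraction property of Teichm\"uller geodesics through the thick part (the Dowdall--Duchin--Masur criterion, cited in the proof of Proposition~\ref{prop;satisfiesqiip}), combined with Masur's asymptoticity. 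Your plan is still correct --- contractingness and Morse stability are intimately related and either can be made to serve --- but you should be aware that the paper treats the two inputs as belonging to two distinct theorems, so citing \cite{DuTa} here would conflate \ref{alght-mcg0} with \ref{alght-mcg}.

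Finally, note that the two hypotheses you handle in parallel (algebraic: $\bigcap_m g_m H_{\alpha_m}g_m^{-1}$ infinite; geometric: $\bigcap_m(g_m H_{\alpha_m}z_0)^{+\Delta}$ of large diameter) are genuinely both needed for the full statement. The algebraic hypothesis implies the geometric one easily, via Short's theorem and the fact that an infinite convex-cocompact intersection has an unbounded orbit; the converse is the nontrivial direction and is what a Durham--Taylor-type contraction argument provides. The paper's sketch does not spell this out, so your decision to split the proof into two routes, one feeding the ball conclusion and the other feeding the double-coset and height conclusions, is the right organisational move even if it is not one the paper makes explicit.
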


           A more geometric strengthening of Theorem \ref{alght-mcg0} can be
           obtained as follows using recent work of Durham and Taylor
           \cite{DuTa}, who have given an intrinsic quasi-convexity
           interpretation of convex cocompactness, by proving that convex
           cocompact subgroups of Mapping Class Groups are stable: in
           a word metric,  they are
           undistorted, and  quasi geodesics with end points in the subgroup remain close
           to each other \cite{DuTa}.

           \begin{theorem} \label{alght-mcg}  (Height from a word
             metric) 
             Let $G$ be the mapping class group of a surface $S$ and $d$ the word
             metric with respect to a finite generating set.
             Then for $\sigma \geq 0$, and any subgroup $H$ that is
             $\sigma$-convex cocompact, the group $H$ has finite  geometric height in $(G,d)$.

             Moreover, any $\sigma$-convex cocompact subgroup $H$ has finite
             geometric height in  $(G,d_1)$, where 
             $d_1$ is the word metric with respect to any (not
             necessarily finite) generating set.
             \end{theorem}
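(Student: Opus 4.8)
The plan is to deduce the theorem from its Teichmuller-metric counterpart, Theorem~\ref{alght-mcg0}, using the stability of convex cocompact subgroups established by Durham and Taylor~\cite{DuTa} and the cocompactness of the $Mod(S)$-action on the thick part of $Teich(S)$ (Mumford compactness). Fix a basepoint $z_0\in Teich(S)$ lying in the $\varepsilon_0$-thick part for some $\varepsilon_0>0$ and having trivial $Mod(S)$-stabilizer, and let $\Phi\colon(G,d)\to Teich(S)$, $\Phi(g)=gz_0$, be the orbit map. It is coarsely Lipschitz, $G$-equivariant, and, by the choice of $z_0$, injective; in particular $\Phi(gH)\neq\Phi(g'H)$ whenever $gH\neq g'H$, and the whole orbit $Gz_0$ lies in the $\varepsilon_0$-thick part because the systole function is $Mod(S)$-invariant.

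I would first record the two consequences of $\sigma$-convex cocompactness used below, both uniform over cosets by $G$-equivariance of $d$. By~\cite{DuTa}, $H$ is a stable subgroup of $(G,d)$: it is undistorted, and there is a Morse gauge $M$ such that every $(G,d)$-geodesic with endpoints on a coset $gH$ stays in the $M$-neighbourhood of $gH$. Next, by the $Teich(S)$-description of convex cocompactness together with Theorem~\ref{theo;klh}, the orbit map $H\to Teich(S)$ is a quasi-isometric embedding onto a quasiconvex subset; combined with undistortion of $H$ in $(G,d)$, this means that $\Phi|_{gH}$ is a $(\lambda,\mu)$-quasi-isometric embedding onto a uniformly $\sigma'$-quasiconvex subset of $Teich(S)$, with $\lambda,\mu,\sigma'$ independent of~$g$.

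Now suppose, for a contradiction, that $H$ has infinite geometric height in $(G,d)$. Then for arbitrarily large $n$ there are distinct cosets $g_0H,\dots,g_nH$ and a radius $\Delta$ so that $J:=\bigcap_{j=0}^{n}(g_jH)^{+\Delta}$ is not absorbed by a smaller thickening, in the sense of Definition~\ref{def:ifoldinter}, with $\operatorname{diam}_{(G,d)}(J)$ as large as we wish relative to $\Delta$. Pick $x,y\in J$ with $d(x,y)\geq\tfrac12\operatorname{diam}_{(G,d)}(J)$ and a $(G,d)$-geodesic $[x,y]$. By the Morse property $[x,y]\subset(g_0H)^{+(\Delta+M)}$, and the nearest-point projection of $[x,y]$ to $g_0H$ is a coarse geodesic of $g_0H$ of length comparable to $d(x,y)$ (Morse property and undistortion of $H$); hence $\Phi([x,y])$ is a uniform quasigeodesic of $Teich(S)$, so $d_{Teich}(\Phi x,\Phi y)$ is bounded below by a fixed increasing function of $\operatorname{diam}_{(G,d)}(J)$ up to an error of order $\Delta$, while $\Phi x,\Phi y\in\bigcap_{j=0}^{n}(g_jHz_0)^{+(\lambda\Delta+\mu')}$. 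Taking $\operatorname{diam}_{(G,d)}(J)$ large enough, the intersection $\bigcap_j(g_jHz_0)^{+(\lambda\Delta+\mu')}$ in $Teich(S)$ has diameter exceeding $\max\{10(\lambda\Delta+\mu'),D\}$, where $D$ is the constant of Theorem~\ref{alght-mcg0}; so that theorem yields $z\in Teich(S)$ whose $C$-ball meets every $g_jHz_0$. Since each $g_jHz_0\subset Gz_0$ is $\varepsilon_0$-thick, the point $z$ lies in the $\varepsilon_1$-thick part for some $\varepsilon_1=\varepsilon_1(\varepsilon_0,C)>0$, and Mumford compactness bounds the number of orbit points within distance $C$ of $z$ by a constant $N_0=N_0(C,\varepsilon_1)$. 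Therefore $n+1\leq N_0$, contradicting that $n$ was arbitrary, and $H$ has finite geometric height in $(G,d)$.

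For the last assertion, let $d_1$ be the word metric of an arbitrary (possibly infinite) generating set; if $(G,d_1)$ is bounded the claim is immediate, and otherwise one runs the same scheme, observing that the algebraic height of $H$ is already finite by Theorem~\ref{alght-mcg0} and is independent of the generating set, and that the three ingredients beyond Theorem~\ref{alght-mcg0}---undistortion and the Morse property of cosets of $H$ (convex cocompact subgroups being stable for every word metric), uniform quasiconvexity of the orbits $g_jHz_0$ in $Teich(S)$, and Mumford compactness---do not see the generating set. The main obstacle, in both parts, is the quantitative bookkeeping of the third paragraph: one must control how the thickening radius $\Delta$, the Morse gauge $M$, and the quasi-isometry constants of $\Phi|_{gH}$ combine so that a \emph{non-absorbed} $(G,d)$-intersection of large diameter is transported to a Teichmuller intersection whose diameter genuinely clears the threshold $\max\{10\Delta',D\}$ of Theorem~\ref{alght-mcg0}. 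This is exactly where uniform quasiconvexity and undistortion of convex cocompact orbits enter essentially, playing the role that the common boundary accumulation point plays in the proof of Theorem~\ref{alght}.
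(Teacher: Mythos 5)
Your proof takes a genuinely different route from the paper's, and it has a gap which you yourself flag but do not close.

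The paper never invokes Theorem~\ref{alght-mcg0}. Instead it picks $a,b$ in the large intersection, lifts to points $a_i,b_i \in g_iH$ at distance $\leq \Delta$ from them, takes $H$-geodesics $\gamma_i$, and builds $k+1$ quasigeodesics $\eta_i=[a_0,a'_i]\cdot g_i\gamma'_i\cdot[b'_i,b_0]$ in $(G,d)$ all sharing the endpoints $a_0,b_0$. The Durham--Taylor Morse/stability theorem then pins the $\eta_i$ pairwise within Hausdorff distance $2R(\sigma)$, and this closeness is pushed to $Teich(S)$ through the Lipschitz orbit map. The contradiction comes from the fact that the disjoint middle arcs $g_i\gamma'_i$ all lie in the thick part where the action is uniformly proper. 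Crucially, the constant $2R(\sigma)$ is \emph{independent of $\Delta$}; the thickening parameter never appears in the Teichm\"uller-side estimate.

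Your argument instead transports the $(G,d)$-intersection to a $Teich(S)$-intersection and tries to meet the hypothesis of Theorem~\ref{alght-mcg0}. This is where the gap lies, and you flag it yourself in the final paragraph. Concretely: the definition of geometric $i$-fold intersection only gives $\operatorname{diam}_{(G,d)}(J)\ge 10\Delta$ with no upper control on $\Delta$. Passing through a $(\lambda,\mu)$-qi embedding with a Lipschitz error of order $\Delta$ coming from the Morse neighbourhood gives, at best, $d_{Teich}(\Phi x,\Phi y)\gtrsim d(x,y)/\lambda' - c\Delta$, while the Teichm\"uller thickening you must use is $\Delta'\approx\lambda\Delta$. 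To clear the $10\Delta'$ threshold you would need $(10/\lambda'-c-10\lambda)\Delta>\mathrm{const}$, and the coefficient of $\Delta$ is negative as soon as $\lambda,\lambda'>1$, so the inequality fails whenever $\Delta$ is taken large alongside the diameter. This is precisely why the paper does not reduce to Theorem~\ref{alght-mcg0}: the translation of a non-absorbed intersection across the orbit map does not preserve the ratio diameter$/\Delta$ that \ref{alght-mcg0} requires. You would need a genuinely new argument (some bootstrapping to bound $\Delta$ independently of the diameter, say via hyperbolicity as in Proposition~\ref{prop;hyp_qc_have_fgh}) to make this reduction go through.

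There is also a smaller but real issue in your treatment of the last assertion: you assert that convex cocompact subgroups ``are stable for every word metric''. The Durham--Taylor theorem gives stability only with respect to a finite generating set; for an arbitrary (possibly infinite) generating set the Morse property can fail (indeed $H$ itself can become bounded). The paper's handling of this step is different and does not need stability at all: it simply notes that enlarging the generating set only decreases distances, so the bound on diameters proved for the finite word metric persists.
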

           
           \begin{proof}

             Assume that the theorem is false:  there exists $\sigma$ such
             that for all $k$, and all $D$, there exists a
             $\sigma$-convex
             cocompact subgroup   $H$, with a collection of distinct cosets $\{ g_mH,
             m=0, \dots, k
             \}$ (with $g_0=1$),  satisfying the property that $\cap_m (g_mH)^{+\Delta}$  
has diameter larger than
             $\max\{10\Delta, D\}$.   

             Let $a,b$ be two points in $\cap_m (g_mH)^{+\Delta}$  such that
             $d(a,b) \geq \max\{10\Delta, D\}$. For each $i$, let $a_i, b_i$ in
             $g_i H$ be at distance at most $\Delta$ from $a$ and $b$
             respectively. Consider $\gamma_i$  geodesics in $H$ from  $g_i^{-1}
             a_i$ to $g_i^{-1} b_i$.    Consider also $a'_i$ and $b'_i-$ nearest
             point projections of $a_0$ and $b_0$ on  $g_i \gamma_i$. Finally,
             denote by $g_i \gamma'_i$ the subpath of $g_i \gamma_i$
             between   $a'_i$ and $b'_i$

             By  \cite[Prop. 5.7]{DuTa}, $H$ is quasiconvex in $G$ (for a fixed chosen
             word metric), and
             for each $i$, $g_i \gamma_i$ is a
             $f(\sigma)$-quasi-geodesic (for some function $f$).

             We thus obtain from $a_0$ to $b_0$ a family of  $k+1$ paths, namely
             $\gamma_0$ and (one for all $i$), the concatenation $ \eta_i = [a_0, a'_i] \cdot
             g_i \gamma'_i \cdot [b'_i,b_0]$.

             For $D$ large enough, the paths $\eta_i$ are
             $2f(\sigma)$-quasigeodesics  in $G$.

             Stability of $H$ (\cite[Thm. 1.1]{DuTa}) implies 
             that, there exists $R(\sigma)$ such that in $G$, the paths remain
             at mutual  Hausdorff distance at most 
             $R(\sigma)$.    This is thus also true in the
             Teichmuller space by the orbit map.  Hence it follows that all the
             subpaths    $g_i \gamma'_i$ are at distance at most  $2 R(\sigma)$
             from each other, but are disjoint, and all lie in a thick part of the
             Teichmuller space, where the action is uniformly proper. This leads to
             a contradiction.

             Since the diameter of intersections can only go down if the
             generating set is increased, the last statement follows.
           \end{proof}

           \subsection{Out$(F_n)$}  \label{outfn}

           Following Dowdall-Taylor \cite{dt1}, we say that 
           a finitely generated  subgroup $H$ of  $Out(F_n)$
           is
           $\sigma-${\bf convex cocompact}
           if 
           \begin{enumerate}
           \item all non-trivial elements of $H$ are atoroidal and fully irreducible.
           \item for
             some (any)
             $x\in 
             cv_n$, the (projectivized) Outer space for $F_n$, the orbit $Hx \subset cv_n$
             is $\sigma-$quasiconvex with respect to the Lipschitz metric.
           \end{enumerate}
       
       \begin{rmark}
       The above Definition, while not explicit in \cite{dt1}, is implicit in Section 1.2 of that paper.
       
       Also, a word about the metric on $cv_n$ is in order. The statements in \cite{dt1} are made with respect to the {\em unsymmetrized} metric on outer space. However, convex cocompact subgroups have orbits lying in the thick part; and hence the unsymmetrized and symmetrized metrics are quasi-isometric to each other. We assume henceforth, therefore, that we are working with the symmetrized metric, to which the conclusions of \cite{dt1} apply via this quasi-isometry.
       \end{rmark}

           The following Theorem gives a characterization of  convex
           cocompact  subgroups in this context and is the analog of
           Theorem \ref{qi-coco}.

           \begin{theorem}\cite{dt1} \label{theo;dt14} 
             Let $H  $ be a  finitely generated subgroup of $Out(F_n)$ all whose 
             non-trivial elements  are atoroidal and fully irreducible. Then
             $H$  is convex cocompact 
             if and only if for
             some (any)
             $x\in 
             \FF_n$ (the free factor complex of $F_n$), the orbit $Hx \subset \FF_n$
             is qi-embedded in $\FF_n$.\label{qi-coco-out}\end{theorem}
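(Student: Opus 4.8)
The plan is to transport the proof of Theorem~\ref{qi-coco} from the mapping class group to $Out(F_n)$, replacing Teichmuller space by Outer space $cv_n$ (with the symmetrized Lipschitz metric), the curve complex by the free factor complex $\FF_n$, and Masur's theorem on asymptotic Teichmuller geodesics by its $Out(F_n)$ counterpart. The hypothesis that every nontrivial element of $H$ is atoroidal and fully irreducible is what confines the orbit $Hx$ to the thick part of $cv_n$, where the symmetrized and unsymmetrized Lipschitz metrics are quasi-isometric; this is precisely the regime in which the estimates of \cite{dt1} (stated there for the unsymmetrized metric) become available.

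\emph{Forward implication.} Assume $H$ is $\sigma$-convex cocompact, so $Hx\subset cv_n$ is $\sigma$-quasiconvex. The coarse projection $\pi\colon cv_n\to\FF_n$ is coarsely Lipschitz, so $\pi$ carries $Hx$ to a coarsely Lipschitz image and it remains only to bound $d_{\FF_n}(hx,h'x)$ below by a linear function of $d_{cv_n}(hx,h'x)\asymp d_H(h,h')$. For this one uses that, since $Hx$ is quasiconvex, a Lipschitz geodesic of $cv_n$ between $hx$ and $h'x$ stays uniformly close to $Hx$, and that for a purely atoroidal subgroup such a geodesic projects under $\pi$ to a uniform unparametrized quasigeodesic of $\FF_n$ (the projection estimates of \cite{dt1}, which rest on the fact that axes of fully irreducible automorphisms project to quasigeodesics of $\FF_n$). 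Composing gives the quasi-isometric embedding $Hx\hookrightarrow\FF_n$; the statement is independent of the basepoint because changing $x$ moves $Hx$ a bounded Hausdorff distance and $H$ acts isometrically on both spaces.

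\emph{Converse.} Assume $Hx$ is qi-embedded in $\FF_n$ and $H$ is purely atoroidal, and repeat the argument used for Theorem~\ref{alght} and Theorem~\ref{alght-mcg0}. First identify the limit set $\Lambda_H\subset\partial\FF_n$ and lift it, through the identification of $\partial\FF_n$ with equivalence classes of arational $F_n$-trees, to a subset of $\partial cv_n$. The key geometric input, the analogue of Masur's theorem, is that two Lipschitz geodesic rays in $cv_n$ whose $\pi$-images converge to the same arational lamination are asymptotic. Granting this, for any two points of $Hx$ the $\FF_n$-geodesic joining them lifts to a path in $cv_n$ that fellow-travels a Lipschitz geodesic between the two orbit points and hence stays uniformly close to $Hx$; this shows $Hx$ is quasiconvex in $cv_n$, so $H$ is convex cocompact.

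\emph{Main obstacle.} The converse direction is the hard part, and within it the asymptoticity statement for Lipschitz geodesic rays with a common limit point. In contrast with Teichmuller space, $cv_n$ carries only a coarsely length-space structure for an asymmetric metric, and the structure of $\partial cv_n$ — arational trees, mixing, failure of unique ergodicity — is considerably more intricate; proving the required asymptoticity, and pinning down the direction along which limits must be taken, is exactly the point at which one must appeal to the fine theory of \cite{dt1} (building on Bestvina--Reynolds, Hamenstadt, and Namazi--Pettet--Reynolds) rather than to a soft argument. The forward direction, by comparison, is a routine consequence of quasiconvexity together with the projection estimates.
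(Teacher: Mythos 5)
This statement is not proved in the paper at all: it is quoted verbatim, with a citation to \cite{dt1}, as a known theorem of Dowdall and Taylor, exactly as the Kent--Leininger/Hamenstadt characterization (Theorem~\ref{theo;klh}) is quoted rather than proved. There is therefore no ``paper's own proof'' to compare against; the paper's only engagement with this result is to state it and to cite one of its ingredients later (\cite[Theorem 4.1]{dt1}, the asymptoticity of quasi-geodesics in $cv_n$ limiting to a common boundary point of $\Lambda_H$), which it uses to port the height argument of Theorem~\ref{alght-mcg0} to $Out(F_n)$.

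Taken on its own terms, your sketch is a reasonable high-level outline of the Dowdall--Taylor argument, and you correctly identify the asymptoticity result on $\partial cv_n$ as the hard analytic input for the converse. One genuine imprecision in the forward direction: after observing that a Lipschitz geodesic between $hx$ and $h'x$ stays near the thick orbit, you conclude that it projects to a \emph{uniform unparametrized quasigeodesic} of $\FF_n$ and treat this as establishing the needed lower bound $d_{\FF_n}(hx,h'x)\gtrsim d_{cv_n}(hx,h'x)$. An unparametrized quasigeodesic gives no control whatsoever over the $\FF_n$-distance between its endpoints in terms of the $cv_n$-distance; what is actually required (and what Dowdall--Taylor prove, via thickness and progress/contraction estimates) is that the projected path makes definite \emph{parametrized} progress in $\FF_n$. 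As written, that step does not yield a qi-embedding. Since the paper itself offers no proof, the more appropriate move would simply be to cite \cite{dt1} for this theorem rather than to reconstruct its proof.
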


           Dowdall and Taylor also show \cite[Theorem 4.1]{dt1} that any two
           quasi-geodesics in $cv_n$
           converging to the same point $p$ on the limit set $\Lambda_H$ (in
           $\partial cv_n$) of a convex cocompact subgroup $H$
           are asymptotic. More precisely, given $\lambda, \mu$ and $p \in
           \Lambda_H$ there exists $C_0(= C_0(\lambda, \mu, p))$ such that any
           two 
           $(\lambda, \mu)-$quasi-geodesics in $cv_n$ converging to
           $p$ are asymptotically $C_0-$close.
           As observed before in the context of Theorem \ref{alght-mcg0},
           this is adequate for the proof of Theorem \ref{alght-mcg0} to go through:

           \begin{theorem} \label{alght-out}  
             Let $G=Out(F_n)$, and $cv_n$ the Outer space for $G$ with
             a base point $z_0$.
             Then for $\sigma \geq 0$, there exists $C\geq 0$
             such that the following holds. \\ Let
             $H_1, \dots , H_n$
             be a finite collection of $\sigma-$convex cocompact subgroups of $G$.
             Suppose that there exist distinct cosets $\{
             g_mH_{\alpha_m} \}$ with $\alpha_m \in \{1, \dots ,
             n\}$, $m = 1, \dots, n$,
             such that
             $\cap_m g_mH_{\alpha_m}g_m^{-1}$ is infinite. Then there exists a point
             $z \in cv_n$
             such that the ball of radius
             $C$
             in $cv_n$
             intersects every image of $z_0$ by a coset
             $g_mH_{\alpha_m} z_0$. 

             Further, for any $i \in \{1, \dots , n\}$, there are
             only finitely many double cosets of the form
             $H_i g_i H_{\alpha_i}$
             such that
             $H_i \cap \bigcap_i  g_i H_{\alpha_i}g_i^{-1}$
             is infinite.

             The collection $\{ H_1, \dots , H_n\}$ has finite algebraic height.
           \end{theorem}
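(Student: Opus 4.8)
The plan is to transcribe the proof of Theorem~\ref{alght} (equivalently, the proof of Theorem~\ref{alght-mcg0}), with the proper space $cv_n$ --- equipped with the \emph{symmetrized} Lipschitz metric, so that $G=Out(F_n)$ acts by isometries --- playing the role that $Teich(S)$ played for $Mod(S)$, and with the Dowdall--Taylor asymptoticity result \cite[Theorem~4.1]{dt1} substituting for Masur's theorem \cite{masur}. First I would reduce to a single subgroup exactly as in the Remark following Theorem~\ref{alght}: by the pigeonhole principle, any sufficiently large collection of distinct cosets $g_mH_{\alpha_m}$ with $\bigcap_m g_mH_{\alpha_m}g_m^{-1}$ infinite contains $N$ cosets of a single $\sigma$-convex cocompact $H=H_\alpha$. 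Fix a basepoint $z_0\in cv_n$ lying in the thick part and having trivial $G$-stabilizer (such points are dense). Since $H$ is $\sigma$-convex cocompact, $Hz_0$ is $\sigma$-quasiconvex in $cv_n$ and, as recorded above, lies in a thick part $cv_n^{\ge\epsilon}$ with $\epsilon=\epsilon(\sigma)$; as $G$ acts by isometries, every translate $g_iHz_0$ is $\sigma$-quasiconvex and contained in $cv_n^{\ge\epsilon}$ as well.

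Next I would produce a common limit point. Put $K=\bigcap_{i=1}^{N} g_iHg_i^{-1}$ and suppose $K$ is infinite. Every nontrivial element of $K$ lies in $H$, hence is atoroidal and fully irreducible; choose $1\neq h\in K$ and let $p\in\partial cv_n$ be its attracting fixed point. Because $K\le g_iHg_i^{-1}$ for every $i$, the point $p$ lies in the limit set $\Lambda_{g_iHg_i^{-1}}$ of each orbit $g_iHz_0$ (and, incidentally, $hg_iH=g_iH$, so each coset is $h$-invariant). Using that $g_iHz_0$ is $\sigma$-quasiconvex in the proper geodesic space $cv_n$ with $p$ in its limit set, I would extract by Arzel\`a--Ascoli a geodesic ray $\sigma_i$ of $cv_n$ converging to $p$ and lying within $\sigma$ of $g_iHz_0$; that $\Lambda_H$ is well behaved (non-empty, invariant, containing fixed points of loxodromic elements of conjugates of $H$) is part of the Dowdall--Taylor picture \cite{dt1}, cf.\ Theorem~\ref{qi-coco-out}.

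Now I would invoke \cite[Theorem~4.1]{dt1}: since the $\sigma_i$ are geodesics of $cv_n$ converging to the same point $p$ of the limit set of the convex cocompact subgroup $H$, the tails of $\sigma_i$ and $\sigma_1$ lie within a bounded Hausdorff distance $C_0$ of one another. Hence there is a point $z\in cv_n$ and a radius $C=C(\sigma)$ such that $B_C(z)$ meets every orbit $g_iHz_0$. As $z$ lies within $C$ of $g_1Hz_0\subset cv_n^{\ge\epsilon}$, the ball $B_C(z)$ sits inside a ball of definite radius about a point of $cv_n^{\ge\epsilon}$; by proper discontinuity of the $G$-action together with cocompactness on $cv_n^{\ge\epsilon}$, such a ball contains at most $M=M(C)$ points of the orbit $Gz_0$. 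Since $z_0$ has trivial stabilizer and the $g_iH$ are distinct cosets, the chosen points of $B_C(z)\cap g_iHz_0$ are pairwise distinct, so $N\le M$. Re-expanding the pigeonhole reduction gives the ball statement for the original family, and hence finiteness of algebraic height. For the double-coset assertion: if $H_i\cap g_iH_{\alpha_i}g_i^{-1}$ is infinite, apply the ball statement to the two cosets $H_i$ and $g_iH_{\alpha_i}$ to get $z$ with $B_C(z)$ meeting both $H_iz_0$ and $g_iH_{\alpha_i}z_0$; this produces $\eta\in H_ig_iH_{\alpha_i}$ with $d(z_0,\eta z_0)\le 2C$, and proper discontinuity leaves only finitely many such $\eta$, hence only finitely many such double cosets.

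The step I expect to be the real obstacle --- indeed the only place where anything beyond bookkeeping occurs --- is making the radius $C$ in the fellow-travelling step depend only on $\sigma$, uniformly over the configuration $\{g_i\}$ and over the limit point $p$. Since $cv_n$ is not hyperbolic, the statement ``all geodesic rays to a given boundary point are uniformly eventually close'' is false in general, and it is precisely this uniform ``thin rays'' input, valid along limit sets of convex cocompact subgroups, that is supplied by \cite[Theorem~4.1]{dt1} (mirroring the use of Masur's theorem in Theorem~\ref{alght-mcg0}). Granting that input, combined with the uniform properness of the $Out(F_n)$-action on the thick part of $cv_n$, the remainder is a routine transcription of the proof of Theorem~\ref{alght-mcg0}.
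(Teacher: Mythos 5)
Your proposal is correct and follows exactly the route the paper has in mind: the paper gives no independent argument for Theorem~\ref{alght-out}, stating only that the Dowdall--Taylor asymptoticity result \cite[Theorem~4.1]{dt1} plays the role of Masur's theorem so that ``the proof of Theorem~\ref{alght-mcg0} goes through,'' and that proof is in turn declared to be ``an exact replica'' of the proof of Theorem~\ref{alght}. You carry out that transcription faithfully --- pigeonhole reduction to a single $H$, a common boundary point $p$ from a nontrivial element of the intersection, quasi-geodesic rays in the cosets toward $p$, the Dowdall--Taylor fellow-travelling to collapse everything into a ball, and uniform properness of the action on the thick part of $cv_n$ to bound the count --- and you supply more detail than the paper does, including the double-coset step. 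Your flag on the uniformity of the fellow-travelling constant is the right place to worry: the paper's own phrasing $C_0(\lambda,\mu,p)$ reads as $p$-dependent, but the argument genuinely needs $C_0$ to depend only on $\sigma$ (equivalently on $H$) and the quasi-geodesic constants, not on $p$ or the configuration; that uniform statement is what \cite{dt1} actually provides (it is the stability/contraction of the convex cocompact orbit), and the paper's notation is simply loose there.
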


           Since an analog of the stability result of \cite{DuTa}
           in the context of Out$(F_n)$ is missing at the moment, we
           cannot quite get an analog
           of Theorem \ref{alght-mcg}.

           \subsection{Algebraic and geometric qi-intersection
             property: Examples} 
           In the Proposition below
           we shall put parentheses 
           around (geometric) to indicate that the statement holds for
           both the qi-intersection property
           as well as the 
           geometric qi-intersection property.
           \begin{prop}\label{prop;satisfiesqiip}
             \begin{enumerate}
             \item  Let $H$ be a quasiconvex subgroup of a hyperbolic
               group $G$, endowed
               with a locally finite word metric. Then,
               $\{H\}$ satisfies the uniform (geometric) qi-intersection property.

             \item Let $H$ be a relatively quasiconvex subgroup of a relatively
               hyperbolic group $(G, \calP)$.  Let $\calP_0$ be a set of conjugacy
               representatives of groups in $\calP$, and $d$ a word metric on $G$
               over a generating set $S= S_0\cup \calP_0$, where $S_0$ is
               finite. Then $\{H\}$ satisfies the uniform (geometric) qi-intersection property
               with respect to $d$.

             \item Let $H$ be a convex-cocompact subgroup of the Mapping Class
               Group $Mod(\Sigma)$ of an oriented closed surface $\Sigma$ of genus
               $\geq 2$. If $d$ is a word metric on $Mod(\Sigma)$ that makes it
               quasi-isometric to the curve complex of $\Sigma$, then
               $H$ satisfies the uniform (geometric) qi-intersection
               property
               with respect to $d$.

             \item Let $H$ be a convex-cocompact  subgroup of  $Out(F_n)$ for 
               some $n\geq 2$.  If $d$ is a word metric on $Out(F_n)$  that makes it
               quasi-isometric to the free factor complex of $F_n$,
               then $H$ satisfies the uniform (geometric)
               qi-intersection property
               with respect to $d$.
             \end{enumerate}
           \end{prop}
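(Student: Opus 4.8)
The plan is to treat all four cases simultaneously, using the common feature that in each of them $(G,d)$ is a hyperbolic geodesic space in which the finite family $\{H_1,\dots,H_m\}$ (in particular $H$) is quasiconvex: for (1) this is the hypothesis; for (2) it is Definition~\ref{relqc}, the relative Cayley graph being hyperbolic; for (3) it is Theorem~\ref{qi-coco} together with hyperbolicity of the curve complex and the chosen equivariant quasi-isometry; for (4) it is Theorem~\ref{qi-coco-out} together with hyperbolicity of the free factor complex. One also has, in each case, finite algebraic and geometric height of $H$ --- for (1) by Theorem~\ref{alght} and Proposition~\ref{prop;hyp_qc_have_fgh}, for (2) by Theorem~\ref{relht} and Proposition~\ref{prop;rh_or_gh}, for (3) by Theorems~\ref{alght-mcg0} and \ref{alght-mcg}, for (4) by Theorem~\ref{alght-out} --- and, crucially, the same theorems supply the bounded-double-coset conclusions, hence finitely many conjugacy classes of unbounded $n$-fold conjugates and finitely many orbits of geometric $n$-fold intersections at every level $n$.

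I would first dispose of the geometric qi-intersection property, which is the easier half. By Remark~\ref{rem:cobdd} the second condition of Definition~\ref{geoqiintn} follows from the first once $(G,d)$ is hyperbolic, so it suffices to verify the first. A geometric $n$-fold intersection is a set $J=\bigcap_j(g_jH_j)^{+\Delta}$ in which $\Delta$ is comparable to the quasiconvexity constant of $H$ (this is forced by the nondegeneracy condition in Definition~\ref{def:ifoldinter}), and Proposition~\ref{prop;projections} shows any such $J$ is $4\delta$-quasiconvex in $(G,d)$, with $4\delta$ independent of $n$. Quasiconvexity then yields coarse path connectivity (subdivide a geodesic between two points of $J$ into unit steps and push each vertex onto $J$), and the resulting coarse path metric on $J$ is quasi-isometric to the metric induced from $(G,d)$, with constants depending only on $\delta$; this is exactly the first condition of Definition~\ref{geoqiintn}, and no use of finite height is needed here.

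For the algebraic qi-intersection property one first fixes, at each level $n$ below the height, a choice $(\HH_n)_0$ of $C_1$-quasiconvex conjugacy representatives. This is possible because there are finitely many conjugacy classes of unbounded $n$-fold conjugates (by the bounded-double-coset conclusions of Theorems~\ref{alght}, \ref{relht}, \ref{alght-mcg0}, \ref{alght-out}), and each such intersection is quasiconvex in $(G,d)$ with a constant controlled through the intersection theorems --- Short's Theorem~\ref{short-intn} for (1), Theorem~\ref{short-intn-coco} for (3) and (4), Hruska's Theorem~\ref{hruska-intn} for (2). The second condition of Definition~\ref{qiintn} is then the projection dichotomy, which follows from the second statement of Proposition~\ref{prop;projections} combined with the standard coarse description of the nearest-point projection between cosets of quasiconvex subgroups recalled in the discussion preceding Definition~\ref{qiintn}: $\Pi_{B_0}(c^{-1}A_0)$ lies in a bounded neighbourhood of $B_0\cap c^{-1}A_0c$, which after left-translation gives $\Pi_B(A)$ inside the asserted translate of a neighbourhood of $A_0\cap B_0^c$; the constant $C_n$ depends only on $\delta$ and the finitely many quasiconvexity constants at level $n$.

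The first condition of Definition~\ref{qiintn} is the heart of the argument. Given $H\in\HH_n$, write $H=\bigcap_{j=1}^n g_jH_jg_j^{-1}$ and conjugate by $g_1^{-1}$, so that the conjugate $H'$ lies inside $H_1$; being an intersection of conjugates of quasiconvex subgroups, $H'$ is quasiconvex in $(G,d)$ by the intersection theorems above, and since $(H_1,d_1)$ is quasi-isometrically embedded in $(G,d)$ (quasiconvexity of $H_1$) and is itself hyperbolic, $H'$ is quasiconvex --- hence qi-embedded --- in $(H_1,d_1)$, whence $(H',d')$ is $(C_1,C_1)$-qi-embedded in $(G,d)$ for $d'$ the metric induced on $H'$ from $H_1$ (and likewise for any $H_i$ containing a conjugate of $H'$). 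Finiteness of height ensures that only finitely many levels $n$ occur and finitely many conjugacy classes at each, so a single $C_1$ works throughout. I expect the main obstacle to be precisely this uniformity --- it genuinely uses finiteness of the double cosets, not merely finiteness of height --- together with, in case (2), coping with the non-properness of the relative metric and matching the word metric $d_i$ on each $H_i$ coming from its own relative structure against the metric induced from $(G,d)$, for which one invokes Hruska's structure theory for relatively quasiconvex subgroups.
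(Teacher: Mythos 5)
The geometric half of your argument is fine and matches the paper, and your treatment of the first condition of Definition~\ref{qiintn} is broadly in line with what the paper does. The gap is in your dismissal of the second condition of Definition~\ref{qiintn} as following from ``the second statement of Proposition~\ref{prop;projections} combined with the standard coarse description of the nearest-point projection between cosets of quasiconvex subgroups recalled in the discussion preceding Definition~\ref{qiintn}.'' That discussion is only motivational, and the fact you are invoking --- that $\Pi_{H_1}(cH_2)$ lies in a uniformly bounded neighbourhood of $H_2^c\cap H_1$ --- is not automatic in the spaces you are working with. What Proposition~\ref{prop;projections} gives is the weaker, purely geometric dichotomy: either $\Pi_B(A)$ has diameter at most $4C+20\delta$, or $\Pi_B(A)\subset A^{+3C+10\delta}$. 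Passing from ``$\Pi_B(A)$ lies near both $A$ and $B$'' to ``$\Pi_B(A)$ lies near a translate of $A_0\cap B_0^c$'' requires producing an actual nontrivial element of the group-theoretic intersection from the geometric proximity of the two cosets, and that step is not free when $(G,d)$ is not locally finite. In fact this is where the bulk of the paper's proof lives: a descending-induction pigeonhole argument carried out in the electrified metrics $d_{i+1}$, which is what makes each $\CC\HH_i$ satisfy the projection dichotomy once it is known for $\CC\HH_{i+1}$.

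Moreover, the mechanism that makes the pigeonhole work is genuinely different in the four cases, which is why the paper does not treat them simultaneously the way you propose. In case~(1) the word metric is locally finite, so long projections produce boundedly many possible differences $a_i^{-1}b_i$ and pigeonhole applies directly. In case~(2) the relative Cayley graph is not locally finite; the paper first bounds the \emph{angular} metric at cone points along the short connecting paths, and only then does local finiteness of the angular metric rescue the pigeonhole. In cases~(3) and~(4) the crucial additional input is the contracting property of Teichm\"uller (resp.\ Outer space) geodesics that make progress in the curve complex (resp.\ free factor complex), via \cite{DDM} and \cite{dt1} (see the references to \cite{DH} in the paper): this converts proximity in the electric metric $d$ into proximity in the proper space $Teich(\Sigma)$ or $cv_n$, where the action of $G$ is proper, after which one can pigeonhole. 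Your proposal mentions non-properness only parenthetically, in case~(2), as an afterthought about matching induced metrics, and says nothing at all about the contracting-geodesic input for cases~(3) and~(4). As written, your argument for the projection dichotomy would fail in all three non-proper cases, and is circular even in case~(1) since the ``standard coarse description'' is precisely what the pigeonhole is establishing.
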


           \begin{proof}
             All four cases have similar proofs. Consider the first point.\\

 \noindent {\bf Case 1: $G$ hyperbolic, $H$ quasiconvex.}\\

                Let $h$ be the height of $H$ (which is finite by Theorem
             \ref{alght}): every $h+1$-fold intersection of conjugates of $H$ is finite, but some $h$-fold intersection is infinite.

             The first  conditions of Definition \ref{qiintn} and
             Definition \ref{geoqiintn}   follow from this finiteness and 
             Proposition \ref{prop;hyp_qc_have_fgh}
             and  Theorem \ref{short-intn}.

             The second condition of Definition \ref{geoqiintn}
             follows from Proposition \ref{prop;projections}.

We prove the second condition of Definition  \ref{qiintn}  (on mutual coboundedness of elements of $\CC\HH_i$) iteratively.

By Theorem \ref{short-intn}, there exists $C_h$ such that  two elements of $\CC\HH_h$ are $C_h$-quasiconvex in $(G,d)$. Let $D>0$. If $A$ and $B$ are two distinct such elements such that the projection of $A$ on $B$ has diameter greater than $D$, then there are $D/C_h$ pairs of elements $(a_i, b_i)$ in $A\times B$,  such that $a_i^{-1}b_i $  are elements of length at most $ 20\delta C_h$. Choose $N_0$ larger than the cardinality of finite subgroups of $G$. By a standard pigeon hole argument, if $D$ is large enough, there are $N_0$ such pairs for which  $a_i^{-1}b_i $ take the same value.   It follows that there are two   essentially distinct conjugates of elements of $\HH_h$ that intersect on a subset of at least $N_0$ elements, hence on an infinite subgroup. This contradicts the definition of height, and it follows that $D$ is bounded, and elements of $\CC\HH_h$ are mutually cobounded.  

We continue by descending induction. Assume that the second property of Definition \ref{qiintn}  is established for  
$\CC\HH_{i+1}$. By Proposition \ref{prop;criterion}  it follows that $(G,d_{i+1})$ is hyperbolic.
Let $\delta_{i+1}$ be its hyperbolicity constant.
By Proposition \ref{cobpersists},  there exists $C_i$ such that  two elements of $\CC\HH_{i}$ are $C_i$- quasiconvex in $(G,d_{i+1})$.

Again take  $A$ and $B$  two distinct  elements of  $\CC\HH_{i}$ such
that the   projection of $A$ on $B$ has diameter greater than $D>1000\delta_{i+1}$ for
$d_{i+1}$. Then there are at least $D/C_i$ pairs of elements $(a_i,b_i)$ in $A\times
B$,  such that $a_i^{-1}b_i $ is an element of length at most $
20\delta_{i+1} C_h$ for the metric $d_{i+1}$, 
and for all $i$ there exists $i'$ such that the segments
$[a_i,b_i], [a_{i'}, b_{i'}]$ are $(200\delta_{i+1})$-far from one
another. Apply the Proposition \ref{prop;unfolding_qc}  to each
geodesic $[a_i, b_i]$ to find quasi-geodesics $q_i$ from $a_i$ to
$b_i$ in $(G,d)$ (this can also be done by Lemma \ref{ea-strong}). We
know that in $(G,d)$,  $A$ and $B$ are quasiconvex (for the constant
$C_h$).  By their definition, and by hyperbolicity, the paths $q_i$
end at bounded distance of a shortest-point projection of $a_i$ to $B$
(for $d$). 
Therefore, since $(G,d)$ is
hyperbolic, and since the $q_i$ are far from one another for $d$, it
follows that the $q_i$ are actually short for the metric $d$ (shorter than $(200\delta
C_h)$).   Since there are  $D/C_i$ pairs of elements $(a_i,b_i)$,   by
the pigeon hole argument, there is an element $g_0$ (of length at most
$(200\delta
 C_h)$ in the metric $d$) such that for  $D/(C_i\times
 B_{G,d}(200\delta C_h) )$ such pairs, the difference $a^{-1} b$  equals $g_0$.  If $D$
is large enough, $D/(C_i \times B_{G,d}(200\delta
C_h) )$ is larger than the cardinality of the finite order elements of
$G$. It follows that the two essentially  distinct conjugates of elements
of $\HH_i$, corresponding to the cosets $A$ and $B$, intersect on  a
set of size larger than any finite subgroup of $G$ (and of diameter
larger than $3$ in $d_{i+1}$). Thus the
intersection is an infinite subgroup of $G$. This subgroup is necessarily among the conjugates of some $\HH_j$ for $j\geq i+1$, but therefore must have diameter $2$ in the metric $d_{i+1}$.\\

             \noindent {\bf Case 2: $G$ relatively hyperbolic, $H$
               relatively quasiconvex.}\\

               The geometric height of $H$ for the relative metric is
               finite, by Proposition \ref{prop;rh_or_gh}.  Let $h$ be
               its value. 
             The first points of Definition \ref{qiintn} follows from  this finiteness and  
             Theorem \ref{hruska-intn}.

                   The second point has a similar proof as the first case, except that the pigeon hole argument needs to be made precise because the relative metric $(G,d)$ is not locally finite.

Let $D>0$. If $A$ and $B$ are two
 distinct  elements  of $\CC\HH_h$  such that the projection of $A$ on $B$ has diameter
 greater than $D$,  then there are $D/C_H$ pairs of elements $(a_i,
 b_i)$ in $A\times B$,  such that $a_i^{-1}b_i $  are elements
 of length at most $ 20\delta C_h$.  Moreover, if $D>100\delta C_h$, 
for each  $[a_i,b_i]$, there is $[a_j, b_j]$ such that both segments
are short (for $d$) and  are
 at distance at least $(50\delta C_h)$ from each other. It
 follows that, in the Cone-off Cayley graph of $G$,  the maximal angle
 of $[a_i, b_i]$ at the cone vertices  is uniformly  bounded
 by $(100\delta C_h) + 2(2C_h+5\delta)$. Indeed, consider
 $\alpha$ and $\beta$ 
 quasi-geodesic paths in $A$ and $B$ respectively, from 
 $a_i$ to $a_j$ and from $b_i$ to $b_j$.  By hyperbolicity and
 quasi-geodesy, at distance $30\delta C_h$ from $a_i$ and
 $b_i$, there is a path of length $2(2C_h+5\delta)$ joining $\alpha$ to
 $\beta$. Being too short, this path cannot possibly intersect $[a_i,
 b_i]$. There is thus a path from $a_i$ to $b_i$ of length at most
 $2\times (30\delta C_h) +  2(2C_h+5\delta)$ that does not intersect
 $[a_i, b_i]$ outside its end points. It follows indeed that the
 maximal angle of  $[a_i, b_i]$  is at most  $2\times (30\delta C_h) +
 2(2C_h+5\delta)+ 20\delta C_h$.

From this bound on angles, we may use the fact that the angular metric
at each cone point is locally finite (by definition of relative
hyperbolicity) and the bound on the length in the
metric $d$,  to get that  all the elements $a_i^{-1}b_i $ are in a
finite set, independent of $D$.  We can now use the  pigeon hole argument, as in the hyperbolic case, and conclude similarly that $D$ is bounded.

The rest of the argument is also  by descending induction. Assume that
the second property of Definition \ref{qiintn}   is established for
$\CC\HH_{i+1}$.  
We proceed in a very similar way as in the hyperbolic case, with  the
difference is that, after establishing that the paths $q_i$ are small
for the metric $d$, one needs to check that their angles at cone
points are bounded, which is done by the argument we just used. We
provide the details now.

By Proposition \ref{prop;criterion}  it follows that $(G,d_{i+1})$ is hyperbolic.
Let $\delta_{i+1}$ be its hyperbolicity constant.
By Proposition \ref{cobpersists},  there exists $C_i$ such that  two elements of $\CC\HH_{i}$ are $C_i$- quasiconvex in $(G,d_{i+1})$.

Take  $A$ and $B$  two distinct  elements of  $\CC\HH_{i}$ such
that the   projection of $A$ on $B$ has diameter greater than
some constant $D$ for
$d_{i+1}$. Take a quasigeodesic in the projection of $A$ on $B$, of
length $D$.      Then there are at least $D/C_i$ pairs of elements $(a_i,b_i)$ in $A\times
B$, with $b_i$ on that quasigeodesic, and  such that $a_i^{-1}b_i $ is an element of length at most $
20\delta_{i+1} C_h$ for the metric $d_{i+1}$, 
and for all $i$ there exists $i'$ such that the segments
$[a_i,b_i], [a_{i'}, b_{i'}]$ are $(200\delta_{i+1})$-far from one
another. Apply the Proposition \ref{prop;unfolding_qc} (or
alternatively \ref{ea-strong})  to each
geodesic $[a_i, b_i]$ to find quasi-geodesics $q_i$ from $a_i$ to
$b_i$ in $(G,d)$. We
know that in $(G,d)$,  $A$ and $B$ are quasiconvex (for the constant
$C_h$).  By their definition, and by hyperbolicity, the paths $q_i$
end at bounded distance of a shortest-point projection of $a_i$ to $B$
(for $d$). 
Therefore, since $(G,d)$ is
hyperbolic, and since the $q_i$ are far from one another for $d$, it
follows that the $q_i$ are actually short for the metric $d$ (shorter than $(200\delta
C_h)$).   
By the argument used at the initial step of the descending induction, we also have an uniform upper bound on the
maximal angle of these paths, and therefore on the number of elements
of $G$ that label one of the paths $q_i$.

Since there are  $D/C_i$ pairs of elements $(a_i,b_i)$, if $D$ is
large enough,  by
the pigeon hole argument, there is an element $g_0$ (of length at most
$(200\delta
C_h)$ in the metric $d$), and a pair $(a_{i_0}, b_{i_0})$, such that
$a^{-1}_i b_i = g_0$ and such that 
for  $1000\delta_{i+1} C_i$  
other such pairs $(a_j, b_j)$, the difference $a^{-1}_j b_j$ is also
equal to $g_0$. The intersection of  two essentially  distinct conjugates of elements
of $\HH_i$, corresponding to the cosets $A$ and $B$, thus contains
$a_{i_0}^{-1} a_j$ for all those indices $j$.  There are indices $j$  for which $a_{i_0}^{-1} a_j$ labels
a quasi-geodesic paths in $A$ of length at least 
 $1000\delta_{i+1} C_i  $.   Such an element is either loxodromic,
 or elliptic with fixed point at the midpoint $[a_{i_0}, a_j]$.  But
 if all of them are elliptic, 
 for two indices $j_1, j_2$, we get two different fixed points, hence the
 product of the elements $a_{i_0}^{-1} a_{j_1} a_{i_0}^{-1} a_{j_2}$   is loxodromic.

  This element is in the intersection of conjugates
of elements of  $\HH_i$,   thus is in a subgroup  among the conjugates
of some $\HH_j$ for $j\geq i+1$, but therefore must have diameter $2$
in the metric $d_{i+1}$, and cannot contain loxodromic elements. This
is thus a contradiction.   \\

             \noindent {\bf Cases 3 and 4: $G\, = \, Mod(\Sigma)$ or
               $Out(F_n)$, $H$ convex cocompact.}\\

            Consider the Teichmuller metric on Teichmuller space
            $(Teich(\Sigma),d_T)$ 
            and the 
             (symmetrization of the) Lipschitz metric on Outer space
             $(cv_n,d_S)$ 
             respectively for
             $Mod(\Sigma)$
             and $Out(F_n)$. 
             Though $Teich(\Sigma)$ and $cv_n$ are
             non-hyperbolic, they 
             are proper metric spaces.

             For the mapping class group $Mod(\Sigma)$, the curve
             complex $(CC(\Sigma), d)$ is hyperbolic and  quasi-isometric to $(Mod(\Sigma),d)$, where $d$ is
             the word-metric on $Mod(\Sigma)$ obtained by taking as
             generating set a finite generating
             set of $Mod(\Sigma)$ along with {\it all} elements of
             certain sub-mapping class groups (see \cite{masur-minsky}).

             Similarly for $Out(F_n)$, the free factor complex
             $(\FF_n, d) $
             is hyperbolic, and is quasi-isometric to $(Out(F_n),d)$
             for a certain word metric over an infinite generating
             set  (\cite{BF-ff}).   
             This establishes that the hypotheses in the statements of
             Cases 3 and 4 are not vacuous.

             Recall that if a subgroup  $H$ of $Mod(\Sigma)$ or $Out(F_n)$
             is $C$-convex co-compact, then by Theorem \ref{theo;klh} (and
             \ref{theo;dt14})   the orbit of a base point
             in $CC(\Sigma)$ (or $\FF_n$) is a quasi-isometric image
             of the orbit of a base point in Teichmuller space.

             Finiteness of height of convex cocompact subgroups
             follows from Theorems \ref{alght-mcg} and \ref{alght-out}
             for 
             $G\, = \, Mod(\Sigma)$  and $Out(F_n)$ respectively.
             The first  condition of Definition \ref{qiintn} now follows from Theorems
             \ref{short-intn-coco}.

             We now proceed with proving the second  condition of Definition \ref{qiintn}.
             
             We first remark that, given $C$, there exists $\Delta, C'$ such that  if $A,B$ are cosets of $C$-convex
             co-compact subgroups, and if $a_1, a_2 \in A$, $b_1,
             b_2\in B$ are such that, in $CC(\Sigma)$, $d(a_1, b_1)$
             and $d(a_2, b_2) $ are at most $10C\delta$ and that  $d(a_1, a_2)$
             and $d(b_1,b_2)$ are larger than $\Delta$ then, $d_T(
             a_i, b_i ) \leq C' $ for both $i=1,2$. Indeed, by
             definition of convex cocompactness, the segment $[a_1,
             a_2 ]$ in Teichmuller space maps on a parametrized
             quasi-geodesic in the curve complex. A result of
             Dowdall Duchin and Masur ensures that
             Teichmuller geodesics that make progress in the curve
             complex, are contracting in Teichmuller space    
             \cite[Theorem A]{DDM} (see  the formulation done and proved in 
             \cite[Prop. 3.6]{DH}). Thus the
             segment   $[a_1,
             a_2 ]$   is contracting in Teichmuller space: any Teichmuller
             geodesic whose projection in the curve complex
             fellow-travels that of   $[a_1,
             a_2 ]$ has to be uniformly close to $[a_1,
             a_2 ]$. Applying that to 
             the segment $[b_1, b_2]$, it follows
             that it must remain at bounded distance (for Teichmuller
             distance) from $[a_1, a_2]$, as demanded. 

             A
             similar statement is valid for $Out(F_n)$  with the
             objects that we introduced, it suffice to use \cite[Prop.
             4.17]{DH}, an arrangement of Dowdall-Taylor's result
             \cite{dt1}, in place of the Dowdall-Duchin-Masur criterion.

             With this estimate, one can easily  adapt  the proof of the
             first case to get the result.

           \end{proof}

           \section{From quasiconvexity to graded relative hyperbolicity}

           Recall that we  defined  graded
           geometric relative hyperbolicity in
           Definition \ref{ggrh}.

           \subsection{Ensuring  geometric graded relative hyperbolicity}

           \begin{prop} \label{ghqiimpliesggrh-a}
             Let $G$ be a group, $d$ a word metric on $G$
             with respect to some (not necessarily finite)
             generating set, such that $(G,d)$ is
             hyperbolic.
             Let $H$ be a subgroup of $G$. 
             If $\{H\}$ has finite 
             geometric height for $d$ and  has the  uniform qi-intersection
             property, then $(G,\{H\},d)$ has   graded
             geometric relative hyperbolicity.
           \end{prop}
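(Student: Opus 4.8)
The plan is to verify, one by one, the three clauses of Definition~\ref{ggrh} for the family $\{H\}$, reducing each to results of Section~\ref{rh} — Propositions~\ref{prop;criterion}, \ref{cobpersists} and \ref{prop;from_horo_to_he} — together with the two clauses of the uniform qi-intersection property. Write $n$ for the geometric height of $H$; for each $i$ let $(\HH_i)_0$ be the chosen conjugacy representatives, $\CC\HH_i$ the associated family of left cosets, and $d_i$ the metric obtained from $d$ by electrifying $\CC\HH_i$. Since left translations are isometries of $(G,d)$ and the representatives in $(\HH_i)_0$ are $C_1$-quasiconvex in the hyperbolic space $(G,d)$ by clause~(1) of Definition~\ref{qiintn}, every coset in $\CC\HH_i$ is $C_1$-quasiconvex in $(G,d)$.

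Clauses~(1) and (3) of Definition~\ref{ggrh} are the easier ones. Finiteness of geometric height is the hypothesis; moreover, using the left-invariance identity $d(k g_j, k g_{j'}) = d(g_j, g_{j'})$ one sees that an unbounded intersection $\bigcap_{j=1}^{m} g_j H g_j^{-1}$, after normalising $g_1 = 1$, lies in $\bigcap_j (g_j H)^{+\Delta}$ for $\Delta = \max_j \|g_j\|_d$, so an unbounded algebraic $(n+1)$-fold conjugate would, for a suitable thickening, give an unbounded geometric $(n+1)$-fold intersection, contradicting height $n$; hence $\HH_m = \emptyset$ for $m > n$. Finiteness of the number of orbits of $\HH_i$ follows by the GMRS-type argument of the proofs of Theorem~\ref{alght} and Proposition~\ref{prop;hyp_qc_have_fgh}: the uniformly quasiconvex cosets $g_j H$ defining an element of $\HH_i$ fellow-travel along an unbounded set, so they all meet a fixed-radius ball around a common point, and a pigeon-hole argument — carried out with the care required in the non-proper case exactly as in the proof of Proposition~\ref{prop;satisfiesqiip} and using clause~(2) of Definition~\ref{qiintn} to control the angular data — yields finitely many double-coset representatives. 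For clause~(3), clause~(1) of Definition~\ref{qiintn} furnishes, for each $K \in \HH_i$, a conjugate $K'$ whose induced metric $d'$ (from $H$) is $(C_1, C_1)$-quasi-isometrically embedded in $(G,d)$; since $(K',d')$ is coarsely connected, $K'$ is coarsely path connected in $(G,d)$ with a constant depending only on $C_1$, and hence, by left translation and finiteness of orbits, so is every coset of $\CC\HH_i$, with a constant $D_i$ depending only on $i$.

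The content of the proposition is clause~(2): for each $i$ with $1 \le i \le n+1$, the family $\CC\HH_{i-1}$ is coarsely hyperbolically embedded in $(G, d_i)$. Since $\CC\HH_i$ is a uniformly $C_1$-quasiconvex family in the hyperbolic space $(G,d)$, Proposition~\ref{prop;criterion} gives that $(G,d_i)$ is hyperbolic, and Proposition~\ref{cobpersists} gives that $\CC\HH_{i-1}$ is uniformly quasiconvex in $(G,d_i)$; so by the second half of Proposition~\ref{prop;criterion} and by Proposition~\ref{prop;from_horo_to_he} it suffices to prove that $\CC\HH_{i-1}$ is mutually cobounded in $(G,d_i)$. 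Let $A = a A_0$ and $B = b B_0$ be distinct elements of $\CC\HH_{i-1}$ and set $c = a^{-1} b$. By clause~(2) of Definition~\ref{qiintn}, either $\Pi_B(A)$ is $d$-bounded by $C_{i-1}$ (hence $d_i$-bounded, since electrification does not increase distances), or $\Pi_B(A)$ lies in $a\,(A_0 \cap B_0^{\,c})^{+C_{i-1}}$. In the second case, because $A_0 \ne B_0^{\,c}$ the subgroup $A_0 \cap B_0^{\,c}$ is an intersection of at least $i$ conjugates of $H$ over pairwise distinct cosets, hence is contained in some $i$-fold intersection $L$. If $L$ is bounded, then — using that bounded algebraic intersections are uniformly bounded, which one extracts from finiteness of geometric height — $A_0 \cap B_0^{\,c}$ is bounded by a constant depending only on $i$, and $\Pi_B(A)$ is uniformly $d$-bounded. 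If $L$ is unbounded, then $L \in \HH_i$, so $L$ is a conjugate of some $L_0 \in (\HH_i)_0$, the coset $a(A_0 \cap B_0^{\,c})$ lies in a neighbourhood of a coset of $L_0$ — an element of $\CC\HH_i$, which is collapsed in $d_i$ — and hence $\Pi_B(A)$ is $d_i$-bounded. Thus $\CC\HH_{i-1}$ is mutually cobounded in $(G,d_i)$, and Propositions~\ref{prop;criterion} and \ref{prop;from_horo_to_he} complete clause~(2).

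The main obstacle is the phrase ``lies in a neighbourhood of a coset of $L_0$'' in the last step: as the authors observe in the discussion before Definition~\ref{qiintn}, the conjugate $L = \rho L_0 \rho^{-1}$ only satisfies $L \subseteq (\rho L_0)^{+\|\rho\|_d}$, and the conjugating element $\rho$ — so the radius of this neighbourhood, and a priori the $d_i$-diameter of $\Pi_B(A)$ — is not uniform over pairs $A,B$. Making the bound on this $d_i$-diameter depend only on $i$ is the technical heart of the proof, and it is exactly here that both clauses of the qi-intersection property must be combined: clause~(1) supplies, for each intersection, a distinguished $(C_1,C_1)$-quasi-isometrically embedded conjugate, which together with the finiteness of orbits from clause~(1) of Definition~\ref{ggrh} reduces the situation to finitely many ``shapes'' of intersection, each with a fixed good conjugate, so that the stray neighbourhood radius is absorbed into the constant $C_i$. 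Once this bookkeeping is carried out, all three clauses of Definition~\ref{ggrh} hold and $(G,\{H\},d)$ has graded geometric relative hyperbolicity.
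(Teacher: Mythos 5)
Your overall architecture agrees with the paper's: reduce clause~(2) of Definition~\ref{ggrh} to mutual coboundedness of $\CC\HH_{i-1}$ in $(G,d_i)$ and then invoke Propositions~\ref{prop;criterion}, \ref{cobpersists} and \ref{prop;from_horo_to_he}, while clauses~(1) and~(3) come more directly from the hypotheses. You also usefully spell out the verification of clauses~(1) and~(3), which the paper leaves implicit. Up to that point the two arguments are essentially the same.

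The problem is your treatment of the coboundedness step. You correctly flag that, read literally, clause~(2) of Definition~\ref{qiintn} places $\Pi_B(A)$ in $a\,(A_0\cap B_0^{\,c})^{+C_{i-1}}$, and that if $A_0\cap B_0^{\,c}=\rho L_0\rho^{-1}$ for $L_0\in(\HH_i)_0$, the coset $a\rho L_0\rho^{-1}$ sits at $d_i$-Hausdorff distance $d_i(\rho,e)$ from the collapsed coset $a\rho L_0$, and this is not a priori uniform. But your proposed repair --- using finiteness of $G$-orbits in $\HH_i$ together with clause~(1) to reduce to ``finitely many shapes'' --- does not close the gap: finiteness of conjugacy classes bounds how many representatives $L_0$ there are, not the $d_i$-length of the conjugator $\rho$ relating a particular intersection $A_0\cap B_0^{\,c}$ to its chosen representative. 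Even within a single conjugacy class that length is unbounded, and left translation by $a$ (an isometry) is of no help against the right translation by $\rho^{-1}$. So the assertion ``the stray neighbourhood radius is absorbed into the constant $C_i$'' is not supported by what you have set up. For comparison, the paper simply asserts that Definition~\ref{qiintn} yields mutual coboundedness in $d_{n+1}$ and, in the verifications of Proposition~\ref{prop;satisfiesqiip}, establishes mutual coboundedness of $\CC\HH_i$ in $d_{i+1}$ directly by descending induction --- effectively reading clause~(2) as a formalisation of that coboundedness rather than as a statement purely in $(G,d)$ from which coboundedness in $d_{i+1}$ must then be derived. If you adopt that reading, the Proposition is the formal assembly in your first paragraph and the $\rho$ issue does not arise; if you insist on the literal reading, you need an actual argument bounding $d_i(\rho,e)$ (or reformulating $\CC\HH_i$ so that the coset $a(A_0\cap B_0^{\,c})$ itself is coned), and ``finitely many shapes'' is not it.
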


           \begin{proof} 
             As in Definition \ref{qiintn}, $\HH_n$ denotes the collection of
             intersections of $n$
             essentially distinct conjugates of $H$. Let
             $(\HH_n)_0$ denote  a set of conjugacy
             representatives of $(\HH_n)$ that are $C_1$-quasiconvex, and let
             $\CC\HH_n$ denote the collection of cosets of elements of $(\HH_n)_0$.
             Let $d_n$ be the metric on $X=(G,d)$ after electrifying the elements of
             $\CC\HH_n$. 

             By Definition \ref{qiintn}
             and Remark \ref{rem:cobdd}, for all $n$, all elements of $\CC\HH_n$ and
             of $\CC\HH_{n+1}$  are
             $C_1$-quasiconvex in $(G,d)$. Therefore, by Proposition
             \ref{cobpersists}, all elements of  $\CC\HH_n$ are
             $C'_1$-quasiconvex  in $(G,d_{n+1})$ for
             some $C'_1$ depending on the hyperbolicity of $d$, and on $C_1$.

             By   Definition \ref{qiintn},  $\CC\HH_n$ is mutually cobounded in
             the metric $d_{n+1}$.     Proposition
             \ref{prop;criterion} now shows that the horoballification of $(G,d_{n+1})$
             over $\CC\HH_{n}$ is hyperbolic, for all $n$.  Proposition
             \ref{prop;from_horo_to_he} then guarantees that  $\CC\HH_{n}$ is
             coarsely hyperbolically embedded in $(G,d_{n+1})$. Since $H$ is assumed to
             have finite  
             geometric height,     $(G,\{H\},d)$ has 
             graded geometric   relative hyperbolicity.
           \end{proof}

           \subsection{Graded relative hyperbolicity for quasiconvex subgroups}

           \begin{prop}\label{qcimpliesgrh}
             Let $H$ be a  quasiconvex subgroup of a hyperbolic group $G$,  with a
             word metric $d$ (with respect to a finite generating set). Then the
             pair $(G,\{H\})$ has   
             graded geometric
             relative hyperbolicity, and graded relative hyperbolicity.
           \end{prop}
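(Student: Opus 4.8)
The plan is to assemble this from the pieces already in place, since the two flavours of graded relative hyperbolicity are now almost immediate corollaries. First I would record that a quasiconvex subgroup $H$ of a hyperbolic group is finitely generated and undistorted; that it has finite algebraic height by Theorem \ref{alght}; that it has finite geometric height by Proposition \ref{prop;hyp_qc_have_fgh}; and that $\{H\}$ satisfies the uniform (and the uniform geometric) qi-intersection property by Proposition \ref{prop;satisfiesqiip}(1). Since $d$ is a finite-generating-set word metric, $(G,d)$ is hyperbolic, so Proposition \ref{ghqiimpliesggrh-a} applies verbatim and yields that $(G,\{H\},d)$ has graded geometric relative hyperbolicity (Definition \ref{ggrh}). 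That disposes of the first half of the statement.

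For graded relative hyperbolicity in the sense of Definition \ref{grh}, I would verify the three clauses in turn. Clause (1), that $H$ has algebraic height some $n\in\natls$, is exactly Theorem \ref{alght}. For clause (3) I would reuse the argument inside the proof of Proposition \ref{ghqiimpliesggrh-a}: iterating Short's intersection theorem (Theorem \ref{short-intn}) shows the chosen conjugacy representatives $(\HH_{i-1})_0$ are uniformly quasiconvex in $(G,d)$, the second clause of the qi-intersection property (Definition \ref{qiintn}) together with Remark \ref{rem:cobdd} makes the coset family $\CC\HH_{i-1}$ mutually cobounded in the electrified metric $d_i$, and then Proposition \ref{prop;criterion} followed by Proposition \ref{prop;from_horo_to_he} gives that $(G,d_i)$ is strongly hyperbolic relative to $\HH_{i-1}$; undistortion of the quasiconvex intersections (the qi-embedding clause of Definition \ref{qiintn}(1)) is what lets us replace the induced metrics on the $K\in\HH_{i-1}$ by the word metrics coming from generating sets $S_K$ without disturbing the relatively hyperbolic structure, which is precisely what clause (3) demands.

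Clause (2) is the only point requiring a small separate argument. Each $K\in\HH_{i-1}$ is an intersection of $i-1$ essentially distinct conjugates of $H$, hence quasiconvex in $G$ by iterated application of Theorem \ref{short-intn}, hence finitely generated; any finite generating set of $K$ is in particular a finite relative generating set with respect to the peripheral family prescribed in Definition \ref{grh}(2). The remaining content is the uniform bound on $|S_K|$: by the ``further'' part of Theorem \ref{alght} applied to the finite family (there are only finitely many double cosets, hence finitely many $G$-conjugacy classes among the elements of $\HH_{i-1}$), and since conjugate subgroups have equal rank, $\operatorname{rank}(K)$ is bounded by a constant $r_i$ depending only on $i$, so one may take $|S_K|\le r_i$.

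The step I expect to be the main obstacle — really the only place beyond bookkeeping — is the compatibility issue hidden in clause (3): the propositions of Section \ref{rh} give \emph{coarse hyperbolic embeddedness of the subsets} $\CC\HH_{i-1}$ in $(G,d_i)$ (Definition \ref{he}), whereas Definition \ref{grh}(3) asks for strong relative hyperbolicity with each $K$ carrying the intrinsic word metric of $S_K$. Bridging these is exactly the role of the qi-intersection property, so the argument must be phrased so that the undistortion supplied by Definition \ref{qiintn}(1) is used to identify, up to quasi-isometry, the $S_K$-metric, the metric induced from $(G,d)$, and the relevant coarse path metric on $K$; once this identification is made the upgrade from ``coarsely hyperbolically embedded family of subsets'' to ``group strongly hyperbolic relative to subgroups with word metrics'' is formal. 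I would make sure to state this reconciliation explicitly rather than leaving it implicit.
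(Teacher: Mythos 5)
Your proposal is correct and follows the same route as the paper's own proof: finite height by Theorem~\ref{alght}, qi-intersection property by Proposition~\ref{prop;satisfiesqiip}, and Proposition~\ref{ghqiimpliesggrh-a} to conclude graded geometric relative hyperbolicity, with the algebraic version then deduced from local finiteness of the metric and quasiconvexity of the $i$-fold intersections. The only difference is one of presentation: the paper dispatches the transition to (algebraic) graded relative hyperbolicity in a single sentence, whereas you verify clauses (1)--(3) of Definition~\ref{grh} explicitly, including the uniform bound on $|S_K|$ via the double-coset finiteness from Theorem~\ref{alght} and the reconciliation of the $S_K$-word-metric with the coarse path metric via undistortion; this is a useful expansion of a step the paper leaves implicit.
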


           \begin{proof} 
             For the word metric $d$ with respect to a finite generating set, 
  graded geometric relative hyperbolicity agrees with the notion of graded 
             relative hyperbolicity (Definition \ref{grh}).

             By Theorem \ref{alght}, $H$ has finite height. By Proposition
             \ref{prop;satisfiesqiip} it satisfies the uniform qi-intersection
             property \ref{qiintn}.  Therefore, by   Proposition
             \ref{ghqiimpliesggrh-a},  the pair $(G,\{H\})$ has 
             graded relative
             hyperbolicity.

             Finally, note that since the word metric we use is locally finite, and all $i-$fold intersections are quasiconvex,  graded relative
             hyperbolicity follows.
           \end{proof}

           \begin{prop} \label{relqcimpliesgrh}
             Let $(G,\PP)$ be a finitely generated relatively
             hyperbolic group. Let $H$ be a relatively quasiconvex
             subgroup. Let $S$ be a finite relative generating set of
             $G$ (relative to $\PP$) and let
             $d$ be the word metric with respect to $S \cup \PP$.  Then  $(G,\{H\},d)$ has 
             graded relative hyperbolicity as well as  
        graded geometric relative hyperbolicity. 
           \end{prop}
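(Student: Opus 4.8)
The plan is to mimic the proof of Proposition \ref{qcimpliesgrh} almost line by line, replacing ``quasiconvex subgroup of a hyperbolic group'' by ``relatively quasiconvex subgroup of a relatively hyperbolic group'' and feeding in the relatively hyperbolic counterpart of each ingredient. The first point to record is that $(G,d)$ is itself hyperbolic: since $d$ is the word metric for the generating set $S\cup\PP$, the Cayley graph is quasi-isometric to the coned-off Cayley graph of $(G,\PP)$, which is hyperbolic by the very definition of relative hyperbolicity. Next I would invoke Proposition \ref{prop;rh_or_gh} --- the geometric rephrasing of Hruska--Wise's Theorem \ref{relht} --- to conclude that $\{H\}$ has finite geometric height for $d$, and Proposition \ref{prop;satisfiesqiip}(2) to conclude that $\{H\}$ has the uniform qi-intersection property with respect to $d$. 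With both of these in hand, Proposition \ref{ghqiimpliesggrh-a} applies word for word and already yields that $(G,\{H\},d)$ has graded geometric relative hyperbolicity; in particular, for each $i$ the electrified metric $d_i$ built from the cosets of (conjugacy representatives of) the $i$-fold intersections is hyperbolic, $\CC\HH_{i-1}$ is coarsely hyperbolically embedded in $(G,d_i)$ --- equivalently, by Proposition \ref{prop;from_horo_to_he}, $(G,d_i)$ is strongly hyperbolic relative to $\HH_{i-1}$ --- and the items of $\CC\HH_i$ are uniformly coarsely path connected.

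To obtain (algebraic) graded relative hyperbolicity as well, I would then verify, one by one, the conditions of the relatively hyperbolic analogue of Definition \ref{grh}. Condition (1), finiteness of the relative algebraic height, is exactly the last assertion of Theorem \ref{relht}. Condition (3), that $(G,d_i)$ be strongly hyperbolic relative to $\HH_{i-1}$ with each $K\in\HH_{i-1}$ carrying the word metric of its finite relative generating set, follows from the strong relative hyperbolicity already established together with the observation that this word metric on $K$ is quasi-isometric to the metric induced from $(G,d_i)$; the latter combines Proposition \ref{retraction} with the uniform relative quasiconvexity of the intersection subgroups supplied by Hruska's Theorem \ref{hruska-intn}. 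That leaves Condition (2): each $K\in\HH_{i-1}$ should admit a finite relative generating set, relative to $H\cap\HH_i$ together with its peripheral pieces (the infinite intersections of $K$ with conjugates of members of $\PP$), with the number of generators bounded in terms of $i$ alone.

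This last condition is where the real work lies, and I expect it to be the main obstacle. In the hyperbolic-group case (Proposition \ref{qcimpliesgrh}) it was essentially automatic, since every $i$-fold intersection is quasiconvex, hence finitely generated, and one can simply take $S_K$ to be a finite generating set of $K$ and read off the uniform bound from finiteness of double cosets; in the relatively hyperbolic case the intersection subgroups need not be finitely generated in the ordinary sense, so one must genuinely produce a relative generating set and control its cardinality. The key input is the ``finitely many double cosets'' clause of Theorem \ref{relht}: for each $i$ there are only finitely many double cosets $KgH$ for which $K\cap gHg^{-1}$ is not parabolic. From this, together with the pigeonhole and double-coset bookkeeping already carried out in Case~2 of the proof of Proposition \ref{prop;satisfiesqiip}, one argues that $K$ is generated by the finitely many maximal intersections $K\cap gHg^{-1}$ lying in $\HH_i$, by its peripheral subgroups, and by finitely many further elements coming from bounded-length double coset representatives --- with a count depending only on $i$. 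Assembling Conditions (1)--(3) then gives graded relative hyperbolicity, which completes the argument.
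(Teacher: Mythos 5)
Your proposal is correct and follows essentially the same route as the paper: Theorem \ref{relht} via Proposition \ref{prop;rh_or_gh} for finite geometric height, Proposition \ref{prop;satisfiesqiip}(2) for the uniform qi-intersection property, Proposition \ref{ghqiimpliesggrh-a} for the geometric conclusion, and then the algebraic version by invoking relative quasiconvexity of the $i$-fold intersections. The one place you go noticeably beyond the paper's text is in spelling out how Condition~(2) of Definition \ref{grh} (bounded finite relative generating sets for each $K\in\HH_{i-1}$) is obtained from the finitely-many-double-cosets clause of Theorem \ref{relht} and Hruska's Theorem \ref{hruska-intn} --- the paper compresses that entire step into the single closing sentence ``since \dots\ all $i$-fold intersections are relatively quasiconvex as well, the above argument furnishes graded relative hyperbolicity,'' so your expansion is a fair and accurate unpacking rather than a different argument.
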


           \begin{proof}
             The proof is similar to that of Proposition \ref{qcimpliesgrh}. By
             Theorem \ref{relht}, $H$ has finite relative height,  hence it has
             finite  
      geometric
             height for the relative metric (see Example
             \ref{prop;rh_or_gh}).

             Next, by  Proposition \ref{prop;satisfiesqiip}, $H$ satisfies the uniform
             qi-intersection property for a relative metric, and 
             graded geometric relative hyperbolicity follows from
             Proposition \ref{ghqiimpliesggrh-a}.

             Again, since $G$ has a word metric with respect to a
             finite relative generating set, and $H$ and all $i-$fold
             intersections are relatively
             quasiconvex as well, the above argument furnishes graded
             relative hyperbolicity as well. 
           \end{proof}

           Similarly, replacing the use of Theorem
           \ref{alght} by Theorems \ref{alght-mcg0} and \ref{alght-out}, 
           one  obtains the following.

           \begin{prop}\label{cocoimpliesgrh0} Let $G$ be the mapping
             class group $Mod(S)$ (respectively $Out(F_n)$). Let $d$
             be a word metric
             on $G$ making it quasi-isometric to the curve complex
             $CC(S)$ (respectively the free factor complex $\FF_n$).
             Let $H$ be a  convex cocompact subgroup of  $G$. Then  $(G,\{H\},d)$ has
             graded   relative hyperbolicity. 
           \end{prop}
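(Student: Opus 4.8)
The plan is to mimic the proofs of Propositions \ref{qcimpliesgrh} and \ref{relqcimpliesgrh}, substituting Theorems \ref{alght-mcg0} and \ref{alght-out} for Theorem \ref{alght}, and verifying the three clauses of Definition \ref{grh} in turn. Write $\HH_i$ for the family of algebraic $i$-fold intersections of $H$ (intersections of $i$ essentially distinct conjugates), $(\HH_i)_0$ for a choice of conjugacy representatives, $\CC\HH_i$ for the set of their left cosets, and $d_i$ for the metric obtained from $d$ by electrifying $\CC\HH_i$. Two facts are used throughout. First, since $d$ makes $(G,d)$ quasi-isometric to the hyperbolic space $CC(S)$, resp.\ $\FF_n$ (see the proof of Proposition \ref{prop;satisfiesqiip}), the space $(G,d)$ is a hyperbolic geodesic space, even though $d$ is not a locally finite word metric. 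Second, iterating Theorem \ref{short-intn-coco}, every element of every $\HH_i$ is again convex cocompact, hence in particular finitely generated and, by Theorem \ref{theo;klh}, resp.\ \ref{theo;dt14}, quasiconvex in $(G,d)$.

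First I would dispatch clauses (1) and (2) of Definition \ref{grh}. By Theorem \ref{alght-mcg0}, resp.\ Theorem \ref{alght-out} --- applied not merely to $\{H\}$ but to any finite family of convex cocompact subgroups --- $H$ has finite algebraic height, say $n$, and for each $i\le n$ only finitely many double cosets of the relevant form have infinite $i$-fold intersection; this yields that $\HH_i$ has only finitely many conjugacy classes. Clause (2) is then easy: each $K\in\HH_{i-1}$ is finitely generated, so has a finite generating set $S_K$, which is a fortiori a finite generating set relative to $H\cap\HH_i$; picking the $S_K$ conjugation-equivariantly and using finiteness of the conjugacy classes of such $K$, one bounds $|S_K|$ in terms of $i$ alone. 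For $i>n+1$ the groups in $\HH_{i-1}$ are finite and clauses (2)--(3) are automatic there, so the graded structure stabilizes.

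For clause (3) I would run the argument of Proposition \ref{ghqiimpliesggrh-a} up to, but not including, its final sentence, which is precisely where finite \emph{geometric} height (not available for $Out(F_n)$) is invoked. By Proposition \ref{prop;satisfiesqiip} (cases 3 and 4), $\{H\}$ has the uniform qi-intersection property of Definition \ref{qiintn} for $d$; hence all members of $\CC\HH_{i-1}\cup\CC\HH_i$ are uniformly quasiconvex in $(G,d)$, so $(G,d_i)$ is hyperbolic (Proposition \ref{prop;criterion}) and, by Proposition \ref{cobpersists}, the members of $\CC\HH_{i-1}$ are uniformly quasiconvex in $(G,d_i)$; the projection clause of Definition \ref{qiintn} then forces $\CC\HH_{i-1}$ to be mutually cobounded in $(G,d_i)$ (a large $d$-projection $\Pi_B(A)$ lands in a translate of a neighborhood of an element of $\HH_i$, hence is $d_i$-bounded). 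Proposition \ref{prop;criterion} now makes the horoballification of $(G,d_i)$ over $\CC\HH_{i-1}$ hyperbolic, and Proposition \ref{prop;from_horo_to_he} gives that $\CC\HH_{i-1}$ is coarsely hyperbolically embedded in $(G,d_i)$; by Remark \ref{comp2} this is exactly clause (3), namely that $(G,d_i)$ is strongly hyperbolic relative to $\HH_{i-1}$. With clauses (1)--(3) established, $(G,\{H\},d)$ has graded relative hyperbolicity.

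This reduces the Proposition to an assembly of results already in place, so the step needing the most care --- rather than a serious new obstacle --- is the passage to clause (3) above, together with the fact that $d$ is not a locally finite word metric: unlike in the proof of Proposition \ref{qcimpliesgrh}, one cannot obtain the algebraic statement as a corollary of the geometric one via the third bullet of Remark \ref{comp1}. Instead the algebraic inputs (finite algebraic height, finiteness of the relevant double cosets, hence finiteness of the conjugacy classes of $i$-fold intersections) must be taken directly from Theorems \ref{alght-mcg0} and \ref{alght-out}, whose proofs replace local finiteness of a Cayley graph by properness of $Teich(S)$, resp.\ $cv_n$. (For $Mod(S)$ one further has finite geometric height for $d$ by Theorem \ref{alght-mcg}, so Proposition \ref{ghqiimpliesggrh-a} applies verbatim and also yields graded geometric relative hyperbolicity; the analogue for $Out(F_n)$ is currently open, which is why only the algebraic conclusion is asserted here.)
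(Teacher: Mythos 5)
Your proposal follows the paper's intended approach: the paper dispatches this proposition with the one-line remark ``Similarly, replacing the use of Theorem~\ref{alght} by Theorems~\ref{alght-mcg0} and~\ref{alght-out}, one obtains the following,'' pointing back to the proofs of Propositions~\ref{qcimpliesgrh} and~\ref{relqcimpliesgrh}. Your reconstruction of clauses (1), (2), and (3) of Definition~\ref{grh} from Theorems~\ref{alght-mcg0}/\ref{alght-out}, Theorem~\ref{short-intn-coco}, and Proposition~\ref{prop;satisfiesqiip} is what that remark is implicitly asking for.

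Where you genuinely improve on the paper's terse indication is in the handling of the $Out(F_n)$ case. The proof of Proposition~\ref{qcimpliesgrh} funnels through Proposition~\ref{ghqiimpliesggrh-a}, whose hypothesis includes finite \emph{geometric} height; for a locally finite word metric that is deducible from finite algebraic height (Remark~\ref{comp1}, third bullet), but the metric $d$ here is not locally finite, and indeed the paper itself concedes (remark after Proposition~\ref{cocoimpliesgrh}) that finite geometric height for $Out(F_n)$ is open. You correctly observe that one cannot therefore cite Proposition~\ref{ghqiimpliesggrh-a} wholesale; instead one must run its internal argument --- quasiconvexity of $\CC\HH_n$ in $(G,d_{n+1})$ via Proposition~\ref{cobpersists}, mutual coboundedness from the projection clause of Definition~\ref{qiintn}, then Propositions~\ref{prop;criterion} and~\ref{prop;from_horo_to_he} --- and stop before the final sentence (the only place geometric height is invoked), supplying clause (1) separately from the directly-proved algebraic height bound. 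You also correctly note that for $Mod(S)$, Theorem~\ref{alght-mcg} does give finite geometric height, so Proposition~\ref{ghqiimpliesggrh-a} applies verbatim and yields the stronger Proposition~\ref{cocoimpliesgrh}. This is the right reading, and it makes explicit a subtlety the paper passes over. The remainder of your argument --- convex cocompactness and uniform quasiconvexity of the $i$-fold intersections via Theorem~\ref{short-intn-coco} and Theorems~\ref{theo;klh}/\ref{theo;dt14}, finite generating sets from finite generation plus the finitely-many-conjugacy-classes assertion of Theorems~\ref{alght-mcg0}/\ref{alght-out} --- is sound and matches the ingredients the paper assembles elsewhere.
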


           Again, replacing the use of Theorem
           \ref{alght} by Theorem \ref{alght-mcg}, we obtain:

           \begin{prop}\label{cocoimpliesgrh} Let $G$ be the mapping
             class group $Mod(S)$. Let $d$ be a word metric
             on $G$ making it quasi-isometric to the curve complex
             $CC(S)$.
             Let $H$ be a  convex cocompact subgroup of  $G$. Then  $(G,\{H\},d)$ has 
             graded  geometric relative hyperbolicity. 
           \end{prop}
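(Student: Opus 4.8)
The plan is to mirror the proofs of Propositions \ref{qcimpliesgrh}, \ref{relqcimpliesgrh} and \ref{cocoimpliesgrh0}, assembling the three ingredients that Definition \ref{ggrh} requires out of results already established. First I would observe that, since $d$ is chosen to make $Mod(S)$ equivariantly quasi-isometric to the curve complex $CC(S)$, and $CC(S)$ is Gromov hyperbolic, the space $(G,d)$ is itself hyperbolic; this is exactly the standing hypothesis needed in order to invoke Proposition \ref{ghqiimpliesggrh-a}. Note that $d$ is a word metric with respect to an infinite generating set (a finite generating set together with all elements of the relevant sub-mapping class groups), which is allowed both in Proposition \ref{ghqiimpliesggrh-a} and in Proposition \ref{prop;satisfiesqiip}.

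Next I would invoke Theorem \ref{alght-mcg}: for a $\sigma$-convex cocompact subgroup $H$ of $Mod(S)$, and for a word metric with respect to any (not necessarily finite) generating set, $H$ has finite geometric height. Applied to our $d$, this gives the first half of condition (1) of Definition \ref{ggrh}. The finiteness of the double coset sets $\{ H_i g H_{\alpha_i} : H_i \cap \bigcap_i g H_{\alpha_i} g^{-1}\text{ infinite}\}$ from Theorem \ref{alght-mcg0} then yields that for each $i$ there are only finitely many orbits of geometric $i$-fold intersections, which is the remaining half of condition (1).

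Then I would appeal to Proposition \ref{prop;satisfiesqiip}(3): a convex cocompact subgroup $H$ of $Mod(S)$, with $d$ a word metric making $G$ quasi-isometric to $CC(S)$, satisfies the uniform (geometric) qi-intersection property with respect to $d$ (this is the step where the Durham--Taylor stability result, entering through Theorem \ref{alght-mcg}, and the Dowdall--Duchin--Masur contraction estimate are genuinely used). Having checked that $(G,d)$ is hyperbolic, that $\{H\}$ has finite geometric height for $d$, and that $\{H\}$ has the uniform qi-intersection property, Proposition \ref{ghqiimpliesggrh-a} now applies verbatim: it produces, via Propositions \ref{cobpersists}, \ref{prop;criterion} and \ref{prop;from_horo_to_he}, the coarse hyperbolic embeddedness of $\CC\HH_{i-1}$ in $(G,d_i)$ for all $i\le n+1$ (condition (2)) together with the coarse path connectivity of the items of $\CC\HH_i$ (condition (3)). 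Hence $(G,\{H\},d)$ has graded geometric relative hyperbolicity.

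The main obstacle I anticipate is bookkeeping rather than conceptual: one must make sure that all three cited inputs --- finite geometric height (Theorem \ref{alght-mcg}), the uniform qi-intersection property (Proposition \ref{prop;satisfiesqiip}(3)), and the persistence-of-hyperbolicity machinery (Proposition \ref{ghqiimpliesggrh-a}) --- are stated for one and the same metric $d$, namely the infinite-generating-set word metric quasi-isometric to $CC(S)$, and that the constants $C_n$ furnished by the qi-intersection property are the ones feeding the electrification defining the $d_i$. Since geometric height and the qi-intersection property are invariant under equivariant quasi-isometry, and Proposition \ref{ghqiimpliesggrh-a} already internalizes the electrification bookkeeping, no new estimate is needed; the argument is a short concatenation of the cited statements. $\Box$
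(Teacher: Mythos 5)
Your proposal is correct and follows exactly the route the paper intends: the paper states the result by saying ``replacing the use of Theorem \ref{alght} by Theorem \ref{alght-mcg}, we obtain...'', which unpacks to precisely the three ingredients you assemble (hyperbolicity of $(G,d)$ via the quasi-isometry to $CC(S)$, finite geometric height via the ``moreover'' clause of Theorem \ref{alght-mcg}, the uniform qi-intersection property via Proposition \ref{prop;satisfiesqiip}(3)) fed into Proposition \ref{ghqiimpliesggrh-a}. Your observation that the finitely-many-orbits half of Definition \ref{ggrh}(1) comes from the double-coset finiteness of Theorem \ref{alght-mcg0} is a useful clarification that the paper leaves implicit, and your closing remark on keeping the metric $d$ consistent across the three cited inputs correctly identifies the only real bookkeeping to verify.
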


           \begin{remark} Since we do not have an exact (geometric)
             analog of Theorem \ref{alght-mcg} for Out($F_n$) (more
             precisely an analog of the stability result of
             \cite{DuTa}) as of now, we have to content ourselves with
             the slightly weaker Proposition \ref{cocoimpliesgrh0} for  Out($F_n$).
           \end{remark}

           \section{From graded relative hyperbolicity to quasiconvexity}

           \subsection{A Sufficient Condition}

           \begin{prop}\label{ggrhtoqc}
             Let $G$ be a group and $d$ a hyperbolic word metric  with respect to a (not
             necessarily finite) generating set.  
             Let $H$ be a subgroup such that $(G,\{H\},d)$ has 
              graded geometric
             relative hyperbolicity.
             Then  $H$ is quasiconvex in $(G,d)$.
           \end{prop}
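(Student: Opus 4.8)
The plan is to induct on the geometric height $n$ of $\{H\}$, peeling off one layer of the graded structure at each step and invoking the unfolding machinery (Proposition~\ref{prop;unfolding_qc}) to transfer quasiconvexity from an electrified metric down to the next one. The base case $n=0$ is immediate: then $\HH_1$ consists of uniformly bounded sets, so there is essentially nothing to electrify, and $H$ itself is bounded, hence trivially quasiconvex in $(G,d)$. (One should be a little careful here: even at height $0$ one must check that $H=\HH_1$-bounded forces quasiconvexity; but a bounded subset of any geodesic space is $C$-quasiconvex for $C$ the diameter.)

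For the inductive step, suppose the statement holds for all groups, subgroups and graded families of geometric height $<n$, and let $(G,\{H\},d)$ have graded geometric relative hyperbolicity with $\{H\}$ of geometric height $n$. By Definition~\ref{ggrh}, $\CC\HH_0 = \{$cosets of $H\}$ is coarsely hyperbolically embedded in $(G,d_1)$, where $d_1$ electrifies the elements of $\CC\HH_1$; equivalently, by Proposition~\ref{prop;from_horo_to_he}, $(G,d_1)$ is strongly hyperbolic relative to $\CC\HH_0$. Now here is the key point: the graded family $\CC\HH_\bbN$ \emph{shifted by one} — i.e.\ the family $(\CC\HH_{i+1})_{i\in\bbN}$ viewed inside $(G,d_1)$ — should itself witness graded geometric relative hyperbolicity of $(G,\{\,\cdot\,\},d_1)$ of geometric height $n-1$, because an $(i+1)$-fold intersection of cosets of $H$ is an $i$-fold intersection in the shifted indexing, and the electrification of $d_1$ over $\CC\HH_{i+1}$ is just $d_{i+1}$. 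Hence by the induction hypothesis, $H$ (and indeed every element of $\CC\HH_1$) is quasiconvex in $(G,d_1)$.

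It remains to descend from $(G,d_1)$ to $(G,d)$. By Corollary~\ref{cor-retraction} (applied to the coarsely hyperbolically embedded family $\CC\HH_0$ in $(G,d_1)$, or directly to the hypotheses) and Proposition~\ref{prop;projections}, the elements of $\CC\HH_1$ are uniformly quasiconvex in $(G,d)$, and since $d_1$ is obtained from $d$ by electrifying $\CC\HH_1$, we are exactly in the setting of Proposition~\ref{prop;unfolding_qc}: $H$ is coarsely path connected (third clause of Definition~\ref{ggrh} gives the $D_i$-coarse path connectivity), quasiconvex in the electrification $X^{el}_{\CC\HH_1} = (G,d_1)$, and the finite-geometric-height hypothesis supplies the integer $n$ and the uniform bound $D$ on diameters of $n$-fold intersections $\bigcap_i Y_i^{+\epsilon}$ needed for the strengthened version of that proposition proved in the excerpt. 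The $(\Delta,\epsilon)$-meeting hypothesis — that whenever $H$ $(\Delta,\epsilon)$-meets some $Y\in\CC\HH_1$ there is a path in $H^{+\epsilon\Delta}$ between the meeting points that is a uniform quasigeodesic in $(G,d)$ — is where the geometric qi-intersection structure feeds in: a pair of meeting points of $H$ with a coset $Y$ of an element of $(\HH_1)_0$ lies in a bounded neighborhood of an intersection that is again of the form $H\cap(\text{conjugate of }H)$, hence of an element of $\CC\HH_1$ or $\CC\HH_2$, on which we already have coarse path connectivity and qi-embeddedness. Feeding all of this into Proposition~\ref{prop;unfolding_qc} yields that $H$ is quasiconvex in $(G,d)$.

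**The main obstacle** I expect is verifying the $(\Delta,\epsilon)$-meeting hypothesis of Proposition~\ref{prop;unfolding_qc} rigorously — i.e.\ identifying, up to uniformly bounded error, the "meeting set" of $H$ with a coset $Y\in\CC\HH_1$ as (a translate of) an element of some $\CC\HH_j$, and then using the third clause of Definition~\ref{ggrh} to produce the required quasigeodesic path. This is precisely the point where the convoluted formulation of the qi-intersection properties (Definitions~\ref{qiintn}, \ref{geoqiintn}) and Proposition~\ref{geointnsstable} on stability of geometric $i$-fold intersections under intersection must be combined carefully. A secondary point requiring care is the bookkeeping in the induction: one must be sure that the shifted graded family really does satisfy \emph{all three} clauses of Definition~\ref{ggrh} relative to $(G,d_1)$, in particular that coarse hyperbolic embeddedness of $\CC\HH_i$ in $(G,(d_1)_j)$ for the shifted metrics matches coarse hyperbolic embeddedness of $\CC\HH_{i+1}$ in $(G,d_{i+1})$ — this is true because $(d_1)$ electrified over $\CC\HH_{i+1}$ is quasi-isometric to, indeed equal to, $d_{i+1}$, but it deserves an explicit sentence.
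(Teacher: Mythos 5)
Your overall strategy --- peel off one electrification at a time using Proposition~\ref{prop;unfolding_qc} --- is exactly the engine of the paper's proof, but the way you organize the induction has a genuine circularity in the final step, and an earlier step does no work.

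The most serious problem is your descent from $(G,d_1)$ to $(G,d)$. You take $X=(G,d)$, $\calY=\CC\HH_1$, so that $X^{el}_\calY=(G,d_1)$, and invoke Proposition~\ref{prop;unfolding_qc}. But $\CC\HH_1$ is precisely the family of cosets of (a conjugacy representative of) $H$; in particular $H$ itself is one of the subsets in $\calY$. The proposition requires $\calY$ to be uniformly quasiconvex in $(X,d)=(G,d)$, which in particular requires $H$ to be quasiconvex in $(G,d)$ --- exactly the conclusion sought. The appeals to Corollary~\ref{cor-retraction} and Proposition~\ref{prop;projections} do not supply this: the corollary applied to the coarsely hyperbolically embedded family in $(G,d_1)$ gives quasiconvexity \emph{in $(G,d_1)$}, not in $(G,d)$, and Proposition~\ref{prop;projections} concerns intersections and projections of already-quasiconvex sets. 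A secondary point: after electrifying $\CC\HH_1$, every coset of $H$ has $d_1$-diameter $\leq 2$, so the inductive conclusion ``$H$ (and every element of $\CC\HH_1$) is quasiconvex in $(G,d_1)$'' is vacuous and the shift/induction contributes nothing. The shifted family $(\CC\HH_{i+1})_i$ is also not the graded family of any single subgroup in the sense of Definition~\ref{ggrh} (for instance, $i$-fold intersections of elements of $\HH_2$ involve roughly $2i$ conjugates of $H$, not $i+1$), so it is not literally in scope of your inductive hypothesis.

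The paper's proof fixes all of this by running a \emph{descending} induction on the level $i$ rather than an ascending induction on the height. The inductive hypothesis at level $i+1$ asserts that $d_{i+1}$ is hyperbolic and that for all $j\geq 1$ the elements of $\HH_{i+j}$ are uniformly quasiconvex in $(G,d)$; the base case holds because $\HH_{k+1}$ and deeper levels are uniformly bounded. At each step Proposition~\ref{prop;unfolding_qc} is applied with $X=(G,d)$ and $\calY=\HH_{i+1}$ --- one level \emph{deeper} than the level $\HH_i$ being de-electrified --- which is quasiconvex in $(G,d)$ by the hypothesis, so there is no circularity. The $(\Delta,\epsilon)$-meeting hypothesis is checked via Proposition~\ref{geointnsstable}: the meeting points of $H_{i,\ell}\in\HH_i$ with a $Y\in\HH_{i+1}$ lie in an element of $\HH_{i+1}$, whose quasigeodesics in $(G,d)$ the inductive hypothesis supplies. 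The output ``$H_{i,\ell}$ quasiconvex in $(G,d)$'' is genuinely new information that propagates down to $i=0$, where it yields the claim for $H$. Your write-up should be reorganized along these lines; the de-electrification machinery you invoke is the right one, but $\calY$ must always be a strictly deeper level than the subset whose quasiconvexity you are proving.
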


           \begin{proof}  Assume $(G,\{H\},d)$ has 
             graded geometric relative
             hyperbolicity as in Definition \ref{ggrh}. Then $H$ has finite geometric height in
             $(G,d)$. Let $k$ be this height. Thus, $\HH_{k+1}$ is a collection
             of uniformly bounded subsets,  and
             $d_{k+1} $ is quasi-isometric to $d$.  It follows   
             that $(G,d_{k+1})$ is hyperbolic.

             Further, by  Definition \ref{ggrh}, 
             $\HH_k$  is hyperbolically embedded   in
             $(G,d_{k+1})$. This means in particular that the
             electrification $(G,d_{k+1})^{el}_{\HH_k} $
             is  hyperbolic. Since $(G,d_k)$ is quasi-isometric to
             $(G,d_{k+1})^{el}_{\HH_k} $ (being
             the restriction of the metric on $G$)   
             it follows that $(G,d_k)$ is hyperbolic as well. Further,
             by Corollary \ref{cor-retraction}  the elements
             of $\HH_k$, are uniformly quasiconvex in $(G,d_{k+1})$.

             We now argue by descending induction on $i$.

             \noindent {\it The inductive hypothesis for $(i+1)$:}\\
             We assume that  $d_{i+1}$ is a hyperbolic metric on $G$,
             and that  there is a constant $c_{i+1}$ such that, for
             all $j\geq 1$ 
             the  elements of $  \HH_{i+j}$
             are uniformly $c_{i+1}$-quasiconvex in $(G,d)$.

             We assume the inductive hypothesis for $i+1$ ({\it i.e.} as
             stated), and we now prove it for $i$.

             Of course, we also assume, as in the statement of the Proposition,
             that
             $\HH_{i}$   is coarsely hyperbolically embedded in
             $(G,d_{i+1})$.  Hence $d_i$ is a hyperbolic
             metric on $G$.

             We will now  check that the assumptions of Proposition
             \ref{prop;unfolding_qc}
             are satisfied for $(X,d) = (G, d_{i+1})$, $\YY =
             \HH_{i+1}$, and $H_{i,\ell}$ 
             arbitrary in $\HH_i$.

             Elements of $\HH_{i}$
             in $(G,d_{i+1})$ are uniformly quasiconvex in $(G,d_{i+1})$: this
             follows  from Corollary \ref{cor-retraction}. We will write
             $C_i$ for their quasiconvexity constant. 

             A second step is to check that, for some uniform $\Delta_0$ and
             $\epsilon$, for all $\Delta>\Delta_0$,    when an element
             $H_{i,\ell}$ of $\HH_{i}$
             $(\Delta,\epsilon)$-meets an item of $\HH_{i+1}$, then
             $H^{+\epsilon \Delta}$  contains
             a quasigeodesic  between the meeting points in $H$.   Thus, fix
             $\epsilon <1/100$, and take $\Delta_0 $ 
             larger than $20$ times the thickening constants for the definition
             of elements in $\HH_i$ (which is possible by finiteness
             of number of orbits of i-fold intersections).  
             Assume
             $H_{i,\ell}$   $(\Delta, \epsilon)$-meets $Y \in \HH_{i+1}$.
             Then, by definition of $i-$fold intersections
             \ref{def:ifoldinter}, and Proposition \ref{geointnsstable},
             either the pair of meeting points is in  an item of
             $\HH_{i+1}$ inside $H_{i,\ell} $, or
             $Y\subset H_{i,\ell}$. 
             In both cases, by the inductive assumption,
             there is a path in $H_{i,\ell}^{+\epsilon \Delta}$ between the meeting
             points  in   $H_{i,\ell}$ that is a quasigeodesic for
             $d$.  Hence the second assumption
             of Proposition \ref{prop;unfolding_qc} is satisfied. 

             We can thus conclude by that proposition that $H_{i,\ell}$ is quasiconvex in $(G,d)$ for a
             uniform constant, and therefore the inductive assumption holds for $i$.

             By induction it is then true for $i=0$, hence the first
             statement of the Proposition holds, i.e.
             quasiconvexity follows from 
              graded geometric relative hyperbolicity.

           \end{proof}

           We shall deduce various consequences of Proposition
           \ref{ggrhtoqc} below.  However, before we proceed, we need
           the following observation
           since we are dealing with spaces/graphs that are not necessarily proper.

           \begin{obs} \label{qctoqi} Let $X$ be a (not necessarily
             proper) hyperbolic graph. For all $C_0\geq 0$, there
             exists
             $C_1 \geq 0$ such that the following holds: \\ Let $H$ be
             a hyperbolic group acting uniformly properly
             on $X$, i.e. for all $D_0$ there exists $N$ such that for
             any $x \in X$, any $D_0$ ball in $X$ contains at most $N$
             orbit points of $Hx$.
             Then a $C_0-$quasiconvex orbit of $H$ is
             $(C_1,C_1)-$quasi-isometrically embedded in $X$.
           \end{obs}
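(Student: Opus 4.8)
The plan is to prove directly that the orbit map $\phi\colon (H,d_H)\to (X,d_X)$, $h\mapsto hx_0$, is a $(C_1,C_1)$-quasi-isometric embedding, where $x_0$ is the basepoint whose orbit $Q := Hx_0$ is $C_0$-quasiconvex and $d_H$ is the word metric for a fixed finite generating set $S$ of $H$. The upper Lipschitz bound is free: with $K := \max_{s\in S} d_X(x_0,sx_0)$, choosing a geodesic word for $h^{-1}h'$ and concatenating the corresponding edge-paths gives $d_X(hx_0,h'x_0)\le K\,d_H(h,h')$ for all $h,h'\in H$. So the whole content is the reverse inequality $d_H(h,h')\le C_1\,d_X(hx_0,h'x_0)+C_1$; and since each $h\in H$ acts on $X$ by an isometry that preserves $Q$, it is enough to bound $d_H(1,g)$ in terms of $\ell := d_X(x_0,gx_0)$ for an arbitrary $g\in H$.

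The one input from the hypotheses is the finiteness of $F := \{\, f\in H : d_X(x_0,fx_0)\le 2C_0+1 \,\}$: this is uniform properness of the action, in the form that only finitely many group elements move $x_0$ within any fixed distance (in particular $\operatorname{Stab}_H(x_0)\subseteq F$ is finite, which is in any case forced by the conclusion we are after). Set $L := \max_{f\in F}|f|_H$; it depends on $C_0$ together with the data fixed throughout, namely $X$ and the action of $H$ on it --- which is the sense in which ``$C_1$ depends only on $C_0$''. Now take a geodesic of the graph $X$ from $x_0$ to $gx_0$ and list the vertices $p_0 = x_0, p_1, \dots, p_m = gx_0$ it traverses, so that $m\le \ell+1$ and $d_X(p_j,p_{j+1})\le 1$. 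By $C_0$-quasiconvexity of $Q$ we may choose $h_j\in H$ with $d_X(p_j,h_jx_0)\le C_0$, taking $h_0 = 1$ and $h_m = g$; then $d_X(h_jx_0,h_{j+1}x_0)\le 2C_0+1$, so each $h_j^{-1}h_{j+1}$ lies in $F$, and the telescoping identity $g = (h_0^{-1}h_1)(h_1^{-1}h_2)\cdots(h_{m-1}^{-1}h_m)$ gives $d_H(1,g)\le mL\le L\,\ell + L$, the additive $L$ also covering the degenerate case $\ell = 0$ (where $g\in\operatorname{Stab}_H(x_0)\subseteq F$).

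Combining the two estimates with $C_1 := \max(K,L)$ yields $C_1^{-1}\,d_H(h,h') - C_1 \le d_X(hx_0,h'x_0) \le C_1\,d_H(h,h') + C_1$ for all $h,h'\in H$, which is the assertion. I do not anticipate a genuine obstacle here: hyperbolicity of $X$ is in fact never used (only that $X$ is a geodesic graph and that $Q$ is $C_0$-quasiconvex), and properness of $X$ is likewise irrelevant, which is precisely why the statement is formulated for non-proper $X$. Conceptually the argument is just the Milnor--Schwarz lemma applied to the proper cocompact action of $H$ on $Q$ with its induced path metric, unwound so as to keep the dependence of the constants visible. The only points worth flagging are the reliance on uniform properness to make $F$ (hence $\operatorname{Stab}_H(x_0)$) finite, and the honest remark that the resulting $C_1$ involves the action of $H$ on $X$ and not $C_0$ in isolation.
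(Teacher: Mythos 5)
Your argument is correct, and since the paper states Observation \ref{qctoqi} without supplying a proof, there is no official proof to compare against; the Milnor--Schwarz style orbit-map argument you give is exactly the expected one. The upper Lipschitz bound via the generating set, the lower bound via subdividing a geodesic in $X$ and dragging its vertices into the orbit by $C_0$-quasiconvexity, and the finiteness of $F=\{f\in H: d_X(x_0,fx_0)\le 2C_0+1\}$ from uniform properness, together with telescoping, constitute the whole content. Two of your side remarks deserve emphasis because they flag genuine (if minor) imprecisions in the statement as printed: hyperbolicity of $X$ plays no role, and the constant $C_1$ cannot depend on $C_0$ alone --- it necessarily involves the properness function of the action and the choice of word metric on $H$ (e.g.\ through $K=\max_{s\in S}d_X(x_0,sx_0)$ and $L=\max_{f\in F}|f|_H$), although in the way the paper actually uses the observation (for a fixed $H$ acting on a fixed $X$) this dependence is harmless. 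One small point worth noting, which you already anticipate, is that the phrase ``at most $N$ orbit points of $Hx$'' has to be read as bounding $\#\{h\in H: d(x,hx)\le D_0\}$ rather than the cardinality of a subset of $X$; the latter reading would not, by itself, give finiteness of the point stabilizer, hence not finiteness of $F$. Under the standard reading (which is clearly intended), your proof closes without gaps.
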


           Combining  Proposition \ref{ggrhtoqc} with Observation
           \ref{qctoqi} we obtain the following:

           \begin{prop}\label{ggrhtoqi}
             Let $G$ be a group and $d$ a hyperbolic 
             word metric  with respect to a (not necessarily finite) generating set.
             Let $H$ be a subgroup such that
             \begin{enumerate}
             \item  $(G,\{H\},d)$ has graded geometric relative hyperbolicity.
             \item The action of $H$ on $(G,d)$ is uniformly proper.
             \end{enumerate}
             Then $H$ is hyperbolic and 
             $H$  is qi-embedded in $(G,d)$.
           \end{prop}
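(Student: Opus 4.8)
The plan is to combine the two ingredients singled out just before the statement: Proposition~\ref{ggrhtoqc}, which turns the graded-geometric-relative-hyperbolicity hypothesis into quasiconvexity, and a Milnor--\v{S}varc-type argument (essentially the content of Observation~\ref{qctoqi}) which promotes a quasiconvex orbit to a quasi-isometrically embedded one once the action is uniformly proper. Concretely, the first step is to apply Proposition~\ref{ggrhtoqc} using hypothesis~(1): since $(G,\{H\},d)$ has graded geometric relative hyperbolicity and $(G,d)$ is hyperbolic, $H$ is quasiconvex in $(G,d)$; fix a quasiconvexity constant $C_0$ and write $X=(G,d)$. Note that $H$ acts on $X$ by left multiplication, which is isometric because $d$ is left-invariant, and that the orbit of $e$ under this action is exactly $H\subset G$.

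The second step is to extract finite generation and a quasi-isometric embedding from quasiconvexity together with hypothesis~(2). Set $S=\{h\in H:\ d(e,h)\le 2C_0+2\}$. By uniform properness of the $H$-action on $(G,d)$, applied with base point $e$ and $D_0=2C_0+2$, the set $S$ is finite. Given $h_1,h_2\in H$, sample a geodesic $[h_1,h_2]$ in the (possibly non-locally-finite) graph $X$ at its consecutive vertices $h_1=p_0,p_1,\dots,p_n=h_2$, where $n=d(h_1,h_2)$; by $C_0$-quasiconvexity each $p_j$ lies within $C_0$ of some $g_j\in H$ with $g_0=h_1$, $g_n=h_2$, so $d(g_j,g_{j+1})\le 2C_0+1$ and hence $g_j^{-1}g_{j+1}\in S$ by left-invariance of $d$. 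Thus $h_1^{-1}h_2$ is a product of $n$ elements of $S$, so $S$ generates $H$ (whence $H$ is finitely generated), and $d_S(h_1,h_2)\le n=d(h_1,h_2)$, while conversely $d(h_1,h_2)\le (2C_0+2)\,d_S(h_1,h_2)$ since every element of $S$ has $d$-length at most $2C_0+2$. Hence the orbit map $(H,d_S)\to X$ is a quasi-isometric embedding; this is precisely the conclusion of Observation~\ref{qctoqi} in the special case where the orbit is $H$ itself, and one may instead cite Observation~\ref{qctoqi} directly once $H$ is known to be a finitely generated (hence, as we are about to see, hyperbolic) group.

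The third and final step is standard: a geodesic metric space that quasi-isometrically embeds in a hyperbolic space is itself hyperbolic (via stability of quasigeodesics, pulling back the thin-triangle condition through the quasi-isometry). Applying this to the quasi-isometric embedding $(H,d_S)\hookrightarrow X$ of the previous paragraph shows that the finitely generated group $H$ is hyperbolic, and combined with the same embedding this yields both assertions of the proposition. I do not anticipate a genuine obstacle here: the substantive work is already done in Proposition~\ref{ggrhtoqc}, and the only point requiring attention is that $(G,d)$ need not be locally finite, which is exactly why hypothesis~(2) is invoked — both to guarantee that the generating set $S$ is finite and to run the Milnor--\v{S}varc comparison of metrics.
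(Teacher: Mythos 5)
Your proof is correct and follows essentially the same route as the paper: invoke Proposition~\ref{ggrhtoqc} for quasiconvexity of $H$ in $(G,d)$, then use uniform properness to promote this to a quasi-isometric embedding (the content of Observation~\ref{qctoqi}), from which hyperbolicity of $H$ follows by pulling back thin triangles through the qi-embedding. You helpfully unpack the Milnor--\v{S}varc-type chain argument behind Observation~\ref{qctoqi} and, in doing so, establish finite generation of $H$ first rather than assuming it, which quietly sidesteps the slightly circular phrasing of that Observation (it is stated for a ``hyperbolic group'' $H$, whereas here hyperbolicity of $H$ is part of what one wants to conclude).
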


           \begin{proof} 
             Quasi-convexity of $H$ in $(G,d)$ was established in Proposition \ref{ggrhtoqc}.
             Qi-embeddedness 
             of $H$ follows from Observation \ref{qctoqi}. Hyperbolicity of $H$ is 
             an immediate consequence. \end{proof}

           \subsection{The Main Theorem} We assemble the pieces now to
           prove the following main theorem of the paper.

           \begin{theorem}\label{hypcharzn} Let $(G,d)$ be one of the following:
             \begin{enumerate} 
             \item $G$ a hyperbolic group and $d$ the word metric with
               respect to a finite generating set $S$.
             \item $G$ is finitely generated and hyperbolic relative
               to $\PP$, $S$ a finite relative generating set,  and
               $d$ 
               the word metric with respect to $S \cup \PP$.
             \item $G$ is the mapping class group $Mod(S)$ and $d$ the
               metric obtained by electrifying the  subgraphs
               corresponding to
               sub mapping class groups so that $(G,d)$ is
               quasi-isometric to the curve complex $CC(S)$.
             \item $G$ is $Out(F_n)$ and $d$ the metric obtained by
               electrifying the  subgroups corresponding to
               subgroups that stabilize proper free factors so that
               $(G,d)$ is quasi-isometric to the  free factor complex
               $\FF_n$.
             \end{enumerate}
             Then (respectively)
             \begin{enumerate} 
             \item $H$ is quasiconvex if and only if $(G,\{H\})$ has
               graded geometric  relative hyperbolicity.
             \item $H$ is relatively quasiconvex if and only if
               $(G,\{H\},d)$ has graded geometric relative
               hyperbolicity.
             \item $H$ is convex cocompact in $Mod(S)$  if and only if
               $(G,\{H\},d)$ has graded geometric relative
               hyperbolicity and the  action of
               $H$ on the curve complex is uniformly proper.
             \item $H$ is convex cocompact in  $Out(F_n)$  if and only
               if $(G,\{H\},d)$ has graded geometric relative hyperbolicity
               and the  action of
               $H$ on the free factor complex is uniformly proper.
             \end{enumerate}
           \end{theorem}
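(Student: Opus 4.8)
The plan is to assemble the propositions of Sections~4--6, treating the two directions of each equivalence separately and checking, case by case, that the standing hypothesis on $(G,d)$ matches the hypothesis of the proposition invoked.

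For the implication from quasiconvexity (resp.\ relative quasiconvexity, convex cocompactness) to graded geometric relative hyperbolicity: in case~(1) this is precisely Proposition~\ref{qcimpliesgrh}, and in case~(2) it is Proposition~\ref{relqcimpliesgrh} (the metric $d$ built from $S\cup\PP$ being a hyperbolic word metric, as required there); both rest ultimately on the finiteness-of-height results (Theorems~\ref{alght}, \ref{relht}) and on the uniform qi-intersection property verified in Proposition~\ref{prop;satisfiesqiip}. In case~(3) I would quote Proposition~\ref{cocoimpliesgrh} directly. In case~(4) I would first invoke the uniform qi-intersection property from Proposition~\ref{prop;satisfiesqiip} together with Proposition~\ref{ghqiimpliesggrh-a} (equivalently, Proposition~\ref{cocoimpliesgrh0} together with Remark~\ref{comp2}); in either route the one remaining ingredient is finite geometric height of $H$ for the metric $d$ that is quasi-isometric to $\FF_n$, which I would obtain from the fact that, by Theorem~\ref{qi-coco-out}, convex cocompactness makes the orbit $Hx_0\subset\FF_n$ a quasi-isometric embedding, then running the projection/intersection argument of Proposition~\ref{prop;hyp_qc_have_fgh} inside the hyperbolic space $\FF_n$ and using that $Out(F_n)$ acts uniformly properly on the pieces that arise (thick-part-type pieces, exactly as in the proof of Theorem~\ref{alght-mcg} for the mapping class group). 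In both cases~(3) and~(4), the remaining assertion of the forward direction — that the action of $H$ on the curve complex, resp.\ the free factor complex, is uniformly proper — follows from Theorem~\ref{qi-coco}, resp.\ Theorem~\ref{qi-coco-out}: the orbit map is a quasi-isometric embedding of the finitely generated hyperbolic group $H$, so every ball meets any orbit in a uniformly bounded set.

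For the converse, from graded geometric relative hyperbolicity to quasiconvexity: in cases~(1) and~(2) the metric $d$ is a hyperbolic word metric, so Proposition~\ref{ggrhtoqc} applies and yields that $H$ is quasiconvex in $(G,d)$; in case~(2), quasiconvexity in the Cayley graph with respect to $S\cup\PP$ is, by definition, exactly relative quasiconvexity (Definition~\ref{relqc}). In cases~(3) and~(4) the metric $d$ is quasi-isometric to the curve complex, resp.\ the free factor complex, hence hyperbolic, and the hypothesis additionally supplies uniform properness of the $H$-action; feeding this into Proposition~\ref{ggrhtoqi} gives that $H$ is a finitely generated hyperbolic group quasi-isometrically embedded in $(G,d)$, hence its orbit in $CC(S)$, resp.\ $\FF_n$, is quasi-isometrically embedded, and Theorem~\ref{qi-coco}, resp.\ Theorem~\ref{qi-coco-out}, recognizes this as convex cocompactness.

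The main obstacle is the forward direction of case~(4). For the mapping class group one has the Durham--Taylor stability of convex cocompact subgroups, which is exactly what lets Theorem~\ref{alght-mcg} — and hence Proposition~\ref{cocoimpliesgrh} — produce the \emph{geometric} graded relative hyperbolicity directly; no such stability statement is available for $Out(F_n)$, so finite geometric height with respect to a non-locally-finite metric (the one quasi-isometric to $\FF_n$) must be established by hand from the quasi-isometric embeddedness of the orbit in $\FF_n$, and one has to be careful here — as flagged in Remark~\ref{comp1} — that the passage between the algebraic and geometric notions of height, and of graded relative hyperbolicity, is not automatic when the word metric fails to be proper. Everything else is bookkeeping: matching the ambient hypotheses of the four cases to Propositions~\ref{qcimpliesgrh}, \ref{relqcimpliesgrh}, \ref{cocoimpliesgrh0}, \ref{cocoimpliesgrh}, \ref{ggrhtoqc}, \ref{ggrhtoqi} and to the characterizations of Theorems~\ref{qi-coco} and \ref{qi-coco-out}.
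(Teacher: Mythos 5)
Your assembly of cases (1), (2), (3) and of all four reverse implications matches the paper's own proof exactly: the forward directions of (1), (2), (3) are Propositions~\ref{qcimpliesgrh}, \ref{relqcimpliesgrh}, \ref{cocoimpliesgrh}; the reverse directions of (1), (2) are Proposition~\ref{ggrhtoqc}; and the reverse directions of (3), (4) are Proposition~\ref{ggrhtoqi} followed by Theorems~\ref{qi-coco} and \ref{qi-coco-out}. Your remark that uniform properness of the orbit action follows from the qi-embeddedness guaranteed by convex cocompactness is also the intent of the paper.

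Your treatment of the forward implication in case (4) is where things go wrong. You correctly notice that Proposition~\ref{cocoimpliesgrh0} only delivers graded (algebraic) relative hyperbolicity for $Out(F_n)$, and you correctly identify the missing ingredient as finite \emph{geometric} height of $H$ for the non-locally-finite metric $d$ quasi-isometric to $\FF_n$. But the fix you sketch --- ``running the projection/intersection argument of Proposition~\ref{prop;hyp_qc_have_fgh} inside $\FF_n$ ... exactly as in the proof of Theorem~\ref{alght-mcg}'' --- does not go through. Proposition~\ref{prop;hyp_qc_have_fgh} obtains a bound on the number of cosets meeting a ball by counting points in a locally finite ball, which is not available in $\FF_n$; and the proof of Theorem~\ref{alght-mcg} circumvents this precisely by invoking Durham--Taylor stability of convex cocompact subgroups of $Mod(S)$ to move the disjoint quasi-geodesic segments into a thick part of Teichm\"uller space where the action is proper. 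As the paper emphasizes in the remark following Proposition~\ref{cocoimpliesgrh}, no analogue of that stability result is known for $Out(F_n)$, so ``exactly as in Theorem~\ref{alght-mcg}'' is precisely the step that is unavailable. You cannot silently transplant that argument.

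The paper's own proof of Theorem~\ref{hypcharzn} in fact does not claim to bridge this gap: it cites only Proposition~\ref{cocoimpliesgrh0} for case~(4) forward, yielding graded (algebraic) relative hyperbolicity, which matches the asymmetric statement of case~(4) given in the Introduction (forward direction gives graded relative hyperbolicity, reverse direction assumes geometric graded relative hyperbolicity). The formulation of case~(4) in Theorem~\ref{hypcharzn} as an ``if and only if'' with ``graded geometric relative hyperbolicity'' on both sides is stronger than what the cited propositions establish, and you should treat the equivalence there as the asymmetric statement of the Introduction rather than try to invent the missing $Out(F_n)$ stability.
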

           
           \begin{proof} The forward implications of quasiconvexity to graded 
             geometric relative hyperbolicity in the first 3 cases are proved by
             Propositions \ref{qcimpliesgrh}, \ref{relqcimpliesgrh},
             \ref{cocoimpliesgrh0} and \ref{cocoimpliesgrh}
             and case 4 by Proposition \ref{cocoimpliesgrh0}. In cases
             (3) and (4) properness of the action of $H$ on 
             the curve complex follows from convex cocompactness.
             
             We now proceed with the reverse implications.  Again, the
             reverse implications of (1) and (2) are direct
             consequences
             of Proposition \ref{ggrhtoqc}. 
             
             The proofs of the reverse implications of (3) and (4) are
             similar.  Proposition \ref{ggrhtoqi} proves that any
             orbit of $H$ on
             either the curve complex $CC(S)$ or the free factor
             complex $\FF_n$ is qi-embedded. Convex cocompactness now
             follows from
             Theorems \ref{qi-coco} and \ref{qi-coco-out}. 
           \end{proof}
           
           \subsection{Examples} We give a couple of examples below to show that 
           finiteness of geometric height does not necessarily follow from quasiconvexity.
           
           \begin{example} {\rm Let $G_1 = \pi_1(S)$ and $H = <h>$ be
               a cyclic subgroup corresponding to a simple closed
               curve. Let $G_2 = H_1 \oplus H_2$
               where each $H_i$ is isomorphic to $\mathbb{Z}$. Let $G
               =G_1 \ast_{H=H_1} G_2$. Let $d$ be the metric obtained
               on $G$ with respect to some finite generating
               set along with all elements of $H_2$. Then $G_1$ is
               quasiconvex in $(G,d)$, but $G_1$  does not have finite
               geometric height.} 
           \end{example}
           
           Note however, that the action of $G_1$ on $(G,d)$ is not
           acylindrical. We now furnish another example to show that 
           graded geometric relative hyperbolicity does not
           necessarily follow from quasiconvexity even if we assume
           acylindricity.

           \begin{example}{\rm Let $G = \langle a_i, b_i: i \in
               \natls, a_{2i}^{b_i} = a_{2i-1} \rangle$ and let $F$ be
               the (free) subgroup
               generated by $\{ a_i\}$. Then $F^{b_i}\cap F =
               <a_{2i-1}>$ for all $i$. Let $d$ be the word metric on
               $G$ with respect to the generators
               $a_i, b_i$. Then the action of $F$ on $(G,d)$ is
               acylindrical and $F$ is quasiconvex. However there are
               infinitely many double coset representatives
               corresponding to $b_i$ such that $F^{b_i}\cap F $ is infinite.}
           \end{example}

\section*{Acknowledgments} This work was initiated during a visit of
the second author to  Institut Fourier in Grenoble during June 2015 and carried on while visiting Indian Statistical
Institute, Kolkata.
He thanks the Institutes for their hospitality.

The authors were supported in part by the National Science Foundation 
under Grant No. DMS-1440140 at  the Mathematical Sciences Research Institute in Berkeley
during Fall 2016, where this research was completed.

We thank the referee for a detailed and careful reading of the manuscript and for several extremely helpful and perceptive comments.
\bibliography{qcdm_26mm}
\bibliographystyle{alpha}

\newpage

\part*{Corrigendum to {\it Height graded relative hyperbolicity and quasiconvexity}}

\setcounter{section}{0}

\section{Saturated graded relative hyperbolicity}
There is an unfortunate mistake in the statement and the proof of Proposition 5.1 of \cite{DM17}. This affects one direction of the implications of the main theorem 6.4 (also visible as 1.4). This also affects 5.2, 5.3, 5.4, and 5.5 that are essentially the different specific cases of  the statement of 6.4. Unless explicitly mentioned the references to Theorem numbers, etc.\ refer to \cite{DM17}.

We explain the problem, and give a correction.  

\subsection{Main modifications and corrections}

Proposition  5.1 of \cite{DM17} must  be replaced by Proposition \ref{prop;new51} below.

  The conclusions of Propositions 5.2, 5.3, 5.4 and 5.5 should be changed to:  $(G,\{H\},d)$ have the {\bf saturated} (geometric) graded relative hyperbolicity (as defined below in Section \ref{sec;satu}).

Subsequently, in the conclusions of Theorems 1.4 and 6.4,  the graded relative hyperbolicity should be changed to  {\bf saturated}  graded relative hyperbolicity as stated in the section \ref{sec;conclusion} below.

\subsection{Setting of Proposition 5.1}

Recall the setting of Proposition 5.1. 

Let $G$ be a group, $d$ a word metric on $G$ with respect to some generating set (not necessarily finite), such that $(G,d)$ is hyperbolic. Let $H$ be a subgroup of $G$. Assume that $H$ has finite geometric height (Definition 4.1) and uniform q.i intersection property (Definition 3.9).

The original version of Proposition 5.1 then asserted that $(G, \{H\}, d)$ had the geometric graded relative hyperbolicity property. This is false in general. We propose here a correction of the statement and of the argument, which uses a natural operation, the saturation of a quasiconvex subgroup.

\subsection{Definition of saturation}\label{sec;satu}

Let $G$ be a relatively hyperbolic group, with $d$ the relative word metric,  and $A$ a relatively quasiconvex subgroup (i.e quasiconvex in $(G,d)$). Assume that $A$ is not parabolic. 
The saturation $A_s$ of $A$ in $(G,d)$  is the stabilizer of the limit set $\Lambda A$ of $A$ in $\partial (G,d)$.   If $A$  is  infinite (or even of larger cardinality than any  finite subgroup of $G$ that is not in a single parabolic group) yet bounded in the metric $(G,d)$ we simply say that $A_s$ is the (unique) maximal  parabolic subgroup containing $A$. Let us observe that $A_s$ is its own normalizer, contains $A$ and remains at bounded distance from $A$ in $(G,d)$. 

More generally, if $(G,d)$ is a group with a hyperbolic word metric, $A$ a subgroup, quasi-convex   for $(G,d)$, and of infinite diameter,  then $A_s$ is the stabilizer of the limit set  $\Lambda A$ of $A$ in $\partial (G,d)$. 

This later circumstance will be used for convex cocompact groups, which are quasi-isometrically embedded in $(G,d)$, hence either finite or of infinite diameter.

Let us define the saturated (geometric) graded relative hyperbolicity property,  as a variation of  Definition 4.3, %
as follows. Note that only the second condition is changed compared to the original definition 4.3.

  Let $G$ be a group, $d$ the word metric with respect to some (not necessarily finite) generating set and $\calH$ a finite collection of subgroups. Let $\calH_i$  be the collection of all i-fold conjugates of $\calH$. Let $(\calH_i)_0$ be a choice of conjugacy representatives  and $\calC \calH_i$ the set of left cosets of elements of $(\calH_i)_0$.

Let also $\calS\calH_i$  be the collection of saturations of the groups in $\calH_i$ for the metric $d$. 
Let  $(\calS\calH_i)_0$ be a choice of conjugacy representatives of these groups, and   $\calC\calS\calH_i$  the family of left cosets of   $(\calS\calH_i)_0$. Let $d^s_i$ be the metric on $G$ obtained from $d$ by electrifying the elements of   $\calC\calS\calH_i$.    Let  $\calC\calS\calH_\mathbb{N}$ be the graded family  $(\calC\calS\calH_i)_{i\in \mathbb{N}}$. We say that $(G,d)$ has  saturated (geometric) graded relative hyperbolicity  with respect to  $\calC\calS\calH_\mathbb{N}$ if 

(1)   $\calH$ has (geometric) height $n$ for some $n \in \bbN$, and for each $i$ there are finitely   many orbits of $i$-fold intersections,
 
(2)  for all $i$, $\calC\calS\calH_i$ coarsely hyperbolically embeds in $(G,d^s_i)$, 

(3)  there is $D_i$ such that all items of $\calC \calH_i$ are $D_i$-coarsely path connected in $(G, d)$.

 We say that $(G,d)$ has  saturated graded relative hyperbolicity  with respect to  $\calC\calS\calH_\mathbb{N}$ if (2) and (3) are true, and (1) is true for the algebraic height.

\subsection{Uniform qi intersection property passes to saturations}

Recall that uniform qi intersection property was defined in Definition 3.9.  Let us first notice the following, that essentially states that this property  passes to saturations.

\begin{lembis}\label{lem;saturateUqiIP} Assume that $(G,d)$ is hyperbolic.
  If $H$ has uniform qi intersection property in $(G,d)$ and if  $\calC\calS\calH_\mathbb{N}$ is obtained as in the definition above, then for all $i$ there exists $C^s_i$ such that  each element of  $\calC\calS\calH_i$  is quasi-convex in $(G,d)$, and such that if $A_0, B_0$ are in $  (\calS\calH_i)_0$ and if $\Pi_{B_0} (gA_0)$ has diameter larger than $C^s_i$ for the metric $d$, then $ gA_0g^{-1}\cap B_0$ has diameter  larger than $C^s_i $ for $d$.
\end{lembis} 

\begin{proof} We first prove the uniform quasi-convexity of elements of $  (\calS\calH_i)_0$.  The groups in $(\calH_i)_0$ are uniformly quasi-convex in $(G,d)$, by the first point of Definition 3.9. Each  group $A_s$ in $(\calS\calH_i)_0$ has a subgroup $A$ in   $(\calH_i)_0$  which is co-bounded in $(G,d)$. By hyperbolicity of  $(G,d)$ any geodesic between two points of $A_s$  is close to a geodesic between two points of $A$, which itself remains at bounded distance from $A$ hence from $A_s$. This proves the first point.

For the second point, assume the contrary: for all $C_i$ we can find $A_s, B_s$  in $  (\calS\calH_i)_0$   and $g$  (all depending on $C_i$) such that  $\Pi_{B_s} (gA_s)$ has diameter larger than $C_i$ for the metric $d$, but $ gA_sg^{-1}\cap B_s$ has diameter  smaller than $C_i $ for $d$. Consider  elements $A, B$ in $(\calH_i)_0$ of which $A_s$ and $B_s$ are the saturations. 
  Of course  $ gAg^{-1}\cap B$ has diameter  smaller than $C_i $ for $d$. 
  On the other hand, we can see that   $\Pi_{B} (gA)$ must go to infinity with $C_i$. Indeed, take pairs of points  $a_0, a_1$ in $gA_s$ and $b_0, b_1$ in  $B_s$  realising the shortest point projection of $a_0,a_1$ respectively, and such that  $d(b_0,b_1)$ is larger than  $C_i$. Then, we may find $a_0', a_1' $ in $gA$ close to  $a_0, a_1$ (say at distance at most $D$), and consider their shortest point projection on $B$, say $b'_0, b'_1$.  Approximate the octagon $(b_0, a_0, a'_0, b'_0, b'_1, a'_1, a_1, b_1)$ by a tree, by hyperbolicity. Because both $B$ and $B_s$ are quasi-convex (with uniform constant over   $  (\calS\calH_i)_0$), the Gromov products of the consecutive sides at the vertices $b_0, b_1, b'_0, b'_1$ are uniformly bounded. One can then deduce that the central subsegment of $[b_0, b_1]$ of length  at least $C_i$ minus a universal constant, remains close to $[b'_0, b'_1]$. Thus,  $\Pi_{B} (gA)$ is larger than this quantity.

By uniform qi-intersection property for $\calH_i$, we than have a contradiction.

\end{proof}


\subsection{Correction to Proposition 5.1}
Then we can show a correct version of Proposition 5.1.

\begin{propbis}\label{prop;new51}
  Let $G$ be either a relatively hyperbolic group 
 with a relative word metric $d$, or a Mapping class group  with a word metric $d$ equivariantly quasi-isometric to the curve complex, or $Out(F_n)$  with a word metric $d$ equivariantly quasi-isometric to the free factor complex. %

Let $H$ be a subgroup of $G$. If $\{H\}$ has finite geometric height for $d$ and has the uniform qi-intersection property, then $(G,\{H\},d)$ has the saturated  (geometric) graded relative hyperbolicity property with respect to  $\calC\calS\calH_\mathbb{N}$.
\end{propbis}

\begin{proof}
We first assume that $G$ is relatively hyperbolic.
By Lemma \ref{lem;saturateUqiIP},  
the elements    $\calC\calS\calH_i$ also are  $C^s_1$-quasi convex in $(G,d)$ for some uniform constant $C^s_1$.  

     By Proposition 2.11, all the elements of  $\calC\calS\calH_i$ are uniformly quasi-convex in $(G,d_{i+1})$. 

We now need to show that the elements of  $\calC\calS\calH_i$ are mutually cobounded for  $d^s_{i+1}$ (a property which fails in general for $\calC\calH_n$).

Assume, by contradiction, that the elements (cosets)  of $\calC\calS\calH_i$  are not mutually co-bounded for the metric $d^s_{i+1}$. For all $D$ there exists two essentially different cosets $A_s$ (which can be assumed in $(\calS\calH_i)_0$) and $gB_s$ (for $B_s \in (\calS\calH_i)_0$ as well), that have projection larger than $D$ on one another for the metric $d^s_{i+1}$. Recall that essentially different means that either $A_s\neq B_s$ or $g\notin B_s$.

 By Lemma \ref{lem;saturateUqiIP},  
for $D$ large enough, $A_s\cap gB_sg^{-1}$ has diameter larger than $D-2C_n$ for  $d^s_{i+1}$.

By definition of saturation, $A_s$ and $B_s$ are either bounded, or equal the stabilizers of their respective limit sets $\Lambda A_s, \Lambda B_s$ in the hyperbolic metric $d$.

In the case where both $A_s$ and $B_s$ are bounded, yet infinite (or even of sufficiently large cardinality), 
then we are in the relatively hyperbolic case, and these groups must be parabolic, and as saturations, they are equal to  maximal parabolic group containing $A$ and $B$.  Their intersection cannot be larger than a universal constant for $d$, contrary to our assumption.

 In the  case $A_s$ and $B_s$ are unbounded, we first observe that in that metric, $\Lambda A_s\cap  \Lambda gB_sg^{-1} = \Lambda (A_s\cap gB_sg^{-1})$.
This is  a result of  Yang \cite[Thm 1.1]{y12} for relatively hyperbolic groups.  %

For a convex cocompact subgroup of a Mapping Class Group, (respectively of $Out(F_n)$), this observation can be derived from the fellow traveling property of a thick part of Teichmuller space (Rafi \cite{rafi-gt}) (respectively Dowdall-Taylor \cite{dt1c,dt2c}), and that the weak hull of the group remains in a thick part. We sketch an argument for Mapping Class Groups. If $\xi$ is limit of $(a_n)$ (sequence in  $A_s$) and of $(gb_ng^{-1})$ (in $gB_sg^{-1}$), then after possible re-indexing of  subsequences,  and choosing a base point $x_0$ in Teichmuller space, $a_nx_0$ and $gb_ng^{-1}x_0$ remain at bounded distance. Thus infinitely often, $a_i^{-1}a_j = gb_i^{-1}b_jg^{-1}$, hence the intersection accumulates on $\xi$. %

Now, taking $A$ and $B$ in  $\calH_i$ of which $A_s, B_s$ are the saturations,    $\Lambda A_s = \Lambda A$, and  $\Lambda gB_sg^{-1} = \Lambda gBg^{-1}$.
Therefore, the saturation of  $ (A\cap gBg^{-1})$ contains $A_s\cap gB_sg^{-1}$, hence it has  diameter larger than $D-2C_n$ for  $d^s_{i+1}$.

It follows  that   $ (A\cap gBg^{-1})$  is not an essential $(i+1)$-fold intersection of conjugates of $H$. Writing $A$ and $B$ as  $i$-fold intersection themselves as $A= X_1 \cap \dots \cap X_i$, and  $gBg^{-1}= Y_1 \cap \dots \cap Y_i$, then     $A\cap gBg^{-1} =   X_1 \cap \dots  \cap X_i  \cap Y_1 \cap \dots \cap Y_i $, which can only contain $i$ essentially distinct conjugates.    We must conclude that after permutation of indices, $X_j=Y_j$ for all $j$, and that $gBg^{-1} = A$. Therefore, at the level of saturations,
 $gB_sg^{-1} = A_s$. Since $A_s, B_s$ are in  $(\calS\calH_i)_0$ which is a set of conjugacy representatives, we have that $A_s=B_s$. And therefore $g$ normalizes $B_s$.

This  is precisely where the mistake was: had we taken $B$ in $(\calH_i)_0$, we could not have concluded that $g\in B$. But now, $B_s\in (\calS\calH_i)_0$, which consists of saturated subgroups, which are equal to their own normalizers. Thus, indeed,      we can conclude that  $g\in B_s$.  This contradicts  our original choice that $A_s$ and $gB_s$ are essentially different.

The end of the proof is now the use of Proposition 2.10, and 2.23, as in the original version of \cite{DM17}. 
\end{proof}
 
\subsection{Conclusion}\label{sec;conclusion}

We can proceed and correct statements 5.2 to  5.5. In each of them only the conclusion is changed to:  $(G,\{H\},d)$ have the {\bf saturated} geometric graded relative hyperbolicity.

The syntactical modification of the proof is straightforward, using the corrected proposition above in place of Proposition 5.1. 

The main results Theorems 1.4  and 6.4  need to be corrected as follows.

  \begin{thmbis}\label{hypcharzn_correct} 
             \begin{enumerate} 
             \item Let $G$ be  a hyperbolic group and $d$ 
               the word metric with
               respect to a finite generating set $S$.

               If a subgroup $H$ is  quasiconvex then  $(G,\{H\})$ has
               saturated  graded geometric  relative hyperbolicity. 

               If  $(G,\{H\})$ has  graded geometric  relative hyperbolicity, 
               then $H$ is quasiconvex.

             \item  Let $G$ be a  finitely generated group, hyperbolic relative
               to $\mathcal{P}$, $S$ a finite relative generating set,  and
               $d$ 
               the word metric with respect to $S \cup \mathcal{P}$.

               If a subgroup $H$ is relatively quasiconvex then $(G,\{H\},d)$ has saturated graded  geometric relative hyperbolicity. 

If $(G,\{H\},d)$   has  graded  geometric relative hyperbolicity, then  $H$ is relatively quasiconvex. 
               
             \item Let $G$ be a mapping class group $Mod(S)$ and $d$ the
               metric obtained by electrifying the  subgraphs
               corresponding to
               sub mapping class groups so that $(G,d)$ is
               quasi-isometric to the curve complex $CC(S)$.

               If a subgroup $H$ is convex cocompact in $Mod(S)$  then
               $(G,\{H\},d)$ has saturated graded geometric relative
               hyperbolicity. 

If  $(G,\{H\},d)$ has graded geometric relative
               hyperbolicity and the  action of
               $H$ on the curve complex is uniformly proper, 
               then  $H$ is convex cocompact in $Mod(S)$.

             \item Let $G$ be $Out(F_n)$ and $d$ the metric obtained by
               electrifying the  subgroups corresponding to
               subgroups that stabilize proper free factors so that
               $(G,d)$ is quasi-isometric to the  free factor complex
               $\mathcal{F}_n$.

               If a subgroup $H$ is convex cocompact in  $Out(F_n)$, then  
               $(G,\{H\},d)$ has saturated 
               graded geometric relative hyperbolicity. 

If $(G,\{H\},d)$ has
               graded geometric relative hyperbolicity
               and the  action of
               $H$ on the free factor complex is uniformly proper, then $H$ is convex cocompact in  $Out(F_n)$.

             \end{enumerate}
      
           \end{thmbis}
           
           \begin{proof} The forward implications of quasiconvexity to graded 
             geometric relative hyperbolicity in the first 3 cases are proved by
             the corrections above of Propositions 5.2, 5.3, 5.4, and 5.5.
             and case 4 by the correction of Proposition 5.4. 
             
             Reverse implications are those of Theorem 6.4, their proof is unchanged. 
             
           \end{proof}

\section*{Acknowledgment} We would like to thank Denis Osin for bringing the error in \cite{DM17} to our notice.


\end{document}